\documentclass{amsart}

\usepackage{amsmath,amsfonts,amssymb,amsthm,amscd,comment,euscript}
\usepackage{stmaryrd}
\usepackage[all,cmtip]{xy}
\usepackage{graphicx}
\usepackage{enumerate} 
\usepackage[colorlinks=true]{hyperref} 
\usepackage[usenames,dvipsnames]{xcolor}
\usepackage{accents}

\allowdisplaybreaks[4]

%
%


%
%

\newcommand{\equalby}[1]{\overset{\textrm{#1}}=}             
\newcommand{\equalbyeq}[1]{\equalby{\eqref{#1}}}             
\newcommand{\equalbyref}[1]{\equalby{\ref{#1}}}                
\newcommand{\isoto}{\buildrel \sim\over\to}                 

\newcommand{\Z}{\mathbb{Z}}               
\newcommand{\Q}{\mathbb{Q}}              

\newcommand{\Kr}{\delta^{\text{Kr}}}               

\newcommand{\ot}{\otimes}                 
\newcommand{\id}{\mathrm{id}}           

\newcommand{\ie}{i.e.\ }

%
%

\DeclareMathOperator{\Hom}{\mathrm{Hom}}
\DeclareMathOperator{\End}{\mathrm{End}}

%
%

\newcommand{\al}{\alpha}      
\newcommand{\la}{\lambda}   
\newcommand{\RS}{\Sigma}    
\newcommand{\cl}{\Lambda}  

%
%

\newcommand{\fplus}{+_F} 

\newcommand{\lbr}{[\hspace{-1.5pt}[}  
\newcommand{\rbr}{]\hspace{-1.5pt}]}  
\newcommand{\FGR}[3]{#1 \lbr #2 \rbr_{#3}} 
\newcommand{\RcF}{\FGR{R}{\cl}{F}} 
\newcommand{\aug}{\epsilon}        
\newcommand{\IF}{\mathcal{I}_F}    

%
%

\newcommand{\SW}{S_W} 
\newcommand{\SWd}{S_W^\star}  
\newcommand{\QW}{Q_W}  
\newcommand{\QWd}{Q_W^*} 
\newcommand{\SWP}[1]{S_{W/W_{#1}}}  
\newcommand{\QWP}[1]{Q_{W/W_{#1}}}  

\newcommand{\SWPd}[1]{S_{W/W_{#1}}^\star}  
\newcommand{\QWPd}[1]{Q_{W/W_{#1}}^*}  %

\newcommand{\p}[1]{p_{#1}}   
\newcommand{\dd}[1]{d_{#1}}   

\newcommand{\pd}[1]{p_{#1}^\star}   
\newcommand{\ddd}[1]{d_{#1}^\star}   %

\newcommand{\pqd}[1]{p_{#1}^*}   
\newcommand{\ddqd}[1]{d_{#1}^*}   %

%
%

\newcommand{\cA}{\mathcal{A}}        
\newcommand{\Xa}[2]{\hat {X}^{#1}_{#2}} 
\newcommand{\Ya}[2]{\hat {Y}^{#1}_{#2}} 
\newcommand{\de}{\delta}                   
\newcommand{\Dem}{\Delta}                  
\newcommand{\ED}{\mathcal{D}(\cl)_F} 
\newcommand{\DcF}{\mathbf{D}_F}    
\newcommand{\DcFd}{\mathbf{D}_F^\star}   
\newcommand{\DcFP}[1]{\mathbf{D}_{F,#1}} 
\newcommand{\DcFPd}[1]{\mathbf{D}_{F,#1}^\star} 
\newcommand{\mZ}{\mathcal{Z}}     

\newcommand{\kp}{\kappa}             

\newcommand{\QWcopr}{\triangle} 
\newcommand{\QWcoun}{\varepsilon} 


\newcommand{\unit}{\mathbf{1}}      

\newcommand{\act}{\bullet}            
\newcommand{\rev}{\mathrm{rev}}   
\newcommand{\conc}{\cup}             


\newcommand{\tf}{\tilde{f}} 

\DeclareMathOperator{\hh}{\mathtt{h}}             
\newcommand{\pt}{\mathrm{pt}}   		

%

\theoremstyle{plain}
\newtheorem{theo}{Theorem}[section]
\newtheorem{prop}[theo]{Proposition}
\newtheorem{lem}[theo]{Lemma}
\newtheorem{cor}[theo]{Corollary}

\theoremstyle{definition}
\newtheorem{defi}[theo]{Definition}
\newtheorem{rem}[theo]{Remark}
\newtheorem{example}[theo]{Example}

\numberwithin{equation}{section}   

%
\begin{document}
%

\title[Push-pull operators]{Push-pull operators on the formal affine Demazure algebra and its dual}

\author{Baptiste Calm\`es}
\author{Kirill Zainoulline}
\author{Changlong Zhong}

\address{Baptiste Calm\`es, Universit\'e d'Artois, Laboratoire de
  Math\'ematiques de Lens, France}
\email{baptiste.calmes@univ-artois.fr}

\address{Kirill Zainoulline, Department of Mathematics and Statistics,
University of Ottawa, Canada}
\email{kirill@uottawa.ca}

\address{Changlong Zhong, Department of Mathematical and Statistical Sciences,
University of Alberta, Canada}
\email{zhongusc@gmail.com}

\thanks{The first author acknowledges the support of the French Agence Nationale de la Recherche (ANR) under reference ANR-12-BL01-0005.
The second author was supported by the NSERC Discovery grant  385795-2010, NSERC DAS grant 396100-2010 and the Early Researcher Award (Ontario). We appreciate the support of the Fields Institute; part of this work was done while authors were attending the Thematic Program on Torsors, Nonassociative algebras and Cohomological Invariants at the Fields Institute. 
} 

\subjclass[2010]{20C08, 14F43, 57T15}

\maketitle


\setcounter{tocdepth}{3}
\tableofcontents

\section{Introduction}

In a series of papers \cite{KK86}, \cite{KK90} Kostant and Kumar introduced and successfully applied the techniques of nil (or $0$-) Hecke algebras to study equivariant cohomology and K-theory of flag varieties. In particular, they showed that the dual of the nil Hecke algebra serves as an algebraic model for the $T$-equivariant singular cohomology of $G/B$ (here $G$ is a split semisimple linear algebraic group with a chosen split maximal torus $T$ and $G/B$ is the variety of Borel subgroups). In \cite{HMSZ} and \cite{CZZ}, this formalism has been generalized using an arbitrary formal group law associated to an algebraic oriented cohomology theory in the sense of Levine-Morel \cite{levmor-book}, via the Quillen formula. Namely, given a formal group law $F$ and a finite root system with a set of simple roots $\Pi$, one defines the \emph{formal affine Demazure algebra} $\DcF$ and its dual $\DcFd$ provides an algebraic model for the $T$-equivariant oriented cohomology $\hh_T(G/B)$. Specializing to the additive and the multiplicative formal group laws, one recovers Chow groups (or singular cohomology) and $K$-theory respectively.

Another motivation for studying the algebra $\DcF$ comes from its close relationship to Hecke algebras. Indeed, for the additive (resp. multiplicative) $F$ it coincides with the completion of the nil (resp. 0-) affine Hecke algebra (see \cite{HMSZ}). Moreover, in section~\ref{sec:iwahorihecke}, we show that for some elliptic formal group law $F$ and a root system of Dynkin type $A$ the non-affine part of $\DcF$ is isomorphic to the classical Iwahori-Hecke algebra, hence, relating it to equivariant elliptic cohomology.

In the present paper we pursue the `algebraization program' for oriented cohomology theories started in \cite{CPZ} and continued in \cite{HMSZ} and \cite{CZZ}; the general idea is to match cohomology rings of flag varieties and elements of classical interest in them (such as classes of Schubert varieties) with  algebraic and combinatorial objects that can be introduced simply and algebraically, in the spirit of \cite{Dem73} or \cite{KK86}. This approach is useful to study the structure of these rings, and to perform various computations. We focus here on algebraic constructions pertaining to $T$-equivariant oriented cohomology groups. The precise proofs and details of how our algebraic objects match cohomology groups will be given in \cite{CZZ2}; however, for the convenience of the reader, we now give a brief description of the geometric setting. 

Given an equivariant oriented cohomology theory $\hh$ over a base field whose spectrum is denoted by $\pt$, the formal group algebra $S$ will correspond to $\hh_T(\pt)$.%
\footnote{We will require that the cohomology rings are `complete' in some precise sense, but this is a technical point, that we prefer to hide here for simplicity. See \cite[Definition 2.1]{CZZ2}}
It is an algebra over $R=\hh(\pt)$. 

The $T$-fixed points of $G/B$ are naturally in bijection with the Weyl group $W$. This gives a pull-back to the fixed locus map $\hh_T(G/B) \to \hh_T(W)\simeq \bigoplus_{w \in W} \hh_T(\pt)$. This map happens to be injective. We do not know a direct geometric reason for that, but it follows from our algebraic description, in which it appears as the map $\DcFd \to S^\star_W \simeq \bigoplus_{w \in W} S$ of Definition \ref{defi:algres}. It is then convenient to enlarge $S$ to its localization $Q$ at a multiplicative subset generated by Chern classes of line bundles corresponding canonically to roots, which gives injections $S \subseteq Q$, $\SW \subseteq \QW$ and $\SWd \subseteq \QWd$. Although we do not know good geometric interpretations of $Q$, $\QW$ or $\QWd$, all the formulas and operators we are interested in are easily defined at that localized level, because they involve denominators. The main technical difficulties then lie in proving that these operators actually restrict to $S$, $\SWd$, $\DcFd$ etc., or so to speak, that the denominators cancel out. 

Our central object of study is a push-pull operator on $\DcFd$, which is an algebraic version of the composition
\[
\hh_T(G/P) \stackrel{p_*}\to \hh_T(G/Q) \stackrel{p^*}\to \hh_T(G/P)
\]
of the push-forward followed by the pull-back along the quotient map  $p\colon G/P\to G/Q$, where $P\subseteq Q$ are two parabolic subgroups of $G$. Again $p^*$ happens to be injective, and it identifies $\hh_T(G/Q)$ to a subring of $\hh_T(G/P)$, namely the subring of invariants under the action of the parabolic subgroup $W_Q$ of the Weyl group $W$. This does not seem to be straightforward from the geometry either, and it once more follows from our algebraic description: given subsets $\Xi' \subseteq \Xi$ of a given set of simple roots $\Pi$ (each giving rise to a parabolic subgroup), we define an element $Y_{\Xi/\Xi'}$ in $\QW$ (see \ref{defi:CY}). We define an action of the Demazure algebra $\DcF$ on its $S$-dual $\DcFd$, by precomposition by multiplication on the right. The action of $Y_{\Xi/\Xi'}$ thus defines the desired push-pull operator $A_{\Xi/\Xi'}:(\DcFd)^{W_{\Xi'}} \to (\DcFd)^{W_\Xi}$. The formula for the element $Y_{\Xi/\Xi'}$ with $\Xi'=\emptyset$ had already appeared in related contexts, namely, in discussions around the Becker-Gottlieb transfer for topological complex-oriented theories (see \cite[(2.1)]{BE90} and \cite[\S4.1]{GaRa}).

Finally, we define the algebraic counterpart of the natural pairing $\hh_T(G/B) \ot \hh_T(G/B) \to \hh_T(\pt)$ obtained by multiplication and push-forward to the point. It is a pairing $\DcFd \ot \DcFd \to S$. We show that it is non-degenerate, and that algebraic classes corresponding to (chosen) desingularization of Schubert varieties form a basis of $\DcFd$, with a very simple dual basis with respect to the pairing. We provide the same kind of description for $\hh_T(G/P)$. This generalizes (to parabolic subgroups and to equivariant cohomology groups) and simplifies several statements from \cite[\S14]{CPZ}, as well as results from \cite{KK86} and \cite{KK90} (to arbitrary oriented cohomology theories).
\medskip

The paper is organized as follows.
In sections~\ref{sec:fode} and \ref{sec:formalaffineDemazure}, we recall definitions and basic properties from \cite[\S2, \S3]{CPZ}, \cite[\S6]{HMSZ} and \cite[\S4, \S5]{CZZ}: the formal group algebra $S$, the Demazure and push-pull operators $\Dem_\al$ and $C_\al$ for every root $\al$, the formal twisted group algebra $\QW$ and its Demazure and push-pull elements $X_\al$ and $Y_\al$.
In section~\ref{sec:weylhecke}, we introduce a left $\QW$-action `$\act$' on the dual $\QWd$. It induces both an action of the Weyl group $W$ on $\QWd$ (the Weyl-action) and an action of $X_\al$ and $Y_\al$ on $\QWd$ (the Hecke-action).
In sections~\ref{sec:pushpull} and \ref{sec:pushpulldual}, we introduce and study more general push-pull elements in $\QW$ and operators on $\QWd$ with respect to given coset representatives of parabolic quotients of the Weyl group.
In section~\ref{sec:coeff} we study relationships between some technical coefficients.
In section~\ref{sec:anotherbasis}, we construct a basis of the subring of invariants of $\QWd$, which generalizes \cite[Lemma 2.27]{KK90}.

In section~\ref{sec:iwahorihecke}, we recall the definition and basic properties of the formal (affine) Demazure algebra $\DcF$ following \cite[\S6]{HMSZ}, \cite[\S5]{CZZ} and \cite{Zh}. We show that for a certain elliptic formal group law (Example~\ref{elliptic_ex}), the formal Demazure algebra can be identified with the classical Iwahori-Hecke algebra.
In section~\ref{sec:algmom}, we define the algebraic restriction to the fixed locus map which is used in section~\ref{sec:pushpullDdual} to restrict all our push-pull operators and elements to $\DcF$ and its dual $\DcFd$ as well as to restrict the non-degenerate pairing on $\DcFd$.
In section~\ref{sec:algresP}, we define the algebraic restriction to the fixed locus map on $G/P$ for any parabolic subgroup $P$.
In section~\ref{sec:invol}, we define an involution on $\DcFd$ which is used to relate the equivariant characteristic map with the push-pull operators. In section~\ref{sec:nondeg}, we define and discuss the non-degenerate pairing on the subring of invariants of $\DcFd$ under a parabolic subgroup of the Weyl group. 
At last, in section~\ref{sec:pushDXi}, in the parabolic case, we identify the Weyl group invariant subring $(\DcFd)^{W_\Xi}$ with $\DcFPd{\Xi}$, the dual of a quotient of $\DcF$, which matches more naturally to $\hh_T(G/P)$.

\medskip

\noindent
{\it Acknowledgments:}
One of the ingredients of this paper, the push-pull formulas in the context of Weyl group actions, arose in discussions between the first author and Victor Petrov, whose unapparent contribution we therefore gratefully acknowledge.

\section{Formal Demazure and push-pull operators}\label{sec:fode}

In this section we recall definitions of the formal group algebra and of the formal Demazure and push-pull operators, following \cite[\S2, \S3]{CPZ} and \cite{CZZ}. 

\medskip

Let $R$ be a commutative ring with unit, and let $F$ be a one-dimensional commutative \emph{formal group law} (FGL) over $R$, \ie $F(x,y)\in R\lbr x, y\rbr$ satisfies
\[
F(x,0)=0,\; F(x,y)=F(y,x) \text{ and } F(x,F(y,z))=F(F(x,y), z).
\]
\begin{example} 
The \emph{additive} FGL is defined by $F_a(x,y)=x+y$, and a \emph{multiplicative} FGL is defined by $F_m(x,y)=x+y-\beta xy$ with $\beta\in R$. The coefficient ring of the {\em universal} FGL $F_u(x,y)=x+y+\sum_{i,j\ge 1}a_{i,j}x^iy^j$ is generated by the coefficients $a_{ij}$ modulo relations induced by the above properties and is called the {\em Lazard ring}.
\end{example}

\begin{example} \label{elliptic_ex} 
Consider an elliptic curve given in Tate coordinates by 
\[
(1-\mu_1 t-\mu_2 t^2)s=t^3.
\] 
The corresponding FGL over the coefficient ring $R=\Z[\mu_1,\mu_2]$ is given by \cite[Cor.~2.8]{BB10}
\[
F(x,y):=\tfrac{x+y-\mu_1xy}{1+\mu_2xy}.
\] 
Its genus is the 2-parameter generalized Todd genus introduced and studied by Hirzebruch in \cite{Hi66}.
Its exponent is given by the rational function $\tfrac{e^{\epsilon_1 x}+e^{\epsilon_2 x}}{\epsilon_1e^{\epsilon_1 x}+\epsilon_2 e^{\epsilon_2 x}}$
in $e^x$, where $\mu_1=\epsilon_1+\epsilon_2$ and $\mu_2=-\epsilon_1\epsilon_2$ which suggests to call $F$ a \emph{hyperbolic} FGL and to denote it by $F_h$.

By definition we have 
\[
F_h(x,y)=x+y-xy(\mu_1+\mu_2F_h(x,y))
\]
and, thus, that the formal inverse of $F_h$ is identical to the one of $F_m$ (\ie $\tfrac{x}{\mu_1x-1}$) and $F_h(x,x)=\tfrac{2x-\mu_1x^2}{1+\mu_2x^2}$.
\end{example} 

Let $\cl$ be an Abelian group and let $R\lbr x_\cl\rbr$ be the ring of formal power series with variables $x_\la$ for all $\la \in \cl$. Define the \emph{formal group algebra} $S:=\RcF$ to be the quotient of $R\lbr x_\cl\rbr$ by the closure of the ideal generated by elements $x_0$ and $x_{\la_1+\la_2}-F(x_{\la_1}, x_{\la_2})$ for any $\la_1,\la_2\in \cl$. Here $0$ is the identity element in $\cl$. Let $\IF$ denote the kernel of the augmentation map $\aug\colon S\to R$, $x_\al\mapsto 0$.

\medskip

Let $\cl$ be a free Abelian group of finite rank and let $\RS$ be a finite subset of $\cl$. A \emph{root datum} is an embedding $\RS\hookrightarrow \cl^\vee$, $\al\mapsto \al^\vee$ into the dual of $\cl$ satisfying certain conditions \cite[Exp. XXI, Def. 1.1.1]{SGA}. The \emph{rank} of the root datum is the $\Q$-rank of $\cl\ot_\Z\Q$. The \emph{root lattice} $\cl_r$ is the subgroup of $\cl$ generated by $\RS$, and the \emph{weight lattice} $\cl_w$ is the Abelian group defined by 
\[
\cl_w:=\{\omega\in \cl\ot_\Z\Q\mid \al^\vee(\omega)\in \Z \text{ for all } \al\in \RS\}.
\]
We always assume that the root datum is reduced and \emph{semisimple} ($\Q$-ranks of $\cl_r$, $\cl_w$ and $\cl$ are the same and no root is twice another one). We say that a root datum is \emph{simply connected} (resp. \emph{adjoint}) if $\cl=\cl_w$ (resp. $\cl=\cl_r$), and then use the notation $\mathcal{D}^{sc}_n$ (resp. $\mathcal{D}^{ad}_n$) for irreducible root data where $\mathcal{D}=A, B, C, D, E, F, G$ is one of the Dynkin types and $n$ is the rank.

\medskip

The \emph{Weyl group} $W$ of a root datum $(\cl, \RS)$ is a subgroup of $\mathrm{Aut}_\Z(\cl)$ generated by simple reflections $s_\al$ for all $\al\in \RS$ defined by 
\[
s_\al(\la):=\la-\al^\vee(\la)\al,\quad \la\in \cl.
\] 
We fix a set of \emph{simple roots} $\Pi=\{\al_1,\ldots,\al_n\}\subset \RS$, \ie a basis of the root datum: each element of $\RS$ is an integral linear combination of simple roots with either all positive or all negative coefficients. This partitions $\RS$ into the subsets $\RS^+$ and $\RS^-$ of \emph{positive} and \emph{negative} roots. Let $\ell$ denote the \emph{length function} on $W$ with respect to the set of simple roots $\Pi$. Let $w_0$ be the \emph{longest element} of $W$ with respect to $\ell$ and let $N:=\ell(w_0)$. 
 
\medskip

Following \cite[Def.~4.4]{CZZ} we say that the formal group algebra $S$ is \emph{$\RS$-regular} if $x_\al$ is not a zero divisor in $S$ for all roots $\al\in \RS$. We will always assume that:
\begin{quote}
{\em The formal group algebra $S$ is $\RS$-regular.}
\end{quote}
By \cite[Lemma~2.2]{CZZ} this holds if $x \fplus x$ is not a zero divisor in $R\lbr x \rbr$, in particular if $2$ is not a zero divisor in $R$, or if the root datum does not contain any symplectic datum $C^{sc}$ as an irreducible component. 

\medskip

Following \cite[Definitions~3.5 and 3.12]{CPZ} for each $\al\in \RS$ we define two $R$-linear operators $\Dem_\al$ and $C_\al$ on $S$ as follows: 
\begin{equation}\label{eq:opdec}
\Dem_{\al}(y):=\tfrac{y-s_\al(y)}{x_\al}, \quad C_\al(y):=\kp_\al y-\Dem_\al(y)=\tfrac{y}{x_{-\al}}+\tfrac{s_\al(y)}{x_\al}, \quad  y\in S,
\end{equation}
where $\kp_\al:=\frac{1}{x_\al}+\frac{1}{x_{-\al}}$ (note that $\kp_\al\in S$). The operator $\Dem_\al$ is called the \emph{Demazure} operator and the operator $C_\al$ is called the \emph{push-pull} operator or the \emph{BGG} operator.

\begin{example}
For the hyperbolic formal group law $F_h$ we have $\kp_\al=\mu_1+\mu_2 F_h(x_{-\al},x_\al)=\mu_1$ for each $\al\in \RS$.  If the root datum is of type $A_1^{sc}$, we have $\RS=\{\pm \al\}$, $\cl=\langle \omega \rangle$ with simple root $\al=2\omega$ and 
\[
C_\al(x_\al)=\tfrac{x_\al}{x_{-\al}}+\tfrac{x_{-\al}}{x_\al}=\mu_1x_\al -1 +\tfrac{1}{\mu_1x_\al-1},\quad C_\al(x_\omega) = \tfrac{x_\omega}{x_{-\al}}+\tfrac{x_{-\omega}}{x_\al} = \mu_1 x_\omega - \tfrac{1+\mu_2 x_\omega^2}{1-\mu_1 x_\omega}. 
\]
If it is of type $A_2^{sc}$ we have $\RS=\{\pm \al_1,\pm \al_2, \pm (\al_1 +\al_2)\}$, $\cl=\langle \omega_1,\omega_2 \rangle$ with simple roots $\al_1=2\omega_1-\omega_2$, $\al_2=2\omega_2-\omega_1$ and $x_{\al_1}=\tfrac{2x_1-\mu_1x_1^2 -x_2 - \mu_2 x_1^2 x_2}{1+\mu_2 x_1^2-\mu_1x_2-2\mu_2x_1x_2}$,
\[
C_{\al_2}(x_1)=\mu_1 x_1,\quad C_{\al_1}(x_1)=\mu_1x_1-\tfrac{1 + \mu_2x_1^2 -\mu_1x_2-2\mu_2x_1x_2}{1-\mu_1x_1 - \mu_2x_1x_2},
\]
where $x_1:=x_{\omega_1}$ and $x_2:=x_{\omega_2}$. 
\end{example}

According to \cite[\S3]{CPZ} the operators $\Dem_\al$ satisfy the twisted Leibniz rule
\begin{equation} \label{eq:diffde} 
\Dem_\al(xy)=\Dem_\al(x)y+s_\al(x)\Dem_\al(y),\quad x,y\in S, 
\end{equation}
\ie $\Dem_\al$ is a twisted derivation. Moreover, they are $S^{W_\al}$-linear, where $W_\al=\{e,s_\al\}$, and 
\begin{equation}\label{eq:invde}
s_\al(x)=x\; \text{ if and only if }\; \Dem_\al(x)=0.
\end{equation}

\begin{rem}
Properties \eqref{eq:diffde} and \eqref{eq:invde} suggest that the Demazure operators can be effectively studied using the theory of twisted derivations and the invariant theory of $W$. On the other hand, push-pull operators do not satisfy properties \eqref{eq:diffde} and \eqref{eq:invde} but according to \cite[Theorem~12.4]{CPZ} they correspond to the push-pull maps between flag varieties and, hence, are of geometric origin. 
\end{rem}

For the $i$-th simple root $\al_i$, let  $\Dem_i:=\Dem_{\al_i}$ and $s_i:=s_{\al_i}$. Given a non-empty sequence $I=(i_1,\ldots,i_m)$ with $i_j\in \{1,\ldots, n\}$ define
\[
\Dem_I:=\Dem_{i_1}\circ\cdots \circ \Dem_{i_m}\text{ and }C_I:=C_{i_1}\circ\cdots \circ C_{i_m}.
\]
We say that a sequence $I$ is \emph{reduced} in $W$ if $s_{i_1}s_{i_2}\ldots s_{i_m}$ is a reduced expression of the element $w=s_{i_1}s_{i_2}\ldots s_{i_m}$ in $W$, \ie it is of minimal length among such decompositions of $w$. In this case we also say that $I$ is a \emph{reduced sequence} for $w$ of length $\ell(w)$. For the neutral element $e$ of $W$, we set $I_e=\emptyset$ and $\Dem_\emptyset=C_\emptyset=\id_S$.

\medskip

\begin{rem}
It is well-known that for a nontrivial root datum the composites $\Dem_{I_w}$ and $C_{I_w}$ are independent of the choice of a reduced sequence $I_w$ of $w\in W$ if and only if $F$ is of the form $F(x,y)=x+y+\beta xy$, $\beta \in R$. The ``if'' part of the statement is due to Demazure \cite[Th. 1]{Dem73} and the ``only if'' part is due to Bressler-Evens \cite[Theorem~3.7]{BE90}. So for such $F$ we can define $\Dem_w:=\Dem_{I_w}$ and $C_w:=C_{I_w}$ for each $w\in W$.

The operators $\Dem_w$ and $C_w$ play a crucial role in the Schubert calculus and computations of the singular cohomology ($F=F_a$) and the $K$-theory ($F=F_m$) rings of flag varieties.

For a general $F$  (e.g. for $F=F_h$) the situation becomes much more intricate as we have to rely on choices of reduced decomposition $I_w$.
\end{rem}

Let us now prove a Euclid type lemma for later use.

\begin{lem} \label{lem:regularsum}
If $f \in x R\lbr x \rbr$ is regular in $R\lbr x \rbr$ and $g \in yR\lbr y \rbr$, then $f(x)\fplus g(y)$ is regular in $R\lbr x,y\rbr$.
\end{lem}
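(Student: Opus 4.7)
The plan is to exploit the fact that $F(u,0)=u$ (so that substituting $y=0$ into $f(x)\fplus g(y)$ returns $f(x)$, since $g(0)=0$) and then to argue by successive division by $y$, using regularity of $f$ at each step.

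Concretely, suppose $h\in R\lbr x,y\rbr$ satisfies $(f(x)\fplus g(y))\cdot h=0$. The continuous ring homomorphism $R\lbr x,y\rbr\to R\lbr x\rbr$ sending $y\mapsto 0$ sends $f(x)\fplus g(y)$ to $f(x)\fplus 0=f(x)$ and sends $h$ to $h(x,0)$; hence $f(x)\cdot h(x,0)=0$ in $R\lbr x\rbr$, and regularity of $f$ forces $h(x,0)=0$. Thus $h=y\,h_1$ for some $h_1\in R\lbr x,y\rbr$. Since $y$ is regular in $R\lbr x,y\rbr=R\lbr x\rbr\lbr y\rbr$, the identity $y\cdot (f(x)\fplus g(y))h_1=0$ gives $(f(x)\fplus g(y))h_1=0$. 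Iterating this argument on $h_1,h_2,\dots$ produces, for every $n\ge 0$, a factorization $h=y^n h_n$ with $h_n\in R\lbr x,y\rbr$. Since $\bigcap_{n\ge 0}y^nR\lbr x,y\rbr=0$ in the formal power series ring, we conclude $h=0$, proving that $f(x)\fplus g(y)$ is regular.

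The only delicate points will be to (i) justify that evaluation at $y=0$ is a well-defined continuous ring homomorphism and that it does send $f(x)\fplus g(y)$ to $f(x)$ — this follows at once from $g(0)=0$, $F(u,0)=u$, and the universal property of formal power series; and (ii) record the standard facts that $y$ is regular and that the $y$-adic intersection vanishes in $R\lbr x,y\rbr$. I do not anticipate a substantial obstacle; the whole statement is essentially a $y$-adic descent onto the already-assumed regularity of $f$.
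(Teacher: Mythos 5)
Your argument is correct and essentially the same as the paper's: both reduce to the observation that, viewing $f(x)\fplus g(y)$ in $(R\lbr x\rbr)\lbr y\rbr$, its constant coefficient (with respect to $y$) is $f(x)$, which is regular by hypothesis. The paper then cites a general lemma (\cite[Lemma 12.3(a)]{CZZ}: a power series whose constant term is regular is itself regular), whereas you inline the standard $y$-adic descent proof of that lemma.
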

\begin{proof}
Consider $f\fplus g$ in $R\lbr x,y\rbr = (R\lbr x \rbr) \lbr y \rbr$ and note that its degree $0$ coefficient (in $R\lbr x \rbr$) is $f$ and is regular by assumption, so it is regular by \cite[Lemma 12.3.(a)]{CZZ}.
\end{proof}

{\small
\newcommand{\doublecell}[2][c]{\begin{tabular}[#1]{@{}c@{}}#2\end{tabular}}
\newcommand{\doublecelltop}[2][t]{\begin{tabular}[#1]{@{}c@{}}#2\end{tabular}}
\begin{table}
\begin{tabular}{l||c|c|c|c|c|c|c|c|c}
Type & \doublecelltop{$A_l$ \\ $(l\geq 2)$} & \doublecelltop{$B_l$ \\ $(l\geq 3)$} & \doublecelltop{$C_l$ \\ $(l\ge 2)$} & \doublecelltop{$D_l$ \\ $(l\geq 4)$} & $G_2$ & $F_4$ & $E_6$ & $E_7$ & $E_8$ \\
adjoint & $\emptyset$ & $2\cdot_F$ & $2\cdot_F$ & $\emptyset$ & \doublecell{$2\cdot_F$ \\ and $3\cdot_F$} & $2\cdot_F$ & $\emptyset$ & \doublecell{$2\cdot_F$ \\ or $3\cdot_F$} & \doublecell{$2\cdot_F$ \\ or $3\cdot_F$} \\
non adjoint & $|\cl/\cl_r|$ & $2$ & $2 \in R^\times$ & $2$ & - & - & $3$ & $2$ & - \\  
\end{tabular}
\caption{Integers and formal integers assumed to be regular in $R$ or $R\lbr x \rbr$ in Lemma \ref{lem:div}. In the simply connected $C_2$ case, we require $2$ invertible in $R$.}\label{table:regprimes} 
\end{table}
}

\begin{lem} \label{lem:div}
For each irreducible component of the root datum, assume that the corresponding integers or formal integers listed in Table \ref{table:regprimes} are regular in $R$ or $R\lbr x \rbr$ (and that $2$ is invertible for $C_l^{sc}$). In particular, $S$ is $\RS$-regular. Then $x_\alpha|x_\beta x'$ implies that $x_\alpha| x'$ for any two positive roots $\alpha\neq \beta$ and for any $x' \in S$.
\end{lem}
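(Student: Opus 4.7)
The statement is equivalent to showing that $x_\beta$ is a non-zero-divisor in the quotient ring $S/x_\al S$, i.e.\ that $x_\al$ and $x_\beta$ are ``coprime'' in $S$. The plan is to reduce the problem to the rank-$2$ sub-root datum generated by $\al$ and $\beta$, and then to verify the claim by a case analysis in each Dynkin type.

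First, I would note that the two distinct positive roots $\al, \beta$ span a rank-$2$ sub-root system $\RS'$ of $\RS$, whose Dynkin type is one of $A_1 \times A_1$, $A_2$, $B_2=C_2$, or $G_2$. Choose a $\Z$-basis of $\cl$ adapted to $\RS'$ (and to the simply-connected/adjoint distinction of the irreducible component containing $\RS'$). Writing $S$ as a formal power series ring in the chosen basis variables, the variables indexed by basis elements outside the $\Q$-span of $\RS'$ are ``inert'' for the divisibility question: Lemma \ref{lem:regularsum} (regularity of $f \fplus g$) allows one to transport regularity from a rank-$2$ computation to the ambient ring $S/x_\al S$.

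Next comes the rank-$2$ verification. In type $A_1 \times A_1$, the roots $\al$ and $\beta$ are $\Z$-linearly independent, so $x_\al$ and $x_\beta$ are part of a regular system of generators, and the claim is immediate. In type $A_2$, where the positive roots are $\al, \gamma, \al+\gamma$, one computes $x_{\al+\gamma} = F(x_\al, x_\gamma) \equiv x_\gamma \pmod{x_\al}$, and $x_\gamma$ (or $x_{-\gamma}$) is regular in $S/x_\al S$. In types $B_2 = C_2$ and $G_2$, the reduction of $x_\beta$ modulo $x_\al$ produces, via the FGL identities $x_{\la_1+\la_2}=F(x_{\la_1},x_{\la_2})$ and $x_{n\gamma} = n\cdot_F x_\gamma$, expressions of the shape $F(n\cdot_F x_\gamma, x_\delta)$ with $n\in\{2,3\}$; the regularity of the formal integer $n \cdot_F$ (or of the ordinary integer $n$, or of $|\cl/\cl_r|$), as dictated by Table \ref{table:regprimes} for the relevant case, together with Lemma \ref{lem:regularsum}, is exactly what ensures that the reduction is regular modulo $x_\al$.

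The main obstacle I expect is the bookkeeping in the non-simply-laced rank-$2$ cases, particularly $G_2$, where pairs $(\al,\beta)$ fall into several Weyl orbits that mix short and long roots and where the reduction produces nested FGL expressions whose regularity requires careful invocation of both the $2\cdot_F$ and $3\cdot_F$ hypotheses. A secondary subtlety is the $C_l^{sc}$ case, where the table demands $2 \in R^\times$ rather than merely regular: expressing the short simple root variable in terms of fundamental-weight variables requires an actual division by $2$, and without that inversion the reduction to rank $2$ cannot be carried out.
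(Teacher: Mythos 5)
Your approach — reducing to the rank-$2$ sub-root system generated by $\alpha$ and $\beta$ and doing a case analysis by its Dynkin type — is genuinely different from the paper's, which instead completes $\alpha$ (after Weyl conjugation) to the full basis of simple roots of $\RS$, expands $\beta$ on that basis using Bourbaki's Planches, and then treats the non-adjoint case by the separate observation that $\FGR{R}{\cl_r}{F}\to\FGR{R}{\cl}{F}$ is injective and becomes an isomorphism after inverting $|\cl/\cl_r|$. Your route could in principle give sharper conditions, since for many pairs the sublattice $\langle\alpha,\beta\rangle_\Z$ is already a direct summand of $\cl$ (e.g.\ in adjoint $E_7$ the pair $(\alpha_7,\tilde\alpha)$, which drives the paper's $2\cdot_F$-or-$3\cdot_F$ hypothesis, spans a direct summand, so nothing is needed there by your method).

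However, the sketch has a genuine gap in the $A_2$ case, and this infects the other cases too. You assert that ``$x_\gamma$ is regular in $S/x_\al S$,'' but this is precisely the type of statement the lemma is trying to prove, and it is \emph{false} without a hypothesis: take $\RS=A_2$ simply connected and $\alpha,\gamma$ the two simple roots, so that $\langle\alpha,\gamma\rangle=\cl_r$ has index $3$ in $\cl=\cl_w$; then $\{\alpha,\gamma\}$ does not extend to a basis of $\cl$, and regularity of $x_\gamma$ mod $x_\alpha$ needs $3\cdot_F x$ (or $3$) regular — exactly the Table~\ref{table:regprimes} entry for non-adjoint $A_l$, which your sketch declares unnecessary for type $A_2$. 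The underlying issue is that ``choosing a $\Z$-basis of $\cl$ adapted to $\RS'$'' gives you a basis of the saturated sublattice $\cl\cap\Q\RS'$, not of $\cl_r(\RS')$; the index $[\cl\cap\Q\RS':\langle\alpha,\beta\rangle_\Z]$ is what controls the required regularity, and this index depends on the ambient lattice $\cl$ and the embedding of $\RS'$ in $\RS$ — not only on the abstract rank-$2$ type. You would need to compute or bound this index across all Dynkin types and $\cl$, and show it is always covered by Table~\ref{table:regprimes}; that bookkeeping is precisely what the paper outsources to Bourbaki's tables via the simple-root-basis decomposition, and then handles uniformly in the non-adjoint case through the injection $\FGR{R}{\cl_r}{F}\hookrightarrow\FGR{R}{\cl}{F}$ rather than by root-system case analysis.
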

\noindent (For example, in adjoint type $E_7$ we require that either $2\cdot_F x$ or $3\cdot_F x$ is regular in $R\lbr x \rbr$, and in simply connected type $E_7$, we require that $2$ is regular in $R$.)

\begin{proof}[Proof of Lemma \ref{lem:div}]
It is equivalent to show that $x_\beta$ is regular in $S/(x_\alpha)$. 

If $\alpha$ and $\beta$ belong to different irreducible components, we can complete $\alpha$ and $\beta$ into bases of the lattices of their respective components by \cite[Lemma 2.1]{CZZ}, and then complete the union of the two sets into a basis of $\cl$. By \cite[Cor. 2.13]{CPZ}, it gives an isomorphism $S \simeq R\lbr x_1,\cdots,x_l \rbr$ sending $x_\alpha$ to $x_1$ and $x_\beta$ to $x_2$, so the conclusion is obvious in this case.

If $\alpha$ and $\beta$ belong to the same irreducible component, we can assume that the root datum is irreducible. 

\noindent \emph{Adjoint case.}
Complete $\alpha$ to a basis $(\alpha_i)_{1\leq i \leq l}$ of simple roots of $\RS$ and express $\beta = \sum_i n_i \alpha_i$. Still by \cite[Cor. 2.13]{CPZ}, this yields an isomorphism $S \simeq R \lbr x_1,\ldots,x_l \rbr$, sending $x_\alpha$ to $x_1$ and $x_\beta$ to $(n_1 \cdot_F x_1)\fplus \cdots \fplus (n_l \cdot_F x_l)$. A repeated application of Lemma \ref{lem:regularsum} shows that $x_\beta$ is regular provided $n_i\cdot_F x$ is regular in $R\lbr x \rbr$ for at least one $i \neq 1$. Using Planche I to IX in \cite{Bo68} giving coefficients of positive roots decomposed on simple ones, one checks for every type that it is always the case under the assumptions. For example, in the $E_6$ case, there are always two $1$'s in any decomposition (except if the root is simple), hence the absence of any requirement. In the $E_7$ case, the same is true except for the longest root, in which there is a $1$, a $2$ and a $3$, hence the requirement that $2\cdot_F x$ or $3\cdot_F x$ is regular in $R\lbr x \rbr$. All other cases are as easy and left to the reader. 

\medskip

\noindent \emph{Non adjoint case.}
By \cite[Lemma 1.2]{CZZ}, the natural morphism $\FGR{R}{\cl_r}{F}\to \FGR{R}{\cl}{F}$ induced by the inclusion of the root lattice $\cl_r \subset \cl$ is injective. Furthermore, it becomes an isomorphism if  $q=|\cl/\cl_r|$ is invertible in $R$.

Since $\alpha$ can be completed as a basis of $\cl$ or as a basis of $\cl_r$, both $\FGR{R}{\cl_r}{F}/x_\alpha$ and $\FGR{R}{\cl}{F}/x_\alpha$ are isomorphic to power series ring (in one less variable) and therefore respectively inject in $\FGR{R[\tfrac{1}{q}]}{\cl_r}{F}/x_\alpha$ and $\FGR{R[\tfrac{1}{q}]}{\cl_r}{F}/x_\alpha$, which are isomorphic. By the adjoint case, $x_\beta$ is regular in the latter, and thus in its subring $S/x_\alpha=\FGR{R}{\cl}{F}/x_\alpha$.
\end{proof}

\begin{rem}
Since $n\cdot_F x$ is regular in $R\lbr x \rbr$ if $n$ is regular in $R$, the conclusion of Lemma \ref{lem:div} holds when formal integers are replaced by usual integers in $R$ in the adjoint case. But more cases are covered. For example, if the formal group law is the multiplicative one $x + y -xy$, then one can show that $2\cdot_F x$ is regular in $R\lbr x \rbr$ for any noetherian ring $R$ (exercise: consider the ideal generated by the coefficients of a power series annihilating $2 \cdot_F x$), and in particular if $R=\Z[a,b]/(2a,3b)$, in which neither $3$ nor $2$ are regular, but Lemma \ref{lem:div} will still apply to all adjoint types. 
\end{rem}


\section{Two bases of the formal twisted group algebra}\label{sec:formalaffineDemazure}

We now recall definitions and basic properties of the formal twisted group algebra $\QW$, Demazure elements $X_\al$ and push-pull elements $Y_\al$, following \cite{HMSZ} and \cite{CZZ}. For a chosen set of reduced sequences $\{I_w\}_{w\in W}$ we introduce two $Q$-bases $\{X_{I_w}\}_{w\in W}$ and $\{Y_{I_w}\}_{w\in W}$ of $\QW$ and describe transformation matrices $(a^X_{v,w})$ and $(a^Y_{v,w})$ with respect to the canonical basis $\{\de_w\}_{w\in W}$ of $\QW$. 

\medskip

Let $\SW$ be the \emph{twisted group algebra} of $S$ and the group ring $R[W]$, \ie $\SW=S\otimes_R R[W]$ as an $R$-module and the multiplication is defined by
\begin{equation} \label{eq:product}
(x\otimes \de_w)(x'\otimes \de_{w'})=xw(x')\otimes\de_{ww'}, \quad x,x'\in S,\quad w,w'\in W,
\end{equation}
where $\de_w$ is the canonical element corresponding to $w$ in $R[W]$. The algebra $\SW$ is a free $S$-module with basis $\{1\otimes\de_w\}_{w\in W}$.  Note that $\SW$ is not an $S$-algebra since the embedding $S\hookrightarrow \SW$, $x\mapsto x\ot \de_e$ is not central.

\medskip

Since the formal group algebra $S$ is $\RS$-regular, it embeds into the localization $Q=S[\tfrac{1}{x_\al}\mid\al\in\RS]$. Let $\QW$ be the $Q$-module obtained by localizing the $S$-module $\SW$, \ie $\QW=Q \otimes_S S_W$. The product on $\SW$ extends to $\QW$ using the same formula \eqref{eq:product} on basis elements ($x$ and $x'$ are now in $Q$).

\medskip

Inside $\QW$, we use the notation $q :=q \ot \de_e$ and $\de_w:=1\ot \de_w$, $1:=\de_e$ and $\de_\al:=\de_{s_\al}$ for a root $\al\in \RS$. Thus $q\de_w=q \ot \de_w$ and $\de_w q=w(q)\ot \de_w$. By definition, $\{\de_w\}_{w\in W}$ is a basis of $\QW$ as a left $Q$-module, and $\SW$ injects into $\QW$ via $\de_w\mapsto \de_w$.

\medskip

For each $\al\in \RS$ we define the following elements of $\QW$ (corresponding to the operators $\Dem_\al$ and $C_\al$, respectively, by the action of \eqref{eq:leftactQW}):
\[
X_\al:=\tfrac{1}{x_\al}-\tfrac{1}{x_\al}\de_\al, \quad Y_\al:=\kp_\al-X_\al=\tfrac{1}{x_{-\al}}+\tfrac{1}{x_\al}\de_\al
\]
called the \emph{Demazure elements} and the \emph{push-pull elements}, respectively.

\medskip

Direct computations show that for each $\al\in \RS$ we have
\begin{equation}\label{eq:X_and_Y} 
X_\al^2=\kp_\al X_\al=X_\al \kp_\al\quad\text{and}\quad Y_\al^2=\kp_\al Y_\al=Y_\al\kp_\al,
\end{equation}
\[
X_\al q=s_\al(q)X_\al+\Dem_\al(q)\quad \text{and} \quad Y_\al q=s_\al(q) Y_\al+\Dem_{-\al}(q),\quad q\in Q,
\]
\[
X_\al Y_\al=Y_\al X_\al=0.
\]

\medskip

We set $\de_i:=\de_{s_i}$, $X_i:=X_{\al_i}$ and $Y_i:=Y_{\al_i}$ for the $i$-th simple root $\al_i$. Given a sequence $I=(i_1,i_2,\ldots,i_m)$ with $i_j\in \{1,\ldots,n\}$, the product $X_{i_1} X_{i_2} \ldots X_{i_m}$ is denoted by $X_I$ and the product $Y_{i_1} Y_{i_2} \ldots Y_{i_m}$ by $Y_I$. We set $X_\emptyset=Y_\emptyset=1$.

\medskip

By \cite[Ch.~VI, \S1, No~6, Cor.~2]{Bo68} if $v\in W$ has a reduced decomposition $v=s_{i_1}s_{i_2}\cdots s_{i_m}$, then 
\begin{equation}\label{eq:root}
v\RS^-\cap \RS^+=\{\al_{i_1},s_{i_1}(\al_{i_2}),\ldots,s_{i_1}s_{i_2}\cdots s_{i_{m-1}}(\al_{i_m})\}.
\end{equation}
We define 
\[
x_v:=\prod_{\al\in v\RS^-\cap \RS^+}x_\al.
\]
In particular,   $x_{w_0}=\prod_{\al\in \RS^+}x_\al$ if $w_0$ is the longest element of $W$.

\begin{lem} \label{lem:Sigmaw} We have
\begin{enumerate}
\item $s_\al\RS^-\cap \RS^+=\{\al\}$ and $x_{s_\al}=x_{\al}$;
\item \label{item:vsi} if $\ell(vs_i) =\ell(v)+1$, then 
\[
vs_i\RS^- \cap \RS^+ = (v \RS^- \cap \RS^+ )\sqcup \{v(\al_i)\}\text{ and }x_{vs_i}=x_vx_{v(\al_i)};
\]
\item \label{item:siSigma}if $\ell(s_iv)=\ell(v)+1$, then  
\[
s_iv \RS^- \cap \RS^+ = s_i(v\RS^- \cap \RS^+) \sqcup \{\al_i\}\text{ and }x_{s_iv}=s_i(x_v)x_{\al_i};
\]
\item \label{item:Sigmauv} if $w=uv$ and $\ell(w)=\ell(u)+\ell(v)$, then
\[
w\RS^- \cap \RS^+ =(u \RS^- \cap \RS^+) \sqcup u(v\RS^- \cap \RS^+)\text{ and }x_w=x_uu(x_v);\]
\item \label{item:cw0Sigma}
for any $v\in W$, $\frac{v(x_{w_0})}{x_{w_0}}$ is invertible in $S$.
\end{enumerate}
\end{lem}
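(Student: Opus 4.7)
The plan is to prove (a)--(d) by direct combinatorics of reduced decompositions (using the description \eqref{eq:root} of inversion sets) and then to reduce (e) to showing that each ratio $x_{-\gamma}/x_\gamma$ is a unit in $S$.

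Part (a), for $\al$ a simple root, is the special case $v = e$ of (b); alternatively, it is the classical fact that a simple reflection permutes $\RS^+ \setminus \{\al\}$. For (b), pick a reduced decomposition $v = s_{i_1}\cdots s_{i_m}$; by the length hypothesis $vs_i = s_{i_1}\cdots s_{i_m}s_i$ is still reduced, so \eqref{eq:root} applied to $vs_i$ yields the list for $v$ followed by the single extra element $s_{i_1}\cdots s_{i_m}(\al_i) = v(\al_i)$. This element is distinct from every element of $v\RS^- \cap \RS^+$ because $v^{-1}(v(\al_i)) = \al_i \in \RS^+$, so the union is disjoint; taking the product of the $x_\beta$'s gives $x_{vs_i} = x_v x_{v(\al_i)}$.

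For (c), the hypothesis $\ell(s_iv) = \ell(v) + 1$ is equivalent to $v^{-1}(\al_i) \in \RS^+$, i.e.\ $\al_i \notin v\RS^-$. Then: (i) $(s_iv)^{-1}(\al_i) = -v^{-1}(\al_i) \in \RS^-$, so $\al_i \in s_iv\RS^- \cap \RS^+$; (ii) for each $\beta \in v\RS^- \cap \RS^+$ one has $\beta \neq \al_i$ and hence $s_i(\beta) \in \RS^+$, while $(s_iv)^{-1}(s_i(\beta)) = v^{-1}(\beta) \in \RS^-$; (iii) the two pieces are disjoint because $\al_i \notin s_i(\RS^+ \setminus \{\al_i\})$. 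Counting, both sides have size $\ell(s_iv) = \ell(v)+1$, so the inclusion is an equality, and taking products gives $x_{s_iv} = s_i(x_v)x_{\al_i}$. For (d), induct on $\ell(v)$: write $v = v's_i$ with $\ell(v) = \ell(v')+1$, so $\ell(uv') = \ell(u) + \ell(v')$; apply the inductive hypothesis to $uv'$, then (b) to $(uv', s_i)$ and to $(v', s_i)$, to reassemble $x_w = x_u \cdot u(x_v)$.

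Part (e) is the main obstacle. Since $v$ permutes $\RS$, we have the disjoint decomposition $\RS^+ = (v\RS^+ \cap \RS^+) \sqcup (v\RS^- \cap \RS^+)$. Grouping factors of $v(x_{w_0}) = \prod_{\al \in \RS^+} x_{v(\al)}$ according to the sign of $v(\al)$, and using the bijection $\{\al \in \RS^+ : v(\al) \in \RS^-\} \to v\RS^- \cap \RS^+$, $\al \mapsto -v(\al)$, I obtain
\[
v(x_{w_0}) = \prod_{\beta \in v\RS^+ \cap \RS^+} x_\beta \cdot \prod_{\gamma \in v\RS^- \cap \RS^+} x_{-\gamma},
\]
while $x_{w_0}$ factors according to the same decomposition with $x_\gamma$ in place of $x_{-\gamma}$. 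The ratio collapses to
\[
\frac{v(x_{w_0})}{x_{w_0}} = \prod_{\gamma \in v\RS^- \cap \RS^+} \frac{x_{-\gamma}}{x_\gamma}.
\]
The delicate point is that each factor lies in $S$ and is invertible there: the formal group law relation $F(x_\gamma, x_{-\gamma}) = 0$ identifies $x_{-\gamma}$ with the formal inverse series $\iota_F$ evaluated at $x_\gamma$, and since $\iota_F(T) = -T + O(T^2) \in R\lbr T \rbr$, one has $x_{-\gamma} = x_\gamma \cdot u_\gamma$ with $u_\gamma \in S$ and $\aug(u_\gamma) = -1 \in R^\times$. Thus each $x_{-\gamma}/x_\gamma = u_\gamma$ is a unit in $S$, and the product is as well.
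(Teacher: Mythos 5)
Your proof is correct and follows essentially the same route as the paper: parts (a)--(d) via the inversion-set description \eqref{eq:root} (which the paper leaves to the reader as "immediate from the definition"), and part (e) via the same factorization $v(x_{w_0})/x_{w_0}=\prod_{\gamma\in v\RS^-\cap\RS^+} x_{-\gamma}/x_\gamma$, where you additionally spell out the standard observation that each factor $x_{-\gamma}/x_\gamma$ lies in $S$ with augmentation $-1$ and is hence a unit (the paper simply asserts its invertibility).
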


\begin{proof} Items (a)-(d) follow immediately from the definition. 
As for (e) we have 
\begin{align*}
v\RS^+ &=(v\RS^+\cap \RS^-)\sqcup (v\RS^+\cap \RS^+)=\left(-(v\RS^-\cap \RS^+)\right)\sqcup (v\RS^+\cap \RS^+) \text{ and} \\
\RS^+ &=v\RS\cap \RS^+=(v\RS^-\cap \RS^+)\sqcup(v\RS^+\cup \RS^+), \text{ therefore,}
\end{align*}
\[
\tfrac{v(x_{w_0})}{x_{w_0}}=\tfrac{\prod_{\al\in v\RS^+}x_\al}{\prod_{\al\in \RS^+}x_\al}=\prod_{\al\in v\RS^-\cap\RS^+ }\tfrac{x_{-\al}}{x_\al},
\]
which is invertible in $S$ since so is $\frac{x_{-\al}}{x_\al}$.
\end{proof}
\begin{lem}\label{lem:Xdelta} 
Let $I_v$ be a reduced sequence for an element $v\in W$. 

Then $X_{I_v}=\sum_{w\le v}a^X_{v,w}\de_w$ for some $a^X_{v,w}\in Q$, where the sum is taken over all elements of $W$ less or equal to $v$ with respect to the Bruhat order and $a^X_{v,v}=(-1)^{\ell(v)}\frac{1}{x_v}$. Moreover, we have $\de_v=\sum_{w\le v}b^X_{v,w}X_{I_w}$ for some $b^X_{v,w}\in S$ such that  $b^X_{v,e}=1$ and $b^X_{v,v}=(-1)^{\ell(v)}x_v$.
\end{lem}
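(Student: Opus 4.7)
The plan is to prove both parts by induction on $\ell(v)$.

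For part (a), the base case $v=e$ is trivial since $X_\emptyset=1=\de_e$. For the induction step, decompose the chosen reduced sequence as $I_v=(I_{v'},i_m)$ where $v'=vs_{i_m}$ has length $\ell(v)-1$, so $X_{I_v}=X_{I_{v'}}X_{i_m}=X_{I_{v'}}\cdot\tfrac{1}{x_{\al_{i_m}}}(1-\de_{i_m})$. Substituting the inductive hypothesis for $X_{I_{v'}}$ and using the commutation $\de_u\,q=u(q)\,\de_u$ for $q\in Q$ gives
\[
X_{I_v}=\sum_{u\le v'}\tfrac{a^X_{v',u}}{x_{u(\al_{i_m})}}\,(\de_u-\de_{us_{i_m}}).
\]
By the lifting property of Bruhat order, every index $u$ or $us_{i_m}$ appearing is $\le v$. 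The coefficient of the leading term $\de_v$ arises only from $u=v'$ in the second sum and equals $-a^X_{v',v'}/x_{v'(\al_{i_m})}=(-1)^{\ell(v)}/x_v$ by Lemma~\ref{lem:Sigmaw}(b).

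For part (b), inverting the triangular relation of part (a) over $Q$ immediately gives $\de_v=\sum_{w\le v}b^X_{v,w}X_{I_w}$ with $b^X_{v,w}\in Q$ and $b^X_{v,v}=1/a^X_{v,v}=(-1)^{\ell(v)}x_v$. The crucial step is to upgrade the coefficients from $Q$ to $S$. Observe that $\de_i=1-x_{\al_i}X_i$ lies in the left $S$-module $M:=\sum_{w\in W}S\cdot X_{I_w}\subseteq\QW$. Writing $\de_v=\de_{i_1}\cdots\de_{i_m}$ and verifying that $M$ is stable under right multiplication by each $X_i$ --- which uses the commutation $X_jq=s_j(q)X_j+\Dem_j(q)$ to move $S$-coefficients leftward and the quadratic relation $X_i^2=\kp_{\al_i}X_i$ to reduce the non-reduced products $X_I$ that arise --- shows $\de_v\in M$. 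Together with the $S$-linear independence of $\{X_{I_w}\}$ inherited from their $Q$-linear independence proven in part (a), this forces $b^X_{v,w}\in S$.

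The remaining identity $b^X_{v,e}=1$ follows from the $Q$-linear map $\theta\colon\QW\to Q$ defined by $\theta(\sum_w q_w\de_w)=\sum_w q_w$. A direct calculation using $X_i=\tfrac{1}{x_{\al_i}}(1-\de_i)$ yields $\theta(X_iA)=\Dem_i(\theta(A))$ for every $A\in\QW$; consequently $\theta(X_{I_w})=\Dem_{I_w}(1)=0$ for $w\ne e$ (since $\Dem_j(1)=0$) and $\theta(X_{I_e})=1$. Applying $\theta$ to the expansion of $\de_v$ gives $b^X_{v,e}=\theta(\de_v)=1$.

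The main obstacle is the $Q$-to-$S$ promotion of coefficients in part (b), equivalent to the inclusion $\SW\subseteq M$. This requires controlling arbitrary (possibly non-reduced) products $X_I$, combining the quadratic relation with the subword property of reduced decompositions to express them as $S$-linear combinations of the chosen basis elements.
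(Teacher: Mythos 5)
Your proof of part (a) is correct, and it is essentially the induction that the paper (via CZZ, Lemma 5.4) uses; the paper's companion proof of Lemma~\ref{lem:Ydelta} peels off the first letter of $I_v$ rather than the last, but this is immaterial. Your derivation of $b^X_{v,e}=1$ via the functional $\theta(\sum_w q_w\de_w)=\sum_w q_w$, which is just $z \mapsto z\cdot 1$ for the action \eqref{eq:leftactQW}, is also correct and is the same idea as Lemma~\ref{lem:constcoeff}.

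The gap is in the $Q$-to-$S$ promotion of the $b^X_{v,w}$, which you correctly identify as the crux but do not actually resolve. You claim that $M:=\sum_w S\,X_{I_w}$ is stable under right multiplication by $X_i$ using only the twisted Leibniz commutation and the quadratic relation $X_i^2=\kp_i X_i$. This is insufficient. After moving $S$-coefficients leftward, one is left with products $X_{I_w}X_i=X_{I_w\conc(i)}$, and when $\ell(ws_i)>\ell(w)$ the word $I_w\conc(i)$ is a reduced word for $ws_i$ that is in general \emph{different} from the chosen $I_{ws_i}$. Re-expressing $X_{I_w\conc(i)}$ as $X_{I_{ws_i}}$ plus an $S$-linear combination of shorter $X_{I_u}$'s is exactly the generalized braid relation (e.g.\ $X_iX_jX_i-X_jX_iX_j=\kappa_{ij}(X_j-X_i)$ for $|i-j|=1$ in type $A$, from \cite[Thm.~6.14]{HMSZ}, \cite[7.9]{CZZ}), and this is not a formal consequence of the quadratic relation; proving that the $\kappa$-coefficients lie in $S$ is precisely the substantial part. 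The ``subword property'' does not help here either: in rank $\geq 3$ one encounters non-reduced subwords with no consecutive repeats, e.g.\ $(1,2,1,2)$ inside the reduced word $(1,2,1,3,2,1)$ of $w_0$ in $A_3$, where the quadratic relation simply cannot be applied. The paper defers exactly this point to \cite[Cor.~5.6]{CZZ}, which ultimately rests on the fact that $\{X_{I_w}\}_{w\in W}$ is an $S$-basis of $\DcF$, \cite[Prop.~7.7]{CZZ}; you should either cite that result or prove the braid-type identities, rather than assert that the quadratic relation and subword property suffice.
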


\begin{proof}
It follows from \cite[Lemma~5.4, Corollary~5.6]{CZZ} and the fact that $\de_\al=1-x_\al X_\al$.
\end{proof}

Similarly, for $Y$'s we have

\begin{lem}\label{lem:Ydelta}
Let $I_v$ be a reduced sequence for an element $v\in W$. 

Then $Y_{I_v}=\sum_{w\le v}a^Y_{v,w}\de_w$ for some $a^Y_{v,w}\in Q$ and $a^Y_{v,v}=\frac{1}{x_v}$. Moreover, we have $\de_v=\sum_{w\le v}b^Y_{v,w}Y_{I_w}$ for some $b^Y_{v,w}\in S$ and $b^Y_{v,v}=x_v$.
\end{lem}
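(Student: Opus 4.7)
The plan is to mirror the proof of Lemma~\ref{lem:Xdelta}, replacing the identity $\de_\al=1-x_\al X_\al$ used there by its $Y$-analogue
\[
\de_\al \;=\; x_\al Y_\al-\tfrac{x_\al}{x_{-\al}},
\]
obtained by multiplying the defining formula $Y_\al=\tfrac{1}{x_{-\al}}+\tfrac{1}{x_\al}\de_\al$ by $x_\al$. The coefficient $\tfrac{x_\al}{x_{-\al}}$ lies in $S$ and is in fact a unit (its inverse $\tfrac{x_{-\al}}{x_\al}$ is computed from the formal inverse of $F$), so both formulas exhibit $\de_\al$ as an $S$-linear combination of $1$ and a distinguished generator.

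For the first assertion, I would induct on $\ell(v)$, the case $v=e$ being trivial. For the inductive step, factor $I_v=(j,I')$ where $I'$ is a reduced sequence for $v':=s_jv$, apply the inductive hypothesis $Y_{I'}=\sum_{w\le v'}a^Y_{v',w}\de_w$, and expand
\[
Y_{I_v}=Y_jY_{I'}=\Bigl(\tfrac{1}{x_{-\al_j}}+\tfrac{1}{x_{\al_j}}\de_j\Bigr)\sum_{w\le v'}a^Y_{v',w}\de_w=\sum_{w\le v'}\tfrac{a^Y_{v',w}}{x_{-\al_j}}\de_w+\sum_{w\le v'}\tfrac{s_j(a^Y_{v',w})}{x_{\al_j}}\de_{s_jw},
\]
using $\de_j q=s_j(q)\de_j$. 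The only contribution to $\de_v=\de_{s_jv'}$ is the term $w=v'$ in the second sum, which by Lemma~\ref{lem:Sigmaw}(\ref{item:siSigma}) yields the coefficient $\tfrac{s_j(1/x_{v'})}{x_{\al_j}}=\tfrac{1}{s_j(x_{v'})x_{\al_j}}=\tfrac{1}{x_v}$. That every summand has support $\le v$ follows from the standard Bruhat lifting property: $w\le v'$ together with $\ell(s_jv')>\ell(v')$ forces $s_jw\le v$.

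For the second assertion, the first part shows that the transition matrix from $\{Y_{I_w}\}$ to $\{\de_w\}$ is Bruhat-triangular with diagonal $\tfrac{1}{x_w}$, invertible in $Q$, so $\{Y_{I_w}\}$ is a $Q$-basis of $\QW$; the expansion $\de_v=\sum_{w\le v}b^Y_{v,w}Y_{I_w}$ with $b^Y_{v,w}\in Q$ and support $\le v$ is then automatic, and the equality $b^Y_{v,v}=x_v$ falls out of a leading-term comparison. For the $S$-integrality of the $b^Y_{v,w}$, I would run a parallel induction on $\ell(v)$: decompose $\de_v=\de_j\de_{v'}$, substitute $\de_j=x_{\al_j}Y_j-\tfrac{x_{\al_j}}{x_{-\al_j}}$, expand $\de_{v'}$ inductively, and push all $S$-scalars to the left using $Y_jq=s_j(q)Y_j+\Dem_{-\al_j}(q)$.

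The main obstacle in the last step is that this procedure generates products of the form $Y_jY_{I_w}$ which are not a priori of the form $Y_{I_u}$: the chosen reduced sequence $I_{s_jw}$ may differ from $(j,I_w)$, or $(j,I_w)$ may not even be a reduced sequence at all. This is handled by a subsidiary induction on the Bruhat order: when $(j,I_w)$ is reduced for $u:=s_jw$, the first assertion implies that the difference $Y_{(j,I_w)}-Y_{I_u}$ is supported strictly below $u$, where the second assertion is already available; when $(j,I_w)$ is non-reduced, the quadratic relation $Y_j^2=\kp_{\al_j}Y_j$ combined with the strong exchange condition reduces it to a shorter sequence. Tracking that the resulting coefficients stay in $S$ is the technical crux of the argument.
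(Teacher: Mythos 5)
Your treatment of the first assertion is essentially the paper's own: the paper also peels off the leftmost letter $\beta=\al_{i_1}$ of $I_v$, sets $v'=s_\beta v$, and computes $a^Y_{v,v}=\tfrac{1}{x_\beta}s_\beta(a^Y_{v',v'})=\tfrac{1}{x_v}$ by induction, with the Bruhat support coming from the lifting property. Your use of $\de_\al=x_\al Y_\al-\tfrac{x_\al}{x_{-\al}}$ with $\tfrac{x_\al}{x_{-\al}}\in S^\times$ as the $Y$-analogue of $\de_\al=1-x_\al X_\al$ is likewise exactly what the paper flags.

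For the second assertion the paper simply refers to the proof of \cite[Cor.~5.6]{CZZ}; you instead reconstruct it, and you correctly isolate the difficulty, namely that pushing scalars to the left produces $Y_jY_{I_w}=Y_{(j,I_w)}$, which need not be $Y_{I_u}$ for any chosen $I_u$. Your handling of the reduced case, however, has a real gap. From the first assertion you deduce that $Y_{(j,I_w)}-Y_{I_u}$ has no $\de_u$-component (because $a^Y_{u,u}=1/x_u$ is independent of the reduced word), so it equals $\sum_{w'<u}c_{w'}\de_{w'}$ with $c_{w'}\in Q$ --- but only in $Q$. Invoking the inductive availability of the second assertion for $w'<u$ replaces each $\de_{w'}$ by an $S$-combination of $Y_{I_{w''}}$'s, yet the outer coefficients $c_{w'}$ are still only in $Q$, so no $S$-combination results and the induction does not close as stated. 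What the paper's citation effectively buys is the prior fact that $\{Y_{I_w}\}_{w\in W}$ is a left $S$-basis of $\DcF$ (\cite[Prop.~7.7]{CZZ}, cited in Section~\ref{sec:iwahorihecke}): since $\de_v\in\DcF$, the coefficients of its expansion on that basis lie in $S$ automatically, and your transition-matrix argument over $Q$ then supplies the triangularity and the value $b^Y_{v,v}=1/a^Y_{v,v}=x_v$. If you insist on a self-contained induction, you must control $Y_{(j,I_w)}-Y_{I_u}$ as an element of $\DcF$ with $S$-coefficients from the outset, rather than passing through the $Q$-linear $\de$-coordinates, which is precisely where the $S$-structure is lost.
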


\begin{proof} 
We follow the proof of \cite[Lemma~5.4]{CZZ} replacing $X$ by $Y$. By induction we have
\[
Y_{I_v}=(\tfrac{1}{x_{-\beta}}+\tfrac{1}{x_{\beta}}\de_{\beta})\sum_{w\le v'}a_{v',w}^Y \de_w=\tfrac{1}{x_\beta}s_\beta(a_{v',v'}^Y)\de_v +\sum_{w< v}a_{v,w}^Y\de_w,
\]
where $I_v=(i_1,\ldots,i_m)$ is a reduced sequence of $v$, $\beta=\al_{i_1}$ and $v'=s_\beta v$. This implies the formulas for $Y_{I_v}$ and for $a^Y_{v,v}$. Remaining statements involving $b_{v,w}^Y$ follow by the same arguments as in the proof of \cite[Corollary~5.6]{CZZ} using the fact that $\de_\al=x_\al Y_\al-\tfrac{x_\al}{x_{-\al}}$ and $\tfrac{x_\al}{x_{-\al}}\in S^\times$.
\end{proof}

As in the proof of \cite[Corollary~5.6]{CZZ}, Lemmas~\ref{lem:Xdelta} and~\ref{lem:Ydelta} immediately imply:
\begin{cor}\label{cor:basis}
The family $\{X_{I_v}\}_{v\in W}$ (resp. $\{Y_{I_v}\}_{v\in W}$) is a basis of $\QW$ as a left or as a right $Q$-module. 
\end{cor}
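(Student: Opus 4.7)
The plan is to read off the corollary directly from the triangularity statements in Lemmas~\ref{lem:Xdelta} and~\ref{lem:Ydelta}. Fix any total order on $W$ refining the Bruhat order. Then the matrix $M^X:=(a^X_{v,w})_{v,w\in W}$ is triangular, and its diagonal entries $a^X_{v,v}=(-1)^{\ell(v)}/x_v$ lie in $Q^\times$ since $x_v$ is a product of elements $x_\al$ with $\al\in\RS^+$, each invertible in $Q$ by $\RS$-regularity of $S$. Hence $M^X\in\mathrm{GL}_{|W|}(Q)$, so the change-of-basis matrix from the left $Q$-basis $\{\de_w\}_{w\in W}$ to the family $\{X_{I_v}\}_{v\in W}$ is invertible. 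In fact the inverse is already produced by the second half of Lemma~\ref{lem:Xdelta}: the $b^X_{v,w}$ express $\de_v$ as a left $Q$-combination of the $X_{I_w}$'s. Therefore $\{X_{I_v}\}_{v\in W}$ is a left $Q$-basis of $\QW$. The identical argument with Lemma~\ref{lem:Ydelta} handles $\{Y_{I_v}\}_{v\in W}$.

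For the right $Q$-module structure, I use the commutation rule $q\de_w=\de_w\cdot w^{-1}(q)$ derived from \eqref{eq:product}. Rewriting
\[
X_{I_v}=\sum_{w\le v}a^X_{v,w}\de_w=\sum_{w\le v}\de_w\cdot w^{-1}(a^X_{v,w})
\]
expresses each $X_{I_v}$ as a right $Q$-combination of the $\de_w$'s. The same total order as before makes this new matrix triangular, with diagonal entry $v^{-1}(a^X_{v,v})=(-1)^{\ell(v)}\,v^{-1}(1/x_v)$. Since $v^{-1}(x_v)$ is (up to signs) a product of $x_{-\al}$ with $\al\in\RS^+\cap v^{-1}\RS^-$, hence again a unit in $Q$, the transition matrix is again in $\mathrm{GL}_{|W|}(Q)$, and $\{X_{I_v}\}_{v\in W}$ is a right $Q$-basis of $\QW$. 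The proof for $\{Y_{I_v}\}_{v\in W}$ is the same.

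The only point requiring any attention is the verification that the diagonal coefficients remain invertible after conjugation by $w^{-1}$ when switching to the right action; but as noted this reduces to the fact that $w(x_\al)$ is, up to sign of the root, still of the form $x_{\pm\beta}$ for some root $\beta$, and hence invertible in $Q$. Everything else is bookkeeping built into the two preceding lemmas.
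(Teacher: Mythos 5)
Your proof is correct and follows essentially the same route as the paper, which simply invokes Lemmas~\ref{lem:Xdelta} and~\ref{lem:Ydelta} and defers the bookkeeping to the analogous statement in \cite{CZZ}. You spell out the triangularity argument and the right-module case explicitly, but there is no substantive difference in approach.
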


\begin{example} 
For the root data $A_1^{ad}$ or $A_1^{sc}$ and the formal group law $F_h$ we have $x_\Pi=x_{-\al}$ and
\[
(a_{v,w}^Y)_{v,w\in W}=\begin{pmatrix} 1 & 0 \\ \mu_1-\tfrac{1}{x_{\al}} & \tfrac{1}{x_\al} \end{pmatrix},
\]
where the first row and column correspond to $e\in W$ and the second to $s_\al\in W$.
\end{example}


\section{The Weyl and the Hecke actions}\label{sec:weylhecke}

In the present section we recall several basic facts concerning the $Q$-linear dual $\QWd$ following \cite{HMSZ} and \cite{CZZ}. We introduce a left $\QW$-action `$\act$' on $\QWd$. The latter induces an action of the Weyl group $W$ on $\QWd$ (\emph{the Weyl-action}) and the action by means of $X_\al$ and $Y_\al$ on $\QWd$ (\emph{the Hecke-action}). These two actions will play an important role in the sequel.

\medskip

Let $\QWd:=\Hom_Q(\QW,Q)$ denote the $Q$-linear dual of the left $Q$-module $\QW$.  By definition, $\QWd$ is a left $Q$-module via $(qf)(z):=qf(z)$ for any $z\in \QW$, $f\in \QWd$ and $q\in Q$.  Moreover, there is a $Q$-basis $\{f_w\}_{w\in W}$ of $\QWd$ dual to the canonical basis  $\{\de_w\}_{w\in W}$ defined by $f_w(\de_v):=\Kr_{w,v}$ (the Kronecker symbol) for $w,v\in W$.

\begin{defi}
We define a left action of $\QW$ on $\QWd$ as follows:
\[
(z\act f)(z'):=f(z'z), \quad z,z'\in \QW,\; f\in \QWd.
\]
\end{defi}

By definition, this action is left $Q$-linear, \ie $z\act (qf)=q(z\act f)$ and it induces a different left $Q$-module structure on $\QWd$ via the embedding $q\mapsto q\de_e$, \ie  
\[
(q\act f)(z):=f(zq).
\]
It also induces a $Q$-linear action of $W$ on $\QWd$ via $w(f):=\de_w\act f$.

\begin{lem}\label{lem:bulletactprop} 
We have $q\act f_w=w(q)f_w$ and $w(f_v)=f_{vw^{-1}}$ for any $q\in Q$ and $w,v\in W$. 
\end{lem}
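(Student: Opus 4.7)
The plan is to verify both identities by evaluating on the canonical $Q$-basis $\{\de_v\}_{v\in W}$ of $\QW$, since a $Q$-linear form on $\QW$ is determined by its values on this basis. The key algebraic input is the multiplication rule \eqref{eq:product}, which in the shorthand notation of Section \ref{sec:formalaffineDemazure} reads $\de_v\, q = v(q)\,\de_v$ for $q\in Q$ and $v\in W$.

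For the first identity, I would unravel the definition: for arbitrary $v\in W$,
\[
(q\act f_w)(\de_v)=f_w(\de_v q)=f_w\bigl(v(q)\de_v\bigr)=v(q)\,f_w(\de_v)=v(q)\,\Kr_{w,v}.
\]
The Kronecker symbol forces $v=w$ whenever the value is nonzero, so this equals $w(q)\,\Kr_{w,v}=(w(q)f_w)(\de_v)$. Since the two $Q$-linear forms agree on the basis, they coincide.

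For the second identity, I would apply the same strategy to the Weyl-action, using that $\de_u\de_w=\de_{uw}$:
\[
w(f_v)(\de_u)=(\de_w\act f_v)(\de_u)=f_v(\de_u\de_w)=f_v(\de_{uw})=\Kr_{v,uw}=\Kr_{vw^{-1},u}=f_{vw^{-1}}(\de_u),
\]
where the penultimate equality uses that $v=uw$ is equivalent to $u=vw^{-1}$. Again agreement on the basis yields the result.

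There is no substantial obstacle here: both statements are direct consequences of the definitions of $\act$, of the dual basis, and of the product in $\QW$. The only point worth being careful about is the direction in which $W$ acts on indices in the second identity, namely getting $vw^{-1}$ rather than $w^{-1}v$ or $vw$; tracking this carefully through $\de_u\de_w=\de_{uw}$ pins down the correct formula.
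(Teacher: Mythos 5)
Your proof is correct and follows essentially the same route as the paper: both identities are checked by evaluating against the canonical basis $\{\de_u\}$ and using the twisted multiplication rule $\de_u q = u(q)\de_u$. Your write-up is a bit more explicit than the paper's (spelling out the Kronecker-symbol reduction and the index inversion), but the argument is the same.
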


\begin{proof} 
We have $(q\act f_w)(\de_v)=f_w(v(q)\de_v)=v(q)\Kr_{w,v}$ which shows that $q\act f_v=v(q)f_v$. For the second equality, we have $[w(f_v)](\de_u)=f_v(\de_u\de_w)=\Kr_{v,uw}$, so $w(f_v)=f_{vw^{-1}}$.
\end{proof}

There is a coproduct on the twisted group algebra $\SW$ that extends to $\QW$ defined by \cite[Def.~8.9]{CZZ}:
\[
\QWcopr: \QW\to \QW\ot_Q \QW, \quad q\de_w\mapsto q\de_w\ot \de_w.
\]
Here $\ot_Q$ is the tensor product of left $Q$-modules. It is cocommutative with  co-unit  $\QWcoun: \QW\to Q$, $q\de_w\mapsto q$ \cite[Prop.~8.10]{CZZ}. The coproduct structure on $\QW$ induces a product structure on $\QWd$, which is $Q$-bilinear for the natural action of $Q$ on $\QWd$ (not the one using $\act$). In terms of the basis $\{f_w\}_{w\in W}$ this product is given by component-wise multiplication: 
\begin{equation}\label{eq:fvprod}
(\sum_{v\in W}q_vf_v)(\sum_{w\in W}q'_wf_w)=\sum_{w\in W}q_wq'_wf_w, \quad q_w,q_w'\in Q.
\end{equation}
In other words, if we identify the dual $\QWd$ with the $Q$-module of maps $\Hom(W,Q)$ via
\[
\QWd \to \Hom(W,Q),\quad f\mapsto f',\quad f'(w):=f(\de_w),
\]
then the product is the classical multiplication of ring-valued functions. 

\medskip

The multiplicative identity $\unit$ of this product corresponds to the counit $\QWcoun$ and equals $\unit=\sum_{w\in W}f_w$. We also have 
\begin{equation}\label{eq:qcoprod}
q\act (ff')=(q\act f)f'=f(q\act f')\quad \text{ for }q\in Q \text{ and }f,f'\in \QWd.
\end{equation}

\begin{lem}\label{lem:actautom}
For any $\al\in \RS$ and $f,f'\in \QWd$ we have $s_\al(ff')=s_\al(f)s_\al(f')$, \ie the Weyl group $W$ acts on the algebra $\QWd$ by $Q$-linear automorphisms.
\end{lem}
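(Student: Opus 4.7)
The result is essentially a formal calculation in the dual basis $\{f_w\}_{w\in W}$, and my plan is to carry it out directly, relying on the component-wise multiplication formula \eqref{eq:fvprod}.

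First, I would unpack the Weyl-action on the dual basis. By Lemma~\ref{lem:bulletactprop}, $w(f_v) = f_{vw^{-1}}$, so in particular $s_\al(f_v) = f_{v s_\al}$. Moreover, from the definition $w(f) = \de_w \act f$ together with the general identity $z \act (qf) = q(z \act f)$, the action of $s_\al$ on $\QWd$ is $Q$-linear with respect to the natural $Q$-module structure (the one appearing on both sides of \eqref{eq:fvprod}). Hence for any $f = \sum_{v\in W} c_v f_v$,
\begin{equation*}
s_\al(f) = \sum_{v\in W} c_v f_{v s_\al} = \sum_{u \in W} c_{u s_\al} f_u,
\end{equation*}
after reindexing $u = v s_\al$ (which is a bijection of $W$ with itself since $s_\al^2 = e$).

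Second, writing $f = \sum_v c_v f_v$ and $f' = \sum_v c'_v f_v$, formula \eqref{eq:fvprod} gives $ff' = \sum_v c_v c'_v f_v$, so applying the first step yields
\begin{equation*}
s_\al(ff') = \sum_{u\in W} c_{u s_\al}\, c'_{u s_\al}\, f_u.
\end{equation*}
On the other hand, by the first step applied separately to $f$ and $f'$ and then \eqref{eq:fvprod} again,
\begin{equation*}
s_\al(f)\cdot s_\al(f') = \Bigl(\sum_{u\in W} c_{u s_\al} f_u\Bigr)\Bigl(\sum_{u\in W} c'_{u s_\al} f_u\Bigr) = \sum_{u\in W} c_{u s_\al}\, c'_{u s_\al}\, f_u,
\end{equation*}
which matches, proving the identity. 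Combined with the already-noted $Q$-linearity of $s_\al$, this shows that each simple reflection — and hence every element of $W$ — acts by a $Q$-algebra automorphism of $\QWd$.

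There is no real obstacle here; the only point one must be careful about is that the $Q$-module structure on $\QWd$ used to write $s_\al$ as $Q$-linear is the \emph{natural} one $qf_v = qf_v$, not the $\act$-structure (for which Lemma~\ref{lem:bulletactprop} shows $q\act f_v = v(q) f_v$). A slightly more conceptual alternative would be to observe that under the isomorphism $\QWd \simeq \Hom(W,Q)$, the product becomes pointwise multiplication of functions and $s_\al$ becomes the translation $f' \mapsto (w \mapsto f'(w s_\al))$, so the identity is tautological; but the dual-basis argument above is shorter.
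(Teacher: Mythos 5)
Your proof is correct and follows essentially the same route as the paper's: the paper observes that by $Q$-linearity (with respect to the natural $Q$-module structure) it suffices to check the identity on basis elements $f_w, f_v$, where it is immediate; you carry out that reduction explicitly and then do the straightforward computation, also correctly flagging the subtlety that the relevant $Q$-linearity is with respect to the natural structure rather than the $\act$-structure.
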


\begin{proof}
By $Q$-linearity of the action of $W$ and of the product, it suffices to check the formula on basis elements $f=f_w$ and $f'=f_v$, for which it is straightforward. 
\end{proof}

Observe that the ring $Q$ can be viewed as a left $\QW$-module via the following action:
\begin{equation}\label{eq:leftactQW}
(q\de_w)\cdot q':=qw(q'), \quad q,q'\in Q, ~w\in W.
\end{equation}
Then by definition we have
\begin{equation}\label{eq:actone}
(q\act \unit)(z)=z\cdot q,\quad z\in \QW.
\end{equation}

\begin{defi}
For  $\al\in \RS$ we define two $Q$-linear operators on $\QWd$ by 
\[
A_\al(f):=Y_\al \act f \quad\text{ and }\quad B_\al(f):=X_\al \act f,\quad f\in \QWd.
\]
An action by means of $A_\al$ or $B_\al$ will be called a \emph{Hecke-action} on $\QWd$.
\end{defi} 

\begin{rem}
If $F=F_m$ (resp. $F=F_a$) one obtains actions introduced by Kostant--Kumar in \cite[$I_{18}$]{KK90} (resp. in \cite[$I_{51}$]{KK86}). 
\end{rem}

As in \eqref{eq:diffde} and \eqref{eq:invde} we have
\begin{equation}\label{eq:propBa} 
B_\al(ff')=B_\al(f)f'+s_\al(f)B_\al(f')\text{ and }B_\al\circ s_\al=-B_{\al},\; \text { for } f,f'\in \QWd, 
\end{equation}
\begin{equation}\label{eq:invBa}
B_\al(f)=0\;\text{ if and only if }\; f\in (\QWd)^{W_\al}.
\end{equation}
Indeed, using \eqref{eq:qcoprod} and Lemma~\ref{lem:actautom} we obtain
\begin{eqnarray*}
B_\al(f)f' +s_\al(f)B_\al(f')&=&[\tfrac{1}{x_\al}(1-\de_\al)\act f]f'+s_\al(f)[\tfrac{1}{x_\al}(1-\de_\al)\act f']\\
&=& [\tfrac{1}{x_\al}\act (f-s_\al(f))]f'+s_\al(f)[\tfrac{1}{x_\al}\act (f'-s_\al(f'))]\\
&=& \tfrac{1}{x_\al}\act (ff' - s_\al(f)s_\al(f'))=B_\al(ff')
\end{eqnarray*}
and $B_\al(s_\al(f))=\tfrac{1}{x_\al}(1-\de_\al)\act s_\al(f)=\tfrac{1}{x_\al}\act (s_\al(f)-f)=-B_\al(f)$. As for~\eqref{eq:invBa} we have $0=B_\al(f)=X_\al\act f=\tfrac{1}{x_\al}\act [(1-\de_\al)\act f]$ which is equivalent to $f=s_\al(f)$.

\medskip

And as in \eqref{eq:X_and_Y}, we obtain
\begin{equation} 
A_\al^{\circ 2}(f) =\kp_\al\act  A_\al(f)=A_\al(\kp_\al\act f),\quad B_\al^{\circ 2}(f) = \kp_\al \act B_\al(f) =B_\al (\kp_\al\act f),
\end{equation}
\[
A_\al \circ B_\al = B_\al \circ A_\al=0.
\]

We set $A_i=A_{\al_i}$ and $B_i:=B_{\al_i}$ for the $i$-th simple root $\al_i$. We set $A_I=A_{i_1}\circ \ldots \circ A_{i_m}$ and $B_I=B_{i_1}\circ \ldots \circ B_{i_m}$ for a non-empty sequence $I=(i_1,\ldots, i_m)$ with $i_j\in \{1,\ldots,n\}$ and $A_\emptyset=B_\emptyset=\id$. The operators $A_I$ and $B_I$ are key ingredients in the proof that the natural pairing of Theorem~\ref{theo:bilform} on the dual of the formal affine Demazure algebra is non-degenerate.


\section{Push-pull operators and elements} \label{sec:pushpull}

Let us now introduce and study a key notion of the present paper, the notion of push-pull operators (resp. elements) on $Q$ (resp. in $\QW$) with respect to given coset representatives in parabolic quotients of the Weyl group.

\medskip

Let $(\RS,\cl)$ be a root datum with a chosen set of simple roots $\Pi$. Let $\Xi\subseteq \Pi$ and let $W_\Xi$ denote the subgroup of the Weyl group $W$ of the root datum generated by simple reflections $s_\al$, $\al\in \Xi$. We thus have $W_\emptyset=\{e\}$ and $W_\Pi=W$.  Let $\RS_\Xi:=\{\al \in \RS \mid s_\al \in W_\Xi\}$ and let $\RS_\Xi^+:=\RS_\Xi \cap \RS^+$, $\RS_\Xi^-:=\RS_\Xi \cap \RS^-$ be subsets of positive and negative roots respectively. 

\medskip

Given subsets $\Xi' \subseteq \Xi$ of $\Pi$, let $\RS_{\Xi/\Xi'}^+:=\RS_{\Xi}^+\setminus \RS_{\Xi'}^+$ and $\RS_{\Xi/\Xi'}^-:=\RS_{\Xi}^-\setminus \RS_{\Xi'}^-$. We define 
\[
x_{\Xi/\Xi'}:=\prod_{\al \in \RS_{\Xi/\Xi'}^-} x_\al\quad \text{ and set }x_\Xi:=x_{\Xi/\emptyset}.
\]
In particular, $x_\Pi=\prod_{\al\in \RS^-}x_\al=w_0(x_{w_0})$.

\begin{lem}\label{lem:PQroot} 
Given subsets $\Xi'\subseteq \Xi$ of $\Pi$ we have
\[
v(\RS_{\Xi/\Xi'}^-)=\RS_{\Xi/\Xi'}^-\text{ and }v(\RS_{\Xi/\Xi'}^+)=\RS_{\Xi/\Xi'}^+ \text{ for any }v\in W_{\Xi'}.
\]
\end{lem}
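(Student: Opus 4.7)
The plan is to reduce to a single simple reflection and then argue by tracking coefficients of $\beta$ in the simple root basis $\Xi$ of $\RS_\Xi$.

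First, since $\RS_{\Xi/\Xi'}^- = -\RS_{\Xi/\Xi'}^+$ and $v$ acts $\Z$-linearly, the two equalities in the statement are equivalent, so it suffices to prove the one for positive roots. Next, because $W_{\Xi'}$ is generated by $\{s_\al\}_{\al \in \Xi'}$ and each $s_\al$ is an involution, the claim reduces to showing the inclusion $s_\al(\RS_{\Xi/\Xi'}^+) \subseteq \RS_{\Xi/\Xi'}^+$ for every $\al \in \Xi'$.

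Fix such $\al$ and $\beta \in \RS_{\Xi/\Xi'}^+$. I would invoke the standard fact that $\Xi \subseteq \Pi$ is itself a set of simple roots for the parabolic root subsystem $\RS_\Xi$, so one can write $\beta = \sum_{\gamma \in \Xi} n_\gamma \gamma$ with all $n_\gamma \ge 0$. The assumption $\beta \notin \RS_{\Xi'}^+$ forces $n_{\gamma_0} > 0$ for at least one $\gamma_0 \in \Xi \setminus \Xi'$ (otherwise the support of $\beta$ would lie inside $\Xi'$, placing $\beta$ in $\RS_{\Xi'}^+$). Now compute $s_\al(\beta) = \beta - \al^\vee(\beta)\,\al$; since $\al \in \Xi'$, only the $\al$-coefficient of the expansion changes, so the $\gamma_0$-coefficient of $s_\al(\beta)$ remains $n_{\gamma_0} > 0$. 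Since $W_{\Xi'} \subseteq W_\Xi$ preserves $\RS_\Xi$, we have $s_\al(\beta) \in \RS_\Xi$, and since every element of $\RS_\Xi$ has coefficients of a single sign in the basis $\Xi$, the positive coefficient on $\gamma_0$ forces $s_\al(\beta) \in \RS_\Xi^+$. The same strictly positive coefficient on a simple root outside $\Xi'$ prevents $s_\al(\beta)$ from lying in $\RS_{\Xi'}$, and hence $s_\al(\beta) \in \RS_{\Xi/\Xi'}^+$, as desired.

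There is essentially no obstacle here: the argument is a clean piece of bookkeeping. The only subtlety I would be careful about is invoking the two standard facts that $\Xi$ is a system of simple roots for $\RS_\Xi$ and that reflections $s_\al$ with $\al \in \Xi'$ preserve the coefficients on $\Xi \setminus \Xi'$ of any root expanded in this basis; once those are acknowledged, the conclusion follows in one line.
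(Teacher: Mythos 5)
Your proof is correct, but it takes a genuinely different route from the paper's. The paper argues by contradiction using the inversion-set description~\eqref{eq:root}: for $\al\in\RS_{\Xi/\Xi'}^-$ and $\beta:=v(\al)$, it shows that $\beta\in\RS_{\Xi'}$ would force $\al\in\RS_{\Xi'}$, and that $\beta\in\RS^+$ would put $\beta$ in $v\RS_\Xi^-\cap\RS_\Xi^+ = v\RS_{\Xi'}^-\cap\RS_{\Xi'}^+\subseteq\RS_{\Xi'}$, again a contradiction. Your proof instead reduces to a single generator $s_\al$, $\al\in\Xi'$, and tracks coefficients of $\beta$ in the simple-root basis $\Xi$ of the parabolic subsystem $\RS_\Xi$: since $s_\al(\beta)=\beta-\al^\vee(\beta)\al$ only perturbs the $\al$-coefficient, a strictly positive coefficient on some $\gamma_0\in\Xi\setminus\Xi'$ survives, which forces $s_\al(\beta)$ to be positive (single-sign property of roots in $\RS_\Xi$) and to lie outside $\RS_{\Xi'}$. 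Both arguments are sound; yours is arguably more elementary and self-contained, relying only on the coefficient bookkeeping and the well-known fact that $\Xi$ is a simple system for $\RS_\Xi$, whereas the paper's proof is shorter because it exploits a formula already in play elsewhere in the paper. One small point worth spelling out in your write-up is why the inclusion $s_\al(\RS_{\Xi/\Xi'}^+)\subseteq\RS_{\Xi/\Xi'}^+$ upgrades to equality (finiteness of the set plus injectivity of $s_\al$), though this is standard.
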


\begin{proof} 
We prove the first statement only, the second one can be proven similarly.
Since $v$ acts faithfully on $\RS_{\Xi}$, it suffices to show that for any $\al\in \RS_{\Xi/\Xi'}^-$, the root $\beta:=v(\al)\not\in\RS_{\Xi'}$ and is negative. Indeed, if $\beta\in \RS_{\Xi'}$, then so is $\al=v^{-1}(\beta)$ (as $v^{-1}\in W_{\Xi'}$), which is impossible.  On the other hand,  if $\beta$ is positive, then  
\[
\beta=v(\al)\in v\RS_{\Xi}^-\cap \RS_{\Xi}^+=v\RS_{\Xi'}^-\cap \RS_{\Xi'}^+,
\]
where the latter equality follows from \eqref{eq:root} and the fact that $v\in W_{\Xi'}$. So $\al=v^{-1}(\beta)\in \RS_{\Xi'}$, a contradiction.
\end{proof}

\begin{cor} \label{cor:xXifixed}
For any $v \in W_{\Xi'}$, we have $v(x_{\Xi/\Xi'})=x_{\Xi/\Xi'}$.
\end{cor}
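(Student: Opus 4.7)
The proof is essentially a one-liner given Lemma \ref{lem:PQroot}. The plan is as follows.

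Recall that the action of $W$ on $\cl$ extends naturally to an action on $S = \RcF$ by ring automorphisms, determined by $v(x_\lambda) = x_{v(\lambda)}$ for $\lambda \in \cl$. In particular, for any root $\alpha$ and any $v \in W$, we have $v(x_\alpha) = x_{v(\alpha)}$. Applying this termwise to the defining product, I compute
\[
v(x_{\Xi/\Xi'}) \;=\; v\Bigl(\prod_{\alpha \in \RS_{\Xi/\Xi'}^-} x_\alpha\Bigr) \;=\; \prod_{\alpha \in \RS_{\Xi/\Xi'}^-} x_{v(\alpha)}.
\]

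Now by Lemma \ref{lem:PQroot}, for any $v \in W_{\Xi'}$, the map $\alpha \mapsto v(\alpha)$ is a permutation of the finite set $\RS_{\Xi/\Xi'}^-$. Therefore the indexing set of the product is merely permuted, and since multiplication in $S$ is commutative,
\[
\prod_{\alpha \in \RS_{\Xi/\Xi'}^-} x_{v(\alpha)} \;=\; \prod_{\alpha \in \RS_{\Xi/\Xi'}^-} x_\alpha \;=\; x_{\Xi/\Xi'},
\]
which proves the claim.

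There is no real obstacle here; the only point to verify is that the $W$-action on $\cl$ indeed lifts to a well-defined action on $S$ compatible with $v(x_\lambda) = x_{v(\lambda)}$, but this is standard for the formal group algebra (the defining relations $x_0 = 0$ and $x_{\lambda_1+\lambda_2} = F(x_{\lambda_1},x_{\lambda_2})$ are preserved under any automorphism of $\cl$). Thus the corollary follows immediately by combining Lemma \ref{lem:PQroot} with commutativity of $S$.
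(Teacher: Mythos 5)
Your proof is correct and is exactly the intended argument: the paper states this as an immediate corollary of Lemma~\ref{lem:PQroot} without writing out a proof, and your reasoning—apply $v$ termwise to the defining product $x_{\Xi/\Xi'}=\prod_{\alpha\in\RS_{\Xi/\Xi'}^-}x_\alpha$, note that $v$ permutes the index set $\RS_{\Xi/\Xi'}^-$ by Lemma~\ref{lem:PQroot}, and invoke commutativity of $S$—is precisely what makes it immediate.
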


\begin{defi} \label{defi:CY}
Given a set of left coset representatives $W_{\Xi/\Xi'}$ of
$W_{\Xi}/W_{\Xi'}$ we define a \emph{push-pull operator} on $Q$ with respect to $W_{\Xi/\Xi'}$ by
\[
C_{\Xi/\Xi'}(q):=\sum_{w\in W_{\Xi/\Xi'}}w\big(\tfrac{q}{x_{\Xi/\Xi'}}\big), ~q\in Q,
\]
and a \emph{push-pull element} with respect to $W_{\Xi/\Xi'}$ by
\[
Y_{\Xi/\Xi'}:=\big(\sum_{w\in W_{\Xi/\Xi'}} \de_w\big)\tfrac{1}{x_{\Xi/\Xi'}}.
\]
We set $C_{\Xi}:=C_{\Xi/\emptyset}$ and  $Y_{\Xi}:=Y_{\Xi/\emptyset}$ (so they do not depend on the choice of $W_{\Xi/\emptyset}=W_\Xi$ in these two special cases). 
\end{defi}

By definition, we have $C_{\Xi/\Xi'}(q)=Y_{\Xi/\Xi'}\cdot q$, where $Y_{\Xi/\Xi'}$ acts on $q\in Q$ by \eqref{eq:leftactQW}. Also in the trivial case where $\Xi=\Xi'$, we have $x_{\Xi/\Xi}=1$, while $C_{\Xi/\Xi}=\id_Q$ and $Y_{\Xi/\Xi}=1$ if we choose $e$ as representative of the only coset. Observe that for $\Xi=\{\al_i\}$ we have $W_\Xi=\{e,s_i\}$ and $C_\Xi=C_i$ (resp. $Y_\Xi=Y_i$) is the push-pull operator (resp. element) introduced before and preserves $S$. 

\begin{example}
For the formal group law $F_h$ and the root datum $A_2$, we have $x_\Pi=x_{-\al_1}x_{-\al_2}x_{-\al_1-\al_2}$ and
\[
C_\Pi(1)=\sum_{w\in W} w(\tfrac{1}{x_\Pi})=\mu_1(\tfrac{1}{x_{-\al_2}x_{-\al_1-\al_2}} + \tfrac{1}{x_{-\al_1}x_{\al_2}}+\tfrac{1}{x_{\al_1}x_{\al_1+\al_2}})=\mu_1^3+\mu_1\mu_2.
\]
\end{example}

\begin{lem} \label{lem:Cinvtoinv}
The operator $C_{\Xi/\Xi'}$ restricted to $Q^{W_{\Xi'}}$ is independent of the choices of representatives $W_{\Xi/\Xi'}$ and it maps $Q^{W_{\Xi'}}$ to $Q^{W_\Xi}$.
\end{lem}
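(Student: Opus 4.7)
The plan is to reduce both assertions to the single observation that for $q\in Q^{W_{\Xi'}}$ the element $q/x_{\Xi/\Xi'}$ lies in $Q^{W_{\Xi'}}$ as well. This is exactly the combination of the assumption on $q$ and Corollary \ref{cor:xXifixed}, which tells us that $x_{\Xi/\Xi'}$ is fixed by every $v\in W_{\Xi'}$.

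For the first claim, I would take two systems of left coset representatives of $W_{\Xi}/W_{\Xi'}$ and match them coset by coset: any two representatives of the same coset differ by right multiplication by some $v\in W_{\Xi'}$, so the corresponding summands are
\[
wv\bigl(\tfrac{q}{x_{\Xi/\Xi'}}\bigr)=w\bigl(v\bigl(\tfrac{q}{x_{\Xi/\Xi'}}\bigr)\bigr)=w\bigl(\tfrac{q}{x_{\Xi/\Xi'}}\bigr),
\]
where the last equality uses the $W_{\Xi'}$-invariance of $q/x_{\Xi/\Xi'}$. Summing over the cosets then gives the same value of $C_{\Xi/\Xi'}(q)$.

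For the second claim, let $u\in W_{\Xi}$. Since left multiplication by $u$ permutes the left cosets of $W_{\Xi'}$ in $W_{\Xi}$, the family $\{uw\}_{w\in W_{\Xi/\Xi'}}$ is again a complete set of left coset representatives of $W_{\Xi}/W_{\Xi'}$. Therefore
\[
u\bigl(C_{\Xi/\Xi'}(q)\bigr)=\sum_{w\in W_{\Xi/\Xi'}}uw\bigl(\tfrac{q}{x_{\Xi/\Xi'}}\bigr)=C_{\Xi/\Xi'}(q),
\]
where the last equality is the independence of representatives just proved, applied to this new system. Since $u\in W_{\Xi}$ was arbitrary, $C_{\Xi/\Xi'}(q)\in Q^{W_{\Xi}}$.

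There is essentially no hard step here; the only point to verify carefully is that the argument for independence of representatives does apply to the alternative system $\{uw\}$ used in the second part, which is why the two parts must be proved in this order. All of this relies on having $q/x_{\Xi/\Xi'}\in Q^{W_{\Xi'}}$, which is precisely what Corollary \ref{cor:xXifixed} provides.
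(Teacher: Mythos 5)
Your proof is correct and takes essentially the same approach as the paper: the independence of representatives follows from $q/x_{\Xi/\Xi'}\in Q^{W_{\Xi'}}$ via Corollary \ref{cor:xXifixed}, and the invariance under $W_\Xi$ follows because $uW_{\Xi/\Xi'}$ is again a system of coset representatives. The paper states this more tersely, but the underlying argument is identical.
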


\begin{proof}
The independence follows, since $\tfrac{1}{x_{\Xi/\Xi'}}\in Q^{W_{\Xi'}}$ by Corollary~\ref{cor:xXifixed}. The second part follows, since for any $v \in W_{\Xi}$, and for any set of coset representatives $W_{\Xi/\Xi'}$, the set $v W_{\Xi/\Xi'}$ is again a set of coset representatives. 
\end{proof}
Actually, we will see in Corollary \ref{cor:opAinv2} that the operator $C_{\Xi}$ sends $S$ to $S^{W_{\Xi}}$. 

\begin{rem}
The formula for the operator $C_\Xi$ (with $\Xi'=\emptyset$) had appeared before in related contexts, namely, in discussions around the Becker-Gottlieb
transfer for topological complex-oriented theories (see \cite[(2.1)]{BE90} and \cite[\S4.1]{GaRa}).
The definition of the element $Y_{\Xi/\Xi'}$ can be viewed as a generalized algebraic analogue of this formula.
\end{rem}

\begin{lem}[Composition rule] \label{lem:pullpushcomp} 
Given subsets $\Xi''\subseteq \Xi'\subseteq \Xi$ of $\Pi$ and given sets of representatives $W_{\Xi/\Xi'}$ and $W_{\Xi'/\Xi''}$, take $W_{\Xi/\Xi''}:=\{wv\mid w\in W_{\Xi/\Xi'},\, v\in W_{\Xi'/\Xi''}\}$ as the set of representatives of $W_\Xi/W_{\Xi''}$. Then
\[
C_{\Xi/\Xi'} \circ C_{\Xi'/\Xi''}=C_{\Xi/\Xi''}\text{ and }Y_{\Xi/\Xi'}Y_{\Xi'/\Xi''}=Y_{\Xi/\Xi''}.
\]
\end{lem}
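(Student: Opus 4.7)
The plan is to first prove the statement about $Y$'s, from which the statement about $C$'s follows immediately by the observation (made just after Definition~\ref{defi:CY}) that $C_{\Xi/\Xi'}$ is the operator on $Q$ obtained by letting $Y_{\Xi/\Xi'}$ act via \eqref{eq:leftactQW}, so that $C_{\Xi/\Xi'}\circ C_{\Xi'/\Xi''}(q) = Y_{\Xi/\Xi'}\cdot(Y_{\Xi'/\Xi''}\cdot q) = (Y_{\Xi/\Xi'}Y_{\Xi'/\Xi''})\cdot q$ by associativity of the module action.

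The first preliminary step is to check the multiplicativity of the $x$-factor. Since $\RS^-_{\Xi'} \subseteq \RS^-_\Xi$ and all differences of $\RS$'s are taken inside $\RS^-_\Xi$, the partition
\[
\RS^-_{\Xi/\Xi''} = \RS^-_{\Xi/\Xi'} \sqcup \RS^-_{\Xi'/\Xi''}
\]
is immediate, and hence $x_{\Xi/\Xi''} = x_{\Xi/\Xi'}\cdot x_{\Xi'/\Xi''}$ by taking products.

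The main computation is then performed inside $\QW$. We start from
\[
Y_{\Xi/\Xi'}Y_{\Xi'/\Xi''} = \Bigl(\sum_{w\in W_{\Xi/\Xi'}}\de_w\Bigr)\tfrac{1}{x_{\Xi/\Xi'}}\Bigl(\sum_{v\in W_{\Xi'/\Xi''}}\de_v\Bigr)\tfrac{1}{x_{\Xi'/\Xi''}}.
\]
The key point is to commute $\tfrac{1}{x_{\Xi/\Xi'}}$ past the $\de_v$'s: since each $v \in W_{\Xi'/\Xi''}$ lies in $W_{\Xi'}$, Corollary~\ref{cor:xXifixed} gives $v^{-1}(\tfrac{1}{x_{\Xi/\Xi'}}) = \tfrac{1}{x_{\Xi/\Xi'}}$, and hence $\tfrac{1}{x_{\Xi/\Xi'}}\de_v = \de_v\,v^{-1}(\tfrac{1}{x_{\Xi/\Xi'}}) = \de_v\tfrac{1}{x_{\Xi/\Xi'}}$ by the multiplication rule \eqref{eq:product}. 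Pulling the scalar all the way to the right and using $\de_w\de_v = \de_{wv}$ yields
\[
Y_{\Xi/\Xi'}Y_{\Xi'/\Xi''} = \Bigl(\sum_{w,v}\de_{wv}\Bigr)\tfrac{1}{x_{\Xi/\Xi'}x_{\Xi'/\Xi''}} = \Bigl(\sum_{u\in W_{\Xi/\Xi''}}\de_u\Bigr)\tfrac{1}{x_{\Xi/\Xi''}} = Y_{\Xi/\Xi''},
\]
where in the middle equality we use the hypothesis that $\{wv : w\in W_{\Xi/\Xi'},\,v\in W_{\Xi'/\Xi''}\}$ is precisely $W_{\Xi/\Xi''}$ (so in particular the products are distinct and the double sum collapses to a single sum), together with the multiplicativity of $x$ proved above.

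The only subtlety is the commutation step, which hinges on having representatives $v$ inside $W_{\Xi'}$ so that Corollary~\ref{cor:xXifixed} applies; everything else is bookkeeping. I do not expect any genuine obstacle.
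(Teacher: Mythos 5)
Your proof is correct and follows essentially the same route as the paper's: compute $Y_{\Xi/\Xi'}Y_{\Xi'/\Xi''}$ in $\QW$, use Corollary~\ref{cor:xXifixed} to eliminate the conjugating action of $v\in W_{\Xi'}$ on $x_{\Xi/\Xi'}$, identify $x_{\Xi/\Xi'}x_{\Xi'/\Xi''}=x_{\Xi/\Xi''}$, and deduce the $C$-statement from the $Y$-statement via the action \eqref{eq:leftactQW}. The only cosmetic difference is that you commute $\tfrac{1}{x_{\Xi/\Xi'}}$ past the $\de_v$'s before multiplying the sums, while the paper multiplies first and then simplifies $v^{-1}(x_{\Xi/\Xi'})$; you also spell out the partition $\RS^-_{\Xi/\Xi''} = \RS^-_{\Xi/\Xi'} \sqcup \RS^-_{\Xi'/\Xi''}$, which the paper leaves implicit.
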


\begin{proof} 
We prove the formula for $Y$'s, the one for $C$'s follows since $C$ acts as $Y$, and the composition of actions corresponds to multiplication. We have $Y_{\Xi/\Xi'}Y_{\Xi'/\Xi''}=$
\[
(\sum_{w\in W_{\Xi/\Xi'}}\de_w\tfrac{1}{x_{\Xi/\Xi'}})(\sum_{v\in W_{\Xi'/\Xi''}}\de_v\tfrac{1}{x_{\Xi'/\Xi''}})=\sum_{w\in W_{\Xi/\Xi'},\, v\in W_{\Xi'/\Xi''}}\de_{wv}\tfrac{1}{v^{-1}(x_{\Xi/\Xi'})x_{\Xi'/\Xi''}}.
\]
By Corollary~\ref{cor:xXifixed}, we have $v^{-1}(x_{\Xi/\Xi'})=x_{\Xi/\Xi'}$. Therefore, $v^{-1}(x_{\Xi/\Xi'})x_{\Xi'/\Xi''}=x_{\Xi/\Xi'}x_{\Xi'/\Xi''}=x_{\Xi/\Xi''}$. We conclude by definition of $ W_{\Xi/\Xi''}$.
\end{proof}

The following lemma follows from the definition of $C_{\Xi/\Xi'}$.
\begin{lem}[Projection formula] \label{lem:projformC}
We have
\[
C_{\Xi/\Xi'}(qq')=q\, C_{\Xi/\Xi'}(q')\quad \text{ for any }q\in Q^{W_\Xi}\text{ and }q'\in Q.
\]
\end{lem}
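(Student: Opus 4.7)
The plan is to unfold the definition of $C_{\Xi/\Xi'}$ and use the $W_\Xi$-invariance of $q$ to factor it out of each term of the sum. The proof should fit in a few lines, with no real obstacle.

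First I would write
\[
C_{\Xi/\Xi'}(qq') = \sum_{w\in W_{\Xi/\Xi'}} w\!\left(\tfrac{qq'}{x_{\Xi/\Xi'}}\right).
\]
Then, since each $w \in W_{\Xi/\Xi'} \subseteq W_\Xi$ acts on $Q$ by a ring automorphism, I would split $w\bigl(q \cdot \tfrac{q'}{x_{\Xi/\Xi'}}\bigr) = w(q)\, w\!\left(\tfrac{q'}{x_{\Xi/\Xi'}}\right)$. The hypothesis $q \in Q^{W_\Xi}$ gives $w(q) = q$ for every $w \in W_\Xi$, and in particular for every representative $w \in W_{\Xi/\Xi'}$, so $q$ may be pulled out of the sum, yielding $q\, C_{\Xi/\Xi'}(q')$ by the definition of $C_{\Xi/\Xi'}$.

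The only thing worth a brief remark is that the left-hand side is well-defined independently of the choice of representatives $W_{\Xi/\Xi'}$: this does not automatically follow from Lemma~\ref{lem:Cinvtoinv} because the product $qq'$ need not lie in $Q^{W_{\Xi'}}$. However, the computation above works for any fixed choice of $W_{\Xi/\Xi'}$ and matches it on both sides, so no compatibility issue arises. There is no real obstacle — the statement is essentially a direct consequence of the definition combined with the Weyl-equivariance of the action on $Q$.
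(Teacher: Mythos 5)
Your proof is correct and matches the paper's intent exactly: the paper gives no explicit argument, merely noting that the lemma ``follows from the definition of $C_{\Xi/\Xi'}$,'' and what you write out is precisely that unfolding—apply $w$ as a ring automorphism, use $w(q)=q$ since $W_{\Xi/\Xi'}\subseteq W_\Xi$, and pull $q$ out of the sum. Your extra remark about well-definedness is fine and harmless, though not needed since, as you say, the identity holds for each fixed choice of representatives.
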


\begin{lem} \label{lem:decomppullpush} 
Given a subset $\Xi$ of $\Pi$ and $\al\in \Xi$ we have
\begin{enumerate}
\item \label{item:YY}
$Y_\Xi=Y'Y_\al=Y_\al Y''~\text{ for some }Y' \text{ and }Y''\in \QW$,
\item \label{item:YX}
$Y_\Xi X_\al=X_\al Y_\Xi=0$, $Y_\al Y_\Xi=\kp_\al Y_\Xi$ and $Y_\Xi Y_\al=Y_\Xi\kp_\al$.
\end{enumerate}
\end{lem}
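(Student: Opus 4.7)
The plan is to deduce part~(a) from the composition rule (Lemma~\ref{lem:pullpushcomp}) together with a short direct computation in $\QW$, and then to obtain part~(b) as a formal consequence of (a) combined with the quadratic relations~\eqref{eq:X_and_Y} for $X_\al$ and $Y_\al$.

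For the factorization $Y_\Xi=Y'Y_\al$, I would apply Lemma~\ref{lem:pullpushcomp} to the chain $\emptyset\subseteq\{\al\}\subseteq\Xi$, with $W_{\{\al\}}=\{e,s_\al\}$ as the (only possible) set of representatives of $W_{\{\al\}}/W_\emptyset$. Unwinding Definition~\ref{defi:CY} shows that $Y_{\{\al\}/\emptyset}=(\de_e+\de_{s_\al})\tfrac{1}{x_{-\al}}$ coincides with $Y_\al$, so the composition rule yields $Y_\Xi=Y_{\Xi/\{\al\}}Y_\al$ and one takes $Y':=Y_{\Xi/\{\al\}}\in\QW$. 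The composition rule as stated is phrased using left cosets only, so the second factorization $Y_\Xi=Y_\al Y''$ requires a short direct calculation. Partition $W_\Xi=\bigsqcup_i W_{\{\al\}}u_i$ into right cosets, so that $\sum_{w\in W_\Xi}\de_w=(1+\de_\al)\sum_i\de_{u_i}$ in $\QW$. A one-line check using the product in $\QW$ gives $Y_\al\cdot x_{-\al}=1+\de_\al$, whence
\[
Y_\Xi=(1+\de_\al)\Bigl(\sum_i\de_{u_i}\Bigr)\tfrac{1}{x_\Xi}=Y_\al\cdot\Bigl(x_{-\al}\sum_i\de_{u_i}\tfrac{1}{x_\Xi}\Bigr),
\]
and one sets $Y''$ to be the parenthesized element of $\QW$.

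Part~(b) then follows at once from (a) and~\eqref{eq:X_and_Y}. Using $Y_\al X_\al=X_\al Y_\al=0$ and $Y_\al^2=\kp_\al Y_\al=Y_\al\kp_\al$, one computes $Y_\Xi X_\al=Y'(Y_\al X_\al)=0$, $X_\al Y_\Xi=(X_\al Y_\al)Y''=0$, $Y_\al Y_\Xi=Y_\al^2 Y''=\kp_\al Y_\al Y''=\kp_\al Y_\Xi$, and finally $Y_\Xi Y_\al=Y'Y_\al^2=Y'Y_\al\kp_\al=Y_\Xi\kp_\al$. The only mild obstacle I anticipate is the left/right coset asymmetry of Lemma~\ref{lem:pullpushcomp}, which is dealt with above by the right-coset decomposition together with the identity $Y_\al\cdot x_{-\al}=1+\de_\al$; once (a) is in hand, part (b) is entirely mechanical.
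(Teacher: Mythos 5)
Your proof is correct and follows essentially the same route as the paper: part (a) is deduced from the composition rule for the first factorization, and from a right-coset decomposition of $W_\Xi$ combined with the identity $Y_\al x_{-\al}=1+\de_\al$ (equivalent to the paper's $(1+\de_\al)\tfrac{1}{x_{-\al}}=Y_\al$) for the second; part (b) then follows mechanically from (a) and \eqref{eq:X_and_Y} exactly as you compute.
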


\begin{proof} 
\eqref{item:YY} The first identity follows from Lemma~\ref{lem:pullpushcomp} applied to $\Xi'=\{\al\}$ (in this case $Y'=Y_{\Xi/\Xi'}$).

\medskip

For the second identity, let ${}^\al W_\Xi$ be the set of right coset representatives of $W_\al\backslash W_{\Xi}$, thus each $w\in W_\Xi$ can be written uniquely either as $w=s_\al u$ or as $w=u$ with $u\in {}^\al W_\Xi$. Then 
\begin{eqnarray*}
Y_\Xi &=& \sum_{u\in {}^\al W_\Xi}(1+\de_\al)\de_{u}\tfrac{1}{x_\Xi} =\sum_{u\in {}^\al W_\Xi}(1+\de_\al)\tfrac{1}{x_{-\al}}x_{-\al}\de_u\tfrac{1}{x_\Xi}\\
 &=& \sum_{u\in {}^\al W_\Xi}Y_\al x_{-\al}\de_u\tfrac{1}{x_\Xi}= Y_\al\sum_{u\in {}^\al W_\Xi}\de_u\tfrac{u^{-1}(x_{-\al})}{x_\Xi}.
\end{eqnarray*}

\eqref{item:YX} then follows from \eqref{item:YY} and \eqref{eq:X_and_Y}.
\end{proof}


\section{The push-pull operators on the dual}\label{sec:pushpulldual}

We now introduce and study the push-pull operators on the dual of the twisted formal group algebra $\QWd$.

\medskip

For $w\in W$, we define $f_w^\Xi:=\sum_{v\in wW_\Xi}f_{v}$. Observe that $f_w^\Xi = f_{w'}^\Xi$ if and only if $w W_\Xi = w' W_\Xi$. Consider the subring of invariants $(\QWd)^{W_\Xi}$ by means of the `$\act$'-action of $W_\Xi$ on $\QWd$ and fix a set of representatives $W_{\Pi/\Xi}$ of $W/W_\Xi$. By Lemma \ref{lem:bulletactprop}, we then have the following

\begin{lem} \label{lem:QWinvariant} 
The family $\{f_w^\Xi\}_{w\in W_{\Pi/\Xi}}$ forms a basis of $(\QWd)^{W_\Xi}$ as a left $Q$-module, and $f_w^\Xi f_v^\Xi =\Kr_{w,v} f_v^\Xi$ for any $w,v\in W_{\Pi/\Xi}$. 
\end{lem}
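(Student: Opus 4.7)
The plan is to reduce everything to the explicit formula $w(f_v)=f_{vw^{-1}}$ from Lemma~\ref{lem:bulletactprop} and the componentwise product formula~\eqref{eq:fvprod} for the basis $\{f_v\}_{v\in W}$.

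First I would check that each $f_w^\Xi$ lies in $(\QWd)^{W_\Xi}$. For any $u\in W_\Xi$, the Weyl action sends $f_v\mapsto f_{vu^{-1}}$, and right multiplication by $u^{-1}\in W_\Xi$ permutes the coset $wW_\Xi$, so the sum $f_w^\Xi=\sum_{v\in wW_\Xi}f_v$ is unchanged. Next, expand an arbitrary element as $f=\sum_{v\in W}q_v f_v$ and compute $u(f)=\sum_v q_v f_{vu^{-1}}=\sum_v q_{vu}f_v$. Thus $W_\Xi$-invariance is equivalent to the coefficient function $v\mapsto q_v$ being constant on each left coset of $W_\Xi$; writing this common value as $q_w$ for $w\in W_{\Pi/\Xi}$ yields $f=\sum_{w\in W_{\Pi/\Xi}}q_w f_w^\Xi$, which gives spanning. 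Linear independence is then immediate, since the cosets $wW_\Xi$ for distinct $w\in W_{\Pi/\Xi}$ are disjoint, so the families of $f_v$'s appearing in distinct $f_w^\Xi$'s are disjoint, and any relation $\sum_w q_w f_w^\Xi=0$ forces each $q_w=0$ by the $Q$-linear independence of $\{f_v\}_{v\in W}$.

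For the product formula, \eqref{eq:fvprod} gives $f_v f_{v'}=\Kr_{v,v'}f_v$, so
\[
f_w^\Xi f_{w'}^\Xi=\sum_{v\in wW_\Xi,\,v'\in w'W_\Xi}f_v f_{v'}=\sum_{v\in wW_\Xi\cap w'W_\Xi}f_v.
\]
For $w,w'\in W_{\Pi/\Xi}$ the intersection $wW_\Xi\cap w'W_\Xi$ is empty when $w\neq w'$ and equals $wW_\Xi$ when $w=w'$, so $f_w^\Xi f_{w'}^\Xi=\Kr_{w,w'}f_w^\Xi$ as claimed.

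I do not anticipate any real obstacle here: the whole statement follows directly from the explicit formula for the Weyl action on the dual basis and the fact that the parabolic cosets $\{wW_\Xi\}_{w\in W_{\Pi/\Xi}}$ partition $W$. The only minor point to keep in mind is that the $W_\Xi$-action acts on the \emph{right} on indices (via $v\mapsto vu^{-1}$), which is exactly what makes left cosets the right object to sum over.
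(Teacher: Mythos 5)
Your proof is correct and follows essentially the same route as the paper, which simply invokes Lemma~\ref{lem:bulletactprop} (the formula $w(f_v)=f_{vw^{-1}}$) together with the componentwise product~\eqref{eq:fvprod}; you have just spelled out the details that the paper leaves implicit.
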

In other words, $\{f_w^\Xi\}_{w \in W_{\Pi/\Xi}}$ is a set of pairwise orthogonal projectors, and the direct sum of their images is $(\QWd)^{W_{\Xi}}$.

\begin{defi}
Given subsets $\Xi'\subseteq \Xi$ of $\Pi$ and a set of representatives $W_{\Xi/\Xi'}$ we define a $Q$-linear operator on $\QWd$ by
\[
A_{\Xi/\Xi'}(f):=Y_{\Xi/\Xi'}\act f,\quad f\in \QWd,
\]
and call it the \emph{push-pull operator} with respect to $W_{\Xi/\Xi'}$. It is $Q$-linear since so is the $'\act'$-action. We set $A_\Xi=A_{\Xi/\emptyset}$.
\end{defi}

Lemma~\ref{lem:pullpushcomp} immediately implies:
\begin{lem}[Composition rule]\label{lem:compositA} 
Given subsets $\Xi''\subseteq \Xi' \subseteq \Xi$ of $\Pi$ and sets of representatives $W_{\Xi/\Xi'}$ and $W_{\Xi'/\Xi''}$, let $W_{\Xi/\Xi''}=\{wv\mid w\in W_{\Xi/\Xi'},\, v\in W_{\Xi'/\Xi''}\}$, then  we have $A_{\Xi/\Xi'}\circ A_{\Xi'/\Xi''}=A_{\Xi/\Xi''}$. 
\end{lem}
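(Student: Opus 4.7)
The plan is to reduce the composition identity directly to the multiplicative composition rule for push-pull elements already established in Lemma~\ref{lem:pullpushcomp}, by exploiting the fact that $\act$ is a genuine left action of $\QW$ on $\QWd$.

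First, I would unpack the definition. For any $f \in \QWd$, we have
\[
(A_{\Xi/\Xi'}\circ A_{\Xi'/\Xi''})(f) = Y_{\Xi/\Xi'} \act \bigl(Y_{\Xi'/\Xi''} \act f\bigr).
\]
Since $\act$ is a left action (this is immediate from the definition $(z\act f)(z') = f(z'z)$, which gives $(zz')\act f = z\act(z'\act f)$ because $f(z'(zz'')) = f((z'z)z'')$ by associativity in $\QW$), the right-hand side equals $(Y_{\Xi/\Xi'}\, Y_{\Xi'/\Xi''}) \act f$.

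Next, I would invoke Lemma~\ref{lem:pullpushcomp}, which — precisely under the compatibility hypothesis $W_{\Xi/\Xi''}=\{wv\mid w\in W_{\Xi/\Xi'},\, v\in W_{\Xi'/\Xi''}\}$ on the choice of coset representatives — yields the identity $Y_{\Xi/\Xi'}\, Y_{\Xi'/\Xi''} = Y_{\Xi/\Xi''}$ in $\QW$. Substituting this into the previous expression gives
\[
(A_{\Xi/\Xi'}\circ A_{\Xi'/\Xi''})(f) = Y_{\Xi/\Xi''}\act f = A_{\Xi/\Xi''}(f),
\]
which is the desired equality of operators.

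There is essentially no obstacle: the entire content is packaged in Lemma~\ref{lem:pullpushcomp}, and the only additional observation is the trivial functoriality $(zz')\act f = z\act(z'\act f)$. The one small point worth making explicit in the write-up is that this functoriality is exactly why we verify the $Y$-identity rather than the $C$-identity — the action $\act$ of $\QW$ on $\QWd$ (as opposed to the action of $\QW$ on $Q$ used in defining the $C$-operators) is what makes the composition of operators correspond to the product of algebra elements in the correct order.
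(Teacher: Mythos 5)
Your proof is correct and takes exactly the same route as the paper, which simply states that Lemma~\ref{lem:pullpushcomp} immediately implies the result. You have merely made explicit the (routine but worth noting) verification that $\act$ is a genuine left action, so that $(zz')\act f = z\act(z'\act f)$.
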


\begin{lem}[Projection formula] \label{lem:projformulaA} 
We have  
\[
A_{\Xi/\Xi'}(ff')=fA_{\Xi/\Xi'}(f')\quad \text{ for any }f\in (\QWd)^{W_\Xi}\text{ and }f'\in \QWd.
\]
\end{lem}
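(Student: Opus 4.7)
The plan is to unwind the definition $A_{\Xi/\Xi'}(f) = Y_{\Xi/\Xi'} \act f$ using the factorization $Y_{\Xi/\Xi'} = \sum_{w \in W_{\Xi/\Xi'}} \de_w \tfrac{1}{x_{\Xi/\Xi'}}$ in $\QW$, and then move the invariant factor $f$ through each step of the resulting expression. Since $\act$ is a left action of $\QW$ on $\QWd$, we can rewrite
\[
A_{\Xi/\Xi'}(ff') = \sum_{w \in W_{\Xi/\Xi'}} \de_w \act \bigl( \tfrac{1}{x_{\Xi/\Xi'}} \act (ff') \bigr).
\]

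Now $f$ can be pulled past the scalar in each summand using \eqref{eq:qcoprod}, which gives $\tfrac{1}{x_{\Xi/\Xi'}} \act (ff') = f \cdot \bigl(\tfrac{1}{x_{\Xi/\Xi'}} \act f'\bigr)$, and then past the group element using Lemma~\ref{lem:actautom}: since $\de_w$ acts as the $Q$-linear algebra automorphism $w$, we have $\de_w \act (f \cdot g) = w(f) \cdot (\de_w \act g)$ for any $g \in \QWd$. The hypothesis $f \in (\QWd)^{W_\Xi}$, together with the inclusion $W_{\Xi/\Xi'} \subseteq W_\Xi$, implies $w(f) = f$ for every $w$ appearing in the sum, so $f$ factors out of the whole expression and what remains reassembles into $Y_{\Xi/\Xi'} \act f' = A_{\Xi/\Xi'}(f')$.

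There is no substantive obstacle: the statement is essentially the dual counterpart of the projection formula Lemma~\ref{lem:projformC} for $C_{\Xi/\Xi'}$, transported to $\QWd$ via the compatibility of $\act$ with the product on $\QWd$. The only point that deserves care is keeping the product on $\QWd$ and its attendant $Q$-module structure (the ones coming from the coproduct $\QWcopr$, for which \eqref{eq:qcoprod} and Lemma~\ref{lem:actautom} are formulated) distinct from the twisted left $Q$-action defined via $\act$ and used in the definition of $A_{\Xi/\Xi'}$.
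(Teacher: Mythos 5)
Your proposal is correct and follows essentially the same route as the paper's proof: expand $Y_{\Xi/\Xi'}$ into its sum over coset representatives, use \eqref{eq:qcoprod} to pull $f$ out of the scalar action, use Lemma~\ref{lem:actautom} to distribute $\de_w \act$ over the product, and invoke $W_\Xi$-invariance of $f$ (together with $W_{\Xi/\Xi'} \subseteq W_\Xi$) to factor $f$ out of the sum. The closing remark about distinguishing the two $Q$-module structures on $\QWd$ is exactly the right point of care, and is implicit in the paper's computation as well.
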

\begin{proof} 
Using \eqref{eq:qcoprod} and Lemma \ref{lem:actautom}, we compute 
\[
\begin{split}
& A_{\Xi/\Xi'}(ff')=Y_{\Xi/\Xi'}\act (ff') = \big(\sum_{w\in W_{\Xi/\Xi'}} \hspace{-.5ex} \de_w \tfrac{1}{x_{\Xi/\Xi'}}\big) \act (ff') = \hspace{-1ex}\sum_{w\in W_{\Xi/\Xi'}} \hspace{-.5ex}\de_w \act \tfrac{1}{x_{\Xi/\Xi'}} \act (ff') \\
& = \sum_{w\in W_{\Xi/\Xi'}} \hspace{-.5ex} \de_w \act \big(f (\tfrac{1}{x_{\Xi/\Xi'}} \act  f')\big) =\sum_{w\in W_{\Xi/\Xi'}} \hspace{-.5ex}(\de_w \act f)  (\de_w \act \tfrac{1}{x_{\Xi/\Xi'}} \act  f') \\
& = f \sum_{w\in W_{\Xi/\Xi'}} \hspace{-.5ex}\de_w \act \tfrac{1}{x_{\Xi/\Xi'}} \act  f' = f  A_{\Xi/\Xi'}(f') \mbox{\qedhere}
\end{split}
\]
\end{proof}

Here is an analogue of Lemma \ref{lem:Cinvtoinv}.
\begin{lem} \label{lem:Ainvtoinv}
The operator $A_{\Xi/\Xi'}$ restricted to $(\QWd)^{W_{\Xi'}}$ is independent of the choices of representatives $W_{\Xi/\Xi'}$ and it maps $(\QWd)^{W_{\Xi'}}$ to $(\QWd)^{W_\Xi}$.
\end{lem}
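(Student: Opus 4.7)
My plan is to mirror the proof of Lemma~\ref{lem:Cinvtoinv}, transporting it from $Q$ to $\QWd$ through the `$\act$'-action, and using that the Weyl action on $\QWd$ is by definition $v(g)=\de_v\act g$. The key preliminary step is to expand $A_{\Xi/\Xi'}(f)$. Since $\act$ is a left action of $\QW$ on $\QWd$ (as $((zz')\act f)(z'')=f(z''zz')=(z\act(z'\act f))(z'')$), I would write
\[
A_{\Xi/\Xi'}(f)=Y_{\Xi/\Xi'}\act f=\sum_{w\in W_{\Xi/\Xi'}} \de_w\act\bigl(\tfrac{1}{x_{\Xi/\Xi'}}\act f\bigr)=\sum_{w\in W_{\Xi/\Xi'}} w\bigl(\tfrac{1}{x_{\Xi/\Xi'}}\act f\bigr),
\]
which is the direct $\QWd$-analogue of the formula defining $C_{\Xi/\Xi'}$.

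For independence, given two sets of representatives, they differ by $W'_{\Xi/\Xi'}=\{wv_w\mid w\in W_{\Xi/\Xi'}\}$ with each $v_w\in W_{\Xi'}$. For $v\in W_{\Xi'}$ and $f\in(\QWd)^{W_{\Xi'}}$, I would compute inside $\QW$ using $\de_v\tfrac{1}{x_{\Xi/\Xi'}}=v(\tfrac{1}{x_{\Xi/\Xi'}})\de_v$, Corollary~\ref{cor:xXifixed}, and the assumption $\de_v\act f=v(f)=f$, obtaining
\[
\de_v\act\bigl(\tfrac{1}{x_{\Xi/\Xi'}}\act f\bigr)=\bigl(\de_v\tfrac{1}{x_{\Xi/\Xi'}}\bigr)\act f=v(\tfrac{1}{x_{\Xi/\Xi'}})\act(\de_v\act f)=\tfrac{1}{x_{\Xi/\Xi'}}\act f.
\]
Applying this with $v=v_w$ shows $(wv_w)\bigl(\tfrac{1}{x_{\Xi/\Xi'}}\act f\bigr)=w\bigl(\tfrac{1}{x_{\Xi/\Xi'}}\act f\bigr)$ term by term, so the total sum is unchanged.

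For $W_\Xi$-equivariance of the output, I would pick $u\in W_\Xi$ and observe that $\{uw\mid w\in W_{\Xi/\Xi'}\}$ is again a set of left coset representatives for $W_\Xi/W_{\Xi'}$. Applying $\de_u\act-$ to the expansion formula gives
\[
u\bigl(A_{\Xi/\Xi'}(f)\bigr)=\sum_{w\in W_{\Xi/\Xi'}}(uw)\bigl(\tfrac{1}{x_{\Xi/\Xi'}}\act f\bigr),
\]
which, by the independence just established, equals $A_{\Xi/\Xi'}(f)$. This concludes that $A_{\Xi/\Xi'}(f)\in(\QWd)^{W_\Xi}$.

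I expect no substantial obstacle: once the expansion formula for $A_{\Xi/\Xi'}(f)$ is in place, the two claims reduce to exactly the combinatorics of coset representatives used in Lemma~\ref{lem:Cinvtoinv}. The only step that requires genuine care is the bookkeeping in the short $\QW$-calculation above, where one must correctly commute $\de_v$ past $\tfrac{1}{x_{\Xi/\Xi'}}$ and distinguish the `$\act$'-action of $Q$ on $\QWd$ from the Weyl action, invoking Corollary~\ref{cor:xXifixed} and the $W_{\Xi'}$-invariance of $f$ at the right moment.
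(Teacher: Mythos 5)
Your proof is correct and follows essentially the same route as the paper's: both exploit that `$\act$' is a left action of $\QW$, use Corollary~\ref{cor:xXifixed} to commute $\de_v$ past $\tfrac{1}{x_{\Xi/\Xi'}}$ for $v\in W_{\Xi'}$, and then reduce $W_\Xi$-equivariance to the already-established independence of coset representatives (the paper phrases it as $\de_v Y_{\Xi/\Xi'}$ being the element associated to $vW_{\Xi/\Xi'}$). The only stylistic difference is that you expand $A_{\Xi/\Xi'}(f)$ term by term and pull $\tfrac{1}{x_{\Xi/\Xi'}}\act f$ out as a common object, whereas the paper keeps the computation inside $\QW$ before applying `$\act f$'; this is a cosmetic reorganization of the same argument.
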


\begin{proof}
Let $f \in (\QWd)^{W_{\Xi'}}$. For any $w\in W$ and  $v \in W_{\Xi'}$, by Corollary \ref{cor:xXifixed}, we have 
\[
\big(\de_{wv} \tfrac{1}{x_{\Xi/\Xi'}}\big) \act f = \big(\de_w \tfrac{1}{x_{\Xi/\Xi'}} \de_v \big) \act f = \big(\de_w \tfrac{1}{x_{\Xi/\Xi'}}\big) \act \de_v \act f = \big(\de_w \tfrac{1}{x_{\Xi/\Xi'}}\big)\act f.
\]
which proves that the action on $f$ of any factor $\de_w (\tfrac{1}{x_{\Xi/\Xi'}})$ in $Y_{\Xi/\Xi'}$ is independent of the choice of the coset representative $w$. 

Now if $v\in W_{\Xi}$, we have
\[ 
v (A_{\Xi/\Xi'}(f)) = \de_v \act Y_{\Xi/\Xi'} \act f = (\de_v Y_{\Xi/\Xi'}) \act f = A_{\Xi/\Xi'}(f),
\]
where the last equality holds since $\de_v Y_{\Xi/\Xi'}$ is again an operator $Y_{\Xi/\Xi'}$ corresponding to the set of coset representatives $v W_{\Xi/\Xi'}$  (instead of $W_{\Xi/\Xi'}$). This proves the second claim. 
\end{proof}

\begin{lem} \label{lem:APonfv} 
We have $A_{\Xi/\Xi'}(f_v)=\tfrac{1}{v(x_{\Xi/\Xi'})}\sum_{w\in W_{\Xi/\Xi'}}f_{vw^{-1}}$. In particular,  
\[
A_{\Xi/\Xi'}(f_v^{\Xi'})=\tfrac{1}{v(x_{\Xi/\Xi'})}f_v^{\Xi}, \qquad A_{\Pi/\Xi}(f_v^{\Xi})=\tfrac{1}{v(x_{\Pi/\Xi})}\unit\quad \text{and} \quad A_\Pi(v(x_\Pi) f_v)=\unit.
\]
\end{lem}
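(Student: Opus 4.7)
The strategy is a direct computation unwinding the definition of $A_{\Xi/\Xi'}$ via Lemma~\ref{lem:bulletactprop}. First I would rewrite
\[
Y_{\Xi/\Xi'}=\big(\sum_{w\in W_{\Xi/\Xi'}}\de_w\big)\tfrac{1}{x_{\Xi/\Xi'}}=\sum_{w\in W_{\Xi/\Xi'}}w\big(\tfrac{1}{x_{\Xi/\Xi'}}\big)\de_w,
\]
using the product rule $\de_w q = w(q)\de_w$ in $\QW$. Since $\act$ is a left action of $\QW$ on $\QWd$, I get
\[
(q\de_w)\act f_v=q\act(\de_w\act f_v)=q\act w(f_v)=q\act f_{vw^{-1}}=vw^{-1}(q)\,f_{vw^{-1}}
\]
by two applications of Lemma~\ref{lem:bulletactprop}. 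Setting $q=w(\tfrac{1}{x_{\Xi/\Xi'}})$ kills the $w$, leaving the scalar $v(\tfrac{1}{x_{\Xi/\Xi'}})=\tfrac{1}{v(x_{\Xi/\Xi'})}$, which factors out of the sum to give the desired formula
\[
A_{\Xi/\Xi'}(f_v)=\tfrac{1}{v(x_{\Xi/\Xi'})}\sum_{w\in W_{\Xi/\Xi'}}f_{vw^{-1}}.
\]

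For the first specialization, I would expand $f_v^{\Xi'}=\sum_{u\in W_{\Xi'}}f_{vu}$ and apply the main formula to each $f_{vu}$. The key simplification is that $vu(x_{\Xi/\Xi'})=v(x_{\Xi/\Xi'})$ by Corollary~\ref{cor:xXifixed}, which makes the common scalar factor out. The resulting double sum $\sum_{u\in W_{\Xi'}}\sum_{w\in W_{\Xi/\Xi'}}f_{vuw^{-1}}$ is indexed by $vW_\Xi$ because $\{uw^{-1}\mid u\in W_{\Xi'},\, w\in W_{\Xi/\Xi'}\}$ exhausts $W_\Xi$ bijectively: the inverses of a set of left coset representatives of $W_\Xi/W_{\Xi'}$ form a set of right coset representatives of $W_{\Xi'}\backslash W_\Xi$, so $W_\Xi=\bigsqcup_{w\in W_{\Xi/\Xi'}}W_{\Xi'}w^{-1}$.

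The remaining two identities are immediate consequences. Taking $\Xi=\Pi$ in the previous case and noting that $f_v^\Pi=\sum_{u\in W}f_u=\unit$ yields $A_{\Pi/\Xi}(f_v^{\Xi})=\tfrac{1}{v(x_{\Pi/\Xi})}\unit$. Finally, applying $Q$-linearity of $A_\Pi$ to $v(x_\Pi)f_v$ and using the main formula with $\Xi=\Pi$, $\Xi'=\emptyset$ cancels the scalar and leaves $\sum_{w\in W}f_{vw^{-1}}=\unit$. No step looks difficult; the only mild subtlety is the combinatorial repackaging of cosets in the first specialization, which is handled by the observation about inverses of representatives.
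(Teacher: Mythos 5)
Your proof is correct and follows essentially the same route as the paper: both apply Lemma~\ref{lem:bulletactprop} to unwind the action of $Y_{\Xi/\Xi'}$ on $f_v$ (the paper acts first by $\tfrac{1}{x_{\Xi/\Xi'}}$ and then by $\de_w$, while you first move $\tfrac{1}{x_{\Xi/\Xi'}}$ past $\de_w$ inside $\QW$, which is a cosmetic difference), and both then invoke Corollary~\ref{cor:xXifixed} together with the coset decomposition $W_\Xi=\bigsqcup_{w\in W_{\Xi/\Xi'}}W_{\Xi'}w^{-1}$ to collapse the double sum in the first specialization.
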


\begin{proof} 
By Lemma \ref{lem:bulletactprop} we get 
\[
A_{\Xi/\Xi'}(f_v)
= \big(\hspace{-1ex}\sum_{w\in W_{\Xi/\Xi'}}\hspace{-2ex}\de_w\tfrac{1}{x_{\Xi/\Xi'}}\big)\act f_v
= \hspace{-1ex}\sum_{w\in W_{\Xi/\Xi'}}\hspace{-2ex}\de_w\act \big(\tfrac{1}{v(x_{\Xi/\Xi'})}f_v\big)
=\tfrac{1}{v(x_{\Xi/\Xi'})}\hspace{-1ex}\sum_{w\in W_{\Xi/\Xi'}}f_{vw^{-1}}.
\]
In particular
\[
\begin{split}
A_{\Xi/\Xi'}(f_v^{\Xi'}) 
& = \sum_{w \in W_{\Xi'}}\tfrac{1}{vw(x_{\Xi/\Xi'})}\sum_{u \in W_{\Xi/\Xi'}} f_{vwu^{-1}} 
  = \tfrac{1}{v(x_{\Xi/\Xi'})}\sum_{w \in W_{\Xi'}} \sum_{u \in W_{\Xi/\Xi'}} f_{vwu^{-1}} \\
& = \tfrac{1}{v(x_{\Xi/\Xi'})}\sum_{w \in vW_{\Xi}} f_w 
  = \tfrac{1}{v(x_{\Xi/\Xi'})} f_v^{\Xi},
\end{split}
\]
where the second equality follows from Corollary \ref{cor:xXifixed}.
\end{proof}

Together with Lemma~\ref{lem:QWinvariant} we therefore obtain:
\begin{cor}
We have $A_{\Xi/\Xi'}((\QWd)^{W_{\Xi'}})=(\QWd)^{W_\Xi}$.
\end{cor}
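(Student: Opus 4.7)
The plan is as follows. The inclusion $A_{\Xi/\Xi'}\bigl((\QWd)^{W_{\Xi'}}\bigr)\subseteq (\QWd)^{W_\Xi}$ is already contained in Lemma \ref{lem:Ainvtoinv}, so only surjectivity onto $(\QWd)^{W_\Xi}$ remains to be verified.

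For the reverse inclusion, I would first exploit the freedom of choosing coset representatives. Since $W_{\Xi'}\subseteq W_\Xi$, every $W_\Xi$-coset decomposes as a disjoint union of $W_{\Xi'}$-cosets, so one can pick a set of representatives $W_{\Pi/\Xi}$ of $W/W_\Xi$ which is simultaneously contained in a set of representatives $W_{\Pi/\Xi'}$ of $W/W_{\Xi'}$. By Lemma \ref{lem:QWinvariant}, any $f\in (\QWd)^{W_\Xi}$ then writes uniquely as $f=\sum_{w\in W_{\Pi/\Xi}} q_w f_w^{\Xi}$ with $q_w\in Q$.

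Next, I would produce an explicit preimage. Applying Lemma \ref{lem:APonfv} with $v=w\in W_{\Pi/\Xi}\subseteq W_{\Pi/\Xi'}$ gives
\[
A_{\Xi/\Xi'}(f_w^{\Xi'})=\tfrac{1}{w(x_{\Xi/\Xi'})}\, f_w^{\Xi}.
\]
Note that $x_{\Xi/\Xi'}=\prod_{\al\in\RS_{\Xi/\Xi'}^-}x_\al$ is a product of elements that are invertible in $Q$ by the $\RS$-regularity hypothesis, hence $w(x_{\Xi/\Xi'})\in Q^\times$. Therefore the element
\[
\tilde f:=\sum_{w\in W_{\Pi/\Xi}}q_w\, w(x_{\Xi/\Xi'})\, f_w^{\Xi'}
\]
lies in $(\QWd)^{W_{\Xi'}}$ (being a $Q$-linear combination of basis elements of $(\QWd)^{W_{\Xi'}}$ by Lemma \ref{lem:QWinvariant} applied with $\Xi'$), and by $Q$-linearity of $A_{\Xi/\Xi'}$ satisfies $A_{\Xi/\Xi'}(\tilde f)=f$, proving surjectivity.

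There is no real obstacle here; the only small point of care is the compatible choice of nested coset representatives $W_{\Pi/\Xi}\subseteq W_{\Pi/\Xi'}$ so that Lemma \ref{lem:APonfv} (which assumes $v$ is one of the fixed representatives of $W/W_{\Xi'}$) can be invoked directly on each $f_w^{\Xi'}$, and the observation that $w(x_{\Xi/\Xi'})$ is invertible in $Q$ so that the preimage coefficients make sense.
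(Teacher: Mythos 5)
Your proposal is correct and follows the same route the paper takes: the paper simply remarks that the corollary follows from Lemma~\ref{lem:APonfv} (giving $A_{\Xi/\Xi'}(f_v^{\Xi'})=\tfrac{1}{v(x_{\Xi/\Xi'})}f_v^{\Xi}$ with $v(x_{\Xi/\Xi'})\in Q^\times$) combined with Lemma~\ref{lem:QWinvariant} (the $\{f_w^{\Xi'}\}$ and $\{f_w^{\Xi}\}$ bases), which is precisely the computation you spell out. One small remark: the formula of Lemma~\ref{lem:APonfv} holds for an arbitrary $v\in W$ (not just a chosen representative), and $f_w^{\Xi'}$ depends only on $wW_{\Xi'}$, so the nested choice $W_{\Pi/\Xi}\subseteq W_{\Pi/\Xi'}$ is a convenience rather than a necessity.
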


\begin{defi}\label{defi:charmap}
We define the \emph{characteristic map} $c\colon Q \to \QWd$ by $q\mapsto q\act \unit$. 
\end{defi}
By the definition of the `$\act$' action, $c$ is an $R$-algebra homomorphism given by $c(q)=\sum_{w\in W}w(q)f_w$, that is, $c(q)\in \QWd$ is the evaluation at $q\in \QW$ via the action \eqref{eq:leftactQW} of $\QW$ on $Q$. Note that  $c$ is $\QW$-equivariant with respect to this action and the `$\act$'-action.
Indeed, $c(z\cdot q)=(z\cdot q) \act \unit=z\act (q\act \unit)=z\act c(q)$. In particular, it is $W$-equivariant.

\medskip

The following lemma provides an analogue of the push-pull formula of \cite[Theorem. 12.4]{CPZ}. 
\begin{lem} 
Given subsets $\Xi'\subseteq \Xi$ of $\Pi$, we have $A_{\Xi/\Xi'}\circ c =c \circ  C_{\Xi/\Xi'}$.
\end{lem}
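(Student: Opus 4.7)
The plan is to unwind the definitions and use the $\QW$-equivariance of the characteristic map $c$ that was established immediately after Definition \ref{defi:charmap}. Specifically, for any $z \in \QW$ and $q \in Q$, we have $c(z \cdot q) = z \act c(q)$, where on the left $\cdot$ denotes the action \eqref{eq:leftactQW} of $\QW$ on $Q$, and on the right $\act$ denotes the action of $\QW$ on $\QWd$.

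The key observation is that, by Definition \ref{defi:CY}, the push-pull operator $C_{\Xi/\Xi'}$ is exactly the action of the element $Y_{\Xi/\Xi'} \in \QW$ on $Q$ via \eqref{eq:leftactQW}: that is, $C_{\Xi/\Xi'}(q) = Y_{\Xi/\Xi'} \cdot q$. Meanwhile, by definition, $A_{\Xi/\Xi'}(f) = Y_{\Xi/\Xi'} \act f$. Thus, applying the $\QW$-equivariance of $c$ to the element $z = Y_{\Xi/\Xi'}$ yields
\[
c(C_{\Xi/\Xi'}(q)) = c(Y_{\Xi/\Xi'} \cdot q) = Y_{\Xi/\Xi'} \act c(q) = A_{\Xi/\Xi'}(c(q)),
\]
which is the desired identity. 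There is no substantial obstacle: the lemma is essentially a translation of the already-established equivariance of $c$ into the language of the push-pull operators, and the proof reduces to a single application of the equivariance formula once one recognises that both sides are built from the same element $Y_{\Xi/\Xi'}$ acting through two compatible actions on $Q$ and $\QWd$.
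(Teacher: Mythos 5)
Your proof is correct and is essentially identical to the paper's: both exploit the $\QW$-equivariance $c(z\cdot q)=z\act c(q)$ established right after Definition \ref{defi:charmap}, specialized to $z=Y_{\Xi/\Xi'}$, together with the definitions $C_{\Xi/\Xi'}(q)=Y_{\Xi/\Xi'}\cdot q$ and $A_{\Xi/\Xi'}(f)=Y_{\Xi/\Xi'}\act f$. The only difference is the direction in which you write the chain of equalities.
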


\begin{proof} 
By definition, we have
\[
A_{\Xi/\Xi'}(c(q))=Y_{\Xi/\Xi'}\act c(q)=c(Y_{\Xi/\Xi'}\cdot q) = c(C_{\Xi/\Xi'}(q)). \qedhere
\]
\end{proof}


\section{Relations between bases coefficients} \label{sec:coeff}

In this section we describe relations between coefficients appearing in decompositions of various elements on the different bases of $\QW$ and of $\QWd$.

\medskip

Given a sequence $I=(i_1,\ldots,i_m)$, let $I^{\rev}:=(i_m,\ldots,i_1)$.
\begin{lem} \label{lem:CPi_I_Irev}
Given a sequence $I$ in $\{1,\ldots,n\}$, for any $x,y\in S$ and $f,f'\in \QWd$ we have
\[
C_\Pi(\Dem_I(x)y)=C_\Pi(x\Dem_{I^{\rev}}(y))\;\text{ and }\; A_\Pi(B_I(f)f')=A_\Pi(fB_{I^{\rev}}(f')).
\]
Similarly, we have 
\[
C_\Pi(C_I(x)y)=C_\Pi(xC_{I^{\rev}}(y))\;\text{ and }\; A_\Pi(A_I(f)f')=A_\Pi(fA_{I^{\rev}}(f')).
\]
\end{lem}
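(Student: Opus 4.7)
The plan is to induct on $|I|$, reducing everything to the case of a single simple root. I will describe the $C_\Pi$/$\Dem_I$ version in detail; the $A_\Pi$/$B_I$, $C_\Pi$/$C_I$, and $A_\Pi$/$A_I$ cases proceed in exactly the same way. The critical base-case input is $Y_\Pi X_\al = 0$ from Lemma~\ref{lem:decomppullpush}(b), applied with $\Xi=\Pi$ (which contains every simple root $\al$); via the action \eqref{eq:leftactQW} on $Q$ and the $\act$-action on $\QWd$ this translates into the operator identities $C_\Pi\circ\Dem_\al=0$ and $A_\Pi\circ B_\al=0$.

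For the base case $|I|=1$ with $\Dem_\al$, I will rewrite the twisted Leibniz rule \eqref{eq:diffde} as $\Dem_\al(x)y = \Dem_\al(xy) - s_\al(x)\Dem_\al(y)$ and apply $C_\Pi$; the first term drops by $C_\Pi\circ\Dem_\al=0$, leaving $C_\Pi(\Dem_\al(x)y) = -C_\Pi(s_\al(x)\Dem_\al(y))$. Substituting $s_\al(x)$ for $x$ and invoking the sign change $\Dem_\al\circ s_\al = -\Dem_\al$ (immediate from \eqref{eq:opdec}) then yields $C_\Pi(\Dem_\al(x)y)=C_\Pi(x\Dem_\al(y))$. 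The $B_\al$ base case is formally identical, using \eqref{eq:propBa} and $A_\Pi\circ B_\al=0$. For $C_\al$, I will exploit the decomposition $C_\al=\kp_\al-\Dem_\al$ to reduce to the $\Dem_\al$ case, getting $C_\Pi(C_\al(x)y) = C_\Pi(\kp_\al xy)-C_\Pi(\Dem_\al(x)y) = C_\Pi(x\kp_\al y)-C_\Pi(x\Dem_\al(y)) = C_\Pi(xC_\al(y))$; the middle equality uses the established base case together with the commutativity $\kp_\al xy = x\kp_\al y$ in $S$. The $A_\al$ base case is analogous, relying on $A_\Pi((\kp_\al\act f)f')=A_\Pi(f(\kp_\al\act f'))$, which is immediate from \eqref{eq:qcoprod}.

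The inductive step is straightforward bookkeeping: for $I=(i_1,\ldots,i_m)$ with $m\geq 2$ and $I':=(i_2,\ldots,i_m)$, the base case moves $\Dem_{i_1}$ across first, and the induction hypothesis applied to $I'$ then handles the remainder:
\begin{align*}
C_\Pi(\Dem_I(x)y)
&= C_\Pi(\Dem_{i_1}(\Dem_{I'}(x))\,y) = C_\Pi(\Dem_{I'}(x)\,\Dem_{i_1}(y)) \\
&= C_\Pi(x\,\Dem_{(I')^{\rev}}(\Dem_{i_1}(y))) = C_\Pi(x\,\Dem_{I^{\rev}}(y)).
\end{align*}
I expect no serious obstacle. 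The only point deserving attention is that the base case genuinely needs the sign-flipping identity $\Dem_\al\circ s_\al=-\Dem_\al$ (resp.\ $B_\al\circ s_\al=-B_\al$) to convert the minus sign coming from the twisted Leibniz rule into the desired symmetry; without this last step one would end up with an asymmetric statement featuring $s_\al(x)$ in place of $x$.
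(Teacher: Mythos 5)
Your proof is correct and follows essentially the same route as the paper: the base case rests on the same three ingredients ($Y_\Pi X_\al = 0$ translated into $A_\Pi\circ B_\al = 0$, the twisted Leibniz rule \eqref{eq:propBa}, and the sign flip $B_\al\circ s_\al = -B_\al$), and the $A_\al$/$C_\al$ variant is reduced to the $B_\al$/$\Dem_\al$ one via $A_\al=\kp_\al-B_\al$ just as in the paper. The only cosmetic difference is that the paper applies $B_\al$ directly to $s_\al(f)f'$ to produce the symmetric identity in one stroke, whereas you rearrange the Leibniz rule and then substitute $s_\al(f)$ for $f$; these are the same computation organized slightly differently.
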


\begin{proof} 
By Lemma~\ref{lem:decomppullpush}.(b) we have  $Y_\Pi X_\al=0$ for any $\al\in \Pi$. 
By \eqref{eq:propBa} we obtain
\[
0=A_\Pi\big(B_\al(s_\al(f)f')\big)=A_\Pi\big(fB_\al(f')-B_\al(f)f'\big).
\]
Hence, $A_\Pi(B_\al(f)f')=A_\Pi(fB_\al(f'))$ and  $A_\Pi(B_I(f)f')=A_\Pi(fB_{I^{\rev}}(f'))$ by iteration. 

To prove the corresponding formula involving $A_I$, note that $A_\al=\kp_\al-B_\al$, so 
\[
\begin{split}
fA_\al(f')-A_\al(f)f' & = f(\kp_\al\act f'-B_\al(f'))-(\kp_\al \act f-B_\al(f))f' \\
 & \hspace{-1ex}\equalbyeq{eq:qcoprod} B_\al(f)f'-fB_\al(f')=B_\al(s_\al(f')f),
\end{split}
\]
so $A_\Pi(A_\al(f)f')=A_\Pi(fA_\al(f'))$ and again $A_\Pi(A_{I}(f)f')=A_\Pi(fA_{I^{\rev}}(f'))$ by iteration.
The formulas involving $C$ operators are obtained similarly. 
\end{proof}

\begin{cor}\label{cor:I_and_Irev} 
Let $I=(i_1,\ldots,i_m)$ be a sequence in $\{1,\ldots,n\}$. Let
\[
X_I=\sum_{v\in W}a^X_{I,v}\de_v \; \text{ and } \; X_{I^{\rev}}=\sum_{v\in W}a'^X_{I,v}\de_v \quad\text{ for some } a^X_{I,v},\; a'^X_{I,v} \in Q,
\] 
then $v(x_\Pi)\, a'^X_{I,v}=v(a^X_{I,v^{-1}})\, x_\Pi$. Similarly, let
\[
Y_{I}=\sum_{v\in W}a^Y_{I,v}\de_v \; \text{ and }\; Y_{I^{\rev}}=\sum_{v\in W}a'^Y_{I,v}\de_v\quad\text{ for some } a^Y_{I,v},\; a'^{Y}_{I,v}\in Q,
\]
then $v(x_\Pi)\, a'^Y_{I,v}=v(a^Y_{I,v^{-1}}) \,x_\Pi$.
\end{cor}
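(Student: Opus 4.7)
The plan is to deduce this corollary from Lemma~\ref{lem:CPi_I_Irev} (more precisely, from its statement involving $B_I$ for the $X$-version, and the one involving $A_I$ for the $Y$-version) by applying the identity $A_\Pi(B_I(f) f') = A_\Pi(f B_{I^{\rev}}(f'))$ to the pair $(f,f') = (f_e, f_v)$ and then reading off the coefficient of $\unit$.

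First, I would compute $B_I(f_e) = X_I \act f_e$ explicitly using $X_I = \sum_w a^X_{I,w}\de_w$. Using the definition of the $\act$-action together with $f_e(\de_{vw}) = \Kr_{e,vw}$, one checks that $(q\de_w)\act f_e = w^{-1}(q) f_{w^{-1}}$, so that
\[
B_I(f_e) = \sum_{v\in W} v(a^X_{I,v^{-1}})\, f_v.
\]
Componentwise multiplication \eqref{eq:fvprod} then gives $B_I(f_e) f_v = v(a^X_{I,v^{-1}}) f_v$. Applying $A_\Pi$, which is $Q$-linear in the natural sense, and using Lemma~\ref{lem:APonfv} in the form $A_\Pi(f_v) = \tfrac{1}{v(x_\Pi)}\unit$, we obtain
\[
A_\Pi\bigl(B_I(f_e) f_v\bigr) = \tfrac{v(a^X_{I,v^{-1}})}{v(x_\Pi)}\, \unit.
\]

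For the other side, a similar direct computation yields $(q\de_w)\act f_v = vw^{-1}(q)\, f_{vw^{-1}}$, so that
\[
B_{I^{\rev}}(f_v) = \sum_{u\in W} u(a'^X_{I,u^{-1}v})\, f_u,
\]
and the componentwise product with $f_e$ leaves only the $u=e$ term, namely $a'^X_{I,v}\, f_e$. Applying $A_\Pi$ gives $\tfrac{a'^X_{I,v}}{x_\Pi}\,\unit$. Equating the two outcomes via Lemma~\ref{lem:CPi_I_Irev} and using that $q\mapsto q\,\unit$ is injective from $Q$ to $\QWd$ (because $q\unit = \sum_w qf_w$ and $\{f_w\}$ is a basis) yields the desired identity for $X$. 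The argument for the $Y$-case is identical after replacing $B_I$ by $A_I$ and $X_I$ by $Y_I$.

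There is no serious obstacle: the only mild point to keep track of is the distinction between the natural left $Q$-module structure on $\QWd$ (under which $A_\Pi$ is linear and the componentwise product is bilinear) and the structure induced by $\act$ via $q\mapsto q\de_e$; since $A_\Pi$ is defined via $\act$ but is also linear for the natural action by \eqref{eq:qcoprod}, the extraction of scalars presents no difficulty.
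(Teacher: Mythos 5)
Your proposal is correct and follows essentially the same route as the paper: both apply Lemma~\ref{lem:CPi_I_Irev} to the pair $(f_e,f_v)$, compute $B_I(f_e)f_v$ and $f_e B_{I^{\rev}}(f_v)$ explicitly, and evaluate $A_\Pi$ via Lemma~\ref{lem:APonfv}. The only cosmetic difference is that you divide by $v(x_\Pi)$ and $x_\Pi$ before comparing, whereas the paper multiplies both sides through and compares $v(x_\Pi)A_\Pi(B_I(f_e)f_v)$ with $x_\Pi A_\Pi(f_e B_{I^{\rev}}(f_v))$; these are equivalent since $x_\Pi$ is invertible in $Q$.
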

\begin{proof}
We have 
\[
\begin{split}
v(x_\Pi) A_\Pi \big( B_I(f_e)f_v \big) 
& = v(x_\Pi) A_\Pi \big( (X_I\act f_e)f_v \big) 
= v(x_\Pi) A_\Pi \big( (\sum_w w^{-1}(a^X_{I,w})f_{w^{-1}})f_v \big) \\
& = v(x_\Pi) A_\Pi \big( v(a^X_{I,v^{-1}})f_v \big)
\equalbyref{lem:APonfv} v \big( a^X_{I,v^{-1}} \big)\unit,
\end{split}
\]
and symmetrically 
\[
\begin{split}
x_\Pi A_\Pi \big( f_e B_{I^{\rev}}(f_v) \big) 
& = x_\Pi A_\Pi \bigg( f_e \sum_w a'^X_{I,w}\de_w \act f_v  \bigg) \\
& = x_\Pi A_\Pi \bigg( f_e \sum_w vw^{-1}(a'^X_{I,w} ) f_{vw^{-1}} \bigg) \\
& = x_\Pi A_\Pi(a'^X_{I,v}f_e) 
= a'^X_{I,v} \unit.
\end{split}
\]
Lemma \ref{lem:CPi_I_Irev} then yields the formula by comparing the coefficients of $X_I$ and $X_{I^{\rev}}$. The formula involving $Y_I$ is obtained similarly. 
\end{proof}

\begin{lem}\label{lem:AIrev}
For any sequence $I$, we have 
\[
A_{I^{\rev}}(x_\Pi f_e)=\sum_{v\in W}v(x_\Pi)a^Y_{I,v}f_v\quad\text{ and }\quad
B_{I^{\rev}}(x_\Pi f_e)=\sum_{v\in W}v(x_\Pi)a^X_{I,v}f_v.
\]
\end{lem}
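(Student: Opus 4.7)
The plan is to expand $A_{I^{\rev}}(x_\Pi f_e)$ by writing $Y_{I^{\rev}}=\sum_{v\in W} a'^Y_{I,v}\de_v$ (with the notation of Corollary~\ref{cor:I_and_Irev}), compute the action of each summand on $x_\Pi f_e$ directly from the definition of $\act$, and then invoke Corollary~\ref{cor:I_and_Irev} to convert the primed coefficients into the unprimed ones.

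More precisely, first I would check that for any $v\in W$ and any $q\in Q$,
\[
\de_v\act(qf_e)=v(q)\, f_{v^{-1}}.
\]
This can be obtained either by unwinding $(z\act f)(z')=f(z'z)$ on the basis $\{\de_u\}$, or by using $\de_v q=v(q)\de_v$ in $\QW$ together with Lemma~\ref{lem:bulletactprop}. Then, for $z=a'^Y_{I,v}\de_v$, combining this with $q\act f_w=w(q)f_w$ gives
\[
(a'^Y_{I,v}\de_v)\act(x_\Pi f_e)=a'^Y_{I,v}\act(x_\Pi f_{v^{-1}})=v^{-1}(a'^Y_{I,v})\,x_\Pi f_{v^{-1}}.
\]
Summing and reindexing $u=v^{-1}$ yields
\[
A_{I^{\rev}}(x_\Pi f_e)=\sum_{u\in W} u(a'^Y_{I,u^{-1}})\,x_\Pi f_u.
\]

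The last step is to apply Corollary~\ref{cor:I_and_Irev} with $v$ replaced by $u^{-1}$, which reads $u^{-1}(x_\Pi)\,a'^Y_{I,u^{-1}}=u^{-1}(a^Y_{I,u})\,x_\Pi$; applying $u$ to both sides gives $u(a'^Y_{I,u^{-1}})\,x_\Pi = a^Y_{I,u}\,u(x_\Pi)$, and substituting this into the expression above produces exactly $\sum_u u(x_\Pi)\,a^Y_{I,u}\,f_u$. The $B_{I^{\rev}}$ formula follows by the same argument with $Y$ replaced by $X$ and the second half of Corollary~\ref{cor:I_and_Irev} in place of the first.

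I do not expect any real obstacle; the only subtlety is to keep track of whether $x_\Pi$ is to the left of a basis vector in the ordinary $Q$-module sense or in the $\act$-sense (they differ by a $w$-twist). Once the identity $\de_v\act(x_\Pi f_e)=x_\Pi f_{v^{-1}}$ is set up correctly, the rest is purely formal manipulation matching the two sides coefficient by coefficient, with Corollary~\ref{cor:I_and_Irev} doing all the work.
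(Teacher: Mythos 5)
Your approach is the same as the paper's: expand $Y_{I^{\rev}}=\sum_v a'^Y_{I,v}\de_v$, act on $x_\Pi f_e$ term by term using left $Q$-linearity of $\act$ together with Lemma~\ref{lem:bulletactprop}, reindex, and finish with Corollary~\ref{cor:I_and_Irev}. The one slip is your first displayed identity $\de_v\act(qf_e)=v(q)\,f_{v^{-1}}$: since $\act$ is $Q$-linear for the ordinary (non-$\act$) module structure and $\de_v\act f_e=f_{v^{-1}}$, the correct statement is $\de_v\act(qf_e)=q\,f_{v^{-1}}$ -- but this is in fact what you use in the subsequent computation (and restate correctly in your closing remark), so the argument goes through.
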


\begin{proof} 
We prove the first formula only. The second one can be obtained using similar arguments.
Let $Y_{I^{\rev}}=\sum_{v\in W}a'^Y_{I,v}\de_v$ and $Y_I=\sum_{v\in W}a^Y_{I,v}\de_v$ as in Corollary~\ref{cor:I_and_Irev}. 
\[
\begin{split}
A_{I^{\rev}}(x_\Pi f_e) 
& = Y_{I^{\rev}}\act x_\Pi f_e 
= \sum_{v\in W} x_\Pi (a'^Y_{I,v}\de_v\act f_e) \\
& = \sum_{v\in W}x_\Pi (a'^Y_{I,v}\act f_{v^{-1}}) 
= \sum_{v\in W}x_\Pi v^{-1}(a'^Y_{I,v})f_{v^{-1}} 
= \sum_{v\in W}x_\Pi v(a'^Y_{I,v^{-1}})f_{v}.
\end{split}
\]
The formula then follows from Corollary~\ref{cor:I_and_Irev}.
\end{proof}

Let $\{X_{I_w}^*\}_{w\in W}$ and $\{Y_{I_w}^*\}_{w\in W}$ be the $Q$-linear bases of $\QWd$ dual to $\{X_{I_w}\}_{w\in W}$ and $\{Y_{I_w}\}_{w\in W}$, respectively, \ie $X_{I_w}^*(X_{I_v})=\Kr_{w,v}$ for $w,v\in W$. By Lemma~\ref{lem:Xdelta} we have $\de_v=\sum_{w\le v}b^X_{v,w}X_{I_w}=\sum_{w\le v}b^Y_{v,w}Y_{I_w}$. Therefore, by duality we have
\begin{equation}\label{eq:XYdualbasis}
X_{I_w}^*=\sum_{v\geq w}b^X_{v,w}f_v \quad\text{and}\quad Y_{I_w}^*=\sum_{v\geq w}b^Y_{v,w}f_v.
\end{equation}

\begin{lem} \label{lem:constcoeff}
We have $X_{I_e}^*=\unit$ and, therefore, $X_{I_e}^*(z)=z\cdot 1$, $z\in \QW$ (the action defined in \eqref{eq:leftactQW}). For any sequence $I$ with $\ell(I)\ge 1$, we have $X_{I_e}^*(X_I)=X_I\cdot 1=0$ and, moreover, if we express $X_I=\sum_{v\in W}q_vX_{I_v}$, then $q_e=0$. 
\end{lem}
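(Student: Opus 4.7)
First I would identify $X_{I_e}^*$ explicitly. Since $I_e = \emptyset$, we have $X_{I_e} = 1$ by convention, and in the expansion from \eqref{eq:XYdualbasis}, $X_{I_e}^* = \sum_{v \geq e} b^X_{v,e} f_v = \sum_{v \in W} b^X_{v,e} f_v$. By Lemma~\ref{lem:Xdelta} the coefficients satisfy $b^X_{v,e} = 1$ for every $v$, so $X_{I_e}^* = \sum_{v \in W} f_v = \unit$.

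Next, to establish $\unit(z) = z \cdot 1$, I would use the description $\unit = c(1) = 1 \act \unit$ together with \eqref{eq:actone}, which gives $(1 \act \unit)(z) = z \cdot 1$. Alternatively one can check this directly on the basis: writing $z = \sum_v q_v \de_v$, both $\unit(z) = \sum_v q_v$ and $z \cdot 1 = \sum_v q_v v(1) = \sum_v q_v$ agree.

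Now I would handle the vanishing statement by rewriting the action of $X_I$ on $1 \in Q$ in terms of Demazure operators. By associativity of the $\QW$-action \eqref{eq:leftactQW}, for any $z \in \QW$ and any $q \in Q$ one has $(X_i z) \cdot q = X_i \cdot (z \cdot q)$, and since $X_i \cdot q = \tfrac{1}{x_{\al_i}}(q - s_i(q)) = \Dem_i(q)$, iterating yields
\[
X_I \cdot 1 = \Dem_{i_1} \circ \Dem_{i_2} \circ \cdots \circ \Dem_{i_m}(1) = \Dem_I(1).
\]
When $\ell(I) \geq 1$, the innermost factor gives $\Dem_{i_m}(1) = \tfrac{1 - s_{i_m}(1)}{x_{\al_{i_m}}} = 0$, hence $\Dem_I(1) = 0$ and therefore $X_{I_e}^*(X_I) = \unit(X_I) = X_I \cdot 1 = 0$.

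Finally, for the last assertion, I would simply apply the functional $X_{I_e}^*$ to the expansion $X_I = \sum_{v \in W} q_v X_{I_v}$. Since $\{X_{I_v}\}_{v \in W}$ is a left $Q$-basis by Corollary~\ref{cor:basis} and $\unit$ is $Q$-linear (as the $Q$-action on $\QWd$ is the evaluation one, not the $\act$-action), we obtain $0 = X_{I_e}^*(X_I) = \sum_v q_v \Kr_{I_e, I_v} = q_e$, which gives the required vanishing. There is no real obstacle here; the only point to be attentive to is that the coefficient extraction uses the standard $Q$-module structure on $\QWd$, so that $X_{I_e}^*$ is indeed $Q$-linear when applied to a $Q$-linear combination of basis elements.
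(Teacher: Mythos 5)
Your proposal is correct and follows essentially the same route as the paper: use $b^X_{v,e}=1$ from Lemma~\ref{lem:Xdelta} to identify $X_{I_e}^*=\unit$, invoke \eqref{eq:actone} for $\unit(z)=z\cdot 1$, observe $X_\al\cdot 1=0$ (which you spell out via $X_I\cdot 1=\Dem_I(1)=0$, a slightly more explicit but equivalent way of reaching the paper's terse assertion), and then extract $q_e$ by $Q$-linearity. The only cosmetic difference is in the final step: the paper evaluates $\sum_v q_v X_{I_v}\cdot 1=q_e$ by reusing the vanishing $X_{I_v}\cdot 1=0$ for $\ell(v)\ge 1$, while you read off $q_e$ directly from the dual-basis relation $X_{I_e}^*(X_{I_v})=\Kr_{e,v}$; these are the same computation, since $X_{I_e}^*(X_{I_v})=X_{I_v}\cdot 1$.
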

\begin{proof}
Indeed, for each $v\in W$ we have $X_{I_e}^*(\de_v)=b^X_{v,e}=1=\unit(\de_v)$.  Therefore, $X_{I_e}^*=\unit$. The formula for $X_{I_e}^*(z)$ then follows by~\eqref{eq:actone}. Since $X_\al \cdot 1=0$, we have $X_I\cdot 1=0$. Finally, we obtain
\[
0=X_I\cdot 1=\sum_{v\in W}q_vX_{I_v}\cdot 1=q_e+\sum_{\ell(v)\ge 1} q_v X_{I_v}\cdot 1=q_e.\qedhere
\]
\end{proof}

\begin{lem}
Let $w_0$ be the longest element in $W$ of length $N$. We have
\[
A_\Pi(X_{I_{w_0}}^*)=(-1)^N\unit \quad \text{ and }\quad A_\Pi(Y_{I_{w_0}}^*)=\unit.
\]
\end{lem}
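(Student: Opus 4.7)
The plan is to expand both dual basis elements explicitly using equation \eqref{eq:XYdualbasis}, observe that each reduces to a single term, and then apply Lemma \ref{lem:APonfv}.

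First, from \eqref{eq:XYdualbasis} we have
\[
X_{I_{w_0}}^* = \sum_{v \geq w_0} b^X_{v,w_0} f_v \quad \text{and} \quad Y_{I_{w_0}}^* = \sum_{v \geq w_0} b^Y_{v,w_0} f_v.
\]
Since $w_0$ is the longest element of $W$, it is maximal for the Bruhat order, so the condition $v \geq w_0$ forces $v = w_0$. By Lemmas \ref{lem:Xdelta} and \ref{lem:Ydelta}, the coefficients $b^X_{w_0,w_0} = (-1)^N x_{w_0}$ and $b^Y_{w_0,w_0} = x_{w_0}$, hence
\[
X_{I_{w_0}}^* = (-1)^N x_{w_0} f_{w_0} \quad \text{and} \quad Y_{I_{w_0}}^* = x_{w_0} f_{w_0}.
\]

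Next, I would verify that $w_0(x_\Pi) = x_{w_0}$. Recall $x_\Pi = \prod_{\al \in \RS^-} x_\al$ and $x_{w_0} = \prod_{\al \in w_0 \RS^- \cap \RS^+} x_\al = \prod_{\al \in \RS^+} x_\al$, since $w_0$ sends $\RS^-$ to $\RS^+$. Therefore
\[
w_0(x_\Pi) = \prod_{\al \in \RS^-} x_{w_0(\al)} = \prod_{\beta \in \RS^+} x_\beta = x_{w_0}.
\]

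Finally, applying Lemma \ref{lem:APonfv} with $v = w_0$ and $\Xi = \Pi$ (so $\Xi/\Xi' = \Pi/\emptyset$) gives $A_\Pi(w_0(x_\Pi) f_{w_0}) = \unit$, that is $A_\Pi(x_{w_0} f_{w_0}) = \unit$. By $Q$-linearity of $A_\Pi$,
\[
A_\Pi(X_{I_{w_0}}^*) = (-1)^N A_\Pi(x_{w_0} f_{w_0}) = (-1)^N \unit, \qquad A_\Pi(Y_{I_{w_0}}^*) = A_\Pi(x_{w_0} f_{w_0}) = \unit.
\]

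There is no real obstacle here: once one recognizes that maximality of $w_0$ in the Bruhat order collapses each sum to a single term, the result follows by invoking the already-established evaluation formula $A_\Pi(v(x_\Pi) f_v) = \unit$ of Lemma \ref{lem:APonfv}. The only minor check to record carefully is the identity $w_0(x_\Pi) = x_{w_0}$, which comes straight from the fact that $w_0$ exchanges positive and negative roots.
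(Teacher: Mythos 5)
Your proof is correct and follows essentially the same approach as the paper: both use the dual-basis expansion from equation \eqref{eq:XYdualbasis} together with Lemma \ref{lem:APonfv} and the identity $w_0(x_\Pi)=x_{w_0}$. The only cosmetic difference is that you collapse the sum $\sum_{v\ge w_0}$ to a single term before applying $A_\Pi$, whereas the paper applies $A_\Pi$ to the general sum first and then specializes to $w=w_0$.
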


\begin{proof}
Consider the first formula. By Lemma \ref{lem:Xdelta}  $\de_v=\sum_{w\le v}b^X_{v,w}X_{I_w}$ with $b^X_{w,w}=x_w$, therefore $X_{I_w}^*=\sum_{v\ge w}b^X_{v,w}f_v$. Lemma~\ref{lem:APonfv} yields
\[
A_\Pi(X_{I_w}^*)=\sum_{v\ge w}\tfrac{b^X_{v,w}}{v(x_\Pi)}\unit.
\]
If $w=w_0$ is the longest element, then $A_\Pi(X_{I_{w_0}}^*)=\tfrac{(-1)^Nx_{w_0}}{w_0(x_\Pi)}\unit=(-1)^N\unit$ by \eqref{eq:root}.

The second formula is obtained similarly using Lemma~\ref{lem:Ydelta} instead.
\end{proof}

\begin{lem}\label{lem:commrelX} 
For any reduced sequence $I$ of an element $w$ and $q\in Q$ we have
\[
X_Iq=\sum_{v\le w} \phi_{I,v}(q) X_{I_v}\quad \text{ for some }\phi_{I,v}(q)\in Q.
\]
\end{lem}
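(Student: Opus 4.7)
The plan is to pass through the canonical basis $\{\de_v\}_{v\in W}$, where the right action of $q\in Q$ is explicit via $\de_v q = v(q)\de_v$, and then convert back using the fact that both changes of basis between $\{X_{I_v}\}$ and $\{\de_v\}$ are triangular with respect to the Bruhat order, as provided by Lemma~\ref{lem:Xdelta}.

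First, I would apply Lemma~\ref{lem:Xdelta} to the given reduced sequence $I$ for $w$ (the statement of that lemma applies to any reduced sequence, not necessarily the pre-chosen ones $I_v$) to write
\[
X_I \;=\; \sum_{v\le w} \alpha_v\, \de_v \qquad \text{for some } \alpha_v\in Q.
\]
Multiplying on the right by $q$ and using the defining product formula \eqref{eq:product} yields
\[
X_I q \;=\; \sum_{v\le w} \alpha_v\, v(q)\, \de_v,
\]
which already has all its basis support in $\{\de_v:v\le w\}$.

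Next, I would apply the second half of Lemma~\ref{lem:Xdelta} to expand each $\de_v = \sum_{u\le v} b^X_{v,u}\, X_{I_u}$ with $b^X_{v,u}\in S$. Substituting and collecting the coefficient of $X_{I_u}$ gives
\[
X_I q \;=\; \sum_{u\le w}\phi_{I,u}(q)\, X_{I_u}, \qquad \phi_{I,u}(q) \;:=\; \sum_{u\le v\le w} \alpha_v\, v(q)\, b^X_{v,u} \;\in\; Q,
\]
the support condition $u\le w$ following from transitivity of the Bruhat order.

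I anticipate no substantial obstacle: the argument is purely one of change of basis, relying only on the Bruhat-triangularity established in Lemma~\ref{lem:Xdelta} and the elementary observation that right multiplication by $q$ merely rescales each $\de_v$ by $v(q)$, preserving its Bruhat-triangular support. In particular, one does not need to identify $X_I$ with some specific $X_{I_w}$ (which in general differ when reduced decompositions are inequivalent), nor to analyze non-reduced subwords of $I$ directly.
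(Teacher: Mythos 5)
Your proof is correct, and it takes a genuinely cleaner route than the paper's. The paper re-derives the Bruhat-triangular $\de$-support of $X_I q$ from scratch: it expands each $X_i=\tfrac{1}{x_i}(1-\de_i)$, collects terms indexed by subwords $J$ of $I$, and invokes Deodhar's subword theorem to conclude each $\de_{w(J)}$ that appears has $w(J)\le w$; only then does it convert to the $X_{I_v}$-basis via Lemma~\ref{lem:Xdelta}. You instead observe that the $\de$-support of $X_I$ itself is already controlled by Lemma~\ref{lem:Xdelta} (whose statement does apply to an arbitrary reduced sequence of $w$, a point you rightly flag), and that right multiplication by $q$ merely rescales each $\de_v$ to $v(q)\de_v$ without enlarging the support. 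This bypasses Deodhar entirely: the subword combinatorics is already absorbed into the proof of Lemma~\ref{lem:Xdelta}, so repeating it is superfluous. Both routes then finish identically, using the ``moreover'' half of Lemma~\ref{lem:Xdelta} and transitivity of the Bruhat order. Your version is shorter, avoids an external citation, and isolates the only genuinely new observation, namely that $\de_v q=v(q)\de_v$ preserves the triangular support.
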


\begin{proof} 
For any subsequence $J$ of $I$ (not necessarily reduced), we have $w(J) \leq w$ by \cite[Th. 1.1]{Deo77}. Thus, by developing all $X_i=\tfrac{1}{x_{i}}(1-\de_{i})$, moving all coefficients to the left, and then using Lemma \ref{lem:Xdelta} and transitivity of the Bruhat order,
\[
X_I q = \sum_{w\leq v} \tilde{\phi}_{I,w}(q) \de_w = \sum_{w\leq v} \phi_{I,w}(q) X_{I_w} 
\] 
for some coefficients $\tilde{\phi}_{I,w}(q)$ and $\phi_{I,w}(q) \in Q$.
\end{proof}


\section{Another basis of the $W_\Xi$-invariant subring}\label{sec:anotherbasis}

Recall that $\{f_w^\Xi\}_{w\in W_{\Pi/\Xi}}$ is a basis of the invariant subring $(\QWd)^{W_\Xi}$. In the present section we construct another basis $\{X_{I_u}^*\}_{u\in W^\Xi}$ of the subring $(\QWd)^{W_\Xi}$, which generalizes \cite[Lemma 4.34]{KK86} and \cite[Lemma 2.27]{KK90}.

\medskip

Given a subset $\Xi$ of $\Pi$ we define
\[
W^\Xi=\{w\in W\mid\ell(ws_\al)>\ell(w) \text{ for any } \al\in \Xi\}.
\]
Note that $W^\Xi$ is a set of left coset representatives of $W/W_\Xi$ such that each $w\in W^\Xi$ is the unique representative of  minimal length. 

We will extensively use the following fact \cite[\S 1.10]{Hu90}:  
\begin{equation} \label{prop:coset}
\text{For any }w\in W\text{ there exist  unique }u\in W^\Xi\text{ and }v\in W_\Xi 
\end{equation}
\[
\text{ such that }w=uv\text{ and }\ell(w)=\ell(u)+\ell(v).
\]

\begin{defi}
Let $\Xi$ be a subset of $\Pi$. We say that the reduced sequences $\{I_w\}_{w\in W}$ are {\em $\Xi$-compatible} if for each $w\in W$ and the unique factorization $w=uv$ with $u\in W^\Xi$ and $v\in W_\Xi$, $\ell(w)=\ell(u)+\ell(v)$ of \eqref{prop:coset} we have $I_w=I_u \conc I_v$, \ie $I_w$ starts with $I_u$ and ends by $I_v$. 
\end{defi}

Observe that there always exists a $\Xi$-compatible family of reduced sequences. Indeed, one could start with arbitrary reduced sequences $\{I_u\}_{u\in W^\Xi}$ and $\{I_v\}_{v\in W_\Xi}$, and complete it into a $\Xi$-compatible family $\{I_w\}_{w\in W}$ by defining $I_w$ as the concatenation $I_u\cup I_v$ for $w=uv$ with $u\in W^\Xi, v\in W_\Xi$. 

\medskip

\begin{theo} \label{th:WPinvariant} 
For any $\Xi$-compatible choice of reduced sequences $\{I_w\}_{w\in W}$, if $u\in W^\Xi$, then for any sequence $I$ in $W_{\Xi}$ of length at least $1$ (\ie $\al_i \in \Xi$ for each $i$ appearing in the sequence $I$), we have 
\[
X_{I_u}^*(zX_I)=0\text{ for all }z\in \QW.
\]   
\end{theo}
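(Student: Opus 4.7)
The strategy is to identify a left $Q$-submodule $\mathcal{N} \subseteq \QW$ such that $zX_I \in \mathcal{N}$ for every $z$ while $X_{I_u}^*$ vanishes on $\mathcal{N}$ for each $u \in W^\Xi$. In terms of the $Q$-basis of Corollary~\ref{cor:basis}, the natural choice is
\[ \mathcal{N} := \sum_{w \in W \setminus W^\Xi} Q\, X_{I_w}. \]
Let $\mathcal{A}$ denote the sub-$Q$-algebra of $\QW$ generated by $\{X_\alpha : \alpha \in \Xi\}$; using Lemma~\ref{lem:Xdelta} and the fact that Bruhat order on $W$ restricts to Bruhat order on $W_\Xi$, one checks that $\mathcal{A} = \bigoplus_{v \in W_\Xi} Q\de_v$ admits $\{X_{I_v}\}_{v \in W_\Xi}$ as a $Q$-basis. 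The central step is the following \emph{Key Lemma}: for every $u \in W^\Xi$ and every $y \in \mathcal{A}$ with vanishing $X_{I_e}$-coefficient, the product $X_{I_u}\cdot y$ lies in $\mathcal{N}$. Granting this, the theorem follows quickly: expanding $z$ in the $\Xi$-compatible basis as $z = \sum_{u' \in W^\Xi,\, v' \in W_\Xi} q_{u',v'} X_{I_{u'}} X_{I_{v'}}$ gives $zX_I = \sum_{u',v'} q_{u',v'} X_{I_{u'}}\bigl(X_{I_{v'}} X_I\bigr)$. Each factor $X_{I_{v'}} X_I$ lies in $\mathcal{A}$ and corresponds to a sequence in $W_\Xi$ of length $\ell(v') + \ell(I) \geq 1$, so Lemma~\ref{lem:constcoeff} (applied inside $\mathcal{A}$) kills its $X_{I_e}$-coefficient, and the Key Lemma places $X_{I_{u'}}(X_{I_{v'}} X_I) \in \mathcal{N}$. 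Since $\mathcal{N}$ is a left $Q$-submodule, $zX_I \in \mathcal{N}$ as desired.

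I would prove the Key Lemma by induction on $\ell(u)$. The base case $u=e$ is immediate: $X_{I_e}\cdot y = y \in \bigoplus_{v \in W_\Xi \setminus \{e\}} Q\, X_{I_v} \subseteq \mathcal{N}$. For the inductive step, write $y = \sum_{v \in W_\Xi \setminus \{e\}} c_v X_{I_v}$ and apply Lemma~\ref{lem:commrelX} to get $X_{I_u} c_v = \sum_{u' \leq u} \phi_{I_u, u'}(c_v)\, X_{I_{u'}}$ with coefficients in $Q$. For each resulting $u'$, use \eqref{prop:coset} to decompose $u' = u'' v''$ with $u'' \in W^\Xi$, $v'' \in W_\Xi$, $\ell(u') = \ell(u'') + \ell(v'')$, so that $\Xi$-compatibility gives $X_{I_{u'}} = X_{I_{u''}} X_{I_{v''}}$. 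If $v'' = e$ (equivalently $u' \in W^\Xi$), then $\Xi$-compatibility again gives $X_{I_{u'}} X_{I_v} = X_{I_{u'v}}$, and since $v \neq e$ we have $u'v \notin W^\Xi$, so this term lands in $\mathcal{N}$. If $v'' \neq e$, the element $X_{I_{v''}} X_{I_v}$ belongs to $\mathcal{A}$ with vanishing $X_{I_e}$-coefficient by Lemma~\ref{lem:constcoeff} (length $\ell(v'') + \ell(v) \geq 1$), and $\ell(u'') = \ell(u') - \ell(v'') < \ell(u') \leq \ell(u)$; the induction hypothesis therefore gives $X_{I_{u'}} X_{I_v} = X_{I_{u''}}\bigl(X_{I_{v''}} X_{I_v}\bigr) \in \mathcal{N}$. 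Summing over $v$ and $u'$ and using that $\mathcal{N}$ is a left $Q$-submodule of $\QW$ yields $X_{I_u}\cdot y \in \mathcal{N}$.

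The main technical obstacle is handling the non-commutativity: pushing the $Q$-coefficients $c_v$ across $X_{I_u}$ via Lemma~\ref{lem:commrelX} introduces extra basis terms indexed by $u' \leq u$ in Bruhat order, and these $u'$ need not lie in $W^\Xi$, so one cannot naively recurse on $u'$ itself. The dichotomy resolves this: either $u' \in W^\Xi$, in which case $\Xi$-compatibility places the term in $\mathcal{N}$ without further recursion, or $u' \notin W^\Xi$, in which case its factorization through $W^\Xi \times W_\Xi$ forces $v'' \neq e$, and consequently the $W^\Xi$-part $u''$ is strictly shorter than $u$; this is what makes the induction close.
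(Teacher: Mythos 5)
Your proof is correct and follows essentially the same strategy as the paper's: reduce to basis elements, use $\Xi$-compatibility to split off the $W_\Xi$-part, push $Q$-coefficients through $X_{I_u}$ with Lemma~\ref{lem:commrelX}, kill the constant term with Lemma~\ref{lem:constcoeff}, and induct on length. The main difference is organizational: you package the argument around the left $Q$-submodule $\mathcal{N}=\sum_{w\notin W^\Xi}QX_{I_w}$ (which treats all $u\in W^\Xi$ simultaneously) and make explicit the dichotomy $u'\in W^\Xi$ versus $u'\notin W^\Xi$ in the inductive step, whereas the paper handles the same point by re-invoking the $\Xi$-compatible reduction at the start of each inductive step.
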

 
\begin{proof}
Since $\{X_{I_w}\}_{w\in W}$ is a basis of $\QW$, we may assume that $z=X_{I_w}$ for some $w\in W$. We decompose $X_I= \sum_{v \in W_{\Xi}} q_v X_{I_v}$ with $q_v \in Q$. By Lemma~\ref{lem:constcoeff} we may assume $v\neq e$. 

We proceed by induction on the length of $w$. If $\ell(w)=0$, we have $X_{I_w}=X_{I_e}=1$. Since $W_{\Xi}\cap W^{\Xi}=\{e\}$, for any $v\in W_\Xi$, $v\neq e$, we conclude that $X_{I_u}^*(X_{I_v})=0$.

The induction step goes as follows: Assume $\ell(w)\ge 1$. Since the sequences are $\Xi$-compatible, we have
\[
X_{I_w} X_I = X_{I_{w'}} X_{I_{v'}} X_I = X_{I_{w'}} X_{I'},\text{ where }w'\in W^{\Xi},\; v' \in W_{\Xi},\; I' \in W_{\Xi},\text{ and}
\]
$\ell(I')\geq \ell(I)\geq 1$. We can thus assume that $w \in W^{\Xi}$, so that by Lemma \ref{lem:commrelX},
\[
X_{I_w}X_I=\sum_{v\neq e} (X_{I_w} q_{v}) X_{I_{v}} = \sum_{\tilde w\leq w, v \neq e} \phi_{I_{w},\tilde w}(q_{v}) X_{I_{\tilde w}} X_{I_{v}}.
\]
Now $X_{I_u}^*(X_{I_w}X_{I_v})=X_{I_u}^*(X_{I_{wv}})=0$ since $wv$ is not a minimal coset representative: indeed, we already have $w\in W^\Xi$ and $v\neq e$. Applying $X_{I_u}^*$ to other terms in the above summation gives zero by induction. 
\end{proof}

\begin{rem} 
The proof will not work if we replace $X$'s by $Y$'s, because constant terms appear (we can not assume $v\neq e$). 
\end{rem}

\begin{cor}\label{cor:invbasisq}
For any $\Xi$-compatible choice of reduced sequences $\{I_{u}\}_{u\in W}$, the family  $\{X_{I_u}^*\}_{u\in W^\Xi}$ is a $Q$-module basis of $(\QWd)^{W_\Xi}$.
\end{cor}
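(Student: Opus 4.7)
The plan is to deduce the corollary from Theorem \ref{th:WPinvariant} in two steps: first verify that each $X_{I_u}^*$ with $u \in W^\Xi$ lies in $(\QWd)^{W_\Xi}$, and then compare the family $\{X_{I_u}^*\}_{u \in W^\Xi}$ to the $Q$-basis $\{f_v^\Xi\}_{v \in W^\Xi}$ of $(\QWd)^{W_\Xi}$ from Lemma \ref{lem:QWinvariant} (taking $W^\Xi$ as coset representatives) via a triangular change-of-basis matrix.

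For the first step, it suffices to check $s_\al \act X_{I_u}^* = X_{I_u}^*$ for every simple root $\al = \al_i \in \Xi$, since such reflections generate $W_\Xi$. Writing $\de_\al = 1 - x_\al X_\al$ and using $Q$-linearity of $X_{I_u}^*$, for any $z \in \QW$ one obtains
\[
(s_\al \act X_{I_u}^*)(z) = X_{I_u}^*(z\de_\al) = X_{I_u}^*(z) - X_{I_u}^*\bigl((zx_\al) X_\al\bigr),
\]
and Theorem \ref{th:WPinvariant}, applied to the length-one sequence $I=(i)$ in $W_\Xi$ and the element $zx_\al \in \QW$, kills the second term.

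For the second step, by \eqref{eq:XYdualbasis} and Lemma \ref{lem:Xdelta} one has $X_{I_u}^* = \sum_{w \geq u} b^X_{w,u}\, f_w$ with $b^X_{u,u} = (-1)^{\ell(u)} x_u$, which is invertible in $Q$ as a product of $x_\al$'s for $\al \in \RS$. By the $W_\Xi$-invariance just established and the formula $s_\beta(f_w) = f_{ws_\beta}$ from Lemma \ref{lem:bulletactprop}, the coefficient $b^X_{w,u}$ depends only on the coset $wW_\Xi$, so regrouping by cosets represented in $W^\Xi$ gives
\[
X_{I_u}^* = \sum_{v \in W^\Xi} b^X_{v,u}\, f_v^\Xi.
\]
The transition matrix $M = (b^X_{v,u})_{u,v \in W^\Xi}$ vanishes whenever $v \not\geq u$; ordering $W^\Xi$ according to any linear extension of the Bruhat order makes $M$ triangular with diagonal entries $(-1)^{\ell(u)} x_u \in Q^\times$, hence invertible over $Q$. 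The family $\{X_{I_u}^*\}_{u \in W^\Xi}$ is therefore a $Q$-basis of $(\QWd)^{W_\Xi}$.

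The main technical subtlety lies in assembling the triangularity: one must simultaneously exploit the Bruhat-order support of the coefficients $b^X_{w,u}$ coming from Lemma \ref{lem:Xdelta} and their constancy on the cosets $wW_\Xi$ coming from the invariance established in the first step. Once both facts are in hand the conclusion is immediate, and the $\Xi$-compatibility hypothesis on the reduced sequences is used only via Theorem \ref{th:WPinvariant}.
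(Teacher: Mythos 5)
Your proof is correct, and the two halves deserve separate comparison. The first step, showing each $X_{I_u}^*$ with $u \in W^\Xi$ is $W_\Xi$-invariant by writing $\de_\al = 1 - x_\al X_\al$ and invoking Theorem~\ref{th:WPinvariant} with the length-one sequence $(i)$, is essentially the paper's argument (modulo the cosmetic abuse of writing $s_\al \act$ for $\de_{s_\al}\act$).

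The second step, however, takes a genuinely different route. The paper shows directly that $\{X_{I_u}^*\}_{u\in W^\Xi}$ spans the invariants: for $\sigma\in(\QWd)^{W_\Xi}$ one has $\sigma(zX_i)=0$ whenever $\al_i\in\Xi$, and then $\Xi$-compatibility forces $I_w$ to end in such an $i$ whenever $w\notin W^\Xi$, killing the corresponding coefficient of $\sigma=\sum_w x_w X_{I_w}^*$; linear independence is then automatic since the $X_{I_w}^*$ already form a $Q$-basis of all of $\QWd$. You instead exhibit an explicit triangular change of basis against $\{f_v^\Xi\}_{v\in W^\Xi}$, which requires the intermediate observation that $b^X_{w,u}$ is constant on cosets $wW_\Xi$. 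That observation is exactly the paper's Corollary~\ref{cor:inv_coeff}, which the paper states \emph{after} this corollary but deduces only from the invariance established in step 1 (not from the basis statement), so there is no circularity in your ordering. In exchange, your step~2 no longer uses $\Xi$-compatibility at all (it enters only via Theorem~\ref{th:WPinvariant} in step~1), and it produces the explicit expansion $X_{I_u}^*=\sum_{v\in W^\Xi}b^X_{v,u}f_v^\Xi$ as a byproduct; the paper's route is shorter but less explicit. Both are complete.
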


\begin{proof}
 For every $\al_i\in \Xi$ we have 
\[
(\de_i \act X_{I_u}^*)(z)=X_{I_u}^*(z \de_i)=X_{I_u}^*(z (1-x_iX_i))=X_{I_u}^*(z), \quad z\in \QW,
\]
where the last equality follows from Theorem~\ref{th:WPinvariant}. Therefore, $X_{I_u}^*$ is $W_\Xi$-invariant.

Let $\sigma\in (\QWd)^{W_\Xi}$, \ie for each $\al_i\in \Xi$ we have $\sigma=s_i(\sigma)=\de_i\act \sigma$. Then
\[
\sigma(z X_i)=\sigma(z \tfrac{1}{x_{\al_i}}(1-\de_{\al_i}))=\sigma(z \tfrac{1}{x_{\al_i}})-(\de_i\act \sigma)(z\tfrac{1}{x_{\al_i}})=(\sigma -\de_i \act \sigma)(z\tfrac{1}{x_{\al_i}})=0  
\]
for any $z\in \QW$. Write $\sigma=\sum_{w\in W}x_w X_{I_w}^*$ for some $x_w\in Q$. If $w\notin W^\Xi$, then $I_w$ ends by some $i$ such that $\al_i\in \Xi$ which implies that 
\[
x_w=\sigma(X_{I_w})=\sigma(X_{I'_w} X_i)=0, 
\]
where $I'_w$ is the sequence obtained by deleting the last entry in $I_w$. So $\sigma$ is a linear combination of $\{X_{I_u}^*\}_{u\in W^\Xi}$.
\end{proof}

\begin{cor} \label{cor:inv_coeff}
If the reduced sequences $\{I_w\}_{w\in W}$ are $\Xi$-compatible, then $b^X_{wv,u}=b^X_{w,u}$ for any $v\in W_\Xi$, $u\in W^\Xi$ and $w\in W$, where $b^X_{wv,u}$ are the coefficients of Lemma~\ref{lem:Xdelta}.
\end{cor}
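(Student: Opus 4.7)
The plan is to derive this corollary as a direct dualization of the $W_\Xi$-invariance of $X_{I_u}^*$ established in Corollary~\ref{cor:invbasisq}, rather than working combinatorially with the recursive definition of $b^X_{w,u}$. By \eqref{eq:XYdualbasis} we have the clean duality $X_{I_u}^*(\de_w) = b^X_{w,u}$ (extending $b^X_{w,u}=0$ when $w\not\ge u$), so the coefficients appearing in the statement are exactly values of $X_{I_u}^*$ on the canonical basis of $\QW$.

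The key computation is then immediate: for $u \in W^\Xi$, $v\in W_\Xi$ and $w\in W$,
\[
b^X_{wv,u} = X_{I_u}^*(\de_{wv}) = X_{I_u}^*(\de_w\,\de_v) = (\de_v \act X_{I_u}^*)(\de_w) = X_{I_u}^*(\de_w) = b^X_{w,u},
\]
where the second equality is the product rule in $\QW$, the third is the definition of the $\act$-action, and the fourth is the $W_\Xi$-invariance $\de_v \act X_{I_u}^* = X_{I_u}^*$ provided by Corollary~\ref{cor:invbasisq} (which in turn is where the $\Xi$-compatibility of $\{I_w\}_{w\in W}$ is used, via Theorem~\ref{th:WPinvariant}).

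There is no real obstacle here: the only subtlety to notice is that the extension of $b^X_{w,u}$ by zero outside $\{w \geq u\}$ is consistent (so that both sides of the identity make sense for arbitrary $w$), but the invariance argument handles this automatically since it is a statement about the equality of two elements of $\QWd$ evaluated on a $Q$-basis. All the heavy lifting has already been done when proving Corollary~\ref{cor:invbasisq}.
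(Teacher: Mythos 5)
Your proof is correct and follows essentially the same route as the paper: both pivot on the $W_\Xi$-invariance of $X_{I_u}^*$ from Corollary~\ref{cor:invbasisq}, the only difference being that you evaluate $X_{I_u}^*$ directly on $\de_{wv}$ via the duality pairing, while the paper expands $X_{I_u}^*$ in the basis $\{f_w\}$ and compares coefficients after applying Lemma~\ref{lem:bulletactprop}. These are the same argument presented in two dual forms.
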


\begin{proof}  
From Lemma~\ref{lem:Xdelta} we have $X_{I_u}^*=\sum_{w\ge u}b^X_{w,u}f_w$. By Lemma~\ref{lem:bulletactprop} we obtain that $v(X_{I_u}^*)=\sum_{w\ge u}b^X_{w,u}f_{wv^{-1}}$ for any $v\in W_\Xi$. Since $X_{I_u}^*$ is $W_\Xi$-invariant by Corollary~\ref{cor:invbasisq} and $\{f_w\}_{w\in W}$ is a basis of $\QWd$, this implies that $b^X_{wv^{-1},u}=b^X_{w,u}$.
\end{proof}


\section{The formal Demazure algebra and the Hecke algebra}\label{sec:iwahorihecke}

In the present section we recall the definition and basic properties of the formal (affine) Demazure algebra $\DcF$ following \cite{HMSZ}, \cite{CZZ} and \cite{Zh}. 

\medskip

Following \cite{HMSZ}, we define the \emph{formal affine Demazure algebra} $\DcF$ to be the $R$-subalgebra of the twisted formal group algebra $\QW$ generated by elements of $S$ and the Demazure elements $X_i$ for all $i\in \{1,\ldots,n\}$. By \cite[Lemma 5.8]{CZZ}, $\DcF$ is also generated by $S$ and all $X_\al$ for all $\al\in \RS$. Since $\kp_\al\in S$, the algebra $\DcF$ is also generated by $Y_\al$'s and elements of $S$. Finally, since $\de_\al = 1-x_\al X_\al$, all elements $\de_w$ are in $\DcF$, and $\DcF$ is a sub-$\SW$-module of $\QW$, both on the left and on the right. 

\begin{rem}
Since $\{X_{I_w}\}_{w\in W}$ is a $Q$-basis of $\QW$, restricting the action \eqref{eq:leftactQW} of $\QW$ onto $\DcF$  we obtain an isomorphism between the algebra $\DcF$ and the $R$-subalgebra $\ED$ of $\End_R(S)$ generated by operators $\Dem_\al$ (resp. $C_\al$) for all $\al\in \RS$, and multiplications by elements from $S$. This isomorphism maps $X_\al \mapsto \Dem_\al$ and $Y_\al \mapsto C_\al$. Therefore, for any identity or statement involving elements $X_\al$ or $Y_\al$ there is an equivalent identity or statement involving operators $\Dem_\al$ or $C_\al$.
\end{rem}

According to \cite[Theorem~6.14]{HMSZ} (or \cite[7.9]{CZZ} when the ring $R$ is not necessarily a domain),
in type $A_n$, the algebra $\DcF$ is generated by the Demazure elements $X_i$, $i\in \{1,\dots,n\}$,
and multiplications by elements from $S$ subject to the following relations:
\begin{enumerate}[\indent(a)]
\item \label{item:square} $X_i^2= \kp_i X_i$ 
\item \label{item:commute} $X_iX_j=X_jX_i$ for $|i-j|>1$,
\item \label{item:braid} $X_i X_j X_i-X_jX_iX_j=\kappa_{ij}(X_j-X_i)$ for $|i-j|=1$ and
\item \label{item:scalar} $X_iq=s_i(q)X_i+\Dem_i(q)$,
\end{enumerate}

Furthermore, by \cite[Prop. 7.7]{CZZ}, for any choice of reduced decompositions $\{I_w\}_{w \in W}$, the family $\{X_{I_w}\}_{w \in W}$ (resp. the family $\{Y_{I_w}\}_{w \in W})$) is a basis of $\DcF$ as a left $S$-module. 

\medskip

We show now that for some hyperbolic formal group law $F_h$, the formal Demazure algebra can be identified with the classical Iwahori-Hecke algebra.

\medskip

Recall that the Iwahori-Hecke algebra $\mathcal{H}$ of the symmetric group $S_{n+1}$ is a $\Z[t,t^{-1}]$-algebra with generators $T_i$, $i\in \{1,\ldots,n\}$, subject to the following relations:
\begin{enumerate}[\indent(A)]
\item \label{item:quadratic} $(T_i+t)(T_i-t^{-1})=0$ or, equivalently, $T_i^2=(t^{-1}-t)T_i+1$,
\item \label{item:Tcommute} $T_iT_j=T_jT_i$ for $|i-j|>1$ and 
\item \label{item:Tbraid} $T_i T_j T_i=T_j T_i T_j$ for $|i-j|=1$.
\end{enumerate}
(The $T_i$'s appearing in the definition of the Iwahori-Hecke algebra \cite[Def.~7.1.1]{CG10} correspond to $tT_i$ in our notation, where $t=q^{-1/2}$.)

\medskip

Following \cite[Def.~6.3]{HMSZ} let $\mathrm{D}_F$ denote the $R$-subalgebra of $\DcF$ generated by the elements $X_i$, $i\in \{1,\ldots,n\}$, only. By \cite[Prop.~7.1]{HMSZ}, over $R=\mathbb{C}$, if $F=F_a$ (resp. $F=F_m$), then $\mathrm{D}_F$ is isomorphic to the completion of the nil-Hecke algebra (resp. the 0-Hecke algebra) of Kostant-Kumar.  The following observation provides another motivation for the study of formal (affine) Demazure algebras.

\medskip

Let us consider the FGL of example \ref{elliptic_ex} with invertible $\mu_1$. After normalization we may assume $\mu_1=1$. Then its formal inverse is $\tfrac{x}{x-1}$, and since $(1+\mu_2 x_i x_j) x_{i+j}= x_i+x_j-x_i x_j$, the coefficient $\kappa_{ij}$ of relation \eqref{item:braid} is simply $\mu_2$: 
\begin{equation}\label{eq:kappaij}
\kappa_{ij}  =\tfrac{1}{x_{i+j}x_j} - \tfrac{1}{x_{i+j}x_{-i}} -\tfrac{1}{x_{i}x_{j}} 
 = \tfrac{x_i+ x_j -x_i x_j -x_{i+j}}{x_ix_jx_{i+j}}
 = \tfrac{(1+\mu_2 x_i x_j)x_{i+j}-x_{i+j}}{x_i x_j x_{i+j}} = \mu_2
\end{equation}

\begin{prop}\label{Heckepres}
Let $F_h$ be a normalized (\ie $\mu_1=1$) hyperbolic formal group law over an integral domain $R$ containing $\Z[t,t^{-1}]$, and let $a,b \in R$. Then the following are equivalent
\begin{enumerate}
\item[(1)] The assignment $T_i\mapsto aX_i+b,\; i\in \{1,\ldots,n\}$, defines an isomorphism of $R$-algebras $\mathcal{H}\ot_{\Z[t,t^{-1}]}R \to \mathrm{D}_F$.
\item[(2)] We have $a=t+t^{-1}$ or $-t-t^{-1}$ and $b=-t$ or $t^{-1}$ respectively. Furthermore $\mu_2(t+t^{-1})^2 = -1$; in particular, the element $t+t^{-1}$ is invertible in $R$.
\end{enumerate}
\end{prop}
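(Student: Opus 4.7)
The key observation is that the normalization $\mu_1=1$ makes the defining relations of $\mathrm{D}_F$ have scalar coefficients in $R$: relation (a) becomes $X_i^2 = X_i$, and by \eqref{eq:kappaij} relation (c) becomes $X_iX_jX_i - X_jX_iX_j = \mu_2(X_j-X_i)$. Thus both algebras are presented with scalar coefficients and can be compared by direct expansion.

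For (2)~$\Rightarrow$~(1), I would substitute $T_i = aX_i+b$ into the three Hecke relations. Relation (B) is immediate from (b). Relation (A) expands, using $X_i^2=X_i$, to $(a^2+2ab)X_i+b^2 = a(t^{-1}-t)X_i+b(t^{-1}-t)+1$. Expanding (C) using $X_i^2=X_i$ and (c) collapses the difference $T_iT_jT_i - T_jT_iT_j$ to $a(X_i-X_j)(ab+b^2-a^2\mu_2)$. The values in (2) satisfy $a+2b = t^{-1}-t$, $b^2-b(t^{-1}-t)=1$, and $ab+b^2=-1$; combined with $\mu_2 a^2 = -1$ this confirms all three relations, producing a well-defined homomorphism $\phi: \mathcal{H}\otimes_{\Z[t,t^{-1}]}R \to \mathrm{D}_F$. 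Since $\mu_2 a^2 = -1$ forces $a$ to be invertible (with $a^{-1}=-\mu_2 a$), one then defines the converse $\psi: X_i \mapsto a^{-1}(T_i-b)$ and verifies symmetrically, using $T_i^2=(t^{-1}-t)T_i+1$, that (a), (b), (c) hold for $\psi(X_i)$. Being mutually inverse on generators, $\phi$ and $\psi$ are inverse isomorphisms.

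For (1)~$\Rightarrow$~(2), since $\phi$ must be a homomorphism, the same expansions force
\[
(a^2+2ab-a(t^{-1}-t))X_i + (b^2-b(t^{-1}-t)-1) = 0, \qquad a(X_i-X_j)(ab+b^2-a^2\mu_2) = 0
\]
inside $\mathrm{D}_F$. The $R$-linear independence of $\{1, X_i\}$ and of $X_i-X_j$ (following from $\{X_{I_w}\}_{w\in W}$ being a left $S$-basis of $\DcF$, together with $R\hookrightarrow S$ since $R$ is an integral domain) forces each scalar coefficient to vanish; surjectivity of $\phi$ moreover rules out $a=0$. Solving the quadratic $b^2 - (t^{-1}-t)b - 1 = 0$ has discriminant $(t^{-1}+t)^2$, giving $b \in \{t^{-1}, -t\}$; then $a = (t^{-1}-t) - 2b$ yields the prescribed $a = \pm(t+t^{-1})$ with opposite sign. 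A direct check gives $ab+b^2 = -1$ in both cases, so the braid equation reduces to $\mu_2 a^2 = -1$, i.e.\ $\mu_2(t+t^{-1})^2=-1$.

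\emph{Main obstacle.} The one genuinely non-formal input is the $R$-linear independence of small families such as $\{1, X_i, X_j\}$ inside $\mathrm{D}_F$, which is what allows us to extract scalar equations from vanishing $R$-linear combinations of basis vectors; this rests on the basis theorem for $\DcF$ (\cite[Prop. 7.7]{CZZ}) restricted to the subalgebra $\mathrm{D}_F$, combined with the injection $R\hookrightarrow S$. Once this is granted, the entire argument is disciplined bookkeeping in the three Hecke relations.
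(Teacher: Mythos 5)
Your proof is correct and follows essentially the same route as the paper's: substitute $T_i = aX_i + b$ into the Hecke relations, reduce to scalar equations using $\kappa_i = \mu_1 = 1$, $\kappa_{ij}=\mu_2$, and the $R$-linear independence of $1, X_i, X_j$ (coming from the $S$-basis $\{X_{I_w}\}$ of $\DcF$ and $R\hookrightarrow S$), then solve the resulting system for $a$, $b$, $\mu_2$. Your converse via an explicit inverse $\psi$ is slightly more elaborate than the paper's terse appeal to invertibility of $a$, though for $\psi$ to be well-defined on $\mathrm{D}_F$ both arguments implicitly rely on $\mathrm{D}_F$ being free of rank $|W|$ over $R$ (equivalently, presented by (a)--(c)), which is what forces the surjection $\phi$ to be an isomorphism.
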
 

\begin{proof}  
 Assume there is an isomorphism of $R$-algebras given by $T_i\mapsto aX_i+b$. Then relations \eqref{item:commute} and \eqref{item:Tcommute} are equivalent and relation \eqref{item:quadratic} implies that
\[
0=(aX_i+b)^2+(t-t^{-1})(aX_i+b)-1=[a^2+2ab+a(t-t^{-1})]X_i+b^2+b(t-t^{-1})-1.
\]
Therefore $b=-t$ or $t^{-1}$ and $a=t^{-1}-t-2b=t+t^{-1}$ or $-t-t^{-1}$ respectively, since $1$ and $X_i$ are $S$-linearly independent in $\mathrm{D}_F \subseteq \DcF$.

Relations \eqref{item:Tbraid} and \eqref{item:square} then imply 
\[
\begin{split}
0 & =(aX_i+b)(aX_j+b)(aX_i+b)-(aX_j+b)(aX_i+b)(aX_j+b) \\
 & =a^3(X_iX_jX_i-X_jX_iX_j)+(a^2b+ab^2)(X_i-X_j).
\end{split}
\]
Therefore, by relation \eqref{item:braid} and \eqref{eq:kappaij}, we have $a^3\mu_2-a^2b-ab^2=0$ which implies that $0=a^2\mu_2-ab-b^2=(t+t^{-1})^2\mu_2 +1$.

Conversely, by substituting the values of $a$ and $b$, it is easy to check that the assignment is well defined, essentially by the same computations. It is an isomorphism since $a=\pm(t+t^{-1})$ is invertible in $R$. 
\end{proof}

\begin{rem} 
The isomorphism of Proposition~\ref{Heckepres} provides a presentation of the Iwahori-Hecke algebra with $t+t^{-1}$ inverted in terms of the Demazure operators on the formal group algebra $\FGR{R}{\cl}{F_h}$.
\end{rem}

\begin{rem}
In general, the coefficients $\mu_1$ and $\mu_2$ of $F_h$ can be parametrized as $\mu_1=\epsilon_1+\epsilon_2$ and $\mu_2=-\epsilon_1\epsilon_2$ for some $\epsilon_1,\epsilon_2\in R$. In~\ref{Heckepres} it corresponds to $\epsilon_1=\tfrac{t}{t+t^{-1}}$ and $\epsilon_2=\tfrac{t^{-1}}{t+t^{-1}}$ (up to a sign) and in this case \cite[Thm.~4.1]{BuHo} implies that $F_h$ does not correspond to a topological complex oriented cohomology theory (\ie a theory obtained from complex cobordism $MU$ by tensoring over the Lazard ring). Observe that  such $F_h$ still corresponds to an algebraic oriented cohomology theory in the sense of Levine-Morel.
\end{rem}


\section{The algebraic restriction to the fixed locus on $G/B$} \label{sec:algmom}

In the present section we define the algebraic counterpart of the restriction to $T$-fixed locus of $G/B$. 

\medskip

Consider the $S$-linear dual $\SWd=\Hom_S(\SW,S)$ of the twisted formal group algebra. Since $\{\de_w\}_{w\in W}$ is a basis for both $\SW$ and $\QW$, $\SWd$ can be identified with the free $S$-submodule of $\QWd$ with basis $\{f_w\}_{w\in W}$ or, equivalently, with the subset $\{f\in \QWd\mid f(\SW)\subseteq S\}$. 

\medskip

Since $\de_\al=1-x_\al X_\al$ for each $\al\in \RS$, there is a natural inclusion of $S$-modules $\eta\colon \SW\hookrightarrow \DcF$.  The elements $\{X_{I_w}\}_{w\in W}$ (and, hence, $\{Y_{I_w}\}_{w\in   W}$) form a basis of $\DcF$ as a left $S$-module by \cite[Prop.~7.7]{CZZ}. Observe that the natural inclusion $\SW\hookrightarrow \QW$ factors through $\eta$. Tensoring $\eta$ by $Q$ we obtain an isomorphism $\eta_Q\colon \QW \stackrel{\simeq}\to Q \ot_S \DcF$, because both are free $Q$-modules and their bases $\{X_{I_w}\}_{w\in W}$ are mapped to each other. 

\medskip

\begin{defi} \label{defi:algres} Consider the $S$-linear dual $\DcFd=\Hom_{S}(\DcF, S)$. The induced map $\eta^\star\colon \DcFd \to \SWd$ (composition with $\eta$) will be called the {\em restriction to the fixed locus}.
\end{defi}

\begin{lem} \label{lem:DcFdinQW*} 
The map $\eta^\star$ is an injective ring homomorphism and its image in $\SWd \subseteq \QWd=Q\ot_S \SWd$ coincides with the subset
\[
\{f\in \SWd \mid f(\DcF)\subseteq S\}.
\]
Moreover, the basis of $\DcFd$ dual to $\{X_{I_w}\}_{w\in W}$ is $\{X_{I_w}^*\}_{w\in W}$ in $\QWd$.
\end{lem}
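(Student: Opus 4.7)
The plan is to work through the identification $\QWd \simeq \Hom_S(\DcF,Q)$ induced by dualizing the isomorphism $\eta_Q : \QW \isoto Q\ot_S \DcF$ of left $Q$-modules. Under this identification, $\DcFd = \Hom_S(\DcF,S)$ becomes the $S$-submodule of $\QWd$ consisting of functionals whose evaluation on $\DcF\subseteq \QW$ lies in $S$; since $\SW \subseteq \DcF$, one gets inclusions $\DcFd \subseteq \SWd \subseteq \QWd$, and $\eta^\star$ becomes simply restriction, i.e.\ the inclusion $\DcFd \hookrightarrow \SWd$. With this in hand, the image characterization in the lemma is tautological.

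For injectivity, I would observe that if $f \in \DcFd \subseteq \QWd$ vanishes on $\SW$, then by $Q$-linearity and the fact that $\SW$ generates $\QW$ over $Q$ (via $\eta_Q$), it must vanish on all of $\QW$, hence be zero. A more hands-on alternative is to use Lemma~\ref{lem:Xdelta} and argue inductively along the Bruhat order, via $\sum_{u\le w}b^X_{w,u}f(X_{I_u})=f(\de_w)=0$ together with $b^X_{w,w}=(-1)^{\ell(w)}x_w$ being a non-zero divisor in $S$ by $\RS$-regularity.

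For the ring homomorphism assertion, I would use that the product on $\QWd$ is componentwise in the basis $\{f_w\}$ by~\eqref{eq:fvprod}, so $\SWd$ is visibly a subring; it then suffices to show $\DcFd\subseteq \SWd$ is also a subring. This in turn would follow from the restriction of $\QWcopr$ to a coproduct $\DcF \to \DcF\ot_S \DcF$. Since $\QWcopr$ is an algebra map by \cite[Prop.~8.10]{CZZ} and $\DcF$ is generated as an $R$-algebra by $S$ and the elements $X_\al$, I would only need to check this on generators: $\QWcopr(s)=s\ot 1$ is trivial, and $\QWcopr(X_\al)$ can be computed by expanding $X_\al=\tfrac{1}{x_\al}(1-\de_\al)$ and using $\QWcopr(q\de_w)=q\de_w\ot\de_w$.

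For the dual basis claim: by construction $X_{I_w}^*(X_{I_v})=\Kr_{w,v}\in S$, so each $X_{I_w}^*$ sits in $\DcFd$ by the image characterization; since $\DcF$ is $S$-free on $\{X_{I_w}\}_{w\in W}$ by \cite[Prop.~7.7]{CZZ}, its $S$-linear dual is free on the dual basis $\{X_{I_w}^*\}_{w\in W}$. The main obstacle I expect is the careful bookkeeping of the twisted $Q$-bimodule structure on $\QW\ot_Q\QW$ needed to realize $\QWcopr(X_\al)$ inside $\DcF\ot_S \DcF$; this is conceptually reasonable since the coproduct is group-like on the $\de_w$'s, which together with $S$ generate $\DcF$.
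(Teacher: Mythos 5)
Your proposal is correct and takes essentially the same route as the paper: localize via $\eta_Q$, use the restriction of the coproduct $\QWcopr$ to $\DcF$ for the ring-map claim, and read off the image description and dual basis from the $S$-freeness of $\DcF$ on $\{X_{I_w}\}$. The one place you deviate is that the paper simply cites \cite[Theorem~9.2]{CZZ} for $\QWcopr(\DcF)\subseteq \DcF\ot_S\DcF$, whereas you propose to verify it on generators and flag the computation of $\QWcopr(X_\al)$ as your main obstacle; it is in fact unproblematic, e.g.\ $\QWcopr(X_\al)=1\ot X_\al+X_\al\ot\de_\al$ (or equivalently $X_\al\ot 1+\tfrac{x_{-\al}}{x_\al}\de_\al\ot X_\al$) lies visibly in $\DcF\ot_S\DcF$, so either complete that short check or just cite the reference as the paper does.
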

\begin{proof}
The coproduct $\QWcopr$ on $\QW$ restricts to a coproduct on $\DcF$ by \cite[Theorem~9.2]{CZZ} and to the coproduct on $\SW$ via $\eta$. 
Hence, the map $\eta^\star$ is a ring homomorphism. 

There is a commutative diagram
\[
\xymatrix{
\DcFd \ar[d] \ar[r]^{\eta^\star} & \SWd\ar[d] \\
Q\ot_S \DcFd\ar[r]^-{\eta^\star_Q}_-{\simeq} & Q\ot_S \SWd
}
\]
where the vertical maps are injective by freeness of the modules and because $S$ injects into $Q$. The description for the image then follows from the fact that $\{X_{I_w}\}_{w\in W}$ is a basis for both $\DcF$ and $\QW$.

The last part of the lemma follows immediately.
\end{proof}

\medskip

By Lemma~\ref{lem:DcFdinQW*}, $\sigma\in \DcFd\subseteq \QWd$ means that $\sigma(\DcF)\subseteq S$. For any $X\in \DcF$ we have $(X\act \sigma)(\DcF)=\sigma(\DcF X)\subseteq S$, so $X\act \sigma\in \DcFd$.  Hence,  the `$\act$'-action of $\QW$ on $\QWd$ induces a `$\act$'-action of $\DcF$ on $\DcFd$.

\medskip

For each $v\in W$,  we define
\[
\tf_v: = x_\Pi\act f_v = v(x_\Pi) f_v \in \QWd, \quad \text{\ie}\; \tf_v(\sum_{w\in W} q_w\de_w)=v(x_\Pi) q_v.
\]
 
\begin{lem}\label{lem:point}
We have $\tf_v\in \DcFd$ for any $v\in W$.
\end{lem}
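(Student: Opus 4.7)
The plan is to settle the base case $v = w_0$ by identifying $\tilde f_{w_0}$ with a dual basis element of $\DcFd$, and then to deduce the general case by transporting via the `$\act$'-action of $\de_u \in \DcF$ on $\DcFd$ (which preserves $\DcFd$ since $\de_u \in \DcF$, as recalled just before the statement of the lemma).

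For the base case, $w_0$ sends $\RS^-$ bijectively onto $\RS^+$, so $w_0(x_\Pi) = x_{w_0}$ and hence $\tilde f_{w_0} = x_{w_0} f_{w_0}$. In the Bruhat order only $w_0$ itself is $\geq w_0$, so by~\eqref{eq:XYdualbasis} together with Lemma~\ref{lem:Xdelta} (which gives $b^X_{w_0,w_0} = (-1)^N x_{w_0}$) we have
\[
X_{I_{w_0}}^* = b^X_{w_0,w_0}\, f_{w_0} = (-1)^N x_{w_0} f_{w_0} = (-1)^N \tilde f_{w_0}.
\]
Since $X_{I_{w_0}}^* \in \DcFd$ by Lemma~\ref{lem:DcFdinQW*}, we conclude $\tilde f_{w_0} \in \DcFd$.

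For the general case, set $u := v^{-1} w_0$. Using Lemma~\ref{lem:bulletactprop} in the form $\de_u \act f_{w_0} = f_{w_0 u^{-1}} = f_v$ together with the $Q$-linearity of the `$\act$'-action in its second argument, I compute
\[
\de_u \act \tilde f_{w_0} = x_{w_0}\,(\de_u \act f_{w_0}) = x_{w_0} f_v = \tfrac{x_{w_0}}{v(x_\Pi)}\, \tilde f_v,
\]
so $\tilde f_v = \tfrac{v(x_\Pi)}{x_{w_0}}\,(\de_u \act \tilde f_{w_0})$. Since $\tilde f_{w_0} \in \DcFd$, the element $\de_u \act \tilde f_{w_0}$ lies in $\DcFd$ as well, and it suffices to verify that the scalar $v(x_\Pi)/x_{w_0}$ lies in $S$, as $\DcFd$ is stable under $S$-multiplication.

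Using $x_{w_0} = w_0(x_\Pi)$ and applying the automorphism $w_0^{-1}$, this reduces to showing $w(x_\Pi)/x_\Pi \in S^\times$ for every $w \in W$. Mirroring the partition argument of Lemma~\ref{lem:Sigmaw}(e), one splits $w\RS^-$ and $\RS^-$ by intersection with $\RS^\pm$ and identifies the two ``non-cancelling'' parts via $\al \mapsto -\al$ to obtain
\[
\tfrac{w(x_\Pi)}{x_\Pi} = \prod_{\beta \in w\RS^- \cap \RS^+} \tfrac{x_\beta}{x_{-\beta}},
\]
which is a product of units in $S$. Hence $v(x_\Pi)/x_{w_0} \in S^\times$ and so $\tilde f_v \in \DcFd$. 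The only step not directly supplied by a previous statement is this last unit computation, which is the mild technical heart of the argument.
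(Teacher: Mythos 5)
Your proof is correct and follows essentially the same strategy as the paper's: establish $v=w_0$ via the entry $b^X_{w_0,w_0}=(-1)^N x_{w_0}$ from Lemma~\ref{lem:Xdelta}, then transport with $\de_{v^{-1}w_0}\act(-)$ and check that the resulting scalar is a unit via the partition argument of Lemma~\ref{lem:Sigmaw}.\eqref{item:cw0Sigma}. The only notable streamlining is your base case, which identifies $\tf_{w_0}=(-1)^N X^*_{I_{w_0}}$ directly as a dual basis element of $\DcFd$ rather than evaluating $x_\Pi f_{w_0}$ on each $X_{I_u}$; also, your final unit computation simply re-derives Lemma~\ref{lem:Sigmaw}.\eqref{item:cw0Sigma} (applied after a $w_0$-conjugation) and could have been cited instead.
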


\begin{proof} We know that $x_\Pi=w_0(x_{w_0})$, and by Lemma \ref{lem:Sigmaw}.\eqref{item:cw0Sigma}, $\frac{x_{w_0}}{v(x_{w_0})}$ is invertible in $S$ for any $v\in W$, so it suffices to show that $x_\Pi f_v\in \DcFd$. If $v=w_0$, by Lemma~\ref{lem:Xdelta}, we have
\[
X_{I_{w_0}}=\sum_{w\le w_0}a^X_{w_0,w}\de_w, \text{ where } a^X_{w_0,w_0}=(-1)^N\tfrac{1}{x_{w_0}},\;\text{ so}
\] 
\[
(x_\Pi f_{w_0})(X_{I_u})=(x_\Pi  f_{w_0})(\sum_{w\le u}a^X_{u,w}\de_w)=(x_\Pi a^X_{w_0,w_0})\Kr_{u,w_0}=(-1)^N\tfrac{x_\Pi}{x_{w_0}}\Kr_{u,w_0}\in S.
\] 
By Lemma~\ref{lem:DcFdinQW*}, we have $x_\Pi f_{w_0}\in \DcFd$. For an arbitrary $v\in W$, by Lemma~\ref{lem:bulletactprop}, we obtain
\[
x_\Pi f_v=x_\Pi f_{w_0w_0^{-1}v}=v^{-1}w_0(x_\Pi f_{w_0})=v^{-1}w_0(x_\Pi f_{w_0})\in \DcFd. \qedhere
\]
\end{proof}

\begin{cor}\label{cor:xPiDinSW}
For any $z\in \DcF$, we have $x_\Pi z\in \SW$ and $z x_\Pi\in \SW$.
\end{cor}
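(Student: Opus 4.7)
The plan is to write an arbitrary $z\in\DcF\subseteq\QW$ in the canonical $Q$-basis $\{\de_w\}_{w\in W}$ as $z=\sum_{w\in W}q_w\de_w$ with $q_w\in Q$, and then to read off the $\de_w$-coefficients of $x_\Pi z$ and $zx_\Pi$ and verify that they all lie in $S$, which is exactly what it means for these products to belong to $\SW$.

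First I would compute both products explicitly. Using the rule $\de_w q=w(q)\de_w$, we find
\[
x_\Pi z=\sum_{w\in W}x_\Pi q_w\,\de_w \quad\text{and}\quad zx_\Pi=\sum_{w\in W}w(x_\Pi)q_w\,\de_w,
\]
so the claim reduces to showing $x_\Pi q_w\in S$ and $w(x_\Pi)q_w\in S$ for every $w\in W$.

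The second inclusion is immediate from what has already been proved: by Lemma~\ref{lem:point}, $\tf_v=v(x_\Pi)f_v$ lies in $\DcFd$, and the characterization in Lemma~\ref{lem:DcFdinQW*} forces $\tf_v(z)\in S$ for all $z\in\DcF$. Since $f_v(z)=q_v$ by definition of the dual basis, this reads $v(x_\Pi)q_v\in S$, hence $zx_\Pi\in\SW$.

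For $x_\Pi z$, I would deduce $x_\Pi q_v\in S$ from $v(x_\Pi)q_v\in S$ by showing that the ratio $x_\Pi/v(x_\Pi)$ is a unit in $S$. Writing $x_\Pi=w_0(x_{w_0})$ we have
\[
x_\Pi/v(x_\Pi)=(w_0(x_{w_0})/x_{w_0})\cdot(x_{w_0}/(vw_0)(x_{w_0})),
\]
and both factors are units in $S$ by two applications of Lemma~\ref{lem:Sigmaw}.\eqref{item:cw0Sigma}. Multiplying the element $v(x_\Pi)q_v\in S$ by this unit yields $x_\Pi q_v\in S$. I do not anticipate a real obstacle here: the whole argument is bookkeeping, combining Lemma~\ref{lem:point} (the integrality statement $\tf_v\in\DcFd$) with the observation that $W$ permutes the factors of $x_\Pi$ only up to $S$-units.
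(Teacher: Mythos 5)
Your proof is correct and takes essentially the same route as the paper's: both reduce the claim to the integrality of the coefficients of $z$ in the $\de_w$-basis, deduce this from $\tf_v\in\DcFd$ (Lemma~\ref{lem:point}) together with the characterization of $\DcFd$ from Lemma~\ref{lem:DcFdinQW*}, and pass between $x_\Pi q_v$ and $v(x_\Pi)q_v$ via the $S$-unit $x_\Pi/v(x_\Pi)$ coming from Lemma~\ref{lem:Sigmaw}.\eqref{item:cw0Sigma}. The only cosmetic difference is that you work with a general $z\in\DcF$ while the paper checks the two inclusions on the basis elements $X_{I_v}$.
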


\begin{proof} 
It suffices to show that for any sequence $I_v$, $x_\Pi X_{I_v}$ and $X_{I_v}x_\Pi$ belong to $S_W$. Indeed,  
\[
x_\Pi X_{I_v}=x_\Pi\sum_{w\le v}a^X_{v,w}\de_w=\sum_{w\le v}(x_\Pi a^X_{v,w})\de_w=\sum_{w\le v}(x_\Pi f_{w})(X_{I_v})\de_w\in S_W,
\]
and 
\[
X_{I_v}x_\Pi=\sum_{w\le v}a^X_{v,w}\de_wx_\Pi=\sum_{w\le v}a^X_{v,w}w(x_\Pi)\de_w=\sum_{w\le v}(w(x_\Pi)f_w)(X_{I_v})\de_w\in \SW.\qedhere
\]
\end{proof}

Let $\zeta\colon \DcF \to \SW$ be the multiplication on the \emph{right} by $x_{\Pi}$ (it does indeed land in $\SW$ by Corollary \ref{cor:xPiDinSW}). The dual map $\zeta^\star\colon \SWd \to \DcF^\star$ is the `$\act$'-action by $x_\Pi$, and $\zeta^\star(f_v)=\tf_v$. 

\begin{rem} 
In $T$-equivariant cohomology, the map $\zeta^\star$ corresponds to the push-forward from the $T$-fixed point set of $G/B$ to $G/B$ itself, see \cite[Lemma 8.5]{CZZ2}.
In the topological context, for singular cohomology, it coincides with the map $i_*$ discussed in \cite[p.8]{AB84}. 
\end{rem}

\begin{lem} \label{lem:maxsubmod} 
The unique maximal left $\DcF$-module (by the $\act$-action) that is contained in $\SWd$ is $\DcFd$.
\end{lem}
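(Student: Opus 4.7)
The plan is to unpack what "maximal" means here and reduce it to a simple extraction argument using the defining property of $\DcFd$ inside $\SWd$ from Lemma~\ref{lem:DcFdinQW*}. Concretely, I would prove two things: first, that $\DcFd$ is itself a left $\DcF$-submodule of $\SWd$ under $\act$; second, that \emph{any} $\DcF$-submodule $M \subseteq \SWd$ is already contained in $\DcFd$. Together these give both existence of a maximal such submodule and its uniqueness (namely $\DcFd$).

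The first point is essentially already stated in the paragraph just after Lemma~\ref{lem:DcFdinQW*}: for $z \in \DcF$ and $\sigma \in \DcFd$, the function $z \act \sigma$ sends $\DcF$ to $S$ because $(z \act \sigma)(z') = \sigma(z'z)$ and $z'z \in \DcF$, so $\sigma(z'z) \in S$. Combined with $\DcFd \subseteq \SWd$, this makes $\DcFd$ a $\DcF$-submodule of $\SWd$.

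The heart of the argument is the second point. Let $M \subseteq \SWd$ be any left $\DcF$-submodule (under $\act$) and take $f \in M$. I want to show $f \in \DcFd$, i.e., by Lemma~\ref{lem:DcFdinQW*}, that $f(z) \in S$ for every $z \in \DcF$. Fix such a $z$. Since $M$ is $\DcF$-stable, $z \act f \in M \subseteq \SWd$, so $(z \act f)(\delta_e) \in S$. But by the very definition of $\act$,
\[
(z \act f)(\delta_e) \;=\; f(\delta_e \cdot z) \;=\; f(z),
\]
which gives $f(z) \in S$. As $z$ was arbitrary, $f \in \DcFd$, hence $M \subseteq \DcFd$.

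There is no real obstacle: the proof is a one-line trick of evaluating the shifted functional at $\delta_e \in \SW$. The only subtlety worth flagging is that the $\act$-action on $\QWd$ only restricts to $\DcFd$ because $\DcF$ is closed under multiplication; but this is exactly what is needed to get $\DcFd$ into the picture as a submodule, and it is already in place. So the final write-up should just consist of the submodule check together with the evaluation-at-$\delta_e$ argument.
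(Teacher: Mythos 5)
Your proof is correct and is essentially the paper's argument: both reduce the claim to the evaluation-at-$\de_e$ trick, namely $(z\act f)(\de_e)=f(z)\in S$ for $f$ in a $\DcF$-stable submodule of $\SWd$, and then invoke Lemma~\ref{lem:DcFdinQW*} to conclude $f\in\DcFd$. The only cosmetic difference is that the paper applies this with $z=X_I$ and then uses that the $X_{I_w}$ generate $\DcF$ as an $S$-module, whereas you run it directly for arbitrary $z\in\DcF$, which is marginally cleaner; you also make explicit the (preceding-paragraph) observation that $\DcFd$ is itself a $\DcF$-submodule.
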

\begin{proof}
Let $f$ be any element in a given $\DcF$-module $M$ contained in $\SWd$. Then $X_I \act f \in M \subseteq \SWd$ for any sequence $I$, and $(X_I \act f)(\de_e) = f(X_I) \in S$. Since $X_I$'s generate $\DcF$ as an $S$-module, we have $f(\DcF) \subseteq S$, and therefore $f \in \DcF^\star$ by Lemma~\ref{lem:DcFdinQW*}. 
\end{proof}

Define the $S$-module
\[
\mZ=\{f \in \SWd \mid B_i(f) \in \SWd \text{ for any simple root $\al_i$ }\}.
\]
Since for an element $f = \sum_{w\in W}q_w f_w$, $q_w\in S$ we have
\[
B_{i}(f)=X_i \act f = \sum_{w \in W} \tfrac{q_w-q_{ws_i}}{x_{w(\al_i)}} f_w=\sum_{w\in W}\tfrac{q_w-q_{s_{w(\al_i)}w}}{x_{w(\al_i)}}f_w,
\]
this can be rewritten as
\[
\mZ=\{\sum_{w\in W}q_w f_w\in \SWd \mid  \tfrac{q_w-q_{s_\al w}}{x_\al}\in S \text{ for any root }\al\text{ and any }w\in W \}.
\]

The following theorem provides another characterization of $\DcFd$

\begin{theo} \label{theo:D=Z}
We have $\DcFd \subseteq \mZ$, and under the conditions of Lemma \ref{lem:div}, we have $\DcFd=\mZ$.
\end{theo}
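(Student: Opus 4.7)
The inclusion $\DcFd \subseteq \mZ$ is immediate: for $f \in \DcFd$ and any simple root $\al_i$, the element $X_i$ lies in $\DcF$, so $B_i(f) = X_i \act f \in \DcFd \subseteq \SWd$, which shows $f \in \mZ$.

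For the reverse inclusion under the conditions of Lemma \ref{lem:div}, my plan is to invoke Lemma \ref{lem:maxsubmod}: since $\DcFd$ is the unique maximal left $\DcF$-submodule of $\SWd$, it is enough to show that $\mZ$ is itself a left $\DcF$-submodule of $\SWd$ (for the $\act$-action); then $\mZ\subseteq\DcFd$ by maximality. Because $\DcF$ is generated as an $R$-algebra by $S$ and by the simple Demazure elements $X_i$, this reduces to checking closure under the $\act$-actions of $S$ and of each $X_i$. Closure under $S$ is a direct consequence of the twisted Leibniz relation $X_is=s_i(s)X_i+\Dem_i(s)$ in $\DcF$: for $s\in S$ and $f\in\mZ$ we obtain
\[
B_i(s\act f)=s_i(s)\act B_i(f)+\Dem_i(s)\act f\in\SWd,
\]
since $s_i(s),\Dem_i(s)\in S$ and $B_i(f),f\in\SWd$.

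The substantive step is closure under each $X_i$, i.e.\ proving $B_jB_i(f)\in\SWd$ for all simple $\al_j$. When $j=i$ the relation $X_i^2=\kp_iX_i$ yields $B_i\circ B_i(f)=\kp_i\act B_i(f)\in\SWd$. The key case is $j\ne i$, where Lemma \ref{lem:div} is essential. Setting $\al=v\al_i$ and $\beta=v\al_j$, a direct computation gives
\[
(B_jB_if)(\de_v)=\tfrac{1}{x_\beta}\Bigl[\tfrac{q_v-q_{s_\al v}}{x_\al}-\tfrac{q_{s_\beta v}-q_{s_\beta s_\al v}}{x_{s_\beta\al}}\Bigr],
\]
and the $\mZ$-conditions at $(v,i)$ and $(vs_j,i)$ already place each fraction in $S$. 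Putting them over the common denominator $x_\al x_{s_\beta\al}$ produces a numerator $N$ divisible by both $x_\al$ and $x_{s_\beta\al}$; these being distinct positive roots whenever $\beta^\vee(\al)\ne 0$, Lemma \ref{lem:div} upgrades this to $x_\al x_{s_\beta\al}\mid N$, so the bracket is an element $A-B\in S$. The congruence $x_{s_\beta\al}\equiv x_\al\pmod{x_\beta}$ (inherited from the formal group law) together with the $\mZ$-condition at $(v,j)$ reduces the desired divisibility $x_\beta\mid A-B$, via a further appeal to Lemma \ref{lem:div}, to a divisibility of the form $x_\beta\mid q_{s_\al v}-q_{s_\beta s_\al v}$, which one resolves using $\mZ$ applied at $(s_\al v,\,?)$.

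I expect the main obstacle to be the braid case $s_is_j\ne s_js_i$: there the elements $s_\beta s_\al v$ and $s_\al s_\beta v$ are distinct, so the natural $\mZ$-conditions at $(s_\al v,j)$ and $(vs_j,i)$ control different $q$-values and the desired pairing does not occur automatically. One must combine Lemma \ref{lem:div} with the Coxeter relation $s_is_js_i=s_js_is_j$, telescoping through the alternate reduced expression to match the required pair of Weyl group elements. In the commuting case $s_is_j=s_js_i$ everything simplifies because $s_\beta\al=\al$ and $s_\al s_\beta=s_\beta s_\al$, so the four terms pair up directly via $\mZ$ and Lemma \ref{lem:div} is needed only once, to combine divisibilities by the coprime roots $x_\al$ and $x_\beta$. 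Once stability under every $X_i$ is established, $\mZ$ is a $\DcF$-submodule of $\SWd$ and Lemma \ref{lem:maxsubmod} yields $\mZ\subseteq\DcFd$, completing the proof.
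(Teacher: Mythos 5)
Your reduction via Lemma \ref{lem:maxsubmod}, the explicit $S$-closure check, the case $j=i$, your formula for $(B_jB_if)(\de_v)$, and the commuting case all match the paper. The genuine gap is in the non-commuting case $s_j(\al_i)\neq\al_i$, and your proposed remedy will not close it. The divisibility you end up needing, $x_\beta\mid q_{s_\al v}-q_{s_\beta s_\al v}$ with $\beta=v(\al_j)$ and $s_\al v=vs_i$, is \emph{not} a simple-root $\mZ$-condition when $\al_i,\al_j$ do not commute: the pair $(vs_i,\,vs_js_i)$ differs on the right by the non-simple reflection $s_is_js_i=s_{s_i(\al_j)}$, so it is not of the form $(u,us_k)$ with $\al_k$ simple. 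Nor can Coxeter telescoping bridge it: in $A_2^{ad}$ with the additive formal group law over $\Z$, the element $f=x_{\al_1}x_{\al_2}(f_e-f_{w_0})$ satisfies $B_1(f),B_2(f)\in\SWd$ but $B_1(B_2(f))\notin\SWd$, so simple-root conditions alone genuinely fail to propagate. The paper's argument sidesteps all of this by using the \emph{rewritten} description of $\mZ$: $x_\al\mid q_w-q_{s_\al w}$ for \emph{every} root $\al$ (not only roots of the form $w(\al_i)$ with $\al_i$ simple) and every $w\in W$ — natural, since $X_\al\in\DcF$ for all roots $\al$. Read that way, $x_\beta\mid q_{s_\al v}-q_{s_\beta s_\al v}$ is an immediate instance (root $\beta$, element $s_\al v$), and the non-commuting case closes uniformly via the congruence $x_{s_\beta\al}\equiv x_\al\pmod{x_\beta}$ plus one application of Lemma \ref{lem:div} — no braid relation, no telescoping, no special ``braid case'' at all.

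A minor point: Lemma \ref{lem:div} is not needed to put the bracket $A-B$ in $S$; each of $A$ and $B$ is already in $S$ on its own by the two $\mZ$-conditions you cited, so their difference is trivially in $S$. Its one essential use here is at the very end, to pass from ``the numerator of $(B_jB_if)(\de_v)$ is divisible by each of $x_\al$, $x_{s_\beta\al}$, and $x_\beta$'' to ``it is divisible by their product''.
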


\begin{proof} 
Since $\DcFd\subseteq\SWd$ is a sub-$\DcF$-module, we have $\DcFd \subseteq \mZ$. By Lemma~\ref{lem:maxsubmod}, $\DcFd$ is the unique maximal $\DcF$-module contained in $\SWd$, so  we only need to prove that $\mZ$ is a $\DcF$-submodule. 

It suffices to show that for any $f \in \mZ$ and for any simple root $\alpha_i$, the element $X_i \act f$ is still in $\mZ$, or in other words, that for any two simple roots $\al_i$ and $\al_j$, we still have $X_iX_j \act f \in \SWd$. 
If $\alpha_i=\al_j$, it follows from $X_i^2= \kp_i X_i$. 

If $s_j(\al_i)=\al_i$, then $s_is_j=s_js_i$. Let $f=\sum_{w\in W}q_wf_w$, then $ X_i\act f=\sum_{w\in W}\frac{q_w-q_{ws_i}}{x_{w(\al_i)}}f_w$. Set $p_w=\frac{q_w-q_{ws_i}}{x_{w(\al_i)}}$, then 
\[
(X_jX_i)\act f=\sum_{w\in W}\tfrac{p_w-p_{ws_j}}{x_{w(\al_j)}}f_w=\sum_{w\in W}\tfrac{q_w-q_{ws_i}-q_{ws_j}+q_{ws_js_i}}{x_{w(\al_i)}x_{w(\al_j)}}f_w.
\]
Rearranging the numerator, we see that it is divisible by both  $x_{w(\al_i)}$ and $x_{w(\al_j)}$, so it is divisible by $x_{w(\al_i)}x_{w(\al_j)}$ by Lemma~\ref{lem:div}.

Suppose $s_j(\alpha_i) \neq \al_i$, then $s_j(\al_i) \neq  \al_j$.  Since $X_i\act f=\sum_w p_w f_w$ with $p_w\in S$ as above, we need to prove that the coefficient of $f_w$ in $X_jX_i\act f$ is in $S$, for any $w$. This coefficient is
\[
\tfrac{p_w - p_{ws_j}}{x_{w(\al_j)}} = \tfrac{(q_w-q_{ws_i})x_{ws_j(\al_i)}-(q_{ws_j} -q_{ws_j s_i})x_{w(\al_i)}}{x_{w(\al_i)}x_{w(\al_j)}x_{ws_j(\al_i)}}.
\]
Since the numerator is already divisible by $x_{w(\al_i)}$ and by $x_{ws_j(\al_i)}$ by assumption, it suffices, by Lemma~\ref{lem:div}, to show that it is divisible by $x_{w(\al_j)}$. Setting $\gamma=w(\al_j)$ and $\nu=w(\alpha_i)$, it becomes  
$(q_w-q_{s_\nu w})x_{s_\gamma(\nu)}-(q_{s_\gamma w} -q_{s_\gamma s_\nu w})x_{\nu}$. Using that $x_{s_\gamma(\nu)}=F(x_\nu,x_{-\langle \nu, \gamma^\vee\rangle \gamma})\equiv x_{\nu} \mod x_{\gamma}$, the numerator is congruent to  (cf. the proof of \cite[Lem.~5.7]{HMSZ})
\[
((q_w-q_{s_\gamma w})-(q_{s_\nu w}-q_{s_\gamma s_\nu w}))x_{\nu}
\]
which is  $0 \mod x_{\gamma}$, by assumption.
\end{proof}

\begin{rem}
The geometric translation of this theorem (\cite[Theorem 9.2]{CZZ2})
generalizes the classical result \cite[Proposition 6.5.(i)]{Br97}. 
\end{rem}

\begin{rem}
In Theorem \ref{theo:D=Z}, it is not possible to remove entirely the assumptions on the root system and the base ring, as the following example shows. Take a root datum of type $G_2$, and a ring $R$ in which $3=0$, with the additive formal group law $F$ over $R$. Then, $S$ is $\RS$-regular, and if $(\al_1,\al_2)$ is a basis of simple roots, with $\beta=2 \al_2+3 \al_1$ being the longest root, we have $x_\beta=2 x_{\al_2}=-x_{\al_2}$. It is not difficult to check that the element $f=(\prod_{\al \in \RS^+,\ \al \neq \beta} x_\alpha) f_e$ is in $\mZ$, but 
\[
f( X_{I_{w_0}})\overset{\text{Lem}. \ref{lem:Xdelta}}=(\prod_{\al\in \RS^+, \al\neq \beta}x_\al)\cdot \tfrac{1}{x_{w_0}}=\tfrac{1}{x_\beta}\not \in S,
\]
so $f\not\in \DcFd$. Therefore,  $\mZ\supsetneq\DcFd$. Indeed,  $\mZ$ is not even a $\DcF$-module.
\end{rem}

Recall from \eqref{eq:XYdualbasis} that $X_{I_w}^*=\sum_{v\geq w}b^X_{v,w}f_v$ and $Y_{I_w}^*=\sum_{v\ge w}b^Y_{v,w}f_v$.
\begin{cor} \label{cor:bXdiv}
For any  $v,w\in W$ and root $\al$, we have $x_\al\mid (b^X_{v,w}-b^X_{s_\al v,w})$ and $x_\al\mid (b^Y_{v,w}-b^Y_{s_\al v, w})$.
\end{cor}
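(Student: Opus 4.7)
The strategy is to use the fact that the elements $X_{I_w}^*$ and $Y_{I_w}^*$ already live in $\DcFd$, combined with stability of $\DcFd$ under the $\act$-action of $\DcF$. Specifically, by Lemma~\ref{lem:DcFdinQW*}, $\{X_{I_w}^*\}_{w\in W}$ is the basis of $\DcFd$ dual to $\{X_{I_w}\}_{w\in W}$, so $X_{I_w}^*\in \DcFd$, and similarly $Y_{I_w}^* \in \DcFd$ (since $\{Y_{I_w}\}_{w\in W}$ is also a basis of $\DcF$ by the same reasoning). Since $X_\alpha \in \DcF$ for every root $\alpha$, the operator $B_\alpha = X_\alpha \act (-)$ preserves $\DcFd$, and in particular sends $\DcFd$ into $\SWd$.

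The key computation is now straightforward: for any $f = \sum_v q_v f_v \in \SWd$, the formula for $B_\alpha$ (generalizing the simple-root formula given just before Theorem~\ref{theo:D=Z}) gives
\[
B_\alpha(f) = \sum_{v\in W} \tfrac{q_v - q_{vs_\alpha}}{x_{v(\alpha)}} f_v.
\]
Applying this to $f = X_{I_w}^*$ yields $B_\alpha(X_{I_w}^*) = \sum_v \tfrac{b^X_{v,w} - b^X_{vs_\alpha,w}}{x_{v(\alpha)}} f_v$, and since this lies in $\SWd$, each coefficient lies in $S$, so $x_{v(\alpha)} \mid (b^X_{v,w} - b^X_{vs_\alpha,w})$ for all $v, w, \alpha$. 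The same argument with $Y_{I_w}^*$ (still acting by $B_\alpha$, not $A_\alpha$, since this preserves the divisibility form we want) gives $x_{v(\alpha)}\mid(b^Y_{v,w} - b^Y_{vs_\alpha,w})$.

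Finally, to convert these into the stated form, observe that $vs_\alpha = s_{v(\alpha)}v$, so $b^X_{vs_\alpha,w} = b^X_{s_{v(\alpha)}v,w}$. Setting $\gamma = v(\alpha)$ and noting that as $v$ and $\alpha$ range over $W$ and $\RS$ respectively, $\gamma = v(\alpha)$ sweeps out all of $\RS$ (for any fixed $v$, since $v$ permutes $\RS$), one recovers the divisibility $x_\gamma \mid (b^X_{v,w} - b^X_{s_\gamma v, w})$ for every root $\gamma$, and similarly for $b^Y$. There is no real obstacle here—the only conceptual point is recognizing that the duals of the $\DcF$-bases are automatically in $\DcFd$ and hence captured by the $\act$-action machinery; the rest is bookkeeping converting between right multiplication by $s_\alpha$ and left multiplication by $s_{v(\alpha)}$.
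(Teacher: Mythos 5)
Your proof is correct and follows essentially the same route as the paper: the corollary is a direct consequence of $X_{I_w}^*, Y_{I_w}^* \in \DcFd$ and the fact that $\DcFd$ is closed under $B_\alpha = X_\alpha \act (-)$ for every root $\alpha$ (since $X_\alpha \in \DcF$), which is precisely the argument behind the containment $\DcFd \subseteq \mZ$ in Theorem~\ref{theo:D=Z}. Your explicit use of $B_\alpha$ for a \emph{non-simple} root $\alpha$ and the identity $vs_\alpha = s_{v(\alpha)}v$ is exactly the bookkeeping needed to turn the membership of $X_{I_w}^*$ in $\mZ$ into the stated divisibility for all pairs $(v,\alpha)$, so this is a clean and faithful reconstruction of the paper's intended reasoning.
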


\begin{rem}
It is not difficult to see that  Corollary \ref{cor:xPiDinSW}  and Corollary \ref{cor:bXdiv} provide a characterization of elements of $\DcF$ inside $\QW$. This characterization coincides with the residue description of $\DcF$ in \cite[\S4]{ZZ}, which generalizes Ginzburg--Kapranov--Vasserot's construction of certain Hecke algebras in \cite{GKV}.
\end{rem}
\medskip 
For any $\Xi\subseteq \Pi$ and $w\in W$, define 
\[
\Xa{\Xi}{I_w}=\sum_{v\in W_\Xi}\de_v\tfrac{b^X_{v^{-1},w}}{x_\Xi} \quad \text{and}\quad \Ya{\Xi}{I_w}=\sum_{v\in W_\Xi} \de_v\tfrac{b^Y_{v^{-1}, w}}{x_\Xi}.
\]
By Lemma \ref{lem:Xdelta}, $b^X_{v,e}=1$, so 
\[
\Xa{\Xi}{\emptyset}=\sum_{v\in W_\Xi}\de_v\tfrac{1}{x_\Xi}=Y_\Xi.
\]
Note that $Y_\Xi$ does not depend on the choice of reduced sequences $\{I_w\}_{w\in W}$, but $\Xa{\Xi}{I_w}$ and $\Ya{\Xi}{I_w}$ do, since $b^X_{w,v}$ and $b^Y_{w,v}$ do for $w$ such that $\ell(w)\ge 3$. Moreover, we have 
\begin{equation} \label{eq:XwXstar}
\Xa{\Pi}{I_w} \act \tf_e = X_{I_w}^* \quad \text{and}\quad \Ya{\Pi}{I_w}\act \tf_e = Y_{I_w}^*
\end{equation}
by a straightforward computation.

\begin{lem}\label{lem:ZwinDcF} 
For any $\Xi\subseteq \Pi$ and $w\in W$, we have $\Xa{\Xi}{I_w}\in \DcF$ and $\Ya{\Xi}{I_w}\in \DcF$. 
\end{lem}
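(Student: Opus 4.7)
The plan is to prove $\Xa{\Xi}{I_w}\in\DcF$; the argument for $\Ya{\Xi}{I_w}$ is analogous with $b^Y$ replacing $b^X$. My approach relies on the characterization
\[
\DcF=\{z\in \QW\mid z\act\DcFd\subseteq\DcFd\}.
\]
The nontrivial direction runs as follows: if $z\act\DcFd\subseteq\DcFd$, then for every $z'\in\DcF$ and every $\sigma\in\DcFd$ we have $\sigma(z'z)=(z\act\sigma)(z')\in S$; letting $\sigma$ range over the dual basis $\{X_{I_u}^*\}_{u\in W}$ of $\DcFd$ (Lemma~\ref{lem:DcFdinQW*}) forces every $X$-basis coefficient of $z'z$ into $S$, so $z'z\in\DcF$, and choosing $z'=1$ yields $z\in\DcF$.

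With this reduction in hand, it suffices to verify $\Xa{\Xi}{I_w}\act X_{I_u}^*\in\DcFd$ for every $u\in W$, since $\{X_{I_u}^*\}$ is an $S$-basis of $\DcFd$. Using the identities $q\act f_v=v(q)f_v$ and $\delta_{v'}\act f_v=f_{vv'^{-1}}$, direct computation gives $\Xa{\Xi}{I_w}\act X_{I_u}^*=\sum_{w''\in W}q_{w''}f_{w''}$ with
\[
q_{w''}=\sum_{\substack{v'\in W_\Xi \\ w''v'\ge u}} b^X_{w''v',u}\,\frac{(w''v')(b^X_{v'^{-1},w})}{(w''v')(x_\Xi)}\in Q.
\]
By Theorem~\ref{theo:D=Z}, membership in $\DcFd$ then reduces to two conditions: (i) $q_{w''}\in S$ for every $w''$, and (ii) $x_\alpha\mid q_{w''}-q_{s_\alpha w''}$ in $S$ for every root $\alpha$ and every $w''\in W$.

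The hard part is verifying both divisibilities, as the denominators $(w''v')(x_\Xi)$ must cancel. For condition (ii) I plan a case analysis on whether $\alpha\in\Sigma_\Xi$ or not. When $\alpha\in\Sigma_\Xi$, the reflection $s_\alpha$ lies in $W_\Xi$ and the substitution $v'\mapsto s_\alpha v'$ reindexes the sum over $W_\Xi$; pairing summands and applying Corollary~\ref{cor:bXdiv} (i.e.\ $x_\alpha\mid b^X_{v,w}-b^X_{s_\alpha v,w}$) together with the congruence $s_\alpha y\equiv y \pmod{x_\alpha}$ for $y\in S$ and the $W_\Xi$-invariance from Corollary~\ref{cor:xXifixed} produces the required factor of $x_\alpha$. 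When $\alpha\notin\Sigma_\Xi$, the substitution does not preserve $W_\Xi$; the divisibility is then established term-by-term using the same congruences, with Lemma~\ref{lem:div} combining the partial divisibilities. Condition (i), that each $q_{w''}\in S$, is more delicate: the argument exploits the $W_\Xi$-orbital structure of the sum, relating $q_{w''}$ to $C_\Xi(\cdot)$-type averages via Corollary~\ref{cor:xXifixed}, while Corollary~\ref{cor:bXdiv} again controls the pairwise differences between numerators. This last verification is the main obstacle of the proof.
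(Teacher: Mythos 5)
Your overall strategy---reduce to showing $\Xa{\Xi}{I_w}\act X_{I_u}^*\in\DcFd$ for all $u$, via the characterization $\DcF=\{z\in\QW\mid z\act\DcFd\subseteq\DcFd\}$---is correct and genuinely different from the paper's route. But there are two real gaps.

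First, you invoke Theorem~\ref{theo:D=Z} to identify $\DcFd$ with $\mZ$, but that equality requires the hypotheses of Lemma~\ref{lem:div}, which Lemma~\ref{lem:ZwinDcF} does not assume. The paper sidesteps this by a functoriality reduction: $\QW$ and $\DcF$ are functorial in the root datum and the formal group law, and the elements $\Xa{\Xi}{I_w}$ are preserved, so one may reduce to the adjoint datum over the Lazard ring, where all integers are regular and Lemma~\ref{lem:div} applies. Without something comparable your argument only proves the lemma under extra hypotheses.

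Second---and more seriously---the key divisibility verification is left as a ``plan'' and you explicitly concede that it is ``the main obstacle.'' In your formula the denominator $(w''v')(x_\Xi)$ varies with $v'$, which makes the cancellation substantially harder than anything that appears in the paper. The paper avoids this difficulty entirely by applying the anti-involution $\iota(q\de_w)=(-1)^{\ell(w)}w^{-1}(q)\de_{w^{-1}}$, which satisfies $\iota(X_\al)=Y_{-\al}$ and hence preserves $\DcF$; it transforms $\Xa{\Xi}{I_w}$ into $\tfrac{1}{x_\Xi}\sum_{v\in W_\Xi}(-1)^{\ell(v)}b^X_{v,w}\de_v$, with a single factor $\tfrac{1}{x_\Xi}$ cleanly on the left. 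Combined with the adjoint-case criterion $\DcF=\{z\in\QW\mid z\cdot S\subseteq S\}$, the check reduces to a \emph{single} alternating sum $\sum_{v\in W_\Xi}(-1)^{\ell(v)}b^X_{v,w}v(x)$ being divisible by each $x_\al$, $\al\in\RS_\Xi^-$, which follows from Corollary~\ref{cor:bXdiv} after pairing $W_\Xi = {}^\al W_\Xi \sqcup s_\al\,{}^\al W_\Xi$. Your approach would need a comparably effective mechanism for the $v'$-varying denominators, and that is precisely the step that is missing. Finally, your appeal to Corollary~\ref{cor:xXifixed} for ``$W_\Xi$-invariance'' is off the mark: that corollary gives $v(x_{\Xi/\Xi'})=x_{\Xi/\Xi'}$ for $v\in W_{\Xi'}$, which is vacuous when $\Xi'=\emptyset$, and $x_\Xi$ itself is not $W_\Xi$-fixed (only $v'(x_\Xi)/x_\Xi$ is a unit in $S$).
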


\begin{proof}
The ring $\QW$ is functorial in the root datum (\ie along morphisms of lattices that send roots to roots) and in the formal group law. This functoriality sends elements $X_\al$ (or $Y_\al$) to themselves, so it restricts to a functoriality of the subring $\DcF$. It also sends the elements $\Xa{\Xi}{I_w}$ (or $\Ya{\Xi}{I_w}$) to themselves. We can therefore assume that the root datum is adjoint, and that the formal group law is the universal one over the Lazard ring, in which all integers are regular, since it is a polynomial ring over $\Z$.

Consider the involution $\iota$ on $\QW$ given by $q\de_w\mapsto (-1)^{\ell(w)}w^{-1}(q)\de_{w^{-1}}$. It satisfies $\iota(zz')=\iota(z')\iota(z)$. Since $\iota(X_\al)=Y_{-\al}$, it restricts to an involution on $\DcF$. 

To show that $\Xa{\Xi}{I_w}\in \DcF$, it suffices to show that $\iota(\Xa{\Xi}{I_w})\in \DcF$. We have 
\[
\iota(\Xa{\Xi}{I_w})=\sum_{v\in W_\Xi}(-1)^{\ell(v)}\tfrac{b^X_{v^{-1},w}}{x_\Xi}\de_{v^{-1}}=\tfrac{1}{x_\Xi}\sum_{v\in W_\Xi}(-1)^{\ell(v)}b^X_{v,w}\de_v.
\]
Since the root datum is adjoint, we have $\DcF=\{f\in \QW\mid f\cdot S\subseteq S\}$ by \cite[Remark~7.8]{CZZ}, so it suffices to show that $\iota(\Xa{\Xi}{I_w})\cdot x\in S$ for any $x\in S$. We have 
\[
\iota(\Xa{\Xi}{I_w})\cdot x=\tfrac{1}{x_\Xi}\sum_{v\in W_\Xi}(-1)^{\ell(v)}b^X_{v,w}v(x).
\]
By Lemma~\ref{lem:div}, it is enough to show that $\sum_{v\in W}(-1)^{\ell(v)}b^X_{v,w}v(x)$ is divisible by $x_\al$ for any root $\al\in \RS_\Xi^-$. Let $^\al W_\Xi=\{v\in W_\Xi|\ell(s_\al v)>\ell(v)\}$. Then $(-1)^{\ell(s_\al v)} = -(-1)^{\ell(v)}$ and $W_\Xi= {^\al W}_\Xi \sqcup s_\al {^\al W}_\Xi$. So
\[
\begin{split}
& \sum_{v\in W_\Xi}(-1)^{\ell(v)}b^X_{v,w}v(x)=\sum_{v\in {}^\al W_\Xi}(-1)^{\ell(v)}(b^X_{v,w}v(x)-b^X_{s_\al v,w} s_\al v(x))\\
& =\sum_{v\in {}^\al W_\Xi}(-1)^{\ell(v)}(b^X_{v,w}v(x)-b^X_{v,w}s_\al v(x)+b^X_{v,w}s_{\al} v(x)-b^X_{s_\al v,w}s_\al v(x)) \\
& =\sum_{v\in {}^\al W_\Xi}(-1)^{\ell(v)}\bigl(b^X_{v,w}x_\al\Dem_\al\big(v(x)\big)+(b^X_{v,w}-b^X_{s_\al v,w})s_\al v(x)\bigl)
\end{split}
\]
which is divisible by $x_\al$ by Corollary \ref{cor:bXdiv}. Therefore $\Xa{\Xi}{I_w}\in \DcF$. The proof that $\Ya{\Xi}{I_w}\in \DcF$ is similar.
\end{proof}

\begin{theo}
$\QWd$ is a free $\QW$-module of rank 1 generated by $f_w$ for any $w\in W$, and $\DcFd$ is a free left $\DcF$-module of rank 1 generated by $\tf_w$ for any $w\in W$. 
\end{theo}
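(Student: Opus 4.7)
The plan is to prove both statements by exhibiting, in each case, an $R$-linear bijection of the form $z \mapsto z \act g_w$, where $(A, g_w)$ is either $(\QW, f_w)$ or $(\DcF, \tf_w)$. For the $\QW$ claim I would argue by direct computation: for $z = \sum_v q_v \delta_v \in \QW$, combining $(z \act f_w)(\delta_u) = f_w(\delta_u z)$ with the product rule $\delta_u \cdot q \delta_v = u(q) \delta_{uv}$ yields $(z \act f_w)(\delta_u) = u(q_{u^{-1}w})$. Thus the $q_v$ are uniquely recovered from $z \act f_w$ (injectivity), and any prescribed $\sigma = \sum_u s_u f_u \in \QWd$ is realized by the explicit choice $q_{u^{-1}w} = u^{-1}(s_u)$ (surjectivity).

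For the $\DcF$-claim, injectivity of $z \mapsto z \act \tf_w$ is immediate from the $\QW$-case: since $\tf_w = x_\Pi \act f_w$ and $\act$ is associative, $z \act \tf_w = (z x_\Pi) \act f_w$, which vanishes only when $z x_\Pi = 0$, forcing $z = 0$ as $x_\Pi$ is a unit in $Q \subseteq \QW$. To handle surjectivity, I would first reduce to $w = e$ by a short direct computation: $\delta_w \act \tf_w = (w(x_\Pi)/x_\Pi)\, \tf_e$ and $\delta_{w^{-1}} \act \tf_e = (x_\Pi/w(x_\Pi))\, \tf_w$; since $\delta_w \in \DcF$ and $w(x_\Pi)/x_\Pi \in S^\times$ by Lemma~\ref{lem:Sigmaw}.\eqref{item:cw0Sigma}, these identities yield $\DcF \act \tf_e = \DcF \act \tf_w$.

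It remains to show $\DcF \act \tf_e = \DcFd$. By Lemma~\ref{lem:DcFdinQW*}, an arbitrary $\sigma \in \DcFd$ decomposes uniquely as $\sigma = \sum_u s_u X_{I_u}^*$ with $s_u \in S$. The crucial identity is $s \act \tf_e = s\, \tf_e$ for $s \in S$: writing $\tf_e = x_\Pi f_e$ and using $Q$-linearity of $\act$ in its second argument together with $s \act f_e = e(s) f_e = s f_e$, the $\act$-action of $S$ coincides with ordinary scalar multiplication on $\tf_e$. Combined with $\Xa{\Pi}{I_u} \act \tf_e = X_{I_u}^*$ from~\eqref{eq:XwXstar}, this gives $(\Xa{\Pi}{I_u} \cdot s_u) \act \tf_e = \Xa{\Pi}{I_u} \act (s_u \tf_e) = s_u X_{I_u}^*$. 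Summing, $\sigma = \bigl(\sum_u \Xa{\Pi}{I_u}\, s_u\bigr) \act \tf_e$, and the element $\sum_u \Xa{\Pi}{I_u}\, s_u$ lies in $\DcF$ by Lemma~\ref{lem:ZwinDcF}.

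The main obstacle is the subtle discrepancy between the two $S$-module structures on $\DcFd$: the natural scalar multiplication (used to express $\sigma = \sum_u s_u X_{I_u}^*$) and the restriction of the $\act$-action of $\DcF$, which differ by the $W$-twist $s \act f_v = v(s) f_v$. The identity $s \act \tf_e = s\, \tf_e$ is exactly the bridge that reconciles them at the generator $\tf_e$, and the entire surjectivity argument rests on this compatibility being available precisely at that point.
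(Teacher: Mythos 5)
Your proposal is correct and follows essentially the same route as the paper: injectivity via the invertibility of $x_\Pi$ in $Q$, reduction to $w=e$ using $\delta_{w^{-1}}\act\tf_e$ and Lemma~\ref{lem:Sigmaw}.\eqref{item:cw0Sigma}, and surjectivity by expressing $\sigma=\sum_u s_u X_{I_u}^*$ as $\bigl(\sum_u \Xa{\Pi}{I_u} s_u\bigr)\act\tf_e$ with $\Xa{\Pi}{I_u}\in\DcF$ by Lemma~\ref{lem:ZwinDcF} and $\Xa{\Pi}{I_u}\act\tf_e=X_{I_u}^*$ from~\eqref{eq:XwXstar}. You simply spell out more explicitly the compatibility of the two $S$-module structures at the generator $\tf_e$, which the paper leaves implicit.
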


\begin{proof}
Since $\de_v \act f_w=f_{wv^{-1}}$, we have $Q_W \act f_w = \QWd$. Moreover, if $z=\sum_{v\in W}q_v\de_v$ such that $z\act f_w=0$, then $\sum_{v\in W}q_vf_{wv^{-1}}=0$, so $q_v=0$ for all $v\in W$, \ie $z=0$; the first part is proven. 

To prove the second part, note that by Lemma \ref{lem:point} $\tf_e\in \DcFd$  for any $w$. Moreover,  $\{\tf_e\}$ is $\QW$-linearly independent by the first part of the proof, hence it is $\DcF$-linearly independent. On the other hand, $\DcF\act \tf_e=\DcFd$ by Lemma~\ref{lem:ZwinDcF} and \eqref{eq:XwXstar}, so $\tf_e$ generates $\DcFd$ as a left $\DcF$-module. Since $\tf_w=\tfrac{x_\Pi}{w^{-1}(x_\Pi)}\de_{w^{-1}}\act \tf_e$, and $\tfrac{x_\Pi}{w^{-1}(x_\Pi)} \in S$ by Lemma \ref{lem:Sigmaw}.\eqref{item:cw0Sigma}, the same is true for $\tf_w$.
\end{proof}

 
\section{The algebraic restriction to the fixed locus on $G/P$} \label{sec:algresP}

We now extend the results of the previous section to the relative case of $W/W_\Xi$.

\medskip

For any  $\Xi\subseteq \Pi$, let $\SWP{\Xi}$ be the free $S$-module with basis $(\de_{\bar{w}})_{\bar{w}\in W/W_\Xi}$ (it is not necessarily a ring). Let $\QWP{\Xi}=Q \otimes_S \SWP{\Xi}$ be its $Q$-localization. There is a left $S$-linear coproduct on $\SWP{\Xi}$, defined on basis elements by the formula $\de_{\bar{w}} \mapsto \de_{\bar{w}} \otimes \de_{\bar{w}}$; it extends by the same formula to a $Q$-linear coproduct on $\QWP{\Xi}$. The induced products on the $S$-dual $\SWPd{\Xi}$ and the $Q$-dual $\QWPd{\Xi}$ are given by the formula $f_{\bar{v}} f_{\bar{w}} = \Kr_{\bar{v},\bar{w}} f_{\bar{v}}$.

\medskip

If $\Xi' \subseteq \Xi$ and $\bar{w} \in W/W_{\Xi'}$, let $\hat{w}$ its class in $W/W_{\Xi}$. We consider the projection and the sum over orbit maps 
\[
\begin{array}[t]{rccc}
\p{\Xi/\Xi'} : & \SWP{\Xi'} & \to & \SWP{\Xi} \\
 & \de_{\bar{w}} & \mapsto & \de_{\hat{w}}
\end{array}
\quad
\text{and}
\quad
\begin{array}[t]{rccc}
\dd{\Xi/\Xi'} : & \SWP{\Xi} & \to & \SWP{\Xi'} \\
 & \de_{\hat{w}} & \mapsto & \sum\limits_{\substack{\bar{v} \in W/W_{\Xi'} \\ \hat{v}=\hat{w}}} \de_{\bar{v}}.
\end{array} \vspace{-3ex}
\] 
with $S$-dual maps
\[
\arraycolsep=.4ex
\begin{array}[t]{rccc}
\pd{\Xi/\Xi'} : & \SWPd{\Xi} & \to & \SWPd{\Xi'} \\
 & f_{\hat{w}} & \mapsto & \sum\limits_{\substack{\bar{v} \in W/W_{\Xi'} \\ \hat{v}=\hat{w}}} f_{\bar{v}}
\end{array}
\quad
\text{and}
\quad
\begin{array}[t]{rccc}
\ddd{\Xi/\Xi'} : & \SWPd{\Xi'} & \to & \SWPd{\Xi} \\
 & f_{\bar{w}} & \mapsto & f_{\hat{w}}.
\end{array}
\] 

We use the same notation for maps between the corresponding $Q$-localized module $\QWP{\Xi}$ and $\QWP{\Xi'}$, and we write $\pqd{\Xi/\Xi'}$ and $\ddqd{\Xi/\Xi'}$ for their $Q$-dual maps. As usual, when $\Xi'=\emptyset$, we omit it, as in $\p{\Xi}: \SW \to \SWP{\Xi}$. Note that the maps $\p{\Xi/\Xi'}$ preserve the coproduct (the maps $\dd{\Xi/\Xi'}$ don't), and thus the dual maps $\pd{\Xi/\Xi'}$ and $\pqd{\Xi/\Xi'}$ are ring maps. We set $\DcFP{\Xi}:=\p{\Xi}(\DcF)\subseteq \QWP{\Xi}$. 

\medskip

The coproduct on $\QWP{\Xi}$ therefore restricts to a coproduct on $\DcFP{\Xi}$.
We then have the following commutative diagram of $S$-modules	which defines the map $\eta_\Xi$	
\begin{equation} \label{eq:petadiag}
\begin{gathered}
\xymatrix{
\SW\ar@{^(->}[r]^\eta  \ar@{->>}[d]_{\p{\Xi}} & \DcF \ar@{^(->}[r] \ar@{->>}[d]_{\p{\Xi}} & \QW\ar@{->>}[d]_{\p{\Xi}} \\
S_{W/W_\Xi} \ar@{^(->}[r]^-{\eta_\Xi} & \DcFP{\Xi} \ar@{^(->}[r] & Q_{W/W_\Xi}.
}
\end{gathered}
\end{equation}

\begin{lem}
The map $\p{\Xi/\Xi'}\colon \QWP{\Xi'} \to \QWP{\Xi}$ restricts to $\DcFP{\Xi'} \to \DcFP{\Xi}$. 
\end{lem}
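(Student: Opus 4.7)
The plan is to reduce the statement to the basic compositional identity
\[
\p{\Xi} \;=\; \p{\Xi/\Xi'} \circ \p{\Xi'}
\]
as maps $\SW \to \SWP{\Xi}$, and hence (by $Q$-linear extension) as maps $\QW \to \QWP{\Xi}$. This identity is immediate from the definitions on basis elements: for $w \in W$ with class $\bar{w} \in W/W_{\Xi'}$ and further class $\hat{w} \in W/W_\Xi$, we have $\p{\Xi'}(\de_w) = \de_{\bar{w}}$, then $\p{\Xi/\Xi'}(\de_{\bar{w}}) = \de_{\hat{w}} = \p{\Xi}(\de_w)$.

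Once this identity is in hand, the statement is a one-line verification. By definition $\DcFP{\Xi'} = \p{\Xi'}(\DcF)$, so any element of $\DcFP{\Xi'}$ has the form $\p{\Xi'}(z)$ for some $z \in \DcF$. Applying $\p{\Xi/\Xi'}$ then gives
\[
\p{\Xi/\Xi'}\bigl(\p{\Xi'}(z)\bigr) = \p{\Xi}(z) \in \p{\Xi}(\DcF) = \DcFP{\Xi},
\]
which is exactly what we need. In fact this yields the stronger conclusion that $\p{\Xi/\Xi'}(\DcFP{\Xi'}) = \DcFP{\Xi}$, i.e.\ the restriction is surjective.

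There is no serious obstacle here: everything reduces to the naturality of the quotient-by-orbit maps with respect to further quotienting by the larger parabolic subgroup $W_\Xi \supseteq W_{\Xi'}$. The only thing worth double-checking is that the commutative diagram \eqref{eq:petadiag} (which defines $\DcFP{\Xi}$ as the image of $\DcF$ under $\p{\Xi}$) is compatible with the analogous diagram for $\Xi'$ via $\p{\Xi/\Xi'}$; but this is precisely the content of the compositional identity above.
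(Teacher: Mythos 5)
Your proof is correct and is essentially the same argument the paper uses: the paper invokes a diagram chase through diagram~\eqref{eq:petadiag} together with the surjectivity of $\p{\Xi'}\colon \DcF \to \DcFP{\Xi'}$, and the engine of that diagram chase is precisely the compositional identity $\p{\Xi} = \p{\Xi/\Xi'}\circ \p{\Xi'}$ that you spell out. You have simply made explicit (and verified on basis elements) what the paper leaves implicit, and you correctly note the bonus surjectivity of the restriction.
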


\begin{proof}
It follows by diagram chase from Diagram \eqref{eq:petadiag} applied first to $\Xi$ and then to $\Xi'$, using the surjectivity of $\p{\Xi'}\colon \DcF \to \DcFP{\Xi'}$. 
\end{proof}

\begin{lem}\label{lem:pXi}
We have $\p{\Xi}(zX_{\al})=0$ for any $\al\in \RS_\Xi$ and $z\in \QW$.
\end{lem}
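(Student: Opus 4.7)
The plan is to reduce to the case $z = \de_w$ by left $Q$-linearity of $\p{\Xi}$, then compute directly using the definition of $X_\al$ and the fact that $s_\al \in W_\Xi$.

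First, I would recall that $\QW$ is a free left $Q$-module with basis $\{\de_w\}_{w \in W}$, and that $\p{\Xi}\colon \QW \to \QWP{\Xi}$ is left $Q$-linear by construction (it is the $Q$-localization of the $S$-linear map $\de_w \mapsto \de_{\bar{w}}$). Writing $z = \sum_{w\in W} q_w \de_w$ with $q_w \in Q$, the problem reduces to showing $\p{\Xi}(\de_w X_\al) = 0$ for each $w \in W$.

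Next, I would unfold the definition $X_\al = \tfrac{1}{x_\al} - \tfrac{1}{x_\al}\de_\al$ and compute, using the product rule \eqref{eq:product},
\[
\de_w X_\al = \de_w \cdot \tfrac{1}{x_\al}(1 - \de_\al) = w\bigl(\tfrac{1}{x_\al}\bigr)(\de_w - \de_{ws_\al}).
\]
Applying $\p{\Xi}$ and using left $Q$-linearity,
\[
\p{\Xi}(\de_w X_\al) = w\bigl(\tfrac{1}{x_\al}\bigr)\bigl(\de_{\bar{w}} - \de_{\overline{ws_\al}}\bigr).
\]
The key observation is then that since $\al \in \RS_\Xi$, we have $s_\al \in W_\Xi$, so $ws_\al$ and $w$ lie in the same left coset of $W_\Xi$, \ie $\bar{w} = \overline{ws_\al}$ in $W/W_\Xi$. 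Hence the bracketed expression vanishes, giving $\p{\Xi}(\de_w X_\al) = 0$.

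There is no real obstacle here: the whole argument is a straightforward unwinding of definitions, whose only content is the stability of cosets $wW_\Xi$ under right multiplication by $s_\al$ for $\al \in \RS_\Xi$. The only minor care needed is to note that the left $Q$-linearity of $\p{\Xi}$ (as opposed to right, which does not make sense since $X_\al$ has denominators not invariant under $W_\Xi$ in general) is what allows the reduction to basis elements $\de_w$.
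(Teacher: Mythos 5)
Your proof is correct and follows exactly the same approach as the paper: reduce to $z=\de_w$ by left $Q$-linearity, compute $\de_w X_\al = w(\tfrac{1}{x_\al})(\de_w - \de_{ws_\al})$, and note that $\bar{w}=\overline{ws_\al}$ since $s_\al\in W_\Xi$. The closing remark about right-linearity is a mild distraction — $\QWP{\Xi}$ only carries a right $Q^{W_\Xi}$-action in the first place, so the question doesn't really arise — but it does not affect the argument.
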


\begin{proof}
Since $\p{\Xi}$ is a map of $Q$-modules, it suffices to consider $z=\de_w$, in which case $\de_w X_\al=\frac{1}{w(x_\al)}\de_w-\frac{1}{w(x_\al)}\de_{ws_\al}$, so $p(\de_wX_\al)=\frac{1}{w(x_\al)}(\de_{\bar{w}}-\de_{\bar{w}})=0$.
\end{proof}

For any $w \in W$, let $X^\Xi_{I_w}$ be the element $\p{\Xi}(X_{I_w}) \in \DcFP{\Xi}$.

\begin{lem}\label{lem:DcFPbasis}
\begin{enumerate}
\item \label{item:zeroX} Let $\{I_w\}_{w\in W}$ be a family of $\Xi$-compatible reduced sequences. If $w \notin W^\Xi$, then $X^\Xi_{I_w}=0$. 
\item \label{item:basisX} Let $\{I_w\}_{w\in W^\Xi}$ be a family of reduced sequences of minimal length. Then the family $\{X^\Xi_{I_{w}}\}_{w\in W^\Xi}$ forms a $S$-basis of $\DcFP{\Xi}$ and, therefore, forms also a $Q$-basis of $Q_{W/W_\Xi}$. 
\end{enumerate}
\end{lem}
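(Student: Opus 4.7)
The plan for \eqref{item:zeroX} is a direct application of $\Xi$-compatibility together with Lemma \ref{lem:pXi}. Given $w \notin W^\Xi$, write uniquely $w = uv$ with $u \in W^\Xi$, $v \in W_\Xi$ and $\ell(w)=\ell(u)+\ell(v)$; by $\Xi$-compatibility, $I_w = I_u \conc I_v$, hence $X_{I_w} = X_{I_u} X_{I_v}$. Since $v \neq e$, the sequence $I_v$ has length $\geq 1$ and all its indices correspond to simple roots in $\Xi$. Writing $I_v = I'_v \conc (j)$ with $\al_j \in \Xi$, we have $X_{I_w} = (X_{I_u} X_{I'_v}) X_{\al_j}$, so $\p{\Xi}(X_{I_w}) = 0$ by Lemma \ref{lem:pXi}.

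For \eqref{item:basisX}, I will first extend the given family $\{I_w\}_{w \in W^\Xi}$ to a $\Xi$-compatible family of reduced sequences $\{I_w\}_{w \in W}$ by choosing arbitrary reduced sequences $\{J_v\}_{v \in W_\Xi}$ and setting $I_{uv} := I_u \conc J_v$ for $u \in W^\Xi$, $v \in W_\Xi$ (which remains reduced since $\ell(uv) = \ell(u)+\ell(v)$). By \cite[Prop.~7.7]{CZZ} applied to this extended family, $\{X_{I_w}\}_{w \in W}$ is an $S$-basis of $\DcF$. Since $\p{\Xi}\colon \DcF \twoheadrightarrow \DcFP{\Xi}$ is surjective, the family $\{X^\Xi_{I_w}\}_{w \in W}$ generates $\DcFP{\Xi}$ as an $S$-module, and by \eqref{item:zeroX} only the indices $w \in W^\Xi$ contribute, which gives the spanning assertion.

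For linear independence, I pass to the $Q$-localization. Using the same factorization $\{X_{I_u} X_{J_v}\}_{u \in W^\Xi, v \in W_\Xi}$, Corollary \ref{cor:basis} yields a $Q$-basis of $\QW$. For $v \in W_\Xi$ with $v \neq e$, the same argument as in \eqref{item:zeroX} shows $\p{\Xi}(q X_{I_u} X_{J_v}) = q \p{\Xi}(X_{I_u} X_{J_v}) = 0$ for every $q \in Q$, using the left $Q$-linearity of $\p{\Xi}$. Hence, for any $z \in \QW$ decomposed as $z = \sum_{u,v} q_{u,v} X_{I_u} X_{J_v}$, we get $\p{\Xi}(z) = \sum_{u \in W^\Xi} q_{u,e} X^\Xi_{I_u}$, so $\{X^\Xi_{I_u}\}_{u \in W^\Xi}$ generates $\QWP{\Xi}$ as a $Q$-module. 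Since $|W^\Xi| = |W/W_\Xi|$ equals the $Q$-rank of the free module $\QWP{\Xi}$, it is in fact a $Q$-basis.

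The main (mild) obstacle is the potential concern about $S$-linear independence in $\DcFP{\Xi}$ itself, but it disappears via this $Q$-localization: any $S$-linear relation $\sum_u c_u X^\Xi_{I_u} = 0$ in $\DcFP{\Xi} \subseteq \QWP{\Xi}$ forces all $c_u = 0$ since the family is $Q$-linearly independent and $S \hookrightarrow Q$.
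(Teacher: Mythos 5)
Your argument is correct, and parts \eqref{item:zeroX} and the spanning assertion of \eqref{item:basisX} follow the paper's own route exactly (extend to a $\Xi$-compatible family, push the $S$-basis $\{X_{I_w}\}_{w\in W}$ of $\DcF$ forward, and kill the $w\notin W^\Xi$ terms by part \eqref{item:zeroX}). Where you diverge is on the $Q$-linear independence step. The paper instead expands $X^\Xi_{I_w}=\sum_{v\le w}a^X_{w,v}\de_{\bar v}$ and observes that, since $w$ is the minimal-length representative of $wW_\Xi$, the $\de_{\bar w}$-coefficient is $a^X_{w,w}=(-1)^{\ell(w)}x_w^{-1}$, which is invertible in $Q$; so the transition matrix to the canonical basis $\{\de_{\bar w}\}_{w\in W^\Xi}$ is triangular with unit diagonal, hence invertible. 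You instead establish that the $|W^\Xi|$ elements $\{X^\Xi_{I_u}\}_{u\in W^\Xi}$ generate $\QWP{\Xi}$, which is free of the same rank $|W/W_\Xi|$, and invoke the standard commutative-algebra fact that a generating set of size equal to the rank of a finitely generated free module over a commutative ring is automatically a basis (equivalently, a surjective endomorphism of such a module is injective, by a Cayley--Hamilton/Nakayama argument). Both routes are valid; the paper's triangularity computation is more self-contained and also produces the explicit base-change matrix, which is used elsewhere, while yours is shorter at the price of a general fact you should at least name.

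One small point worth making explicit: the cardinality argument lives entirely over $Q$, and the claim ``surjection of free $Q$-modules of equal finite rank is an isomorphism'' should be flagged (it is nontrivial, and fails over noncommutative rings). If you prefer to keep the argument elementary, the paper's triangular-matrix observation is easy to reproduce from Lemma \ref{lem:Xdelta} and the fact that $v\le w$ and $\bar v=\bar w$ force $v=w$ when $w$ is the minimal-length coset representative.
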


\begin{proof}
\eqref{item:zeroX} If $w\not \in W^\Xi$, then $w=uv$ with $u\in W^\Xi$ and $e\neq v\in W_\Xi$. By Lemma~\ref{lem:pXi}, we have $\p{\Xi}(X_{I_w})=\p{\Xi}(X_{I_u}X_{I_v})=0$. 

\medskip

\eqref{item:basisX} Let us complete $\{I_w\}_{w \in W^\Xi}$ to a $\Xi$-compatible choice of reduced sequences $\{I_w\}_{w \in W}$ by choosing reduced decompositions for elements in $W_{\Xi}$.  Since $\{X_{I_w}\}_{w\in W}$ is a basis of $\DcF$, its image $\DcFP{\Xi}$ in $\QWP{\Xi}$ is spanned by $\{X^\Xi_{I_w}\}_{w\in W^\Xi}$ by part (a). Writing $X_{I_w}=\sum_{v\le w}a^X_{w,v}\de_v$ yields $X^\Xi_{I_w}=\sum_{v\le w}a^X_{w,v}\de_{\bar{v}}$.  Since $w\in W^\Xi$ is of minimal length in $wW_\Xi$,  the coefficient of $\de_{\bar{w}}$ in $X^\Xi_{I_w}$ is $a^X_{w,w}=(-1)^{\ell(w)}\frac{1}{x_w}$, invertible in $Q$, so the matrix expressing the $\{X^\Xi_{I_w}\}_{w\in W^\Xi}$ on the basis $\{\de_{\bar{w}}\}_{w\in W^\Xi}$ is upper triangular with invertible (in $Q$) determinant, hence $\{X^\Xi_{I_w}\}_{w\in W^\Xi}$ is $Q$-linearly independent in $Q_{W/W_\Xi}$ and therefore $S$-linearly independent in $\DcFP{\Xi}$.
\end{proof}

Observe in particular that $\DcFP{\Pi} \simeq S$ carried by $X_{\emptyset}^\Pi=\de_{\bar{e}}$.

\begin{defi} \label{defi:algresP}
The dual map $\eta_\Xi^\star: \DcFPd{\Xi} \to \SWPd{\Xi}$ is called the \emph{algebraic restriction to the fixed locus}.
\end{defi}

As in Lemma \ref{lem:DcFdinQW*}, and by the similar proof, we obtain:
\begin{lem} \label{lem:algresPinj}
The map $\eta_\Xi^\star$ is an injective ring homomorphism and its image in $\SWPd{\Xi} \subseteq \QWPd{\Xi}$ coincides with the subset
\[
\{f\in \SWPd{\Xi} \mid f(\DcFP{\Xi})\subseteq S\}.
\]
Moreover, the basis of $\DcFPd{\Xi}$ dual to $\{X^\Xi_{I_w}\}_{w\in W^\Xi}$ maps to $\big\{(X^\Xi_{I_w})^*\big\}_{w\in W^\Xi}$ in $\QWPd{\Xi}$.
\end{lem}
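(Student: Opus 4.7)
The plan is to follow closely the strategy used to prove Lemma~\ref{lem:DcFdinQW*}, transferring it to the parabolic setting via the projection $\p{\Xi}$ and the inclusion $\eta_\Xi$. First, to see that $\eta_\Xi^\star$ is a ring homomorphism, I would verify that $\eta_\Xi$ is a coalgebra map. The coproduct on $\SWP{\Xi}$ is by definition $\de_{\bar w}\mapsto \de_{\bar w}\otimes \de_{\bar w}$, and by the remark preceding Diagram~\eqref{eq:petadiag} the coproduct on $\QWP{\Xi}$ restricts to $\DcFP{\Xi}$, given by the same formula on the $\de_{\bar w}$. Since $\eta_\Xi(\de_{\bar w})=\de_{\bar w}$, compatibility is immediate, and dualizing yields that $\eta_\Xi^\star$ respects the products induced on the duals by these coproducts.

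Next, for injectivity and the description of the image, I would set up the commutative square with horizontal arrows $\eta_\Xi^\star$ (top) and its $Q$-linearization $(\eta_\Xi^\star)_Q \colon Q\otimes_S \DcFPd{\Xi} \to Q\otimes_S \SWPd{\Xi}$ (bottom), and vertical arrows the canonical maps $\DcFPd{\Xi}\hookrightarrow Q\otimes_S \DcFPd{\Xi}$ and $\SWPd{\Xi}\hookrightarrow Q\otimes_S \SWPd{\Xi}$. The vertical arrows are injective because $\DcFP{\Xi}$ is $S$-free by Lemma~\ref{lem:DcFPbasis}\eqref{item:basisX} (so its dual is $S$-free) and $\SWP{\Xi}$ is free by construction, together with the injectivity $S\hookrightarrow Q$. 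The bottom arrow is an isomorphism, because Lemma~\ref{lem:DcFPbasis}\eqref{item:basisX} shows that $\{X^\Xi_{I_w}\}_{w\in W^\Xi}$ is simultaneously an $S$-basis of $\DcFP{\Xi}$ and a $Q$-basis of $\QWP{\Xi}=Q\otimes_S \SWP{\Xi}$, so $\eta_\Xi$ becomes an isomorphism after $Q$-localization. Injectivity of $\eta_\Xi^\star$ then follows by diagram chase. Moreover, an element $f\in \SWPd{\Xi}\subseteq \QWPd{\Xi}$ lies in the image of $\eta_\Xi^\star$ iff its image in $Q\otimes_S \SWPd{\Xi}$ pulls back through $(\eta_\Xi^\star)_Q$ into $\DcFPd{\Xi}\subseteq Q\otimes_S \DcFPd{\Xi}$; identifying both $Q$-tensored duals with $\QWPd{\Xi}$ via the shared $Q$-basis, this condition is exactly $f(\DcFP{\Xi})\subseteq S$.

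Finally, the statement about the dual basis is a direct consequence of the same observation: the $S$-dual basis of $\{X^\Xi_{I_w}\}_{w\in W^\Xi}$ inside $\DcFPd{\Xi}$ is characterized by the same defining Kronecker relations as the $Q$-dual family $\{(X^\Xi_{I_w})^*\}_{w\in W^\Xi}$ in $\QWPd{\Xi}$, so they are identified under the inclusion $\DcFPd{\Xi}\hookrightarrow \QWPd{\Xi}$ provided by $\eta_\Xi^\star$.

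The only point needing mild care is the coalgebra compatibility at the very first step; this is however already built into the formalism laid out before Diagram~\eqref{eq:petadiag}, where the maps $\p{\Xi/\Xi'}$ are observed to preserve the coproduct and the coproduct on $\QWP{\Xi}$ is seen to restrict to $\DcFP{\Xi}$. Consequently, the proof is essentially a mechanical transposition of the one for Lemma~\ref{lem:DcFdinQW*}, with no substantial new obstacle.
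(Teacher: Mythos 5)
Your proposal is correct and follows essentially the same strategy as the paper's proof, which is given only as ``As in Lemma~\ref{lem:DcFdinQW*}, and by the similar proof''; you have simply unpacked what that similar proof is, using the coalgebra compatibility of $\eta_\Xi$ and the $S$-basis/$Q$-basis facts from Lemma~\ref{lem:DcFPbasis}\eqref{item:basisX}, exactly as the paper's argument for $\eta^\star$ uses $\{X_{I_w}\}_{w\in W}$ being a basis of both $\DcF$ and $\QW$.
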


So far, the situation is summarized in the diagram of $S$-linear ring maps
\begin{equation}
\begin{gathered}
\xymatrix{ 
\DcFd \ar@{^(->}[r]^{\eta^\star}  & \SWd \\ 
\DcFPd{\Xi} \ar@{^(->}[r]^{\eta_\Xi^\star} \ar@{^(->}[u]^{\pd{\Xi}} & S_{W/W_\Xi}^\star \ar@{^(->}[u]_{\pd{\Xi}} \\ 
}
\end{gathered}
\end{equation}
in which both columns become injections $\QWPd{\Xi} \hookrightarrow \QWd$ after $Q$-localization. The geometric translation of this diagram is in the proof of Corollary~8.7 in \cite{CZZ2}.

\begin{lem} \label{lem:DdWXibasis}
For any $\Xi$-compatible choice of reduced sequences $\{I_w\}_{w \in W}$, the $W_\Xi$-invariant subring $(\DcFd)^{W_\Xi}$ is a free $S$-module with basis $\{X_{I_w}^\star\}_{w \in W^\Xi}$.
\end{lem}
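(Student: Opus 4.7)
The plan is to combine the $Q$-basis result of Corollary \ref{cor:invbasisq} for $(\QWd)^{W_\Xi}$ with the fact that $\{X_{I_w}\}_{w \in W}$ is an $S$-basis of $\DcF$ (by \cite[Prop.~7.7]{CZZ}), so that the dual basis $\{X_{I_w}^\star\}_{w \in W}$ of $\DcFd$ lies inside $\QWd$ as shown in Lemma \ref{lem:DcFdinQW*}.

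First, I would check that each $X_{I_w}^\star$ with $w \in W^\Xi$ actually lies in $(\DcFd)^{W_\Xi}$. Membership in $\DcFd$ is immediate from Lemma \ref{lem:DcFdinQW*}, and since $\DcF$ contains $\SW$, the $\act$-action of $W_\Xi$ restricts from $\QWd$ to $\DcFd$. By Corollary \ref{cor:invbasisq} (which uses exactly the $\Xi$-compatibility of the reduced sequences), $X_{I_w}^\star$ is fixed by $W_\Xi$, hence lies in $(\DcFd)^{W_\Xi}$.

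Next, $S$-linear independence of $\{X_{I_w}^\star\}_{w \in W^\Xi}$ in $(\DcFd)^{W_\Xi}$ follows from $Q$-linear independence in $(\QWd)^{W_\Xi}$, again by Corollary \ref{cor:invbasisq}. For the spanning property, take any $\sigma \in (\DcFd)^{W_\Xi}$; viewing $\sigma$ in $(\QWd)^{W_\Xi}$, Corollary \ref{cor:invbasisq} yields a unique expansion
\[
\sigma = \sum_{u \in W^\Xi} q_u\, X_{I_u}^\star, \qquad q_u \in Q.
\]
Pairing with the $S$-basis element $X_{I_v} \in \DcF$ for $v \in W^\Xi$ gives $q_v = \sigma(X_{I_v})$, and since $\sigma \in \DcFd$ and $X_{I_v} \in \DcF$, this evaluation lies in $S$. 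Hence every coefficient $q_u$ belongs to $S$, which finishes the proof.

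I don't expect a real obstacle here: all the real work was done in establishing Corollary \ref{cor:invbasisq} (via the nontrivial Theorem \ref{th:WPinvariant}) and in Lemma \ref{lem:DcFdinQW*}. The only subtle point is remembering that $\Xi$-compatibility of the reduced sequences is essential to apply Corollary \ref{cor:invbasisq}, but this is precisely the hypothesis of the lemma.
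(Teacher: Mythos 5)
Your proof is correct and follows essentially the same route as the paper, which simply observes $(\DcFd)^{W_\Xi}=(\QWd)^{W_\Xi}\cap \DcFd$ and invokes Corollary \ref{cor:invbasisq}; you have merely unpacked the intersection argument (invariance of each $X_{I_w}^\star$, linear independence over $S$, and that the $Q$-coefficients $q_v=\sigma(X_{I_v})$ land in $S$) into explicit steps.
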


\begin{proof}
It follows from Corollary \ref{cor:invbasisq} since $(\DcFd)^{W_\Xi}=(\QWd)^{W_\Xi}\cap \DcFd$.
\end{proof}

\begin{lem} \label{lem:DFinv}
The injective maps $\pd{\Xi}\colon\SWPd{\Xi} \to \SWd$, $\pqd{\Xi}\colon \QWPd{\Xi} \to \QWd$ and $\pd{\Xi}\colon\DcFPd{\Xi} \to \DcFd$ have images $(\SWd)^{W_\Xi}$, $(\QWd)^{W_\Xi}$ and $(\DcFd)^{W_\Xi}$, respectively.
\end{lem}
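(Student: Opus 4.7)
The plan is to compute the images of the dual basis elements under $\pd{\Xi}$ (and $\pqd{\Xi}$) explicitly, reducing the first two identities to the description of $(\QWd)^{W_\Xi}$ given in Lemma~\ref{lem:QWinvariant}, and then to deduce the third identity from the $\SWd$-case combined with the intrinsic characterisation of $\DcFPd{\Xi}$ inside $\QWPd{\Xi}$ provided by Lemma~\ref{lem:algresPinj}. Directly from the definition of $\pd{\Xi}$ and $f_v(\de_w)=\Kr_{v,w}$, one sees $\pd{\Xi}(f_{\bar w}) = \sum_{v \in wW_\Xi} f_v = f_w^\Xi$ in the notation of Section~\ref{sec:pushpulldual}; the same formula holds after $Q$-localisation for $\pqd{\Xi}$. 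Since $\{f_{\bar w}\}_{\bar w \in W/W_\Xi}$ is an $S$-basis (resp.\ $Q$-basis) of $\SWPd{\Xi}$ (resp.\ $\QWPd{\Xi}$), the image of $\pd{\Xi}$ (resp.\ $\pqd{\Xi}$) is the $S$-span (resp.\ $Q$-span) of the projectors $\{f_w^\Xi\}_{w \in W_{\Pi/\Xi}}$.

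For the $\QWd$-statement, Lemma~\ref{lem:QWinvariant} gives exactly that $\{f_w^\Xi\}_{w \in W_{\Pi/\Xi}}$ is a $Q$-basis of $(\QWd)^{W_\Xi}$, so the image coincides with $(\QWd)^{W_\Xi}$. For the $\SWd$-statement, the inclusion $\pd{\Xi}(\SWPd{\Xi}) \subseteq (\SWd)^{W_\Xi}$ is immediate. Conversely, given $g = \sum_{v \in W} q_v f_v \in (\SWd)^{W_\Xi}$ with $q_v \in S$, Lemma~\ref{lem:bulletactprop} shows that $W_\Xi$-invariance forces $q_{vw^{-1}}=q_v$ for every $w \in W_\Xi$, so the coefficient depends only on the class $\bar v \in W/W_\Xi$, and $g = \pd{\Xi}(\sum_{\bar v} q_v f_{\bar v})$ lies in the image.

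For the $\DcFd$-identity, the inclusion $\pd{\Xi}(\DcFPd{\Xi}) \subseteq (\DcFd)^{W_\Xi}$ follows from the $\SWd$-case together with Lemma~\ref{lem:algresPinj}. Conversely, take $f \in (\DcFd)^{W_\Xi}$; viewing $f$ in $(\SWd)^{W_\Xi}$ via $\eta^\star$, the $\SWd$-case produces a unique $\tilde f \in \SWPd{\Xi}$ with $\pd{\Xi}(\tilde f)=f$. By Lemma~\ref{lem:algresPinj}, it suffices to show $\tilde f(\DcFP{\Xi}) \subseteq S$. Since $\p{\Xi}\colon \DcF \twoheadrightarrow \DcFP{\Xi}$ is surjective by definition, for any $z' \in \DcFP{\Xi}$ one picks $z \in \DcF$ with $\p{\Xi}(z)=z'$, and then
\[
\tilde f(z') = \pqd{\Xi}(\tilde f)(z) = f(z) \in S
\]
because $f \in \DcFd$. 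The one step requiring actual care is this last compatibility check, which hinges simultaneously on the surjectivity of $\p{\Xi}\colon \DcF \to \DcFP{\Xi}$ and on the Lemma~\ref{lem:algresPinj} criterion for membership in $\DcFPd{\Xi}$; the rest of the argument is a direct unpacking of the explicit basis descriptions.
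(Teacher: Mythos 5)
Your proof is correct, and for the $\DcFd$ part it takes a genuinely different route. The paper handles the third identity by matching explicit bases: it chooses a $\Xi$-compatible family of reduced sequences, uses Lemma~\ref{lem:DcFPbasis} to see that $\p{\Xi}(X_{I_w})$ vanishes for $w\notin W^\Xi$ and runs through a basis of $\DcFP{\Xi}$ otherwise, dualizes to get $\pd{\Xi}\big((X^\Xi_{I_w})^\star\big)=X^\star_{I_w}$ for $w\in W^\Xi$, and then invokes Lemma~\ref{lem:DdWXibasis} (which itself rests on the invariance machinery of Section~\ref{sec:anotherbasis}) to conclude. You instead stay at the level of the intrinsic characterizations: the containment $\pd{\Xi}(\DcFPd{\Xi})\subseteq(\DcFd)^{W_\Xi}$ and its converse both follow from Lemma~\ref{lem:algresPinj} combined with the surjectivity of $\p{\Xi}\colon\DcF\twoheadrightarrow\DcFP{\Xi}$, without ever choosing reduced sequences. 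Your approach is more elementary and makes no use of $\Xi$-compatibility; the paper's approach buys the additional explicit information that dual bases are carried to each other, which the author reuses in later arguments (e.g.\ Theorem~\ref{theo:pairingDXi}). Two minor slips to watch: in the $\SWd$ direction the invariance condition derived from Lemma~\ref{lem:bulletactprop} is $q_{vw}=q_v$ (equivalent to what you wrote only because $W_\Xi$ is closed under inversion), and in the final display you wrote $\pqd{\Xi}(\tilde f)(z)$ where you mean $\pd{\Xi}(\tilde f)(z)$ since $\tilde f$ lives in $\SWPd{\Xi}$; neither affects the validity.
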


\begin{proof}
For any $w\in W$, we have $\pqd{\Xi}(f_{\bar{w}})=f^\Xi_w$. Thus $\pqd{\Xi}(\QWPd{\Xi})=(\QWd)^{W_\Xi}$ by Lemma~\ref{lem:QWinvariant}. Similarly, $\pd{\Xi}(\SWPd{\Xi})=(\SWd)^{W_\Xi}$. Finally, take a $\Xi$-compatible choice of reduced sequences $\{I_w\}_{w\in W}$, dualizing the fact that $\p{\Xi}(X_{I_w})=X^\Xi_{I_w}$, which, by Lemma \ref{lem:DcFPbasis}, is $0$ if $w\not\in W^\Xi$ and a basis element otherwise, we obtain that $\pd{\Xi}((X^\Xi_{I_w})^\star)=X^\star_{I_w}$ if $w \in W^\Xi$, and thus the conclusion for $\DcFPd{\Xi}$ by Lemma \ref{lem:DdWXibasis}. 
\end{proof}
\begin{rem}Note that if $\{I_w\}_{w\in W}$ is not $\Xi$-compatible, then we may not have $\pd{\Xi}((X^\Xi_{I_w})^\star)=X^\star_{I_w}$ for all $w \in W^\Xi$.
\end{rem}

Through the resulting isomorphism $\DcFPd{\Xi} \simeq (\DcFd)^{W_\Xi}$, we obtain
\[
\begin{split}
\DcFPd{\Xi} & =\{f\in \SWPd{\Xi}\mid f(\DcFP{\Xi})\subseteq S\} \\
\simeq (\DcFd)^{W_\Xi} & =\{f\in (\SWd)^{W_\Xi}\mid f(\DcF)\subseteq S\} \\ 
 & =\{f\in \SWd\mid f(\DcF)\subseteq S \text{ and } f(K_\Xi)=0 \} 
\end{split}
\]
where $K_\Xi$ is the kernel of $\p{\Xi}$, \ie the sub-$S$-module of $\DcF$ generated by $(X_{I_w})_{w \notin W^\Xi}$ for a $\Xi$-compatible choice of reduced sequences $\{I_w\}_{w\in W}$.
\medskip

Since $(\DcFd)^{W_\Xi}=\DcFd \cap (\SWd)^{W_\Xi}$, an element of $\SWPd{\Xi}$ is in $\DcFPd{\Xi}$ if and only if its image by $\pd{\Xi}$ is in $\DcFd$. Since $B_\al(f)=0$ when $f \in (\SWd)^{W_\Xi}$ and $\alpha \in W_\Xi$, Theorem \ref{theo:D=Z} then gives:

\begin{theo}
Under the conditions of Lemma \ref{lem:div}, an element $f \in \SWPd{\Xi}$ is in $\DcFPd{\Xi}$ if and only if $B_\al \circ \pd{\Xi}(f) \in \SWd$ for any $\al \notin \RS_\Xi$. In other words, $f = \sum_{\bar{w}} q_{\bar{w}} f_{\bar{w}}$ is in $\DcFPd{\Xi}$ if and only if $x_{w(\al)}$ divides $q_{\bar{w}} - q_{\overline{s_{w(\al)w}}}$ for any $\bar{w} \in W/W_\Xi$ and any $\al \notin \RS_\Xi$. 
\end{theo}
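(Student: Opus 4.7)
The plan is to deduce this directly from Theorem \ref{theo:D=Z}, transported from $\SWPd{\Xi}$ to $\SWd$ via $\pd{\Xi}$. By Lemma \ref{lem:DFinv}, $\pd{\Xi}$ is injective with image $(\SWd)^{W_\Xi}$ on $\SWPd{\Xi}$ and image $(\DcFd)^{W_\Xi}=\DcFd\cap(\SWd)^{W_\Xi}$ on $\DcFPd{\Xi}$. Hence $f\in\DcFPd{\Xi}$ if and only if $\pd{\Xi}(f)\in\DcFd$, the $W_\Xi$-invariance of $\pd{\Xi}(f)$ being automatic. Applying Theorem \ref{theo:D=Z} under the hypotheses of Lemma \ref{lem:div}, this is equivalent to $B_{\al_i}(\pd{\Xi}(f))\in\SWd$ for every simple root $\al_i$; by the all-roots reformulation of the definition of $\mZ$ given just after it, this is in turn equivalent to $B_\al(\pd{\Xi}(f))\in\SWd$ for every root $\al$.

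Next, I would observe that this last condition is automatic whenever $\al\in\RS_\Xi$: since $s_\al\in W_\Xi$ and $\pd{\Xi}(f)$ is $W_\Xi$-invariant under the $\act$-action, $s_\al(\pd{\Xi}(f))=\pd{\Xi}(f)$, so
\[
B_\al(\pd{\Xi}(f))=X_\al\act\pd{\Xi}(f)=\tfrac{1}{x_\al}\act\bigl(\pd{\Xi}(f)-s_\al(\pd{\Xi}(f))\bigr)=0\in\SWd.
\]
Thus only the conditions attached to $\al\notin\RS_\Xi$ carry information, which yields the first, conceptual formulation of the theorem.

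For the explicit divisibility formulation, I would write $\pd{\Xi}(f)=\sum_{w\in W}q_{\bar w}f_w$, where $q_{\bar w}$ depends only on the coset $\bar w=wW_\Xi$, and expand using the standard formula
\[
B_\al\bigl(\pd{\Xi}(f)\bigr)=\sum_{w\in W}\tfrac{q_{\bar w}-q_{\overline{ws_\al}}}{x_{w(\al)}}\,f_w
\]
recalled in the definition of $\mZ$, together with the identity $ws_\al=s_{w(\al)}w$ to rewrite $\overline{ws_\al}=\overline{s_{w(\al)}w}$. The condition $B_\al\circ\pd{\Xi}(f)\in\SWd$ then amounts exactly to the divisibility $x_{w(\al)}\mid q_{\bar w}-q_{\overline{s_{w(\al)}w}}$ for all $w\in W$ and $\al\notin\RS_\Xi$. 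No substantive obstacle is expected: the hard analytic content is already contained in Theorem \ref{theo:D=Z}, and what remains is a routine combination of the identification $\DcFPd{\Xi}\simeq(\DcFd)^{W_\Xi}$ from Lemma \ref{lem:DFinv} with the automatic vanishing of $B_\al$ on $W_\Xi$-invariants for $\al\in\RS_\Xi$.
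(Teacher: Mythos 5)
Your proof is correct and follows essentially the same route as the paper, which also reduces via $\pd{\Xi}$ to the condition $\pd{\Xi}(f)\in\DcFd$, applies Theorem~\ref{theo:D=Z}, and discards the conditions for $\al\in\RS_\Xi$ by observing $B_\al(\pd{\Xi}(f))=0$ on $W_\Xi$-invariants. You spell out more carefully the identification via Lemma~\ref{lem:DFinv} and the explicit divisibility translation, but the underlying argument is identical.
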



\section{The push-pull operators on $\DcFd$}\label{sec:pushpullDdual}

In this section we restrict the push-pull operators onto the dual of the formal affine Demazure algebra $\DcFd$, and define a non-degenerate pairing on it.

\medskip

By Lemma \ref{lem:ZwinDcF}, we have $Y_\Xi\in \DcF$, so 
\begin{cor}\label{cor:opAinv}
The operator $Y_\Xi$ (resp. $A_\Xi$) restricted to $S$ (resp. to $\DcFd$) defines an operator on $S$ (resp. on $\DcFd$).  Moreover, we have
\[
C_\Xi(S)\subseteq S^{W_\Xi}\quad\text{ and }\quad A_\Xi(\DcFd)\subseteq (\DcFd)^{W_\Xi}.
\]
\end{cor}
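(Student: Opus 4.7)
The corollary is essentially a direct combination of results already established. The plan is as follows.

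First, for the claim about $Y_\Xi$ acting on $S$: by Lemma \ref{lem:ZwinDcF} (applied with $w=e$, in which case $\Xa{\Xi}{\emptyset}=Y_\Xi$), we know $Y_\Xi\in\DcF$. The ring $\DcF$ acts on $S$ via the action~\eqref{eq:leftactQW}, and by the isomorphism $\DcF\simeq\ED$ recalled at the beginning of Section~\ref{sec:iwahorihecke}, this action preserves $S$. Hence $C_\Xi(q)=Y_\Xi\cdot q\in S$ whenever $q\in S$, which gives the first assertion.

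Second, for the claim about $A_\Xi$ acting on $\DcFd$: since $\DcFd$ is stable under the $\act$-action of $\DcF$ (this was noted just after Lemma~\ref{lem:DcFdinQW*}, using that if $\sigma(\DcF)\subseteq S$ then $(X\act\sigma)(\DcF)=\sigma(\DcF X)\subseteq S$), and since $Y_\Xi\in\DcF$, we conclude $A_\Xi(f)=Y_\Xi\act f\in\DcFd$ for every $f\in\DcFd$.

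Finally, the two invariance statements follow by specializing the already proved invariance lemmas to $\Xi'=\emptyset$. Lemma~\ref{lem:Cinvtoinv} with $\Xi'=\emptyset$ gives $C_\Xi(Q)\subseteq Q^{W_\Xi}$; intersecting with the first paragraph's inclusion $C_\Xi(S)\subseteq S$ yields
\[
C_\Xi(S)\subseteq S\cap Q^{W_\Xi}=S^{W_\Xi}.
\]
Analogously, Lemma~\ref{lem:Ainvtoinv} with $\Xi'=\emptyset$ gives $A_\Xi(\QWd)\subseteq(\QWd)^{W_\Xi}$, and combining with $A_\Xi(\DcFd)\subseteq\DcFd$ from the second paragraph produces
\[
A_\Xi(\DcFd)\subseteq\DcFd\cap(\QWd)^{W_\Xi}=(\DcFd)^{W_\Xi}.
\]
There is no real obstacle here: the whole work was already done in Lemma~\ref{lem:ZwinDcF} (to get $Y_\Xi\in\DcF$), and in the stability lemmas~\ref{lem:Cinvtoinv} and~\ref{lem:Ainvtoinv}; the corollary is just their assembly.
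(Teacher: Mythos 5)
Your proof is correct and follows essentially the same route as the paper: both reduce the statement to $Y_\Xi\in\DcF$ (which you get from Lemma~\ref{lem:ZwinDcF}), deduce that $Y_\Xi$ then acts on $S$ and on $\DcFd$, and obtain the $W_\Xi$-invariance from the localized statement. If anything, your invocation of Lemmas~\ref{lem:Cinvtoinv} and~\ref{lem:Ainvtoinv} with $\Xi'=\emptyset$ for the invariance step is cleaner than the paper's, which cites the projection formula (Lemma~\ref{lem:projformulaA}) at that point — almost certainly a misreference for Lemma~\ref{lem:Ainvtoinv}.
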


\begin{proof} 
Here $Y_\Xi$ acts on $S\subseteq Q$ via \eqref{eq:leftactQW}. Since $Y_\Xi\in \DcF\subseteq \{z\in \QW\mid z\cdot S\subseteq S\}$ by \cite[Remark~7.8]{CZZ} and $Y_\Xi\cdot Q \subseteq (Q)^{W_\Xi}$, the result follows. 

As for $A_\Xi$, by Lemma~\ref{lem:DcFdinQW*} any $f\in \DcFd$ has the property that $f(\DcF)\subseteq S$. Therefore, $(A_\Xi(f))(\DcF)=(Y_\Xi\act f)(\DcF)=f(\DcF Y_\Xi)\subseteq S$, so $A_\Xi(f)\in \DcFd$. The result then follows by Lemma~\ref{lem:projformulaA}.
\end{proof}

\begin{cor} \label{cor:opAinv2}
Suppose that the root datum has no irreducible component of type $C_n^{sc}$ or that $2$ is invertible in $R$. Then if $|W_{\Xi'}|$ is regular in $R$, for any $\Xi'\subseteq \Xi\subseteq \Pi$, we have 
\[
C_{\Xi/\Xi'}(S^{W_{\Xi'}})\subseteq S^{W_\Xi}.
\]
\end{cor}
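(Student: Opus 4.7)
The plan is to relate $C_{\Xi/\Xi'}$ to $C_\Xi$ by an identity that isolates the factor $|W_{\Xi'}|$. First, for any $x \in S^{W_{\Xi'}}$, I would establish
\[
C_\Xi(x_{\Xi'}\cdot x) \;=\; |W_{\Xi'}|\cdot C_{\Xi/\Xi'}(x).
\]
To derive this, I would start from $C_\Xi(x_{\Xi'}\cdot x) = \sum_{w\in W_\Xi} w(x_{\Xi'}\cdot x / x_\Xi) = \sum_{w\in W_\Xi} w(x/x_{\Xi/\Xi'})$, using the factorization $x_\Xi = x_{\Xi/\Xi'}\cdot x_{\Xi'}$. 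By Corollary~\ref{cor:xXifixed} and the hypothesis on $x$, the element $x/x_{\Xi/\Xi'}$ lies in $Q^{W_{\Xi'}}$, so splitting the sum according to the coset decomposition $W_\Xi = \bigsqcup_{\bar w \in W_{\Xi/\Xi'}} \bar w \, W_{\Xi'}$ makes the inner sum over $W_{\Xi'}$ collapse to the scalar $|W_{\Xi'}|$, yielding $|W_{\Xi'}|\cdot \sum_{\bar w \in W_{\Xi/\Xi'}} \bar w (x/x_{\Xi/\Xi'}) = |W_{\Xi'}|\cdot C_{\Xi/\Xi'}(x)$.

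Next, since $x_{\Xi'}\cdot x \in S$, the left-hand side of the identity lies in $S^{W_\Xi}$ by Corollary~\ref{cor:opAinv}, while $C_{\Xi/\Xi'}(x) \in Q^{W_\Xi}$ by Lemma~\ref{lem:Cinvtoinv}. Thus $|W_{\Xi'}|\cdot C_{\Xi/\Xi'}(x) \in S^{W_\Xi}$, and the remaining task is to cancel the factor $|W_{\Xi'}|$ inside $S$: one must show that $z \in Q$ with $|W_{\Xi'}|\cdot z \in S$ forces $z \in S$.

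The main obstacle is this last step, where the regularity of $|W_{\Xi'}|$ in $R$ enters. Choosing a basis of $\cl$, the isomorphism of \cite[Cor.~2.13]{CPZ} identifies $S$ with the power series ring $R\lbr x_{\lambda_1},\ldots,x_{\lambda_l}\rbr$. Since $Q$ is obtained by inverting the multiplicative set $\{x_\al\}_{\al\in\RS}$, any $z\in Q$ can be written as $z = s'/d$ with $s'\in S$ and $d$ a product of $x_\al$'s; the condition $|W_{\Xi'}|\cdot z \in S$ becomes $|W_{\Xi'}|\cdot s' \in d\cdot S$. The quotient $S/(d\cdot S)$ is $R$-torsion-free (a complement to $d\cdot S$ inside $S$ is spanned by monomials not divisible by $d$, a torsion-free $R$-submodule), so the regularity of $|W_{\Xi'}|$ in $R$ forces $s' \in d\cdot S$, and hence $z \in S$. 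Applying this with $z = C_{\Xi/\Xi'}(x)$ and combining with the $W_\Xi$-invariance already obtained gives $C_{\Xi/\Xi'}(x) \in S^{W_\Xi}$, as required.
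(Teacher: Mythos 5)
Your proof follows the same route as the paper's: establish the identity $C_\Xi(x_{\Xi'}\cdot x) = |W_{\Xi'}|\cdot C_{\Xi/\Xi'}(x)$ for $x\in S^{W_{\Xi'}}$, deduce that $|W_{\Xi'}|\cdot C_{\Xi/\Xi'}(x)\in S^{W_\Xi}$ from Corollary~\ref{cor:opAinv}, get the $W_\Xi$-invariance from Lemma~\ref{lem:Cinvtoinv}, and finally cancel the regular factor $|W_{\Xi'}|$. The only point where you depart from the paper is the last step: the paper invokes \cite[Lemma~3.5]{CZZ} to conclude $C_{\Xi/\Xi'}(x)\in S$, while you try to prove the cancellation from scratch. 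That is where a genuine gap appears.

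Your justification that $S/(d\cdot S)$ is $R$-torsion-free because ``a complement to $d\cdot S$ inside $S$ is spanned by monomials not divisible by $d$'' is not correct as stated: the element $d$ is a product of $x_\al$'s, and for a general root $\al$ the element $x_\al$ is a power series in the chosen coordinates $x_{\la_1},\ldots,x_{\la_l}$, not a monomial, so ``monomials not divisible by $d$'' is not meaningful, and the alleged direct-sum decomposition does not hold in a power series ring. More importantly, the statement you need (``$|W_{\Xi'}|\cdot z\in S$ with $z\in Q$ forces $z\in S$'') is simply false without the hypothesis excluding $C_n^{sc}$ (or requiring $2$ invertible), which your argument never uses. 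Concretely, in type $A_1^{sc}=C_1^{sc}$ with the additive FGL over $R=\Z$, one has $\cl=\Z\omega$, $\al=2\omega$, so $S=\Z\lbr x_\omega\rbr$ and $x_\al = 2x_\omega$. Then $z:=x_\omega/x_\al = 1/2 \in Q$ satisfies $2z=1\in S$ but $z\notin S$; equivalently, $S/(x_\al S)$ has $2$-torsion, so the torsion-freeness you assert can fail. A correct direct argument would have to use the stated hypothesis to ensure each relevant $x_\al$ can be taken as a coordinate of the power series ring (i.e., $\al$ can be completed to a basis of $\cl$, as in the proof of Lemma~\ref{lem:div}), so that $S/(x_\al S)$ is again a power series ring over $R$ and hence $R$-torsion-free, and then iterate over the factors of $d$; alternatively, just cite \cite[Lemma~3.5]{CZZ} as the paper does.
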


\begin{proof} 
Let $x\in S^{W_{\Xi'}}$, then $|W_{\Xi'}|\cdot x=\sum_{w\in W_{\Xi'}}w(x)$. So we have 
\[
|W_{{\Xi'}}|\cdot C_{\Xi/\Xi'}(x)=C_{\Xi/\Xi'}(|W_{\Xi'}|\cdot x)=\sum_{u\in W_{\Xi/\Xi'}}u(\tfrac{|W_{\Xi'}|\cdot x}{x_{\Xi/\Xi'}})
\]
$$=\sum_{u\in W_{\Xi/\Xi'}}\sum_{v\in W_{\Xi'}}uv(\tfrac{x}{x_{\Xi/\Xi'}})=\sum_{w\in W_\Xi}w(\tfrac{xx_{\Xi'}}{x_\Xi})\in S^{W_\Xi}.$$
Thus $|W_{\Xi'}|\cdot C_{\Xi/\Xi'}(x)\in S$, which implies that $C_{\Xi/\Xi'}(x)\in S$ by \cite[Lemma 3.5]{CZZ}. Besides, it is fixed by $W_{\Xi}$ by Lemma~\ref{lem:Cinvtoinv}. 
\end{proof}

\begin{cor}
If $|W|$ is invertible in $R$, then $C_{\Xi/\Xi'}( S^{W_{\Xi'}})=S^{W_\Xi}$.
\end{cor}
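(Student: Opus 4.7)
The plan is to show the non-trivial inclusion $S^{W_\Xi} \subseteq C_{\Xi/\Xi'}( S^{W_{\Xi'}})$; the inclusion $C_{\Xi/\Xi'}( S^{W_{\Xi'}}) \subseteq S^{W_\Xi}$ is already Corollary \ref{cor:opAinv2} (note that $|W_{\Xi'}|$ divides $|W|$, so it is regular in $R$, and when $2$ is invertible in $R$ the hypothesis on $C_n^{sc}$ components is irrelevant). For the reverse inclusion, the key observation is that $x_{\Xi/\Xi'}$ is itself a very convenient test element on which $C_{\Xi/\Xi'}$ can be computed explicitly.

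Given $y \in S^{W_\Xi}$, I would set
\[
x := \tfrac{1}{[W_\Xi : W_{\Xi'}]}\, y\, x_{\Xi/\Xi'}.
\]
This makes sense in $S$: since $|W|$ is invertible in $R$ and $[W_\Xi : W_{\Xi'}]$ divides $|W|$, the scalar $\tfrac{1}{[W_\Xi : W_{\Xi'}]}$ lies in $R$. Moreover $x \in S^{W_{\Xi'}}$, because $y \in S^{W_\Xi} \subseteq S^{W_{\Xi'}}$ and $x_{\Xi/\Xi'} \in S^{W_{\Xi'}}$ by Corollary~\ref{cor:xXifixed}.

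Finally I would compute $C_{\Xi/\Xi'}(x)$ directly from the definition. Either from the projection formula (Lemma~\ref{lem:projformC}) applied to the $W_\Xi$-invariant $y$, or more directly:
\[
C_{\Xi/\Xi'}(y\, x_{\Xi/\Xi'}) = \sum_{w\in W_{\Xi/\Xi'}} w\Bigl(\tfrac{y\, x_{\Xi/\Xi'}}{x_{\Xi/\Xi'}}\Bigr) = \sum_{w \in W_{\Xi/\Xi'}} w(y) = [W_\Xi:W_{\Xi'}]\, y,
\]
since $w(y)=y$ for all $w \in W_\Xi$. Dividing by $[W_\Xi:W_{\Xi'}]$ gives $C_{\Xi/\Xi'}(x) = y$, proving surjectivity.

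There is essentially no obstacle here: the whole argument hinges on the elementary observation that $x_{\Xi/\Xi'}$ provides a ``section'' in the sense that $C_{\Xi/\Xi'}(x_{\Xi/\Xi'})$ is the invertible constant $[W_\Xi:W_{\Xi'}]$, and the invertibility of $|W|$ is precisely what is needed to rescale and stay inside $S$ rather than $Q$.
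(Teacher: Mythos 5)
Your proof is correct, and it is a mild but genuine simplification of the paper's argument. The paper first establishes the absolute statement $|W_\Xi|\,x = C_\Xi(x\,x_\Xi)$ for $x\in S^{W_\Xi}$ (so $C_\Xi(S)=S^{W_\Xi}$), and then deduces the relative case by composing via $C_{\Xi/\Xi'}\circ C_{\Xi'}=C_\Xi$ from Lemma~\ref{lem:pullpushcomp}. You instead prove the relative identity directly: for $y\in S^{W_\Xi}$,
\[
C_{\Xi/\Xi'}(y\,x_{\Xi/\Xi'}) = \sum_{w\in W_{\Xi/\Xi'}} w(y) = [W_\Xi:W_{\Xi'}]\,y,
\]
using only that $W_{\Xi/\Xi'}\subseteq W_\Xi$ consists of coset representatives and $y$ is $W_\Xi$-fixed, and that $x_{\Xi/\Xi'}$ is $W_{\Xi'}$-invariant by Corollary~\ref{cor:xXifixed} so $y\,x_{\Xi/\Xi'}\in S^{W_{\Xi'}}$. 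This bypasses the composition rule entirely. Your auxiliary remarks are also sound: $|W_{\Xi'}|$ and $[W_\Xi:W_{\Xi'}]$ both divide $|W|$, so they are invertible; and since any nontrivial $W$ has even order, invertibility of $|W|$ forces $2\in R^\times$, discharging the $C_n^{sc}$ caveat in Corollary~\ref{cor:opAinv2}. The two proofs share the same core idea --- that $x_{\Xi/\Xi'}$ (resp.\ $x_\Xi$) acts as a section making $C_{\Xi/\Xi'}$ (resp.\ $C_\Xi$) a scalar on invariants --- yours just packages it at the relative level in one step.
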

\begin{proof}From the proof of Corollary \ref{cor:opAinv2} we know that for any $x\in S^{W_{\Xi'}}$, $|W_{\Xi}| x=C_{\Xi}\cdot (xx_\Xi)$, so $C_\Xi(S)=S^{W_\Xi}$. The conclusion then follows from the identity $C_{\Xi/\Xi'}\circ C_{\Xi'}=C_\Xi$ of Lemma \ref{lem:pullpushcomp}.
\end{proof}

\begin{theo} \label{theo:bilform}
For any $v,w\in W$, we have
\[
A_\Pi(Y_{I_v}^\star A_{I_w^{\rev}}(\tf_e))=\Kr_{w,v}\unit=A_\Pi(X_{I_v}^\star B_{I_w^{\rev}}(\tf_e)).
\]
Consequently, the pairing
\[
A_\Pi\colon \DcFd\times \DcFd \to (\DcFd)^{W}\cong S, \quad (\sigma,\sigma')\mapsto A_\Pi(\sigma\sigma')
\]
is non-degenerate and 
satisfies that $(A_{I_w^{\rev}}(\tf_e))_{w\in W}$ is dual to the basis $(Y_{I_v}^\star)_{v\in W}$, while  $(B_{I_w^{\rev}}(\tf_e))_{w\in W}$ is dual to the basis $(X_{I_v}^\star)_{v\in W}$.
\end{theo}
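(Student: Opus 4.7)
The plan is to establish the two Kronecker identities first and then deduce the non-degeneracy and dual basis statements as formal consequences.

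First, I would apply Lemma~\ref{lem:CPi_I_Irev} to transfer the operator $A_{I_w^{\rev}}$ from $\tf_e$ across the product onto $Y_{I_v}^\star$, obtaining
\[
A_\Pi\bigl(Y_{I_v}^\star\, A_{I_w^{\rev}}(\tf_e)\bigr) = A_\Pi\bigl(A_{I_w}(Y_{I_v}^\star)\,\tf_e\bigr),
\]
since $(I_w^{\rev})^{\rev}=I_w$. Now $\tf_e = x_\Pi f_e$, and the product on $\QWd$ is pointwise by~\eqref{eq:fvprod}, so for any $g\in\QWd$ one has $g\cdot x_\Pi f_e = x_\Pi\, g(\de_e)\, f_e$. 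Applying $A_\Pi$ and using $A_\Pi(f_e)=\tfrac{1}{x_\Pi}\unit$ from Lemma~\ref{lem:APonfv} (case $v=e$, $\Xi=\Pi$, $\Xi'=\emptyset$), the right-hand side collapses to $A_{I_w}(Y_{I_v}^\star)(\de_e)\,\unit$. Unwinding the Hecke action yields
\[
A_{I_w}(Y_{I_v}^\star)(\de_e) = (Y_{I_w}\act Y_{I_v}^\star)(\de_e) = Y_{I_v}^\star(\de_e\, Y_{I_w}) = Y_{I_v}^\star(Y_{I_w}) = \Kr_{v,w},
\]
by definition of the dual basis. The second identity is obtained by the same procedure after replacing $A$ by $B$, $Y$ by $X$, and $Y_{I_v}^\star$ by $X_{I_v}^\star$.

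For the pairing, note that $A_{I_w^{\rev}}(\tf_e)=Y_{I_w^{\rev}}\act\tf_e\in\DcFd$ since $Y_{I_w^{\rev}}\in\DcF$ acts on $\tf_e\in\DcFd$ (Lemma~\ref{lem:point}); similarly $B_{I_w^{\rev}}(\tf_e)\in\DcFd$. The map $(\sigma,\sigma')\mapsto A_\Pi(\sigma\sigma')$ is $S$-bilinear, because both the product on $\QWd$ and the operator $A_\Pi$ are $Q$-linear for the natural $Q$-action on $\QWd$; and its values lie in $(\DcFd)^W$ by Corollary~\ref{cor:opAinv}, which is the free $S$-module of rank one generated by $\unit$ (identification from the theorem statement). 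The first Kronecker identity then says that the Gram matrix of the pairing between the $S$-basis $\{Y_{I_v}^\star\}_{v\in W}$ of $\DcFd$ (Lemma~\ref{lem:DcFdinQW*}) and the family $\{A_{I_w^{\rev}}(\tf_e)\}_{w\in W}$ is the identity. Hence the latter family is itself an $S$-basis of $\DcFd$, it is the dual basis of $\{Y_{I_v}^\star\}_{v\in W}$ under the pairing, and the pairing is non-degenerate; the analogous statement for $\{B_{I_w^{\rev}}(\tf_e)\}_{w\in W}$ versus $\{X_{I_v}^\star\}_{v\in W}$ is immediate from the second identity.

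The only delicate point is bookkeeping rather than substance: the product on $\QWd$ is $Q$-bilinear for the \emph{natural} $Q$-action, not for the $\act$-action, so the scalar $x_\Pi$ in $\tf_e=x_\Pi f_e$ factors out as ordinary scalar multiplication through the product, and its cancellation against the $\tfrac{1}{x_\Pi}$ coming from $A_\Pi(f_e)$ is what produces the clean Kronecker symbol.
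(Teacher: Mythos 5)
Your proof is correct, but it takes a genuinely different route from the paper. The paper expands both factors $Y_{I_v}^\star = \sum_u b^Y_{u,v} f_u$ and $A_{I_w^{\rev}}(\tf_e) = \sum_u u(x_\Pi)\,a^Y_{w,u} f_u$ in the basis $\{f_u\}$ (the latter via Lemma~\ref{lem:AIrev}), multiplies pointwise, applies $A_\Pi(f_u)=\tfrac{1}{u(x_\Pi)}\unit$ term by term, and obtains $\Kr_{w,v}$ from the matrix orthogonality $\sum_u a^Y_{w,u} b^Y_{u,v} = \Kr_{w,v}$. You instead invoke the self-adjointness property of Lemma~\ref{lem:CPi_I_Irev} \emph{directly} to shift $A_{I_w^{\rev}}$ from $\tf_e$ onto $Y_{I_v}^\star$, and then everything collapses: the pointwise product against $\tf_e = x_\Pi f_e$ extracts the value at $\de_e$, the $x_\Pi$ cancels against $A_\Pi(f_e)=\tfrac{1}{x_\Pi}\unit$, and $(Y_{I_w}\act Y_{I_v}^\star)(\de_e) = Y_{I_v}^\star(Y_{I_w}) = \Kr_{v,w}$ is tautological. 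This bypasses Lemma~\ref{lem:AIrev} altogether; since that lemma is itself a consequence of Lemma~\ref{lem:CPi_I_Irev} via Corollary~\ref{cor:I_and_Irev}, your proof is a shorter path through the same underlying ingredient and, arguably, makes the role of the ``integration by parts'' identity $A_\Pi(A_I(f)f')=A_\Pi(fA_{I^{\rev}}(f'))$ more transparent. One minor imprecision: you cite Lemma~\ref{lem:DcFdinQW*} for the $S$-basis $\{Y_{I_v}^\star\}$ of $\DcFd$, whereas that lemma only states the claim for $\{X_{I_w}^*\}$; the $Y$-version follows by the same argument since $\{Y_{I_w}\}$ is also an $S$-basis of $\DcF$, but it is worth saying so. The final inference that a family biorthogonal to an $S$-basis under an $S$-bilinear form is itself a basis (and the pairing perfect) is correct, though it relies on the standard fact that a surjective endomorphism of a finitely generated module over a commutative ring is an isomorphism; the paper is equally terse on this point.
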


\begin{proof}
We prove the first identity. The second identity is obtained similarly. 

Let $Y_{I_w^{\rev}}=\sum_{v\in W}a'_{w,v}\de_v$ and $Y_{I_w}=\sum_{v\in W}a_{w,v}\de_v$. Let $\de_w=\sum_{v\in W}b_{w,v}Y_{I_v}$ so that $\sum_{v\in W}a_{w,v}b_{v,u}=\Kr_{w,u}$ and $Y_{I_u}^*=\sum_{v\in W}b_{v,u}f_v$.

Combining the formula of Lemma~\ref{lem:AIrev} with the formula $A_\Pi(f_v)=\tfrac{1}{v(x_\Pi )}\unit$ of Lemma~\ref{lem:APonfv}, we obtain
\[
A_\Pi(Y_{I_u}^* A_{I_w^{\rev}}(x_\Pi f_e))=\sum_{v\in W} b_{v,u}v(x_\Pi)a_{w,v}A_\Pi(f_v)=\sum_{v\in W} b_{v,u}a_{w,v}\unit=\Kr_{w,u}\unit.\qedhere
\]
\end{proof}


\section{An involution} \label{sec:invol}

In the present section we define an involution on $\DcF$ and study the relationship between the equivariant characteristic map and the push-pull operators.

\medskip

We define an $R$-linear involution $\tau: \QW \to \QW$ by 
\[
\tau(q \de_w) 
= w^{-1}(q) \tfrac{x_\Pi}{w^{-1}(x_\Pi)} \de_{w^{-1}}
= x_\Pi \de_{w^{-1}} q \tfrac{1}{x_\Pi} 
= \de_{w^{-1}} q \tfrac{w(x_\Pi)}{x_\Pi}
\] 
in particular, $\tau(X_\al)=X_\al$ and $\tau(Y_\al)=Y_\al$.
\begin{lem}\label{lem:inv}
We have $\tau(z_1z_2)=\tau(z_2)\tau(z_1)$ for any $z_1,z_2 \in \QW$, \ie the map $\tau$ just defined is indeed an involution.
\end{lem}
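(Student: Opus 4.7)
My plan is to factor $\tau$ as the conjugation by $x_\Pi$ of a more standard anti-involution on $\QW$, reducing the identity to two independent facts: that the standard map is an anti-homomorphism, and that conjugation by a unit is an algebra automorphism.

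First, I would introduce the auxiliary $R$-linear map $\phi\colon \QW \to \QW$ defined on the canonical basis by
\[
\phi(q\de_w) := \de_{w^{-1}} q = w^{-1}(q)\, \de_{w^{-1}}.
\]
A routine check on basis elements $z_1 = q_1\de_{w_1}$, $z_2 = q_2\de_{w_2}$, using the product rule \eqref{eq:product}, shows that both $\phi(z_1 z_2)$ and $\phi(z_2)\phi(z_1)$ evaluate to $w_2^{-1}w_1^{-1}(q_1)\cdot w_2^{-1}(q_2)\,\de_{(w_1 w_2)^{-1}}$, so $\phi$ is an anti-homomorphism; also $\phi^2 = \id$ is immediate.

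Next I would exploit the second alternative form in the definition of $\tau$: the equality $\tau(q\de_w) = x_\Pi \de_{w^{-1}} q\, \tfrac{1}{x_\Pi}$ reads precisely as $\tau(z) = x_\Pi\, \phi(z)\, x_\Pi^{-1}$ on basis elements (recall $x_\Pi\in Q^\times$, so conjugation by $x_\Pi$ is well defined on $\QW$), and hence on all of $\QW$ by $R$-linearity. Since conjugation by a unit is an algebra automorphism of $\QW$, we get
\[
\tau(z_1 z_2) = x_\Pi\,\phi(z_1 z_2)\, x_\Pi^{-1} = x_\Pi\,\phi(z_2)\phi(z_1)\,x_\Pi^{-1} = \tau(z_2)\tau(z_1)
\]
by inserting $x_\Pi^{-1}x_\Pi$ between the two factors. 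The same conjugation formula combined with $\phi^2 = \id$ also yields $\tau^2 = \id$, justifying the term ``involution'' in the statement.

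There is no real obstacle: the only point needing a small verification is the consistency of the three equivalent expressions given in the definition of $\tau(q\de_w)$ (so that the factorization $\tau = x_\Pi\phi(-)x_\Pi^{-1}$ is legitimate), but this is a direct application of the commutation rule $\de_w q = w(q)\de_w$ and $w^{-1}\bigl(\tfrac{w(x_\Pi)}{x_\Pi}\bigr) = \tfrac{x_\Pi}{w^{-1}(x_\Pi)}$.
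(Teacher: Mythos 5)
Your proposal is correct and takes a slightly different, more structural route than the paper. The paper's own proof is a direct check: it notes $\tau(q)=q$ for $q \in Q$ and $\tau(q\de_w)=\tau(\de_w)q$ (both immediate because $q$ and $x_\Pi$ commute in $Q$), thereby reducing the anti-multiplicativity to the identity $\tau(\de_v\de_w)=\tau(\de_w)\tau(\de_v)$, which follows at once from $\de_v\de_w=\de_{vw}$. Your argument instead factors $\tau$ as conjugation by the unit $x_\Pi$ of the ``plain'' anti-involution $\phi(q\de_w)=\de_{w^{-1}}q$, and then invokes that $\phi$ is anti-multiplicative and that conjugation by a unit is an algebra automorphism. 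The underlying computations are effectively the same, but your factorization makes the structure of $\tau$ more transparent as a twist of the standard anti-involution of the twisted group algebra, and it delivers $\tau^2=\id$ for free from $\phi^2=\id$ (a point the paper's proof text does not spell out, though it is part of the claim). One small point worth keeping in mind: $x_\Pi$ is not central in $\QW$, so ``conjugation by $x_\Pi$'' is an automorphism but not the identity; your write-up handles this correctly by inserting $x_\Pi^{-1}x_\Pi$ between the two factors.
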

\begin{proof}
For any $q \in Q$, we have $\tau(q)=q$ and $\tau(q \de_w) = \tau(\de_w) q$, so it suffices to check that $\tau(\de_v \de_w)=\tau(\de_w) \tau(\de_v)$, which it is immediate from the definition of the multiplication in $\QW$. 
\end{proof}

Note that $\tfrac{x_\Pi}{w^{-1}(x_\Pi)}$ is in $S$ for any $w\in W$ by Lemma \ref{lem:Sigmaw}.\eqref{item:cw0Sigma}, so the involution $\tau$ restricts to $\SW$. 
\begin{cor}\label{cor:invXY}For any sequence $I$, we have $\tau(X_i)=X_i$ and $\tau(qX_I)=X_{I^{\rev}}q$. In particular, $\tau$ induces an involution on $\DcF$.
\end{cor}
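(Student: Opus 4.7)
The plan is to treat the three assertions in sequence. The first two are direct computations using Lemma~\ref{lem:inv} and the defining formula for $\tau$; the third then follows formally from the fact that $\DcF$ is generated as an $R$-algebra by $S$ and the $X_i$.

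For $\tau(X_i)=X_i$, I would split $X_i=\tfrac{1}{x_i}-\tfrac{1}{x_i}\de_i$. The first summand is fixed by $\tau$ since $\tau$ acts as the identity on $Q$ (take $w=e$ in the defining formula). For the second, the definition of $\tau$ gives
\[
\tau\bigl(\tfrac{1}{x_i}\de_i\bigr)=s_i\bigl(\tfrac{1}{x_i}\bigr)\tfrac{x_\Pi}{s_i(x_\Pi)}\de_i=\tfrac{1}{x_{-\al_i}}\cdot\tfrac{x_\Pi}{s_i(x_\Pi)}\de_i,
\]
so that $\tau(X_i)=X_i$ reduces to the identity $\tfrac{x_\Pi}{s_i(x_\Pi)}=\tfrac{x_{-\al_i}}{x_{\al_i}}$. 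This is the only substantive point of the proof, and it follows from the classical root-system fact that $s_i$ permutes $\RS^-\setminus\{-\al_i\}$ to itself and sends $-\al_i$ to $\al_i$: concretely, $s_i(x_\Pi)=x_{\al_i}\prod_{\al\in\RS^-\setminus\{-\al_i\}}x_\al$ while $x_\Pi=x_{-\al_i}\prod_{\al\in\RS^-\setminus\{-\al_i\}}x_\al$, and the ratio drops out.

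For the general formula, I would combine the first step with the anti-multiplicativity of $\tau$ from Lemma~\ref{lem:inv} and $\tau|_Q=\id_Q$: for $I=(i_1,\ldots,i_m)$ and $q\in Q$,
\[
\tau(qX_I)=\tau(X_I)\,\tau(q)=\tau(X_{i_m})\cdots\tau(X_{i_1})\cdot q=X_{i_m}\cdots X_{i_1}\cdot q=X_{I^{\rev}}q.
\]
For the last assertion, note that $\DcF$ is generated as an $R$-algebra by $S$ and $\{X_i\}_{i=1}^n$, both of which are preserved by $\tau$; combined with anti-multiplicativity this gives $\tau(\DcF)\subseteq\DcF$. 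Since $\tau$ is already an involution on $\QW$, its restriction to $\DcF$ is again an involution. The only real difficulty in the whole corollary is the root-combinatorial identity $s_i(x_\Pi)/x_\Pi=x_{\al_i}/x_{-\al_i}$; everything else is formal manipulation once that identity is established.
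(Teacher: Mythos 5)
Your proof is correct and follows the same route as the paper's (very terse) argument: verify $\tau(X_i)=X_i$ by direct computation using $s_i(x_\Pi)/x_\Pi = x_{\al_i}/x_{-\al_i}$, then apply the anti-multiplicativity of $\tau$ from Lemma~\ref{lem:inv}. You have simply filled in the computational details the paper leaves implicit.
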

\begin{proof}By Lemma \ref{lem:inv} it suffices to show that $\tau(X_i)=X_i$, which follows from direct computation.
\end{proof}

Recall that the characteristic map $c:Q\to \QWd$ introduced in \ref{defi:charmap} satisfies that $q\mapsto \sum_{w\in W}w(q)f_w$, or in other words, $c(q)(z)=z\cdot q$ for $z \in \QW$. In particular, we have 
\[
c(q)(X_I)=\Dem_I(q)\text{ and }c(q)(\de_w)=w(q),\quad w\in W.
\]

\begin{lem}\label{lem:charinvol}For any $q\in Q$ and $z\in Q_W$, we have 
\[
A_\Pi\left((\tau(z)\act \tf_e)c(q)\right)=(z\cdot q)\unit.
\]
\end{lem}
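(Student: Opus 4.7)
The strategy is a direct computation, reducing everything to the basis $\{f_w\}_{w\in W}$. By $Q$-linearity of everything in sight, it suffices to verify the identity for $z = q'\de_v$ with $q' \in Q$ and $v \in W$, then extend linearly.

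The first key step is to establish the clean formula
\[
\tau(z) \act \tf_e = \sum_v q_v \tf_v \quad \text{when } z = \sum_v q_v \de_v.
\]
For a single term $z = q' \de_v$, by definition $\tau(z) = v^{-1}(q')\tfrac{x_\Pi}{v^{-1}(x_\Pi)} \de_{v^{-1}}$. Writing $\tf_e = x_\Pi f_e$ and using that $\act$ is a left action together with its left $Q$-linearity for the natural $Q$-module structure on $\QWd$, this reduces to computing
\[
\bigl(v^{-1}(q')\tfrac{x_\Pi}{v^{-1}(x_\Pi)}\bigr) \act \bigl(\de_{v^{-1}} \act (x_\Pi f_e)\bigr).
\]
The inner action gives $x_\Pi f_v$ by Lemma \ref{lem:bulletactprop}, and the outer scalar action yields, by the formula $q \act f_v = v(q) f_v$ of the same lemma, a factor $v\bigl(v^{-1}(q') \tfrac{x_\Pi}{v^{-1}(x_\Pi)}\bigr) = q' \tfrac{v(x_\Pi)}{x_\Pi}$. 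The $x_\Pi$ cancels and we are left with $q' \, v(x_\Pi) f_v = q' \tf_v$, as claimed.

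Next, I would multiply by $c(q) = \sum_u u(q) f_u$ using the componentwise product \eqref{eq:fvprod}. Because $f_v f_u = \Kr_{v,u} f_v$, only the diagonal terms survive:
\[
(\tau(z) \act \tf_e)\, c(q) = \Bigl(\sum_v q_v\, v(x_\Pi)\, f_v\Bigr)\Bigl(\sum_u u(q)\, f_u\Bigr) = \sum_v q_v\, v(x_\Pi q)\, f_v.
\]
Applying $A_\Pi$ and invoking Lemma \ref{lem:APonfv} (namely $A_\Pi(f_v) = \tfrac{1}{v(x_\Pi)}\unit$) together with $Q$-linearity of $A_\Pi$, the $v(x_\Pi)$ in the numerator cancels against $v(x_\Pi)$ in the denominator, yielding
\[
A_\Pi\bigl((\tau(z) \act \tf_e)\, c(q)\bigr) = \sum_v q_v\, v(q)\, \unit = (z \cdot q)\, \unit,
\]
where the last equality is the definition \eqref{eq:leftactQW} of the action of $\QW$ on $Q$.

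There is no substantial obstacle here: the twist in the definition of $\tau$ has been engineered precisely so that the factor $\tfrac{x_\Pi}{v^{-1}(x_\Pi)}$ compensates the asymmetry between the right-multiplication appearing in the definition of $\act$ and the $v(-)$ arising in the multiplication of $\QW$. The only delicate point to keep track of is the distinction between the natural $Q$-module structure on $\QWd$ (with respect to which $\act$ is linear and $A_\Pi$ is $Q$-linear) and the twisted action $q \act f_w = w(q) f_w$ on basis vectors.
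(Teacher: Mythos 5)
Your proposal is correct and follows essentially the same route as the paper's proof: reduce to $z = q'\de_v$, compute $\tau(z)\act\tf_e = q'\,v(x_\Pi)f_v$ using the definition of $\tau$ and Lemma~\ref{lem:bulletactprop}, multiply componentwise by $c(q)$, and apply $A_\Pi(f_v) = \tfrac{1}{v(x_\Pi)}\unit$ from Lemma~\ref{lem:APonfv}. One small caveat: the opening appeal to ``$Q$-linearity of everything in sight'' for the reduction is imprecise since $\tau$ is only $R$-linear, but because you verify the identity for arbitrary $q'\in Q$, additivity alone suffices to extend, so the argument stands.
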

\begin{proof}Let $z=p\de_w$, $p\in Q$, then $\tau(z)\act \tf_e=\de_{w^{-1}}p\frac{w(x_\Pi)}{x_\Pi}\act (x_\Pi f_e)=pw(x_\Pi)f_w$, so 
\[
\begin{split}
A_\Pi\left((\tau(z)\act \tf_e)c(q)\right)&=A_\Pi\left((pw(x_\Pi)f_w)(\sum_{v\in W}v(q)f_v)\right)\\
&=A_\Pi\left(pw(x_\Pi)w(q)f_w\right)=pw(q)\unit=(z\cdot q)\unit.\qedhere
\end{split}
\]
\end{proof}

We have the following special cases of Lemma \ref{lem:charinvol}:
\begin{cor} 
For any sequence $I$ and $x\in S$, we have
\[
A_\Pi(c(q) A_{I^{\rev}}(\tf_e))=C_I(q)\unit ~\text{ and }~ A_\Pi(c(q) B_{I^{\rev}}(\tf_e))=\Dem_I(q)\unit.
\]
\end{cor}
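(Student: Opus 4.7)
The plan is to derive both identities as direct consequences of Lemma \ref{lem:charinvol} by choosing $z=Y_I$ and $z=X_I$, respectively. The three ingredients needed are: (i) the compatibility of $\tau$ with the sequences, i.e. $\tau(Y_I)=Y_{I^{\rev}}$ and $\tau(X_I)=X_{I^{\rev}}$; (ii) the commutativity of the product on $\QWd$; and (iii) the identification of the action of $Y_I$ and $X_I$ on $q \in Q$ with the operators $C_I$ and $\Dem_I$.

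For (i), recall from the definition of $\tau$ that $\tau(X_i)=X_i$ and $\tau(Y_i)=Y_i$. Combined with the antimultiplicativity $\tau(z_1z_2)=\tau(z_2)\tau(z_1)$ of Lemma \ref{lem:inv}, a straightforward induction on the length of $I=(i_1,\ldots,i_m)$ gives $\tau(X_I)=\tau(X_{i_1}\cdots X_{i_m})=X_{i_m}\cdots X_{i_1}=X_{I^{\rev}}$ and similarly $\tau(Y_I)=Y_{I^{\rev}}$. For (ii), the ring structure on $\QWd$ is componentwise multiplication on the basis $\{f_w\}_{w\in W}$ by \eqref{eq:fvprod}, and is therefore commutative; in particular, $c(q)\,A_{I^{\rev}}(\tf_e)=A_{I^{\rev}}(\tf_e)\,c(q)$ and similarly for $B_{I^{\rev}}$. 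For (iii), by the definition of the $\act$-action and of $A_\al$, $B_\al$, we have $A_{I^{\rev}}(\tf_e)=Y_{I^{\rev}}\act\tf_e=\tau(Y_I)\act\tf_e$ and analogously $B_{I^{\rev}}(\tf_e)=\tau(X_I)\act\tf_e$; on the other hand, applying the action \eqref{eq:leftactQW} of $\QW$ on $Q$, we get $Y_I\cdot q=C_I(q)$ and $X_I\cdot q=\Dem_I(q)$, since $Y_\al$ acts on $Q$ as $C_\al$ and $X_\al$ acts as $\Dem_\al$ (cf.\ the remark preceding Definition \ref{defi:algres}).

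Putting these together, for $z=Y_I$ Lemma \ref{lem:charinvol} yields
\[
A_\Pi(c(q)\,A_{I^{\rev}}(\tf_e))=A_\Pi\bigl((\tau(Y_I)\act\tf_e)\,c(q)\bigr)=(Y_I\cdot q)\unit=C_I(q)\unit,
\]
and for $z=X_I$ it yields
\[
A_\Pi(c(q)\,B_{I^{\rev}}(\tf_e))=A_\Pi\bigl((\tau(X_I)\act\tf_e)\,c(q)\bigr)=(X_I\cdot q)\unit=\Dem_I(q)\unit,
\]
which are the two desired identities. There is no real obstacle here; the only mild point to be careful about is the orientation convention, namely that one must select $z$ so that $\tau(z)$ (rather than $z$) matches the operator appearing on $\tf_e$, which is why $z=Y_I$ (resp.\ $X_I$) produces the factor $A_{I^{\rev}}(\tf_e)$ (resp.\ $B_{I^{\rev}}(\tf_e)$) after applying $\tau$.
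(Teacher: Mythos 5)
Your proof is correct and follows exactly the paper's argument: apply Lemma \ref{lem:charinvol} with $z=Y_I$ (resp.\ $z=X_I$), then use $\tau(Y_I)=Y_{I^{\rev}}$ and $\tau(X_I)=X_{I^{\rev}}$ from Corollary \ref{cor:invXY}. You simply spell out a few more of the supporting facts (commutativity of the product on $\QWd$, and that $Y_I\cdot q=C_I(q)$, $X_I\cdot q=\Dem_I(q)$) that the paper leaves implicit.
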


\begin{proof}Letting $z=Y_I$ (resp. $z=X_I$) in Lemma \ref{lem:charinvol}, and using $\tau(Y_I)=Y_{I^{\rev}}$ and $\tau(X_I)=X_{I^{\rev}}$  from Corollary \ref{cor:invXY} we get the two identities. 
\end{proof}

\begin{cor}
For any $z\in \QW$, we have 
$A_\Pi(\tau(z)\act \tf_e)=(z\cdot 1)\unit$. In particular,  $A_\Pi(q\act \tf_e)=q\unit$ and $A_\Pi(B_{I}(\tf_e))=\Dem_{I^{\rev}}(1)\unit=\Kr_{I,\emptyset} \unit$. 
\end{cor}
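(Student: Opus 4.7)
The plan is to obtain the main identity as the specialization $q=1$ of Lemma~\ref{lem:charinvol}. The key observation is that the multiplicative unit $\unit$ of $\QWd$ coincides with $c(1)$: indeed, by the formula for the characteristic map, $c(1)=\sum_{w\in W}w(1)f_w=\sum_{w\in W}f_w=\unit$. Hence, setting $q=1$ in Lemma~\ref{lem:charinvol}, the factor $c(q)=\unit$ disappears from the product on the left-hand side, yielding exactly
\[
A_\Pi(\tau(z)\act \tf_e)=(z\cdot 1)\unit\quad\text{for all }z\in\QW.
\]

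For the two specializations, the recipe is to find $z$ with $\tau(z)$ equal to the desired element, which is trivial because $\tau$ is an involution. First, $\tau(q)=\tau(q\de_e)=q$ for $q\in Q$ (the coefficient $\frac{x_\Pi}{e^{-1}(x_\Pi)}=1$), so taking $z=q$ gives $\tau(z)\act\tf_e=q\act\tf_e$, and the action \eqref{eq:leftactQW} computes $z\cdot 1=q\de_e\cdot 1=q\cdot e(1)=q$, which produces $A_\Pi(q\act\tf_e)=q\unit$. Second, by Corollary~\ref{cor:invXY} we have $\tau(X_{I^{\rev}})=X_I$, so taking $z=X_{I^{\rev}}$ gives $\tau(z)\act\tf_e=X_I\act\tf_e=B_I(\tf_e)$ on the left, while on the right $z\cdot 1=X_{I^{\rev}}\cdot 1=\Dem_{I^{\rev}}(1)$, via the isomorphism $\DcF\simeq\ED$ that sends $X_\al\mapsto\Dem_\al$ (recalled in Section~\ref{sec:iwahorihecke}).

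Finally, to get the Kronecker symbol, note that $\Dem_\al(1)=\frac{1-s_\al(1)}{x_\al}=0$ for any simple root $\al$. When $I=(i_1,\ldots,i_m)$ is non-empty, $I^{\rev}=(i_m,\ldots,i_1)$ and the composition $\Dem_{I^{\rev}}=\Dem_{i_m}\circ\cdots\circ\Dem_{i_1}$ applied to $1$ starts by $\Dem_{i_1}(1)=0$, so the entire composite vanishes; when $I=\emptyset$, $\Dem_\emptyset=\id_S$ gives $1$. Hence $\Dem_{I^{\rev}}(1)=\Kr_{I,\emptyset}$, completing the chain of equalities. No genuine obstacle is anticipated: the proof is a direct specialization of Lemma~\ref{lem:charinvol} combined with the elementary computations of $\tau$ and of the action~\eqref{eq:leftactQW} on $1\in Q$.
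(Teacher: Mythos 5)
Your proof is correct and follows the natural route the paper intends: specializing Lemma~\ref{lem:charinvol} at $q=1$ (noting $c(1)=\unit$), then taking $z=q\in Q$ and $z=X_{I^{\rev}}$ respectively, with Corollary~\ref{cor:invXY} giving $\tau(X_{I^{\rev}})=X_I$ and the action~\eqref{eq:leftactQW} giving $X_{I^{\rev}}\cdot 1 = \Dem_{I^{\rev}}(1)$. The final observation that $\Dem_{I^{\rev}}(1)=\Kr_{I,\emptyset}$ (via $\Dem_{i_1}(1)=0$ in the innermost position) is correct as well.
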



\section{The non-degenerate pairing on the $W_{\Xi}$-invariant subring} \label{sec:nondeg}

In this section, we construct a non-degenerate pairing on the subring of invariants $(\DcFd)^{W_\Xi}$. Using this pairing we provide several $S$-module bases of $(\DcFd)^{W_\Xi}$.

\medskip

For any $w\in W,u\in W^\Xi$ we set
\[
d^Y_{w,u}=u(x_{\Pi/\Xi})\sum_{v\in W_\Xi}a^Y_{w,uv}, \quad d^X_{w,u}=u(x_{\Pi/\Xi})\sum_{v\in W_\Xi}a^X_{w,uv}, \quad \rho_\Xi=\hspace{-1ex}\prod_{w\in W^\Xi}w(x_{\Pi/\Xi})
\]
where $a_{w,uv}^X$ and $a_{w,uv}^Y$ are the coefficients introduced in Lemma~\ref{lem:Xdelta} and~\ref{lem:Ydelta}.

\begin{lem} \label{lem:AXionfe}
For any $w\in W$ we have 
\[
A_\Xi(A_{I^{\rev}_w}(\tf_e))=\hspace{-1ex}\sum_{u\in W^\Xi}d^Y_{w,u} f_{u}^\Xi, 
\qquad  
A_\Xi(B_{I^{\rev}_w}(\tf_e))=\hspace{-1ex}\sum_{u\in W^\Xi}d^X_{w,u}f_{u}^\Xi.
\]
\end{lem}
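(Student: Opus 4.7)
The plan is to evaluate $A_\Xi \circ A_{I_w^{\rev}}$ on $\tf_e$ by first invoking Lemma~\ref{lem:AIrev}, which gives $A_{I_w^{\rev}}(\tf_e) = \sum_{v \in W} v(x_\Pi)\, a^Y_{I_w,v}\, f_v$, and then applying $A_\Xi$ term by term using Lemma~\ref{lem:APonfv} (with $\Xi' = \emptyset$), which yields $A_\Xi(f_v) = \tfrac{1}{v(x_\Xi)}\sum_{s \in W_\Xi} f_{vs^{-1}}$. Since $\RS^- = \RS_\Xi^- \sqcup \RS_{\Pi/\Xi}^-$, we have $x_\Pi = x_\Xi \cdot x_{\Pi/\Xi}$, so the quotient $v(x_\Pi)/v(x_\Xi)$ simplifies to $v(x_{\Pi/\Xi})$.

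The remaining work is combinatorial bookkeeping. I would decompose the sum over $v \in W$ using the unique factorization $v = u\tilde{v}$ with $u \in W^\Xi$ and $\tilde{v} \in W_\Xi$ (from~\eqref{prop:coset}). Two observations then pull the sum into the desired shape: first, Corollary~\ref{cor:xXifixed} (applied with $\Xi \to \Pi$ and $\Xi' \to \Xi$) shows that $\tilde{v}$ fixes $x_{\Pi/\Xi}$, so $v(x_{\Pi/\Xi}) = u(x_{\Pi/\Xi})$ depends only on $u$; second, as $s$ ranges over $W_\Xi$ the element $vs^{-1}$ ranges over the coset $uW_\Xi$, so $\sum_{s \in W_\Xi} f_{vs^{-1}} = f_u^\Xi$ which also depends only on $u$. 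Factoring out these common terms and summing over $\tilde{v} \in W_\Xi$ gives exactly $\sum_{u \in W^\Xi} d^Y_{w,u} f_u^\Xi$ by the definition of $d^Y_{w,u}$.

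The second identity involving $B_{I_w^{\rev}}$ and $d^X_{w,u}$ is obtained by the same argument, replacing the use of the $Y$-part of Lemma~\ref{lem:AIrev} by its $X$-part. There is no real obstacle here: everything reduces to properly reindexing a sum via the parabolic decomposition $W = \bigsqcup_{u \in W^\Xi} u W_\Xi$ and recognising the $W_\Xi$-invariance of $x_{\Pi/\Xi}$.
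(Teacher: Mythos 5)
Your proof is correct and follows essentially the same route as the paper's: apply Lemma~\ref{lem:AIrev}, then Lemma~\ref{lem:APonfv} with $\Xi'=\emptyset$, and reindex the sum via the parabolic decomposition of \eqref{prop:coset}, using the $W_\Xi$-invariance of $x_{\Pi/\Xi}$. The only superficial difference is that you cite Corollary~\ref{cor:xXifixed} where the paper cites the underlying Lemma~\ref{lem:PQroot}, which is an immaterial choice.
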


\begin{proof}
We prove the first formula only; the second one is obtained similarly. By Lemma~\ref{lem:AIrev} and~\ref{lem:APonfv},
\[
A_\Xi(A_{I^{\rev}_w}(x_\Pi f_e))=A_\Xi(\sum_{v\in W}v(x_\Pi) a^Y_{w,v}f_{v})=\sum_{v\in W}v(x_{\Pi/\Xi})a^Y_{w,v}f_v^\Xi=
\]
by \eqref{prop:coset}, representing $v=uv'$, and Lemma~\ref{lem:PQroot}, 
\[
=\sum_{u\in W^\Xi,\, v'\in W_\Xi} uv'(x_{\Pi/\Xi})a^Y_{w,uv'}f_{uv'}^\Xi=\sum_{u\in W^\Xi,\, v'\in W_\Xi} u(x_{\Pi/\Xi})a^Y_{w,uv'}f_{u}^\Xi. \qedhere
\]
\end{proof}

\begin{lem} \label{lem:coeffXi}
For any $w\in W, u\in W^\Xi$, we have $d^Y_{w,u}$ and $d^X_{w,u}$ belong to $S$.
\end{lem}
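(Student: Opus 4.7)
The plan is to identify $d^Y_{w,u}$ and $d^X_{w,u}$ as coefficients, in the basis $\{f_v\}_{v\in W}$, of an element that is already known to lie in $\SWd$. Recall that membership of an element $\sum_v q_v f_v \in \QWd$ in $\SWd$ is equivalent to having all $q_v \in S$, so the lemma will reduce to a chain of membership assertions that have all been proved earlier.

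First I would track a chain of $\act$-actions and check that the relevant element lies in $\DcFd \subseteq \SWd$. By Lemma~\ref{lem:point}, $\tf_e = x_\Pi f_e$ lies in $\DcFd$. Since $X_\al, Y_\al \in \DcF$ and $\DcFd$ is stable under the $\act$-action of $\DcF$ (the observation right before Lemma~\ref{lem:point}), iteration gives $A_{I_w^{\rev}}(\tf_e) = Y_{I_w^{\rev}} \act \tf_e \in \DcFd$ and similarly $B_{I_w^{\rev}}(\tf_e) \in \DcFd$. Next, by Lemma~\ref{lem:ZwinDcF} we have $Y_\Xi \in \DcF$, so Corollary~\ref{cor:opAinv} (applied with the subset $\Xi$) yields that $A_\Xi$ sends $\DcFd$ into $(\DcFd)^{W_\Xi} \subseteq \DcFd \subseteq \SWd$. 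Consequently, $A_\Xi(A_{I_w^{\rev}}(\tf_e))$ and $A_\Xi(B_{I_w^{\rev}}(\tf_e))$ are both elements of $\SWd$.

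Second, I would read off the coefficients using Lemma~\ref{lem:AXionfe}. We have
\[
A_\Xi(A_{I_w^{\rev}}(\tf_e)) = \sum_{u \in W^\Xi} d^Y_{w,u} f_u^\Xi = \sum_{u \in W^\Xi} \sum_{v \in W_\Xi} d^Y_{w,u}\, f_{uv}.
\]
By the unique factorization \eqref{prop:coset}, each $w' \in W$ admits a unique expression $w' = uv$ with $u \in W^\Xi$ and $v \in W_\Xi$; hence the above is precisely the expansion of $A_\Xi(A_{I_w^{\rev}}(\tf_e))$ on the basis $\{f_{w'}\}_{w'\in W}$ of $\QWd$, with the coefficient of $f_u$ (taking $v=e$) being $d^Y_{w,u}$. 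Since this element lies in $\SWd$, we conclude $d^Y_{w,u} \in S$. The argument for $d^X_{w,u}$ is identical, substituting $B_{I_w^{\rev}}$ and $X_\al$ throughout and using the second formula of Lemma~\ref{lem:AXionfe}.

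No substantive obstacle is expected: the lemma is a direct integrality consequence of the two previously established non-trivial facts that $Y_\Xi \in \DcF$ (Lemma~\ref{lem:ZwinDcF}) and that $\DcFd$ is an $\DcF$-submodule of $\SWd$ (a consequence of Lemma~\ref{lem:DcFdinQW*}). The only minor point to keep straight is the bookkeeping in Lemma~\ref{lem:AXionfe}: rewriting $f_u^\Xi$ as $\sum_{v \in W_\Xi} f_{uv}$ so as to land in the canonical $\SWd$-basis.
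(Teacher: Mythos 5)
Your proof is correct and follows exactly the same line of reasoning as the paper's (which is compressed into one sentence: ``It follows from Lemma~\ref{lem:AXionfe} and the fact $\DcFd\subseteq \SWd$''); you have simply unfolded the implicit chain showing that $A_\Xi(A_{I_w^{\rev}}(\tf_e))$ and $A_\Xi(B_{I_w^{\rev}}(\tf_e))$ lie in $\DcFd$ and made the bookkeeping with $f_u^\Xi=\sum_{v\in W_\Xi}f_{uv}$ explicit.
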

\begin{proof}
It follows from Lemma \ref{lem:AXionfe} and the fact $\DcFd\subseteq \SWd$.
\end{proof}

\begin{theo}\label{theo:invbasis}
For any choice of reduced sequences $\{I_w\}_{w\in W^\Xi}$, the two families $\big\{A_\Xi(A_{I_u^{\rev}}(\tf_e))\big\}_{u\in W^\Xi}$ and $\big\{A_\Xi(B_{I_u^{\rev}}(\tf_e))\big\}_{u\in W^\Xi}$ are $S$-module bases of $(\DcFd)^{W_\Xi}$.
\end{theo}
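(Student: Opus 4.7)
The plan is to reduce to the invertibility of a single change-of-basis matrix. Extend the given $\{I_u\}_{u\in W^\Xi}$ by choosing reduced decompositions for the elements of $W_\Xi$ and concatenating, yielding a $\Xi$-compatible family $\{I_w\}_{w\in W}$. Lemma~\ref{lem:DdWXibasis} then exhibits $\{X^*_{I_v}\}_{v\in W^\Xi}$ as an $S$-basis of the free $S$-module $(\DcFd)^{W_\Xi}$ of rank $|W^\Xi|$, and each $A_\Xi(A_{I_u^{\rev}}(\tf_e))$ lies in $(\DcFd)^{W_\Xi}$ by Corollary~\ref{cor:opAinv}. So it suffices to show that the matrix $T$ determined by $A_\Xi(A_{I_u^{\rev}}(\tf_e)) = \sum_v T_{u,v}\,X^*_{I_v}$ has $\det T\in S^\times$; the second family follows by the identical argument with $d^X$ replacing $d^Y$.

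Next, I would expand both sides in the $Q$-module basis $\{f^\Xi_{u'}\}_{u'\in W^\Xi}$ of $(\QWd)^{W_\Xi}$ provided by Lemma~\ref{lem:QWinvariant}. Lemma~\ref{lem:AXionfe} gives $A_\Xi(A_{I_u^{\rev}}(\tf_e)) = \sum_{u'} d^Y_{u,u'}\,f^\Xi_{u'}$, while Corollary~\ref{cor:inv_coeff} applied to \eqref{eq:XYdualbasis} produces $X^*_{I_v} = \sum_{u'} b^X_{u',v}\,f^\Xi_{u'}$. Because $u'\in W^\Xi$ is minimal in its left coset, $u'\hat v\leq u$ forces $u'\leq u$, so $d^Y_{u,u'}=0$ unless $u'\leq u$; symmetrically $b^X_{u',v}=0$ unless $v\leq u'$. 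Ordering $W^\Xi$ by a linear extension of Bruhat order, the coefficient matrices are triangular with diagonals $d^Y_{u,u}=u(x_{\Pi/\Xi})/x_u$ and $b^X_{v,v}=(-1)^{\ell(v)}x_v$ (using Lemmas~\ref{lem:Ydelta} and~\ref{lem:Xdelta} together with the fact that $u\hat v\leq u$ for $\hat v\in W_\Xi$, $u\in W^\Xi$ forces $\hat v=e$). Hence $\det T = \pm\prod_{u\in W^\Xi}u(x_{\Pi/\Xi})/x_u^2$.

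To see this product is a unit, I use that $u(\RS^+_\Xi)\subseteq \RS^+$ for $u\in W^\Xi$, so Lemma~\ref{lem:Sigmaw} gives $x_u = \prod_{\alpha\in u\RS^-_{\Pi/\Xi}\cap\RS^+}x_\alpha$ and $u(x_{\Pi/\Xi})/x_u = \prod_{\gamma\in u\RS^-_{\Pi/\Xi}\cap\RS^-}x_\gamma$. Setting $N^\pm:=\bigsqcup_{u\in W^\Xi}(u\RS^-_{\Pi/\Xi}\cap\RS^\pm)$ as multisets of roots,
\[
\det T = \pm\,\prod_{\gamma\in N^-}x_\gamma\Big/\prod_{\alpha\in N^+}x_\alpha.
\]
The crucial combinatorial claim is $N^- = -N^+$ as multisets: granting this, $\det T = \pm\prod_{\alpha\in N^+}(x_{-\alpha}/x_\alpha)$, a product of units in $S$ by Lemma~\ref{lem:Sigmaw}(e), hence itself a unit.

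The main obstacle is proving this multiset identity, i.e., the equality $|\{u\in W^\Xi : u^{-1}\gamma\in\RS^-_{\Pi/\Xi}\}|=|\{u\in W^\Xi : u^{-1}\gamma\in\RS^+_{\Pi/\Xi}\}|$ for every root $\gamma\in\RS$. My plan is: the involution $w\mapsto s_\gamma w$ on $W$ forces $\sum_{w\in W}\mathrm{sgn}(w^{-1}\gamma)\,[w^{-1}\gamma\in\RS_{\Pi/\Xi}]=0$; then, decomposing $w=uv$ with $u\in W^\Xi$ and $v\in W_\Xi$ and setting $\gamma'=u^{-1}\gamma$, the inner sum over $v$ vanishes when $\gamma'\in\RS_\Xi$ and equals $|W_\Xi|\cdot\mathrm{sgn}(\gamma')$ when $\gamma'\in\RS_{\Pi/\Xi}$. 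The latter evaluation rests on the observation that for any $\beta\in\RS^+_{\Pi/\Xi}$, the simple-root expansion $\beta=\sum_i n_i\alpha_i$ has some $n_j>0$ with $\alpha_j\notin\Xi$, and this $\alpha_j$-coefficient is preserved by every simple reflection $s_{\alpha_k}$ with $\alpha_k\in\Xi$, so the whole $W_\Xi$-orbit of $\beta$ stays in $\RS^+_{\Pi/\Xi}$. The outer sum then reads $|W_\Xi|\cdot(\text{integer difference})=0$, and dividing at the level of integers yields the identity.
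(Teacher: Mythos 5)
Your proposal is correct, and for most of its length it coincides with the paper's argument: the reduction to the invertibility of a change-of-basis matrix over the basis $\{X_{I_v}^*\}_{v\in W^\Xi}$ of Lemma \ref{lem:DdWXibasis}, the expansion on $\{f_{u'}^\Xi\}$ via Lemma \ref{lem:AXionfe} and Corollary \ref{cor:inv_coeff}, the triangularity with diagonal entries $u(x_{\Pi/\Xi})/x_u$ and $(-1)^{\ell(u)}x_u$, and the resulting determinant $\pm\rho_\Xi\prod_{u\in W^\Xi}x_u^{-2}$ are precisely the paper's matrices $D_\Xi^Y$, $D_\Xi^X$ and $E_\Xi^X$. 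Where you genuinely depart is the last step, establishing that this determinant is a unit. The paper invokes Lemma \ref{lem:roottwo}, which shows that the $|W_\Xi|$-th \emph{power} of $\rho_\Xi^{-1}\prod_u x_u^2$ is a product of units of the form $x_\al x_{-\al}^{-1}$ by manipulating the identities for $x_\Pi^{|W|}$ under the involution $\la\mapsto-\la$ together with Lemma \ref{lem:bww0}, and then deduces invertibility of the element from invertibility of its power. You instead identify the determinant itself, with no powers, as $\pm\prod_{\al\in N^+}x_{-\al}/x_\al$ via the multiset identity $N^-=-N^+$, proved by the sign-reversing fixed-point-free involution $w\mapsto s_\gamma w$ on $W$ combined with the $W_\Xi$-stability of $\RS_{\Pi/\Xi}^{\pm}$ (which is exactly Lemma \ref{lem:PQroot}, so you need not reprove it) and the standard fact that $u(\RS_\Xi^+)\subseteq\RS^+$ for $u\in W^\Xi$. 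All of these ingredients check out, including the divisibility of the integer identity by $|W_\Xi|$, which is legitimate since it takes place in $\Z$. Your route is more elementary and more transparent --- it explains combinatorially why the determinant is a unit rather than verifying an algebraic identity among $|W_\Xi|$-th powers --- and could serve as an alternative proof of Lemma \ref{lem:roottwo}. I see no gap.
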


\begin{proof}
Let us first complete our choice of reduced sequence as a $\Xi$-compatible one, by choosing sequences $I_u$ for each $u\in W_\Xi$. By Corollary \ref{cor:opAinv} our families are in the $S$-module $(\DcFd)^{W_\Xi}$. To show that they are bases, it suffices to show that the respective matrices $M^Y_\Xi$ and $M^X_\Xi$ expressing them on the basis $\{X_{I_u}^*\}_{u \in W^{\Xi}}$ of Lemma \ref{lem:DdWXibasis} have invertible determinants (in $S$).

If $u'\in W^\Xi$ and $v\in W_\Xi$, we have $u'\leq u'v$ where the equality holds if and only if $v=e$. By Lemma~\ref{lem:Ydelta}, we get $a^Y_{u,u'v}=0$ unless $u'\le u$ and $a^Y_{u,uv}=0$ if $v \neq e$. This implies that $d^Y_{u,u'}=0$ unless $u'\le u$, and that
\[
d^Y_{u,u}=u(x_{\Pi/\Xi})\sum_{v\in W_\Xi}a^Y_{u,uv}=u(x_{\Pi/\Xi})a^Y_{u,u}=u(x_{\Pi/\Xi})\tfrac{1}{x_u}.
\]
Hence, the matrix $D_\Xi^Y:=(d^Y_{u,u'})_{u,u'\in W^\Xi}$ is lower triangular with determinant $\rho_\Xi \prod_{u\in W^\Xi}\tfrac{1}{x_u}$. Similarly, the matrix $D_\Xi^X:=(d^X_{u,u'})_{u,u'\in W^\Xi}$ is lower triangular with determinant $\rho_\Xi\prod_{u\in W^\Xi}\tfrac{(-1)^{\ell(u)}}{x_u}$. 

On the other hand, for $u\in W^\Xi$, we have 
\[
X_{I_u}^*=\sum_{w\in W}b^X_{w,u}f_w=\sum_{u'\in W^\Xi}\sum_{v\in W_\Xi}b^X_{u'v,u}f_{u'v}.
\]
By Corollary \ref{cor:inv_coeff}, and because $X_{I_u}^*$ is fixed by $W_{\Xi}$, we have $b^X_{u'v,u}=b^X_{u',u}$.  Therefore,
\[
X_{I_u}^*=\sum_{u'\in W^\Xi}b^X_{u',u}\sum_{v}f_{u'v}=\sum_{u'\in W^\Xi}b^X_{u',u}f_{u'}^\Xi.
\]
By Lemma \ref{lem:Xdelta}, $b^X_{u',u}=0$ unless $u'\ge u$, so the matrix $E_\Xi^X:=\{b^X_{u',u}\}_{u',u\in W^\Xi}$ is lower triangular with determinant 
$\prod_{u\in W^\Xi}(-1)^{\ell(u)}x_u$.

The matrix $M^X_\Xi=(E_\Xi^X)^{-1} D_\Xi^X$ has determinant 
\[
\rho_\Xi\prod_{u \in W^\Xi} \tfrac{1}{(x_u)^2}
\] 
which is invertible in $S$ by Lemma~\ref{lem:roottwo} below. Since the determinant of $M^Y_\Xi=(E_\Xi^X)^{-1} D_\Xi^Y$ differs by sign only, it is
invertible as well.
\end{proof}

Recall the definition of $\RS_\Xi$ from the beginning of section \ref{sec:pushpull}, and let $w_{0,\Xi}$ be the longest element of $W_\Xi$. 
\begin{lem} \label{lem:bww0}
For any $w\in W_\Xi$, we have $x_wx_{ww_{0,\Xi}}=w_{0,\Xi}(x_\Xi)$. In particular, if $\Xi=\Pi$ we have $x_{w}x_{ww_0}=x_{w_0}$.
\end{lem}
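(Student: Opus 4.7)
The plan is to reduce both sides to a product of $x_\alpha$ over an explicit subset of $\Sigma_\Xi^+$, and to see that they match.

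First I would observe that $w_{0,\Xi}$ sends $\Sigma_\Xi^-$ bijectively to $\Sigma_\Xi^+$ (as the longest element of $W_\Xi$, viewed as the Weyl group of the subroot system $\Sigma_\Xi$). This immediately gives
\[
w_{0,\Xi}(x_\Xi)=w_{0,\Xi}\Bigl(\prod_{\alpha\in\Sigma_\Xi^-}x_\alpha\Bigr)=\prod_{\alpha\in\Sigma_\Xi^+}x_\alpha.
\]

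Next, I would establish the following reduction: for any $w\in W_\Xi$, one has $w\Sigma^-\cap\Sigma^+=w\Sigma_\Xi^-\cap\Sigma_\Xi^+$. This is where Lemma~\ref{lem:PQroot} is used: applied with $\Pi$ in place of $\Xi$ and $\Xi$ in place of $\Xi'$, it says that any $w\in W_\Xi$ preserves $\Sigma^+\setminus\Sigma_\Xi^+$ and $\Sigma^-\setminus\Sigma_\Xi^-$. Hence splitting $\Sigma^-=\Sigma_\Xi^-\sqcup(\Sigma^-\setminus\Sigma_\Xi^-)$, the image of the second piece under $w$ stays in $\Sigma^-$ and contributes nothing to $w\Sigma^-\cap\Sigma^+$, while the image of $\Sigma_\Xi^-$ lies in $\Sigma_\Xi$. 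Consequently
\[
x_w=\prod_{\alpha\in w\Sigma_\Xi^-\cap\Sigma_\Xi^+}x_\alpha \qquad\text{for every }w\in W_\Xi.
\]

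Applying this with $w$ replaced by $ww_{0,\Xi}\in W_\Xi$ and using $w_{0,\Xi}\Sigma_\Xi^-=\Sigma_\Xi^+$, one gets $x_{ww_{0,\Xi}}=\prod_{\alpha\in w\Sigma_\Xi^+\cap\Sigma_\Xi^+}x_\alpha$. Since $w$ permutes $\Sigma_\Xi$, the two subsets $w\Sigma_\Xi^-\cap\Sigma_\Xi^+$ and $w\Sigma_\Xi^+\cap\Sigma_\Xi^+$ partition $\Sigma_\Xi^+$, so multiplying the two formulas yields $x_w\,x_{ww_{0,\Xi}}=\prod_{\alpha\in\Sigma_\Xi^+}x_\alpha=w_{0,\Xi}(x_\Xi)$, as desired. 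The special case $\Xi=\Pi$ is immediate since $w_0(x_\Pi)=\prod_{\alpha\in\Sigma^+}x_\alpha=x_{w_0}$ by the definition of $x_{w_0}$. No serious obstacle is expected: the argument is a direct combinatorial unwinding of the definitions of $x_v$ and $x_\Xi$, with the only input being the $W_\Xi$-stability result of Lemma~\ref{lem:PQroot}.
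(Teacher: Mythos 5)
Your proof is correct and follows the same overall strategy as the paper's: express $x_w$ and $x_{ww_{0,\Xi}}$ as products over disjoint subsets of $\RS_\Xi^+$ whose union is $\RS_\Xi^+$, and identify the right-hand side $w_{0,\Xi}(x_\Xi)$ with $\prod_{\al\in\RS_\Xi^+}x_\al$. The one place where you diverge is the justification of the intermediate reduction $w\RS^-\cap\RS^+ = w\RS_\Xi^-\cap\RS_\Xi^+$ for $w\in W_\Xi$: the paper cites \eqref{eq:root} (the reduced-decomposition description of $v\RS^-\cap\RS^+$, noting that a reduced word for $w\in W_\Xi$ only involves simple reflections from $\Xi$ so all the produced roots stay in $\RS_\Xi$), while you instead apply Lemma~\ref{lem:PQroot} with $(\Pi,\Xi)$ in place of $(\Xi,\Xi')$ to split off the $W_\Xi$-stable piece $\RS^-\setminus\RS_\Xi^-$. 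Both routes are equally valid; yours is arguably a bit cleaner since it recycles a lemma already proved earlier rather than re-examining reduced words, but the argument is otherwise the same.
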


\begin{proof}
Recall from Lemma \ref{lem:Ydelta} that $b^Y_{w,w}=x_w=\prod_{w\RS^-\cap \RS^+} x_\al$. By \eqref{eq:root}, it also equals $\prod_{w\RS_\Xi^-\cap \RS_\Xi^+} x_\al$. Since $w_{0,\Xi}\RS_\Xi^-=\RS_\Xi^+$, we have $ww_{0,\Xi}\RS_\Xi^-\cap \RS_\Xi^+=w\RS_\Xi^+\cap \RS_\Xi^+$. Moreover,   
\[
(w\RS_\Xi^-\cap \RS_\Xi^+)\cap (w\RS_\Xi^+\cap \RS_\Xi^+)\subset w\RS_\Xi^-\cap w\RS_\Xi^+=w(\RS_\Xi^-\cap \RS_\Xi^+)=\emptyset
\]
and their union is $\RS_\Xi^+$.
\end{proof}

\begin{lem} \label{lem:roottwo}
For any $\Xi\subset \Pi$ the product $\rho_\Xi\prod_{u\in W^\Xi} \tfrac{1}{x_u^2}$ is an invertible element in $S$. 
\end{lem}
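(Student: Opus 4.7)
The plan is to exploit that $x_{-\alpha}/x_\alpha$ is a unit in $S$ for every root $\alpha$ (as noted in the proof of Lemma~\ref{lem:Sigmaw}(e)) in order to reduce the statement to a multiplicity match for the variables $x_\beta$ indexed by positive roots $\beta$. Writing $\rho_\Xi = \prod_{w\in W^\Xi}\prod_{\alpha\in\RS_{\Pi/\Xi}^-}x_{w\alpha}$ and absorbing each $x_{-\gamma}$ into $x_\gamma$ modulo units, one obtains $\rho_\Xi = (\text{unit}) \cdot \prod_{\beta\in\RS^+} x_\beta^{m_\beta}$, where $m_\beta$ is the number of pairs $(w,\alpha) \in W^\Xi\times \RS_{\Pi/\Xi}^-$ with $w\alpha\in\{\pm\beta\}$. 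Meanwhile $\prod_{u\in W^\Xi} x_u^2 = \prod_{\beta\in\RS^+} x_\beta^{2 k_\beta}$ with $k_\beta = \#\{u\in W^\Xi : u^{-1}\beta\in\RS^-\}$, since $\beta \in T_u := u\RS^-\cap \RS^+$ iff $u^{-1}\beta \in \RS^-$. It therefore suffices to prove $m_\beta = 2k_\beta$ for every $\beta\in\RS^+$.

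Counting directly gives $m_\beta = |B|+|C|$, where
\[
B=\{w\in W^\Xi : w^{-1}\beta\in\RS_{\Pi/\Xi}^+\},\quad C=\{w\in W^\Xi : w^{-1}\beta\in\RS_{\Pi/\Xi}^-\}.
\]
The crucial observation is that any $w\in W^\Xi$ satisfies $w(\RS_\Xi^-)\subseteq\RS^-$ (a standard property of minimal length coset representatives), so for $\beta\in\RS^+$ one cannot have $w^{-1}\beta\in\RS_\Xi^-$; hence $w^{-1}\beta\in\RS^-$ is equivalent to $w^{-1}\beta\in\RS_{\Pi/\Xi}^-$, identifying $|C|$ with $k_\beta$ and reducing the claim to $|B|=|C|$.

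This final equality follows from a standard involution argument: by Lemma~\ref{lem:PQroot}, $W_\Xi$ preserves $\RS_{\Pi/\Xi}^\pm$, so the factorization $w=uv$ with $u\in W^\Xi$, $v\in W_\Xi$ gives $|W_\Xi|\cdot|B| = \#\{w\in W : w^{-1}\beta\in\RS_{\Pi/\Xi}^+\}$ and similarly for $C$. The involution $w\mapsto s_\beta w$ on $W$ sends $w^{-1}\beta$ to $-w^{-1}\beta$, and therefore swaps these two sets, establishing $|B|=|C|$. The main delicacy, in my view, lies not in this involution but in the bookkeeping of the unit reduction in the first step: one must verify that matching the exponents of $x_\beta$ at every positive root $\beta$ really does imply that the ratio $\rho_\Xi \prod_u x_u^{-2}$ is a unit in $S$ (and not merely in the localization $Q$). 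This is however immediate from the form of the equality: once multiplicities match, the ratio is literally a product of units $x_{-\alpha}/x_\alpha$.
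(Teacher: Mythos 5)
Your proof is correct, and it takes a genuinely different route from the paper's. The paper manipulates $x_\Pi^{|W|}$ and $\bar x_\Pi^{|W|}$ (where $\bar{\phantom{x}}$ is the involution of $S$ induced by $\lambda\mapsto-\lambda$) to show that the $|W_\Xi|$-th power of $\rho_\Xi^{-1}\prod_u x_u^2$ is a product of units $x_\alpha x_{-\alpha}^{-1}$, then divides by the power to conclude. Your argument instead matches the $x_\beta$-multiplicities directly: after absorbing negative-root factors into positive-root factors modulo units, the multiplicity of $x_\beta$ in $\rho_\Xi$ is $|B|+|C|$ and in $\prod_u x_u^2$ it is $2|C|$ (using that elements of $W^\Xi$ send $\RS_\Xi^-$ into $\RS^-$, so $k_\beta=|C|$), so the claim reduces to $|B|=|C|$, which you get from the fibering of $W$ over $W^\Xi$ via Lemma~\ref{lem:PQroot} together with the involution $w\mapsto s_\beta w$. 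Both proofs ultimately rest on $x_{-\alpha}/x_\alpha\in S^\times$; yours trades the paper's algebraic identity-chasing and power trick for a cleaner combinatorial count on roots, which is arguably more transparent and avoids the (harmless but slightly inelegant) step of deducing invertibility of an element from invertibility of its $|W_\Xi|$-th power.

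One small remark on the step you flagged as delicate: it is fine as stated, but worth making explicit that the equality $\rho_\Xi = (\text{unit})\cdot\prod_{\beta\in\RS^+}x_\beta^{m_\beta}$ holds already in $S$ (not just in $Q$), since each substitution $x_{-\gamma}=(x_{-\gamma}/x_\gamma)\,x_\gamma$ happens inside $S$. Once $m_\beta=2k_\beta$ is established, the equality $\rho_\Xi=(\text{unit})\cdot\prod_u x_u^2$ in $S$ gives immediately that $\rho_\Xi\prod_u x_u^{-2}$ (a priori an element of $Q$) coincides with that unit of $S$, as required.
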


\begin{proof}
We already know that this product is in $S$, since it is the determinant of the matrix $M_\Xi^X$ whose coefficients are in $S$. Consider the $R$-linear involution $u\mapsto \bar u$ on $S=\RcF$ induced by $\la \mapsto -\la$, $\la\in \cl$. Observe that it is $W$-equivariant. 

 For any $\al\in \Xi$, we have 
\[
x_\Xi=s_\al(x_\Xi) x_{-\al} x_\al^{-1} = s_\al(x_\Xi) \bar x_{\al} x_{\al}^{-1}
\]
and, therefore, by induction $x_\Xi=w(x_\Xi)\bar x_vx_v^{-1}$ for any $v\in W_\Xi$. In particular,  $x_\Pi=w(x_\Pi)\bar x_w x_w^{-1}$ for any $w\in W$. Then 
\[
x_\Xi^{|W_\Xi|}=\prod_{v\in W_\Xi}v(x_\Xi)\bar x_v x_v^{-1}\quad\text{and}\quad x_\Pi^{|W|}=\prod_{w\in W}w(x_\Pi)\bar x_w x_w^{-1}.
\]
If $w=uv$ with $\ell(w)=\ell(u)+\ell(v)$, by Lemma \ref{lem:Sigmaw}, part \eqref{item:Sigmauv}, $x_{uv}=x_uu(x_v) $ and $\bar x_{uv}=\bar x_u u(\bar x_v )$. 
Hence
\begin{equation} \label{eq:xpi}
\begin{split}
x_\Pi^{|W|} & =\prod_{w\in W}w(x_\Pi)\bar x_w x_w^{-1}=\prod_{u\in W^\Xi}\prod_{v\in W_\Xi}uv(x_{\Pi/\Xi}x_\Xi)\bar x_{uv} x_{uv}^{-1} \\
& \hspace{-.2ex}\equalbyref{cor:xXifixed} \prod_{u\in W^\Xi} u\big(x_{\Pi/\Xi}^{|W_\Xi|}\big) \prod_{v\in W_\Xi} uv(x_\Xi)\bar x_u u(\bar x_v)x_u^{-1}u(x_v^{-1}) \\
& = \rho_\Xi^{|W_\Xi|} \prod_{u\in W^\Xi} \big(\bar x_u x_u^{-1}\big)^{|W_\Xi|} u\big( \prod_{v\in W_\Xi} v(x_\Xi) \bar x_v x_v^{-1}\big) \\
& = \rho_\Xi^{|W_\Xi|} \prod_{u\in W^\Xi} \big(\bar x_u x_u^{-1}\big)^{|W_\Xi|} u( x_\Xi)^{|W_\Xi|} 
\end{split}
\end{equation}

On the other hand, by Lemma \ref{lem:bww0}, 
\[
\bar x_\Xi^{|W_\Xi|}=w_{0,\Xi}(x_\Xi)^{|W_\Xi|}=\prod_{v \in W_\Xi}x_v x_{vw_{0,\Xi}}=\prod_{v\in W_\Xi}x_v^2
\]
and, in particular, $\bar x_\Pi^{|W|}=\prod_{w\in W} x^2_w$. So, we obtain
\[
\begin{split}
\bar x_\Pi^{|W|} & =\prod_{w\in W}x_{w}^2=\prod_{u\in W^\Xi}\prod_{v\in W_\Xi}x_{uv}^2=\prod_{u\in W^\Xi}\prod_{v\in W_\Xi}x^2_{u}u(x^2_{v}) \\
& =\Big(\prod_{u\in W^\Xi}x_{u}^{2|W_\Xi|}\Big) \Big(\prod_{u\in W^\Xi}u(\prod_{v\in W_\Xi}x^2_{v})\Big)=\Big(\prod_{u\in W^\Xi}x_{u}^2\Big)^{|W_\Xi|} \Big(\prod_{u\in W^\Xi}u(\bar x_{\Xi})\Big)^{|W_\Xi|}. 
\end{split}
\]
Combining this with equation \eqref{eq:xpi}, we obtain 
\[
\Big(\rho_\Xi^{-1} \prod_{u\in W^\Xi}x_u^2\Big)^{|W_\Xi|} =\bar x_\Pi^{|W|}x_\Pi^{-|W|}\Big(\prod_{u\in W^\Xi} u(\bar x_{\Xi}^{-1}x_\Xi)\bar x_u x_u^{-1}\Big)^{|W_\Xi|}
\]
which is an element of $S$, since it is a product of elements of the form $x_\al x_{-\al}^{-1} \in S$. Therefore $\rho_\Xi\prod_{u \in W^\Xi} \tfrac{1}{x_u^2}$ is invertible, since so is its $|W_\Xi|$-th power.
\end{proof}

\begin{cor}\label{cor:Aopsurj} 
Given $\Xi'\subseteq \Xi\subseteq \Pi$ we have $A_{\Xi}(\DcFd)=(\DcFd)^{W_\Xi}$. 
For any set of coset representatives $W_{\Xi/\Xi'}$ the operator $A_{\Xi/\Xi'}$ induces  a surjection $(\DcFd)^{W_\Xi'}\to (\DcFd)^{W_\Xi}$ (independent of the choices of $W_{\Xi/\Xi'}$ by Lemma \ref{lem:Ainvtoinv}).
\end{cor}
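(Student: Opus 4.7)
The plan is to deduce both statements from Theorem~\ref{theo:invbasis} combined with the composition rule of Lemma~\ref{lem:compositA}; essentially all the real work has been done in Theorem~\ref{theo:invbasis}, so what remains is mostly bookkeeping.

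For the first identity $A_\Xi(\DcFd) = (\DcFd)^{W_\Xi}$, the inclusion $\subseteq$ is exactly Corollary~\ref{cor:opAinv}. For the reverse inclusion, I first observe that $A_\Xi$ is $S$-linear with respect to the natural $S$-module structure on $\QWd$ (the one that defines the product on $\QWd$), because for any $q \in Q$ one computes $(Y_\Xi \act (qf))(z) = (qf)(zY_\Xi) = q\, f(zY_\Xi) = (q\cdot A_\Xi(f))(z)$. Hence $A_\Xi(\DcFd)$ is an $S$-submodule of $(\DcFd)^{W_\Xi}$. By Theorem~\ref{theo:invbasis}, for any choice of reduced sequences $\{I_u\}_{u \in W^\Xi}$, the family $\{A_\Xi(A_{I^{\rev}_u}(\tf_e))\}_{u\in W^\Xi}$ is an $S$-basis of $(\DcFd)^{W_\Xi}$. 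Each such basis element lies in $A_\Xi(\DcFd)$: indeed $\tf_e \in \DcFd$ by Lemma~\ref{lem:point}, the element $Y_{I^{\rev}_u}$ is a product of $Y_i$'s all lying in $\DcF$, and $\DcFd$ is a left $\DcF$-module under $\act$, so $A_{I^{\rev}_u}(\tf_e) = Y_{I^{\rev}_u} \act \tf_e \in \DcFd$. Hence $A_\Xi(\DcFd)$ contains an $S$-basis of $(\DcFd)^{W_\Xi}$, and the two coincide.

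For the second assertion, I reduce to the first via the composition rule. Given any set of representatives $W_{\Xi/\Xi'}$, the product set $\{wv \mid w \in W_{\Xi/\Xi'},\, v \in W_{\Xi'}\}$ is a set of representatives of $W_\Xi = W_{\Xi}/W_\emptyset$, so Lemma~\ref{lem:compositA} gives $A_\Xi = A_{\Xi/\Xi'} \circ A_{\Xi'}$. Applying the first part twice,
\[
(\DcFd)^{W_\Xi} = A_\Xi(\DcFd) = A_{\Xi/\Xi'}\bigl(A_{\Xi'}(\DcFd)\bigr) = A_{\Xi/\Xi'}\bigl((\DcFd)^{W_{\Xi'}}\bigr),
\]
which combined with the containment $A_{\Xi/\Xi'}\bigl((\DcFd)^{W_{\Xi'}}\bigr) \subseteq (\DcFd)^{W_\Xi}$ from Lemma~\ref{lem:Ainvtoinv} yields the asserted surjection; independence of the choice of $W_{\Xi/\Xi'}$ is also part of Lemma~\ref{lem:Ainvtoinv}. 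The only potential pitfall in the whole argument is keeping track of which $S$-module structure is being used on $\QWd$ when invoking $S$-linearity of $A_\Xi$ (the natural one, not the $\act$-one), but this is entirely routine.
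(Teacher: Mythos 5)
Your proof is correct and follows essentially the same route as the paper: inclusion $A_\Xi(\DcFd)\subseteq(\DcFd)^{W_\Xi}$ from Corollary~\ref{cor:opAinv}, the reverse inclusion from the basis in Theorem~\ref{theo:invbasis} together with $Q$-linearity of $A_\Xi$ (already noted in the definition of $A_{\Xi/\Xi'}$), and the parabolic case by factoring $A_\Xi = A_{\Xi/\Xi'}\circ A_{\Xi'}$ via Lemma~\ref{lem:compositA}. The only cosmetic difference is that you phrase the surjectivity as one equation chain rather than first restricting and then invoking surjectivity, and your final appeal to Lemma~\ref{lem:Ainvtoinv} for the containment is redundant given the displayed equality.
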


\begin{proof}
By Corollary~\ref{cor:opAinv} and Theorem~\ref{theo:invbasis}, we obtain the first part. To prove the second part, let $\sigma\in (\DcFd)^{W_{\Xi'}}$. By the first part, there exists $\sigma'\in \DcFd$ such that $\sigma=A_{\Xi'}(\sigma')$, so by Lemma \ref{lem:compositA} we have 
\[
A_{\Xi/\Xi'}(\sigma)=A_{\Xi/\Xi'}(A_{\Xi'}(\sigma'))=A_{\Xi}(\sigma')\in (\DcFd)^{W_\Xi}.
\] 
Hence, $A_{\Xi/\Xi'}$ restricts to $A_{\Xi/\Xi'}\colon (\DcFd)^{W_{\Xi'}}\to (\DcFd)^{W_\Xi}$. Since $A_\Xi(\DcFd)=(\DcFd)^{W_\Xi}$, we also have $A_{\Xi/\Xi'}((\DcFd)^{W_{\Xi'}})=(\DcFd)^{W_\Xi}$. 
\end{proof}

\begin{theo}\label{theo:bilformGP} 
Assume that the choice of reduced sequences $\{I_w\}_{w\in W}$ is $\Xi$-compa\-tible. If $u\in W^\Xi$, then 
\[
A_{\Pi/\Xi}(X_{I_u}^\star A_\Xi(B_{I_w}^{\rev}(x_\Pi f_e)))=\Kr_{w,u}\unit.
\]
Consequently, the pairing 
\[
(\DcFd)^{W_\Xi}\times (\DcFd)^{W_\Xi}\to (\DcFd)^W\cong S,\quad (\sigma,\sigma')\mapsto A_{\Pi/\Xi}(\sigma\sigma')
\]
is non-degenerate; 
$\big\{A_{\Xi}(B_{I_u^{\rev}}(x_\Pi f_e))\big\}_{u\in W^\Xi}$ and $\{X_{I_u}^\star\}_{u\in W^\Xi}$ being dual $S$-bases of $(\DcFd)^{W_\Xi}$.
\end{theo}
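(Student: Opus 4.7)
The plan is to reduce this parabolic statement to its absolute counterpart, Theorem~\ref{theo:bilform}, by sliding the factor $X_{I_u}^\star$ through the inner operator $A_\Xi$ via the projection formula, and then collapsing $A_{\Pi/\Xi}\circ A_\Xi$ to $A_\Pi$ using the composition rule.

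First, since the reduced sequences $\{I_w\}_{w\in W}$ are assumed $\Xi$-compatible and $u\in W^\Xi$, Corollary~\ref{cor:invbasisq} gives $X_{I_u}^\star\in (\QWd)^{W_\Xi}$. The projection formula (Lemma~\ref{lem:projformulaA}) for $A_\Xi$ then justifies rewriting
\[
X_{I_u}^\star \cdot A_\Xi\big(B_{I_w^{\rev}}(\tf_e)\big)
= A_\Xi\big(X_{I_u}^\star \cdot B_{I_w^{\rev}}(\tf_e)\big).
\]
This element lies in $(\DcFd)^{W_\Xi}$ (Corollary~\ref{cor:opAinv}), so by Lemma~\ref{lem:Ainvtoinv} the operator $A_{\Pi/\Xi}$ is independent of the choice of coset representatives on it. Applying $A_{\Pi/\Xi}$ and using the composition rule $A_{\Pi/\Xi}\circ A_\Xi = A_\Pi$ of Lemma~\ref{lem:compositA} (for the set $W_\Pi$ chosen as the set of products $W_{\Pi/\Xi}\cdot W_\Xi$), followed by Theorem~\ref{theo:bilform}, I obtain
\[
A_{\Pi/\Xi}\big(X_{I_u}^\star A_\Xi(B_{I_w^{\rev}}(\tf_e))\big)
= A_\Pi\big(X_{I_u}^\star B_{I_w^{\rev}}(\tf_e)\big)
= \Kr_{w,u}\,\unit,
\]
which is the required identity.

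For the non-degeneracy consequence, I will combine this with two basis results. By Lemma~\ref{lem:DdWXibasis}, the family $\{X_{I_u}^\star\}_{u\in W^\Xi}$ is an $S$-basis of $(\DcFd)^{W_\Xi}$, and by Theorem~\ref{theo:invbasis}, so is $\{A_\Xi(B_{I_u^{\rev}}(\tf_e))\}_{u\in W^\Xi}$. The Kronecker formula above then says that the Gram matrix of the pairing $(\sigma,\sigma')\mapsto A_{\Pi/\Xi}(\sigma\sigma')$ in these two bases is the identity, which immediately yields non-degeneracy and shows that these two bases are dual. The identification $(\DcFd)^W\cong S$ used as target for the pairing is the one carried by $\unit$, compatible with $A_\Pi$ via Lemma~\ref{lem:APonfv}.

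The main point to verify carefully is the application of the projection formula, which requires $X_{I_u}^\star$ to be genuinely $W_\Xi$-invariant; this is precisely where the $\Xi$-compatibility of the chosen reduced sequences enters in an essential way, since without it Corollary~\ref{cor:invbasisq} would not apply and $X_{I_u}^\star$ might fail to be invariant. Beyond this, the argument is essentially a formal consequence of the projection formula, the composition rule, and the known absolute duality of Theorem~\ref{theo:bilform}.
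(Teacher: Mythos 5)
Your proof is correct and follows essentially the same route as the paper's: slide $X_{I_u}^\star$ inside $A_\Xi$ by the projection formula (using its $W_\Xi$-invariance, which you rightly trace to $\Xi$-compatibility via Corollary~\ref{cor:invbasisq}), collapse $A_{\Pi/\Xi}\circ A_\Xi$ to $A_\Pi$ by the composition rule, and invoke the absolute duality Theorem~\ref{theo:bilform}. The only cosmetic difference is that the paper cites Corollary~\ref{cor:Aopsurj} up front to record that the pairing lands in $S$, whereas you address this implicitly via Corollary~\ref{cor:opAinv} and the basis lemmas; both are fine.
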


\begin{proof}
By Corollary \ref{cor:Aopsurj}, the pairing is well-defined (\ie it does map into $S$). By Lemma~\ref{lem:projformulaA}, Lemma~\ref{lem:compositA} and Theorem~\ref{theo:bilform}, we obtain $A_{\Pi/\Xi}(X_{I_u}^*A_{\Xi}(B_{I_w^{\rev}}(x_\Pi f_e)))  =$
\[
 =A_{\Pi/\Xi}(A_{\Xi}(X_{I_u}^* B_{I_w^{\rev}}(x_{\Pi}f_e))) 
= A_\Pi(X_{I_u}^*B_{I_w^{\rev}}(x_\Pi f_e)) =\Kr_{w,u}\unit.\qedhere
\]
\end{proof}


\section{Push-forwards and pairings on $\DcFPd{\Xi}$} \label{sec:pushDXi}

We construct now an algebraic version of the push-forward map.

\medskip

For any $\Xi \subseteq \Pi$, the $W_\Xi$ invariant subring $S^{W_\Xi}$ (resp. $Q^{W_\Xi}$) acts by multiplication on the right on $\SWP{\Xi}$ (resp. $\QWP{\Xi}$) by the formula $(\sum_{\bar{w}} q_{\bar{w}} \de_{\bar{w}}) \cdot q' = \sum_{\bar{w}} q_{\bar{w}} w(q') \de_{\bar{w}}$ (note that $w(q')$ does not depend on the choice of a representative $w$ of $\bar{w}$). When $q \in S^{W_\Xi}$ (resp. $Q^{W_\Xi}$) and $f \in \SWPd{\Xi}$ (resp. $f \in \QWPd{\Xi}$), we write $q \act f$ for the map dual to the multiplication on the right by $q$.

Recall that $\ddd{\Xi}:\QWPd{\Xi'} \to \QWPd{\Xi}$ was defined at the beginning of section \ref{sec:algresP}, and that it sends $f_{\tilde{w}}$ to $f_{\bar{w}}$. By Corollary \ref{cor:xXifixed} we know that $\frac{1}{x_{\Xi/\Xi'}}\in (Q)^{W_{\Xi'}}$.

\medskip

We define $\cA_{\Xi/\Xi'}\colon \QWPd{\Xi'} \to \QWPd{\Xi}$ by $\cA_{\Xi/\Xi'}(f):=\ddd{\Xi/\Xi'}\big((1/x_{\Xi/\Xi'})\act f\big)$.
The left commutative diagram
\[
\xymatrix{
\QWP{\Xi'} & \QW \ar[l]_-{\p{\Xi'}} \\ 
\QWP{\Xi} \ar[u]^{\cdot \frac{1}{x_{\Xi/\Xi'}} \circ \dd{\Xi/\Xi'}} & \QW \ar[l]_-{\p{\Xi}} \ar[u]_{\cdot Y_{\Xi/\Xi'}}
}
\hspace{5ex}
\xymatrix{
\QWPd{\Xi'} \ar[d]_{\cA_{\Xi/\Xi'}} \ar[r]^-{\pd{\Xi'}} & \QWd \ar[d]^{A_{\Xi/\Xi'}} \\
\QWPd{\Xi} \ar[r]^-{\pd{\Xi}} & \QWd
}
\]
in which $\cdot 1/x_{\Xi/\Xi'}$ and $\cdot Y_{\Xi/\Xi'}$ mean multiplication on the right, dualizes as the right one. Since $\pd{\Xi}$ restricts to an isomorphism $\DcFPd{\Xi} \isoto (\DcFd)^{W_\Xi}$ by Lemma \ref{lem:DFinv} and since $A_{\Xi/\Xi'}$ restricts to a  map $(\DcFd)^{W_{\Xi'}}\to (\DcFd)^{W_\Xi}$ by Corollary \ref{cor:Aopsurj}, we obtain:
\begin{lem}\label{lem:AXicommute}
The map $\cA_{\Xi/\Xi'}$ restricts to $\DcFPd{\Xi} \to \DcFPd{\Xi'}$ and the diagram
\[
\xymatrix{
\DcFPd{\Xi'} \ar[d]_{\cA_{\Xi/\Xi'}} \ar[r]^-{\pd{\Xi'}}_-{\simeq} & (\DcFd)^{W_{\Xi'}} \ar[d]^{A_{\Xi/\Xi'}} \\
\DcFPd{\Xi} \ar[r]^-{\pd{\Xi}}_-{\simeq} & (\DcFd)^{W_{\Xi}}
}
\]
commutes.
\end{lem}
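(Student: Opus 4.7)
The plan is to read the proof off the commutative square exhibited immediately before the statement, combining it with the two prior identifications of images. The square to use has already been half-done for us: the left diagram of $\QW$-modules is displayed in the text, and the author remarks that it dualizes to the right diagram of $\QWd$-modules, which is exactly the outer square of our desired diagram once one restricts targets.

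First I would verify that the left diagram of $\QW$-modules really does commute (this is the only computational step). Using the definition $Y_{\Xi/\Xi'} = \big(\sum_{w \in W_{\Xi/\Xi'}} \de_w\big)\tfrac{1}{x_{\Xi/\Xi'}}$ and the fact that $\tfrac{1}{x_{\Xi/\Xi'}} \in Q^{W_{\Xi'}}$ by Corollary \ref{cor:xXifixed}, both composites send a basis element $\de_u \in \QW$ to the same sum $\sum_{\tilde v} v\big(\tfrac{1}{x_{\Xi/\Xi'}}\big) \de_{\tilde v}$ in $\QWP{\Xi'}$, where $\tilde v$ runs over the $|W_{\Xi/\Xi'}|$ classes in $W/W_{\Xi'}$ whose image in $W/W_\Xi$ equals $\bar u$. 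Dualizing this commutativity gives the outer $\QWPd{\bullet}$--$\QWd$ square that we need.

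Given that outer square, the remaining claims are pure diagram chase. Let $f \in \DcFPd{\Xi'}$. By Lemma \ref{lem:DFinv}, $\pd{\Xi'}(f) \in (\DcFd)^{W_{\Xi'}}$, and by Corollary \ref{cor:Aopsurj}, $A_{\Xi/\Xi'}(\pd{\Xi'}(f)) \in (\DcFd)^{W_\Xi}$. The outer commutativity forces
\[
\pd{\Xi}(\cA_{\Xi/\Xi'}(f)) = A_{\Xi/\Xi'}(\pd{\Xi'}(f)) \in (\DcFd)^{W_\Xi},
\]
and since $\pd{\Xi}$ is injective with image $(\DcFd)^{W_\Xi}$ on $\DcFPd{\Xi}$ (Lemma \ref{lem:DFinv} again), we conclude $\cA_{\Xi/\Xi'}(f) \in \DcFPd{\Xi}$. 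This is precisely the restriction claim. The commutativity of the restricted square is then immediate, as each of its four maps is simply the restriction or corestriction of the corresponding map in the outer square.

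I do not expect any serious obstacle here: the only non-formal step is the basis-level check of the left $\QW$-module square, which is routine given the definition of $Y_{\Xi/\Xi'}$ and the $W_{\Xi'}$-invariance of $1/x_{\Xi/\Xi'}$; everything else is a two-line diagram chase using Lemma \ref{lem:DFinv} and Corollary \ref{cor:Aopsurj}.
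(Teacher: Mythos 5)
Your proof is correct and takes essentially the same route as the paper: the paper also establishes the commutativity of the $\QW$-module square, dualizes it, and then invokes Lemma \ref{lem:DFinv} and Corollary \ref{cor:Aopsurj} to restrict. The only difference is that you spell out the verification of the left square's commutativity and the final diagram chase, both of which the paper leaves implicit.
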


\begin{rem}
The map $\cA_{\Xi/\Xi'}$ corresponds to a push-forward in the geometric context, see \cite[Diagram~(8.3)]{CZZ2}
\end{rem}

\begin{lem}\label{lem:multDSP} 
Within $\QWP{\Xi}$, we have $\DcFP{\Xi}x_{\Pi/\Xi}\subseteq \SWP{\Xi}$. So the right multiplication by $x_{\Pi/\Xi}$ induces a map $\DcFP{\Xi}\to \SWP{\Xi}$. Consequently, it defines a map $\SWPd{\Xi}\to \DcFPd{\Xi}$, $f\mapsto x_{\Pi/\Xi}\act f$.
\end{lem}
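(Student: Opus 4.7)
The plan is to dualize the inclusion $\DcFP{\Xi}\cdot x_{\Pi/\Xi}\subseteq \SWP{\Xi}$ and reduce it to a statement about elements already known to lie in $\DcFd$. By Lemma~\ref{lem:algresPinj}, $\DcFPd{\Xi}$ is exactly the set of $f\in \SWPd{\Xi}$ with $f(\DcFP{\Xi})\subseteq S$. For $z\in \DcFP{\Xi}$ and $f\in \SWPd{\Xi}$, the identity $f(z\cdot x_{\Pi/\Xi})=(x_{\Pi/\Xi}\act f)(z)$ holds because right multiplication by $x_{\Pi/\Xi}$ on $\QWP{\Xi}$ is well defined by Corollary~\ref{cor:xXifixed}. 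Hence the desired inclusion is equivalent to $x_{\Pi/\Xi}\act \SWPd{\Xi}\subseteq \DcFPd{\Xi}$, and by $S$-linearity it suffices to verify $x_{\Pi/\Xi}\act f_{\bar v}\in \DcFPd{\Xi}$ for each basis element.

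A direct computation gives $x_{\Pi/\Xi}\act f_{\bar v}=v(x_{\Pi/\Xi})f_{\bar v}$. Under the injection $\pd{\Xi}\colon\SWPd{\Xi}\hookrightarrow \SWd$, whose restriction to $\DcFPd{\Xi}$ has image exactly $(\DcFd)^{W_\Xi}$ by Lemma~\ref{lem:DFinv}, this element maps to $v(x_{\Pi/\Xi})f^\Xi_v$. It therefore remains to show that $v(x_{\Pi/\Xi})f^\Xi_v$ lies in $(\DcFd)^{W_\Xi}$; by injectivity of $\pd{\Xi}$ this will give $x_{\Pi/\Xi}\act f_{\bar v}\in \DcFPd{\Xi}$.

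The key step will be to recognize this element as a push-pull image. Unpacking
\[
A_\Xi(\tf_v)=Y_\Xi\act \tf_v=\sum_{\sigma\in W_\Xi}\de_\sigma\act \big((1/x_\Xi)\act \tf_v\big)
\]
and using $(q\act \tf_v)=v(q)\tf_v$ together with $\de_\sigma\act f_v=f_{v\sigma^{-1}}$ from Lemma~\ref{lem:bulletactprop}, I obtain
\[
A_\Xi(\tf_v)=v(x_{\Pi/\Xi})\sum_{\sigma\in W_\Xi}f_{v\sigma^{-1}}=v(x_{\Pi/\Xi})f^\Xi_v.
\]
Since $\tf_v\in \DcFd$ by Lemma~\ref{lem:point} and $A_\Xi(\DcFd)\subseteq (\DcFd)^{W_\Xi}$ by Corollary~\ref{cor:opAinv}, this will conclude the proof of the first claim.

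The remaining statements of the lemma are immediate formal consequences: the inclusion gives the induced map $\DcFP{\Xi}\to \SWP{\Xi}$ by restricting right multiplication, and its $S$-linear dual is precisely the map $\SWPd{\Xi}\to \DcFPd{\Xi}$, $f\mapsto x_{\Pi/\Xi}\act f$. The main obstacle is really the identification $A_\Xi(\tf_v)=v(x_{\Pi/\Xi})f^\Xi_v$; once it is in hand, everything else is bookkeeping with the equivalences already established in sections~\ref{sec:pushpullDdual} and~\ref{sec:algresP}.
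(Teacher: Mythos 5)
Your proof is correct, and it takes a genuinely different route from the one in the paper. The paper's proof works directly on the primal side: using the basis $\{X_{I_w}^\Xi\}_{w\in W^\Xi}$ of $\DcFP{\Xi}$ (Lemma~\ref{lem:DcFPbasis}), it expands
\[
X_{I_w}^\Xi x_{\Pi/\Xi}=\sum_{u\in W^\Xi}u(x_{\Pi/\Xi})\Big(\sum_{v\in W_\Xi}a^X_{w,uv}\Big)\de_{\bar u}=\sum_{u\in W^\Xi}d^X_{w,u}\de_{\bar u},
\]
and then invokes Lemma~\ref{lem:coeffXi}, which says the aggregated coefficients $d^X_{w,u}$ (and $d^Y_{w,u}$) lie in $S$. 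You instead dualize from the start: you correctly observe that $\DcFP{\Xi}x_{\Pi/\Xi}\subseteq \SWP{\Xi}$ is equivalent to $x_{\Pi/\Xi}\act \SWPd{\Xi}\subseteq \DcFPd{\Xi}$ (using that $\SWP{\Xi}\subseteq\QWP{\Xi}$ is precisely the set of elements with $S$-valued pairing against $\SWPd{\Xi}$, and Lemma~\ref{lem:algresPinj}), reduce to the basis vectors $f_{\bar v}$, identify $\pd{\Xi}\big(v(x_{\Pi/\Xi})f_{\bar v}\big)=v(x_{\Pi/\Xi})f_v^\Xi=A_\Xi(\tf_v)$ via Lemma~\ref{lem:APonfv}, and then finish with $\tf_v\in\DcFd$ (Lemma~\ref{lem:point}) and $A_\Xi(\DcFd)\subseteq(\DcFd)^{W_\Xi}$ (Corollary~\ref{cor:opAinv}). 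Both arguments are ultimately underwritten by Corollary~\ref{cor:opAinv}: the paper reaches it indirectly through the bookkeeping of $d^X_{w,u}$, which are needed again later (e.g., Theorem~\ref{theo:invbasis}), whereas your argument bypasses that notation entirely and directly exhibits the dual map as $f_{\bar v}\mapsto A_\Xi(\tf_v)$ under $\pd{\Xi}$ --- which is a cleaner structural statement and in fact makes visible the geometric content (push-forward from the fixed locus) more transparently. The trade-off is that the paper's computation produces the explicit matrix entries $d^X_{w,u}$ that it reuses elsewhere, while your argument is shorter if this lemma is read in isolation.

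One small remark: the paper's displayed computation writes the final coefficient as $d^Y_{w,u}$ although it is built from $a^X_{w,uv}$ and hence should be $d^X_{w,u}$; this is evidently a typo there and does not affect the conclusion, since Lemma~\ref{lem:coeffXi} covers both $d^X$ and $d^Y$. Your version does not have any such issue.
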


\begin{proof}
By Lemma \ref{lem:DcFPbasis} we know that $\{X_{I_w}^\Xi\}_{w\in W^\Xi}$ is a basis of $\DcFP{\Xi}$, so it suffices to show that $X_{I_w}^\Xi x_{\Pi/\Xi}\in \SWP{\Xi}.$ We have 
\[
X_{I_w}^\Xi x_{\Pi/\Xi}=\hspace{-1ex}\sum_{u\in W^\Xi}(\sum_{v\in W_\Xi}a^X_{w,uv})\de_{\bar u}x_{\Pi/\Xi}=\hspace{-1ex}\sum_{u\in W^\Xi}(\sum_{v\in W_\Xi}u(x_{\Pi/\Xi})a^X_{w,uv})\de_{\bar u}=\hspace{-1ex}\sum_{u\in W^\Xi}d^Y_{w,u}\de_{\bar u},
\]
which belongs to $\SWP{\Xi}$ by Lemma \ref{lem:coeffXi}.
\end{proof}
The geometric translation of the map $\SWPd{\Xi}\to \DcFPd{\Xi}$ is the push-forward map from the $T$-fixed points of $G/P_\Xi$ to $G/P_\Xi$, see \cite[Diagram~(8.1)]{CZZ2}.

\begin{example}
Note that in general $x_{\Pi/\Xi}\DcFP{\Xi}\not\subseteq\SWP{\Xi}$. For example, let the root datum be of type $A_2^{ad}$ and $\Xi=\{\alpha_2\}$, then $x_{\Pi/\Xi}=x_{-\al_1}x_{-\al_1-\al_2}$. Let $w=s_2s_1\in W^\Xi$, then 
\[
X_{21}=\tfrac{1}{x_{\al_1}x_{\al_2}}\de_e-\tfrac{1}{x_{\al_2}x_{\al_1+\al_2}}\de_{s_2}-\tfrac{1}{x_{\al_1}x_{\al_2}}\de_{s_1}+\tfrac{1}{x_{\al_2}x_{\al_1+\al_2}}\de_{s_2s_1}.
\]
Then  $X^\Xi_{21}x_{\Pi/\Xi}\in \SWP{\Xi}$ but $x_{\Pi/\Xi}X^\Xi_{21}\not\in \SWP{\Xi}$.
\end{example}
One easily checks that the diagram on the left below is commutative, and it restricts as the one on the right by Lemma \ref{lem:multDSP}.
\[
\xymatrix{
\QWP{\Xi'} \ar[r]^-{\cdot x_{\Pi/\Xi'}} & \QWP{\Xi'} \\
\QWP{\Xi} \ar[u]^{\cdot \frac{1}{x_{\Xi/\Xi'}} \circ \dd{\Xi/\Xi'}} \ar[r]^-{\cdot x_{\Pi/\Xi}} & \QWP{\Xi} \ar[u]_{\dd{\Xi/\Xi'}}
}
\qquad
\xymatrix{
\DcFP{\Xi'} \ar[r]^-{\cdot x_{\Pi/\Xi'}} & \SWP{\Xi'} \\
\DcFP{\Xi} \ar[u]^{\cdot \frac{1}{x_{\Xi/\Xi'}} \circ \dd{\Xi/\Xi'}} \ar[r]^-{\cdot x_{\Pi/\Xi}} & \SWP{\Xi} \ar[u]_{\dd{\Xi/\Xi'}}
}
\]
By $S$-dualization, one obtains the commutative diagram
\[
\xymatrix{
\DcFPd{\Xi'} \ar[d]_{\cA_{\Xi/\Xi'}} & \SWPd{\Xi'} \ar[l]_-{x_{\Pi/\Xi'} \act}  \ar[d]^{\ddd{\Xi/\Xi'}} \\
\DcFPd{\Xi} & \SWPd{\Xi} \ar[l]^-{x_{\Pi/\Xi} \act} 
}
\]
whose geometric interpretation in terms of push-forwards is given in \cite[Diagram~(8.3)]{CZZ2}

Finally, Theorems \ref{theo:bilform} and \ref{theo:bilformGP} immediately translate as:

\begin{theo}
The pairing $\DcFd \times \DcFd \to \DcFPd{\Pi}\simeq S$ defined by sending $(\sigma,\sigma')$ to $\cA_{\Pi}(\sigma \sigma')$ is non degenerate; $\big\{A_{I_w^\rev}(x_\Pi f_e)\big\}_{w \in W}$ and $\{Y_{I_v}^\star\}_{v \in W}$ are dual bases and so are $\big\{B_{I_w^\rev}(x_\Pi f_e)\big\}_{w \in W}$ and $\{X_{I_v}^\star\}_{v \in W}$.
\end{theo}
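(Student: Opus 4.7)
The plan is to reduce the claim directly to Theorem~\ref{theo:bilform} via the identifications already established.

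First, I would note that applying Lemma~\ref{lem:AXicommute} with $\Xi'=\emptyset$ and $\Xi=\Pi$ gives a commutative square identifying $\cA_\Pi\colon \DcFd \to \DcFPd{\Pi}$ with $A_\Pi\colon \DcFd \to (\DcFd)^W$ under the isomorphism $\pd{\Pi}\colon \DcFPd{\Pi} \isoto (\DcFd)^W$ supplied by Lemma~\ref{lem:DFinv}. Since $\cA_\Pi$ is $S$-linear and the product on $\DcFd$ is the restriction of the one on $\QWd$, the bilinear pairing $(\sigma,\sigma') \mapsto \cA_\Pi(\sigma\sigma')$ is well-defined and lands in $\DcFPd{\Pi}$.

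Next, I would spell out the identification $\DcFPd{\Pi}\simeq S$: the module $\SWP{\Pi}=S\de_{\bar e}$ is free of rank one, so its dual $\SWPd{\Pi}$ (and hence $\DcFPd{\Pi}$) is canonically $S$ with generator $f_{\bar e}$. Under $\pd{\Pi}$ this generator maps to $\sum_{w\in W}f_w=\unit$, the multiplicative identity of $\DcFd$, which also generates $(\DcFd)^W$ freely as an $S$-module. Thus the composite $\DcFd \stackrel{\cA_\Pi}\to \DcFPd{\Pi}\simeq S$ matches the composite $\DcFd \stackrel{A_\Pi}\to (\DcFd)^W\cong S$ with $\unit \leftrightarrow 1$.

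Finally, Theorem~\ref{theo:bilform} asserts
\[
A_\Pi\big(Y_{I_v}^\star \, A_{I_w^\rev}(\tf_e)\big)=\Kr_{w,v}\,\unit \qquad \text{and} \qquad A_\Pi\big(X_{I_v}^\star \, B_{I_w^\rev}(\tf_e)\big)=\Kr_{w,v}\,\unit,
\]
and since $\tf_e=x_\Pi f_e$, these translate via the identifications above into the stated dual-basis relations for $\cA_\Pi$. Because $\{X_{I_v}^\star\}_{v\in W}$ and $\{Y_{I_v}^\star\}_{v\in W}$ are $S$-bases of $\DcFd$ (dual to the $S$-bases $\{X_{I_v}\}$ and $\{Y_{I_v}\}$ of $\DcF$ by Lemma~\ref{lem:DcFdinQW*} and \eqref{eq:XYdualbasis}), the existence of dual bases forces the pairing to be non-degenerate. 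There is no genuine obstacle here; the theorem is a direct translation of Theorem~\ref{theo:bilform} across the isomorphisms provided by Lemma~\ref{lem:DFinv} and Lemma~\ref{lem:AXicommute}, the only point to verify being the matching of the counit-type elements $f_{\bar e}$ and $\unit$ under $\pd{\Pi}$.
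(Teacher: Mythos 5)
Your proof is correct and matches the paper's intent exactly: the paper omits an explicit proof, stating only that the result ``immediately translates'' from Theorem~\ref{theo:bilform}, and your write-up supplies precisely the translation via Lemma~\ref{lem:AXicommute} (with $\Xi'=\emptyset$, $\Xi=\Pi$), the isomorphism $\pd{\Pi}\colon \DcFPd{\Pi}\isoto(\DcFd)^W$ of Lemma~\ref{lem:DFinv}, and the matching of $f_{\bar e}$ with $\unit$.
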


\begin{theo} \label{theo:pairingDXi}
The pairing $\DcFPd{\Xi} \times \DcFPd{\Xi} \to \DcFPd{\Pi}\simeq S$ defined by sending $(\sigma,\sigma')$ to $\cA_{\Pi/\Xi}(\sigma \sigma')$ is non degenerate, and $\big\{\cA_{\Xi}(B_{I_w^\rev}(x_\Pi f_e))\big\}_{w \in W^{\Xi}}$ and $\big\{(X_{I_v}^\Xi)^\star\big\}_{v \in W^\Xi}$ are dual bases.
\end{theo}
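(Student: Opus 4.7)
The plan is to deduce the theorem from Theorem \ref{theo:bilformGP} by transport along the ring isomorphism $\pd{\Xi}\colon \DcFPd{\Xi} \isoto (\DcFd)^{W_\Xi}$ provided by Lemma \ref{lem:DFinv} (which is a ring map by Lemma \ref{lem:algresPinj}). In particular, there is essentially no new computation to do; the work is to check that every piece of structure in the statement matches its counterpart in $(\DcFd)^{W_\Xi}$ under $\pd{\Xi}$.

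First I would transport the pairing itself. Since $\pd{\Xi}$ is a ring homomorphism, $\pd{\Xi}(\sigma \sigma') = \pd{\Xi}(\sigma)\,\pd{\Xi}(\sigma')$. Combined with the commutative square of Lemma \ref{lem:AXicommute}, which gives $\pd{\Pi} \circ \cA_{\Pi/\Xi} = A_{\Pi/\Xi} \circ \pd{\Xi}$, this yields
\[
\pd{\Pi}\bigl(\cA_{\Pi/\Xi}(\sigma \sigma')\bigr) = A_{\Pi/\Xi}\bigl(\pd{\Xi}(\sigma)\,\pd{\Xi}(\sigma')\bigr).
\]
Thus the $\cA_{\Pi/\Xi}$-pairing on $\DcFPd{\Xi}$ corresponds, via the isomorphisms $\pd{\Xi}$ and $\pd{\Pi}$, to the $A_{\Pi/\Xi}$-pairing on $(\DcFd)^{W_\Xi}$ from Theorem \ref{theo:bilformGP}, once we identify $\DcFPd{\Pi} \simeq S$ with $(\DcFd)^W \simeq S \cdot \unit$ (noting that $\pd{\Pi}$ sends the generator dual to $\de_{\bar e}$ to $\unit = \sum_{w\in W} f_w$).

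Next I would identify the images of the two claimed dual families. For a $\Xi$-compatible choice of reduced sequences and $v \in W^\Xi$, the proof of Lemma \ref{lem:DFinv} shows that $\pd{\Xi}\bigl((X_{I_v}^\Xi)^\star\bigr) = X_{I_v}^\star$. Applying Lemma \ref{lem:AXicommute} with $\Xi' = \emptyset$ (so $\pd{\emptyset} = \id_{\DcFd}$), we also get
\[
\pd{\Xi}\bigl(\cA_{\Xi}(B_{I_w^{\rev}}(x_\Pi f_e))\bigr) = A_{\Xi}\bigl(B_{I_w^{\rev}}(x_\Pi f_e)\bigr)
\]
for $w \in W^\Xi$. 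Substituting these into the displayed identity and invoking Theorem \ref{theo:bilformGP} gives
\[
\pd{\Pi}\bigl(\cA_{\Pi/\Xi}\bigl((X_{I_v}^\Xi)^\star \cdot \cA_\Xi(B_{I_w^{\rev}}(x_\Pi f_e))\bigr)\bigr) = \Kr_{w,v}\unit,
\]
i.e., under the identification $\DcFPd{\Pi} \simeq S$, the pairing of $(X_{I_v}^\Xi)^\star$ with $\cA_\Xi(B_{I_w^{\rev}}(x_\Pi f_e))$ equals $\Kr_{w,v}$.

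Finally, non-degeneracy of the pairing follows from the duality formula above together with the fact that both families are $S$-bases of $\DcFPd{\Xi}$: the family $\{(X_{I_v}^\Xi)^\star\}_{v \in W^\Xi}$ by Lemma \ref{lem:algresPinj}, and the family $\{\cA_\Xi(B_{I_w^{\rev}}(x_\Pi f_e))\}_{w \in W^\Xi}$ because its image under the isomorphism $\pd{\Xi}$ is an $S$-basis of $(\DcFd)^{W_\Xi}$ by Theorem \ref{theo:bilformGP}. The only minor subtlety — and the closest thing to an obstacle — is checking that the identifications $\DcFPd{\Pi} \simeq S$ used in the two theorems coincide, which is a straightforward unwinding of the definitions of $\p{\Pi}$, $\pd{\Pi}$, and the basis $\{X_{I_e}^\Pi\} = \{\de_{\bar e}\}$ of $\DcFP{\Pi}$.
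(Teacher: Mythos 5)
Your proposal is correct and follows essentially the same route as the paper: complete the reduced sequences to a $\Xi$-compatible family, transport everything along the isomorphism $\pd{\Xi}\colon \DcFPd{\Xi} \isoto (\DcFd)^{W_\Xi}$ of Lemma \ref{lem:DFinv}, use the commutative square of Lemma \ref{lem:AXicommute} (including the case $\Xi'=\emptyset$), and invoke Theorem \ref{theo:bilformGP}. You simply spell out the bookkeeping that the paper's proof leaves implicit.
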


\begin{proof}For any choice of $\{I_w\}_{w\in W^\Xi}$, we complete it into a $\Xi$-compatible family $\{I_w\}_{w\in W}$, then by Lemma \ref{lem:DdWXibasis} $\{X^*_{I_w}\}_{w\in W^\Xi}$ is a basis of $(\DcFd)^{W_\Xi}$. By Lemma \ref{lem:DFinv} we know that $\pd{\Xi}((X^\Xi_{I_w})^\star)=X^*_{I_w}$ if $w\in W^\Xi$, so the conclusion follows from Lemma \ref{lem:AXicommute} and Theorem \ref {theo:bilformGP}.
\end{proof}

In some sense, Theorem \ref{theo:pairingDXi} is not completely satisfactory in terms of geometry: in the parabolic case, although we do know that the Schubert classes $\{\cA_{\Xi}A_{I_w^\rev}(x_\Pi f_e)\}_{w \in W^\Xi}$ form a basis, we did not find a good description of the dual basis with respect to the bilinear form.

\end{document}